\newcounter{pos} 
\tikzset{									
	initcounter/.code={\setcounter{pos}{0}},
	style between/.style n args={3}{
		postaction={
			initcounter,
			decorate,
			decoration={
				show path construction,
				curveto code={
					\addtocounter{pos}{1}
					\pgfmathtruncatemacro{\min}{#1 - 1}
					\ifthenelse{\thepos < #2 \AND \thepos > \min}{
						\draw[#3]
						(\tikzinputsegmentfirst)
						..
						controls (\tikzinputsegmentsupporta) and (\tikzinputsegmentsupportb)
						..
						(\tikzinputsegmentlast);
					}{}
				}
			}
		},
	},
}
\renewcommand{\P}{P^\bullet}
\DeclareMathOperator{\Img}{Im}
\DeclareMathOperator{\module}{mod}
\DeclareMathOperator{\Hom}{Hom}
\DeclareMathOperator*{\proj}{proj}
\DeclareMathOperator{\End}{End}
\DeclareMathOperator{\Ext}{Ext}
\DeclareMathOperator{\Aut}{Aut}
\DeclareMathOperator{\Coh}{Coh}
\DeclareMathOperator{\Int}{Int}
\DeclareMathOperator{\Diffeo}{Diff}
\DeclareMathOperator{\Simp}{Simp}
\DeclareMathOperator{\Perf}{Perf}
\DeclareMathOperator{\Inn}{Inn}
\DeclareMathOperator{\GL}{GL}
\DeclareMathOperator{\PGL}{PGL}
\DeclareMathOperator{\Equ}{Equ}
\DeclareMathOperator{\T}{\mathcal{F}}
\DeclareMathOperator{\basis}{\mathfrak{B}}
\DeclareMathOperator{\rad}{rad}
\DeclareMathOperator{\marked}{\mathcal{M}}
\DeclareMathOperator{\MCG}{\mathcal{M}\mathcal{C}\mathcal{G}}
\DeclareMathOperator{\out}{Out}
\newcommand\restr[2]{{
		\left.\kern-\nulldelimiterspace 
		#1 
		\vphantom{\big|} 
		\right|_{#2} 
}}
\tikzset{
	set arrow inside/.code={\pgfqkeys{/tikz/arrow inside}{#1}},
	set arrow inside={end/.initial=>, opt/.initial=},
	/pgf/decoration/Mark/.style={
		mark/.expanded=at position #1 with
		{
			\noexpand\arrow[\pgfkeysvalueof{/tikz/arrow inside/opt}]{\pgfkeysvalueof{/tikz/arrow inside/end}}
		}
	},
	arrow inside/.style 2 args={
		set arrow inside={#1},
		postaction={
			decorate,decoration={
				markings,Mark/.list={#2}
			}
		}
	},
}
\tikzset{commutative diagrams/.cd,arrow style=tikz,diagrams={>=latex'}}\tikzset{join/.code=\tikzset{after node path={%
			\ifx\tikzchainprevious\pgfutil@empty\else(\tikzchainprevious)%
			edge[every join]#1(\tikzchaincurrent)\fi}}}
\tikzset{>=stealth',every on chain/.append style={join},
	every join/.style={->}}
\tikzset{every loop/.style={min distance=25mm,in=50,out=100,looseness=5}}
\newtheorem{prf}{Proof}[section]
\theoremstyle{remark}
\theoremstyle{plain}
\newtheorem{thm}[prf]{Theorem}
\newtheorem{Introthm}{Theorem}
\newtheorem{lem}[prf]{Lemma}
\newtheorem{prp}[prf]{Proposition}
\newtheorem{cor}[prf]{Corollary}
\theoremstyle{definition}
\newtheorem{exa}[prf]{Example}
\newtheorem*{convention}{Convention}
	\newtheorem{definition}[prf]{Definition}
	\newtheorem*{ndefinition}{Definition}
		\newtheorem{rem}[prf]{Remark}
\theoremstyle{remark}
\newtheorem{notation}[prf]{Notation}
\theoremstyle{plain}
\def\centerarc[#1](#2)(#3:#4:#5) 
\def\triangle[#1][#2][#3][#4][#5][#6][#7]%
\begin{document}
\title{\MakeUppercase{On auto-equivalences and complete derived invariants of gentle algebras}}

\author{Sebastian Opper}
\address[]{Mathematisches Institut, Universität Paderborn, Warburger Str. 100, 33098 Paderborn.}
\email{sopper@math.uni-paderborn.de}

\begin{abstract}
We study triangulated categories which can be modeled by an oriented marked surface $\mathcal{S}$ and a line field $\eta$ on $\mathcal{S}$. This includes bounded derived categories of gentle algebras and -- conjecturally -- all partially wrapped Fukaya categories introduced by Haiden-Katzarkov-Kontsevich \cite{HaidenKatzarkovKontsevich}. We show  that triangle equivalences between such categories induce diffeomorphisms of the associated surfaces preserving orientation, marked points and line fields up to homotopy. This shows that the pair $(\mathcal{S}, \eta)$ is a triangle invariant of such  categories and prove that it is a complete derived invariant for gentle algebras of arbitrary global dimension. We deduce that the group of auto-equivalences of a gentle algebra is an extension of the stabilizer subgroup of $\eta$ in the mapping class group and a group, which we describe explicitely in case of triangular gentle algebras. We show further that diffeomorphisms associated to spherical twists are Dehn twists.
\end{abstract}

\maketitle

\tableofcontents

\section*{Introduction}\label{Introduction}
\noindent The present work evolved from parts of the authors PhD thesis \cite{OpperThesis} and is devoted to the study of auto-equivalences and triangle invariants of a class of triangulated categories which we call \textit{surface-like categories}.
Examples of such categories include bounded derived categories of gentle algebras which have been studied extensively  since their introduction by Assem-Skowro\`nksi \cite{AssemSkowronski} over thirty years ago. More recently, the work of Bocklandt \cite{BocklandtPuncturedSurface} and Haiden-Katzarkov-Kontsevich \cite{HaidenKatzarkovKontsevich} revealed that derived categories of gentle algebras arise as special cases of the class of partially wrapped Fukaya categories of surfaces. The latter are $A_{\infty}$-categories associated with the following data.
\begin{itemize}
\setlength\itemsep{1ex}

\item A compact, oriented, smooth surface $S$ with non-empty boundary $\partial S$,

\item a set $\marked \subseteq \partial S$ of marked points and

\item  the homotopy class of a line field $\eta$, i.e.\ the homotopy class of a section of the projectivized tangent bundle of $S$.
\end{itemize} 
On the other hand, it was shown in \cite{LekiliPolishchukGentle} and \cite{OpperPlamondonSchroll} that every (graded) gentle algebra gives rise to the datum of a marked surface. However, the surface constructions are not identical. \medskip 

\noindent In \cite{OpperPlamondonSchroll}, the authors provided a ``dictionary'' which relates information on the triangulated structure of the derived category with geometric information in the spirit of the partially wrapped Fukaya category in the sense of \cite{Auroux}. As an example, the dictionary includes correspondences between indecomposable objects and curves on the model surface equipped with local systems. Further examples are  geometric interpretations of morphisms, mapping cones and Auslander-Reiten triangles.   These results extend preceeding results in \cite{HaidenKatzarkovKontsevich} about the classification of indecomposable objects in the partially wrapped Fukaya category. It shows that an essential part of the information contained in the derived category of a gentle algebra is encoded in its surface. This relationship serves as prototypical example for the definition of surface-like categories. We expect all partially wrapped Fukaya categories to be examples of surface-like categories.\medskip

\noindent Beyond their presence in Homological Mirror Symmetry in form of partially wrapped Fukaya categories, gentle algebras have a surprising number of connections to areas within and outside of representation theory. This includes

\begin{enumerate}
\setlength\itemsep{1ex}

\item categorical resolutions of tame, non-commutative, nodal curves including (stacky) chains and cycles of projective lines, c.f.\ \cite{BurbanDrozdTilting}, \cite{BurbanDrozd2018} as well as \cite{LekiliPolishchukGentle}, \cite{LekiliPolishchuk2017}.

\item The trivial extension of a gentle algebra is a Brauer graph algebra \cite{SchrollTrivialExtension}. Brauer graph algebras appear in modular representation theory of finite groups as blocks of the group algebra with cyclic defect, c.f.\ \cite{Dade}, \cite{Donovan}. It was shown in \cite{Antipov2007} that Brauer graph algebras have derived invariants related to surfaces.
\end{enumerate}

\noindent The main objective of the present work is the investigation of triangle equivalences between surface-like categories which leads to a structural understanding of the group of auto-equivalences of such categories and provides us with a better understanding on the question when two such categories are triangle equivalent. Our focus is on applications to gentle algebras for which we obtain the most far reaching results.\medskip 

 \noindent Throughout this paper we assume that our ground field is algebraically closed.\medskip

\noindent Our key result is the following relationship between equivalences of surface-like categories and diffeomorphisms of their model surfaces.

\begin{Introthm}\label{IntroTheoremEquivalencesFukayaLikeCategories}
	Let $\T$ and $\T'$ be surface-like categories modeled by marked surfaces $\mathcal{S}, \mathcal{S}'$ and line fields $\eta$ and $\eta'$.  Then, the following is true.\bigskip
	
	\begin{enumerate}
	\item Every triangle equivalence $T :\T \rightarrow \T'$ induces a diffeomorphism $\Psi(T): \mathcal{S} \rightarrow \mathcal{S}'$ of marked surfaces, i.e.\ it preserves orientations and marked points. Moreover, $\Psi(T)$ maps $\eta$ to a line field in the homotopy class of $\eta'$; \bigskip
			\item $\Psi(T)$ realizes the action of $T$ on indecomposable objects, i.e.\ if $\gamma_X$ is a curve on $\mathcal{S}$ that represents an indecomposable object $X \in \T$ and $\gamma_{T(X)}$ is a curve on $\mathcal{S}'$ which represents $T(X)$, then  
			\begin{displaymath} \Psi(T)(\gamma_X) \simeq_{\ast} \gamma_{T(X)}  ,\end{displaymath}
			\smallskip
			
			where $\simeq_{\ast}$ is an equivalence relation which  is slightly coarser than homotopy.
\end{enumerate}	
\end{Introthm}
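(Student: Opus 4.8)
The plan is to reconstruct the triple $(\mathcal{S},\marked,\eta)$ together with the orientation of $\mathcal{S}$ from purely categorical data attached to $\T$, in such a way that the reconstruction is manifestly invariant under triangle equivalences, and then to verify that the resulting diffeomorphism realizes the action of $T$ on curves. The first, and main, task is to isolate inside $\T$ the combinatorial skeleton of $\mathcal{S}$. Using the geometric dictionary that is part of the definition of a surface-like category — indecomposables correspond to homotopy classes of graded curves with local systems, dimensions of graded $\Hom$-spaces compute graded crossing numbers, and distinguished triangles correspond to smoothings of crossings — the arcs of a dissection of $\mathcal{S}$ are realized by a family of indecomposable ``arc objects'' whose mutual graded morphisms encode a ribbon graph: its edges are the arc objects, and the cyclic order of the edges around each vertex is read off from the ``immediate'' morphisms (those realizing a small corner at a marked point) together with their composition behaviour. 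The surface $\mathcal{S}$, its marked points, and its orientation are then recovered, up to isotopy, as the thickening of this oriented ribbon graph; the point is that every ingredient used here — the set of indecomposables, graded $\Hom$, composition, distinguished triangles — is preserved by any triangle equivalence.

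Given $T:\T\rightarrow\T'$, the second step is transport. The equivalence $T$ carries the family of arc objects of $\mathcal{S}$ to a family in $\T'$ realizing a dissection of $\mathcal{S}'$ and preserves all of the data above, hence induces an isomorphism of oriented ribbon graphs and therefore (up to isotopy) an orientation-preserving diffeomorphism $\Psi(T):\mathcal{S}\rightarrow\mathcal{S}'$ matching marked points. By construction $\Psi(T)(\gamma_X)\simeq_{\ast}\gamma_{T(X)}$ for every arc object $X$; for an arbitrary indecomposable one writes it as an iterated mapping cone of arc objects — geometrically, as a sequence of smoothings of arcs of the dissection — and uses that $\Psi(T)$ intertwines the operation of smoothing to conclude $\Psi(T)(\gamma_X)\simeq_{\ast}\gamma_{T(X)}$ in general. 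This yields part (2) and the orientation/marked-point assertions of part (1). Here the coarsening $\simeq_{\ast}$ absorbs the ambiguity in how a curve-with-local-system represents an object.

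The third step is the line field. The grading of a surface-like category — that is, its shift functor — is modelled by $\eta$ through winding numbers: for a closed curve $\delta$ the winding number $\omega_{\eta}(\delta)$ governs the degree shifts in graded self-morphisms and the periodicity along $\delta$, a quantity that only involves $\Hom^{\ast}$ and the shift and is therefore preserved by $T$. Hence $\Psi(T)_{\ast}\eta$ and $\eta'$ assign the same winding number to every closed curve on $\mathcal{S}'$. Since homotopy classes of line fields on a surface with non-empty boundary are determined by their winding-number function on closed curves, it follows that $\Psi(T)_{\ast}\eta\simeq\eta'$, completing part (1).

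I expect the main obstacle to be the first step: extracting $\mathcal{S}$ together with its orientation and its marked points from intrinsic, choice-free categorical invariants, and in particular pinning down the \emph{orientation} — one has to identify the correct asymmetric piece of structure, namely the clockwise-versus-counterclockwise distinction at a crossing detected through composition of morphisms, and check that it survives an arbitrary triangle equivalence rather than only equivalences of a special form. A secondary difficulty lies in the line-field step: one must argue that the winding numbers visible in the category are computed by a class of curves large enough — i.e.\ actually represented by indecomposable objects — to determine $\eta$ up to homotopy.
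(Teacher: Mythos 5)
Your overall architecture (reconstruct the surface from categorical data, transport it along $T$, then recover $\eta$ from winding numbers read off graded endomorphisms) is sound, and your third step is essentially what the paper does: the winding number of the unobstructed loop attached to a closed finite arc is the degree of the non-invertible boundary self-morphism, every unobstructed loop arises this way, and these determine the homotopy class of the line field. Likewise your remark that the orientation is detected by the asymmetry of composition at a corner is exactly the mechanism used (arrow morphisms around a triangle occur only in the clockwise cyclic order). But two of your steps have genuine gaps. First, the reconstruction itself: the paper does not thicken a ribbon graph; it maps essential objects to vertices of the arc complex $A_{\ast}(\mathcal{S})$ and invokes Disarlo's rigidity theorem that (for non-special surfaces) every simplicial isomorphism of arc complexes is induced by a unique diffeomorphism. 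Your route would instead require an intrinsic, equivalence-invariant characterization of which indecomposables form a dissection and which morphisms are the "immediate corner" morphisms giving the cyclic order at each marked point. That is where most of the technical work lives (interior versus boundary morphisms, pure morphisms, arrow morphisms, and the characterization of segment and essential objects), and you have asserted it rather than supplied it; moreover special surfaces, where arc-complex data underdetermines the mapping class, need a separate argument.

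The more serious gap is your passage from dissection arcs to arbitrary indecomposables via "iterated mapping cones." Cones are not functorial, and the real obstruction is that $T(\basis(p))$ is in general a nontrivial linear combination $\sum_i \lambda_i\,\basis(q_i)$ of basis morphisms between $T(X_1)$ and $T(X_2)$; the curve represented by the cone of such a combination is not read off by "intertwining smoothings," so the induction does not close. This is precisely the step the paper replaces by a different device: it proves that the $\simeq_{\ast}$-class of an object is determined by its ordered sequence of intersections with a triangulation, recasts that sequence categorically as a \emph{characteristic sequence} of dual pairs of morphisms to the triangulation objects linked by arrow morphisms, and checks that such sequences are preserved by any triangle equivalence. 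Relatedly, your gloss that $\simeq_{\ast}$ "absorbs the ambiguity in how a curve-with-local-system represents an object" misses its actual purpose: equivalences can genuinely move a loop object to a degenerate arc object within a $\tau$-invariant family (the $\PGL_2(k)$-action on the Kronecker algebra permutes $B_{\lambda,\mu}$, mixing the loop with the two boundary segments), so $\simeq_{\ast}$ must identify a gradable boundary loop with the adjacent boundary segment; any proof of part (2) has to accommodate this phenomenon explicitly rather than treat it as a representation ambiguity.
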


\noindent The equivalence relation $\simeq_{\ast}$ agrees with the homotopy relation for most surfaces. 

Algebraically, $\simeq_{\ast}$ groups objects which are stable under the Auslander-Reiten translation into families which are closed under ``degeneration of the continuous parameter''. 

The construction of the map $\Psi(-)$ is given in Section \ref{SectiondiffeomorphismInducedByEquivalences}. Theorem \ref{IntroTheoremEquivalencesFukayaLikeCategories} combines the assertions of Theorem \ref{TheoremPsiRealizesAction}, Proposition \ref{PropositionPsiYieldsOrientationPreservingdiffeomorphisms}, Proposition \ref{PropositionPsiPreservesWindingNumbers} as well as the results from Section \ref{SectionSpecialSurfaces}. \medskip

\noindent It was shown in \cite{LekiliPolishchukGentle} that (graded) gentle algebras of finite global dimension with equivalent surface models are derived equivalent. We extend this result to ungraded gentle algebras of infinite global dimension and deduce from Theorem \ref{IntroTheoremEquivalencesFukayaLikeCategories} that the surface model of a gentle algebra is a complete derived invariant in the following sense (Theorem \ref{TheoremEquivalentSurfacesImplyDerivedEquivalence} in the text).

\begin{Introthm}\label{IntroCorollaryDerivedInvariant}Let $A_1$ and $A_2$ be gentle algebras, let $\mathcal{S}_{A_i}$ denote the surface of $A_i$ and let $\eta_{A_i}$ denote its line field. Then, the algebras $A_1$ and  $A_2$ are derived equivalent if and only if there exists an orientation preserving diffeomorphism $H: \mathcal{S}_{A_1} \rightarrow \mathcal{S}_{A_2}$, which restricts to a bijection between the sets of marked points and which maps $\eta_{A_1}$ to a line field in the homotopy class of $\eta_{A_2}$.
\end{Introthm}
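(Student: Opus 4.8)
The plan is to treat the two implications separately. For the ``only if'' direction, recall that the bounded derived category of a gentle algebra is surface-like, with $\mathcal{D}^b(\module A_i)$ modeled by $\mathcal{S}_{A_i}$ and $\eta_{A_i}$. Hence a triangle equivalence $T : \mathcal{D}^b(\module A_1) \rightarrow \mathcal{D}^b(\module A_2)$ immediately yields the required diffeomorphism, namely $H := \Psi(T)$: by Theorem~\ref{IntroTheoremEquivalencesFukayaLikeCategories}(1) it preserves orientations, restricts to a bijection between the marked points, and carries $\eta_{A_1}$ into the homotopy class of $\eta_{A_2}$. No restriction on the global dimension is needed here; the substance of the statement is the converse, which extends the finite-global-dimension theorem of \cite{LekiliPolishchukGentle} to arbitrary gentle algebras.

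For the ``if'' direction, suppose we are given an orientation-preserving diffeomorphism $H$ as in the statement. Transporting every structure along $H$, we may assume that $\mathcal{S}_{A_1} = \mathcal{S}_{A_2} =: \mathcal{S}$, that the two sets of marked points coincide, and that $\eta_{A_1}$ and $\eta_{A_2}$ are homotopic; write $\eta$ for this common homotopy class. Each $A_i$ is the gentle algebra $A_{\mathfrak{D}_i}$ associated with an admissible dissection $\mathfrak{D}_i$ of $\mathcal{S}$, and the winding numbers of the arcs of $\mathfrak{D}_i$ along $\eta$ encode, via \cite{OpperPlamondonSchroll}, the data that recovers $\mathcal{D}^b(\module A_i)$. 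I would now connect $\mathfrak{D}_1$ to $\mathfrak{D}_2$ by a finite chain of flips through admissible dissections whose associated line field is again $\eta$, and realise each flip categorically: a dissection $\mathfrak{D}$ determines a silting object in the perfect derived category $\Perf(A_{\mathfrak{D}})$ whose endomorphism algebra is $A_{\mathfrak{D}}$, a flip corresponds to a single silting mutation, and the endomorphism algebra of the mutated object is again the gentle algebra $A_{\mathfrak{D}'}$ of the flipped dissection, so that mutation produces a triangle equivalence $\Perf(A_{\mathfrak{D}}) \simeq \Perf(A_{\mathfrak{D}'})$. When the global dimension is finite, $\Perf(A_{\mathfrak{D}}) = \mathcal{D}^b(\module A_{\mathfrak{D}})$ and this finishes the argument; this is essentially the situation of \cite{LekiliPolishchukGentle}. (One could also route this through the partially wrapped Fukaya category of $(\mathcal{S},\eta)$, but establishing the comparison with $\mathcal{D}^b(\module A)$ in the ungraded, infinite-dimensional case carries the same difficulty.)

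The main obstacle is the passage from $\Perf$ to $\mathcal{D}^b(\module -)$ in the infinite-global-dimension case, where $\Perf(A) \subsetneq \mathcal{D}^b(\module A)$, so that an equivalence of perfect categories does not automatically upgrade to one of bounded derived categories. Here I would verify that the equivalences produced by the flips are standard, i.e.\ induced by tilting complexes, so that by Rickard's derived Morita theory they extend to triangle equivalences $\mathcal{D}^b(\module A_{\mathfrak{D}}) \simeq \mathcal{D}^b(\module A_{\mathfrak{D}'})$; composing along the chain then gives $\mathcal{D}^b(\module A_1) \simeq \mathcal{D}^b(\module A_2)$ as desired. Making this precise requires control over the homological algebra of gentle algebras -- their Gorenstein property, the structure of their singularity categories, and the fact that the dissection objects have endomorphism ring concentrated in degree zero -- together with the bookkeeping that keeps the whole chain of dissections within the homotopy class $\eta$; this is where I expect the real work of the proof to concentrate.
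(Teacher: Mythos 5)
Your ``only if'' direction is exactly the paper's: a triangle equivalence gives $H:=\Psi(T)$ via Theorem~\ref{IntroTheoremEquivalencesFukayaLikeCategories}, and nothing more is needed there. The problem is the converse. You propose to join the two dissections by a chain of flips realised as silting mutations and then to ``verify that the equivalences produced by the flips are standard, i.e.\ induced by tilting complexes,'' deferring this to ``where I expect the real work of the proof to concentrate.'' That deferral is a genuine gap, not a routine verification: an individual flip of an admissible dissection corresponds in general to an irreducible \emph{silting} mutation, and the mutated object need not be tilting over the previous algebra (its graded endomorphism ring can have components in nonzero degrees), so neither the individual steps nor, without further argument, the composite upgrades from $\Perf$ to $\mathcal{D}^b$ by Rickard's theorem. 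You would also need to show that the whole chain of dissections can be kept compatible with the fixed homotopy class of $\eta$ and that a connecting chain of flips exists at all; none of this is established in the proposal. As written, the argument only re-proves the finite-global-dimension case already in \cite{LekiliPolishchukGentle} and names, without closing, the exact difficulty the theorem is supposed to overcome.

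The paper's proof (Theorem~\ref{TheoremEquivalentSurfacesImplyDerivedEquivalence}) avoids the chain entirely. Let $\mathcal{P}$ be the arcs on $\mathcal{S}_{A_1}$ representing the indecomposable projective $A_1$-modules. Lemma~\ref{LemmaGeometricTiltingComplex} characterises, purely geometrically, which collections of arcs represent (basic) tilting objects: finite simple arcs with pairwise disjoint interiors, vanishing winding number of every piecewise smooth loop formed from them, and the property of geometrically generating the surface. The collection $\mathcal{P}$ satisfies these, and since $H$ preserves orientation, marked points and winding numbers, so does $H(\mathcal{P})$ on $\mathcal{S}_{A_2}$. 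Hence $H(\mathcal{P})$ represents a genuine tilting object $Y\in\mathcal{D}^b(A_2)$ in one step --- the winding-number condition is precisely what forces $\Hom^*(Y,Y)$ to be concentrated in degree zero, and ``geometrically generates'' is what forces $A_2$ to lie in the thick subcategory generated by $Y$, which is how the infinite-global-dimension case is handled. Lemma~\ref{LemmaEndomorphismTiltingObject} then reads off $\End(Y)^{\mathrm{op}}\cong A_1$ from the intersection pattern of the arcs, and Rickard's theorem concludes. If you want to salvage your approach, the honest fix is to abandon the flip chain and prove the single-step statement: that the image under $H$ of the arc system of the projectives is itself a tilting object with the right endomorphism ring.
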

\noindent  An important consequence of Theorem \ref{IntroCorollaryDerivedInvariant} is that the diffeomorphism type of $\mathcal{S}_{A}$ and the orbit of the homotopy class of the line field $\eta_{A}$ under the action of the mapping class group of $\mathcal{S}_A$ is a complete derived invariant for gentle algebras extending the previously defined derived invariants by Avella-Alaminos and Geiss \cite{AvellaAlaminosGeiss} (``AAG invariant'').  The mapping class group of $\mathcal{S}_A$ is the set of all isotopy classes of orientation preserving self-diffeomorphisms preserving the set of marked points. By the results of \cite{LekiliPolishchukGentle}, such an orbit is completely determined by a finite set of integers which can be computed explicitely. \medskip

\noindent The next theorem concerns the groups of auto-equivalences of derived categories of gentle algebras. Recall that a finite dimensional algebra $A$ is called \textit{triangular} if its $\Ext$-quiver contains no oriented cycles, i.e.\ there exists no cyclic sequence of simple $A$-modules $S_0, \dots, S_{n+1} \cong S_0$ with the property that $\Ext^1(S_i, S_{i+1}) \neq 0$ for all $0 \leq i \leq n$.

\begin{Introthm}\label{IntroTheoremShortExactSequence} Let $A$ be a gentle algebra. Then, the following is true. \medskip
\begin{enumerate}
\item There exists a short exact sequence of groups			
\begin{displaymath}
\begin{tikzcd}
0 \arrow{r} & \mathscr{K} \arrow{r} & \Aut(\mathcal{D}^b(A)) \arrow{r}{\Psi} &  \MCG(\mathcal{S}_A, \eta_A) \arrow{r} & 0,
\end{tikzcd}
\end{displaymath}
where $\Aut(\mathcal{D}^b(A))$ denotes the group of auto-equivalences of $\mathcal{D}^b(A)$ up to natural isomorphism and $\MCG(\mathcal{S}_A, \eta_A)$ denotes the stabilizer of $\eta_A$ of the action of the mapping class group of $\mathcal{S}_A$. \bigskip

\item Let $X \in \mathcal{D}^b(A)$ be spherical. Then, there exists a simple loop $\gamma_X$ on $\mathcal{S}_A$ such that
\[\Psi(T_X) = D_{\gamma_X},\]

\noindent where $T_X: \mathcal{D}^b(A) \rightarrow \mathcal{D}^b(A)$ denotes the spherical twist with respect to $X$ and $D_{\gamma_X}$ denotes the Dehn twist about $\gamma_X$.
\bigskip

\item \label{Itempoint} Assume that $A$ is triangular. Then, $\mathscr{K}$ is a subgroup of the group of outer automorphisms of $A$. Moreover, $\mathscr{K}$ is generated by the groups of \textit{rescaling equivalences} and \textit{coordinate transformations}.

\end{enumerate}
\end{Introthm}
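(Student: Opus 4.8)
The plan is to establish the three assertions separately, each time reducing to the geometric dictionary already set up. For (1), the map $\Psi$ is constructed in Section~\ref{SectiondiffeomorphismInducedByEquivalences} out of the action of an equivalence on indecomposable objects and the induced bijection of curves; since that construction is compatible with composition and sends naturally isomorphic equivalences to isotopic diffeomorphisms, it descends to a group homomorphism $\Aut(\mathcal{D}^b(A)) \to \MCG(\mathcal{S}_A)$. Applying Theorem~\ref{IntroTheoremEquivalencesFukayaLikeCategories}(1) with both categories equal to $\mathcal{D}^b(A)$ shows that every $\Psi(T)$ fixes the homotopy class of $\eta_A$, so the image lies in the stabiliser $\MCG(\mathcal{S}_A,\eta_A)$, and $\mathscr{K}$ is by definition the kernel. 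Exactness on the left and in the middle is then formal; for surjectivity on the right I would use Theorem~\ref{IntroCorollaryDerivedInvariant} (its proof rather than only its statement): a class $\phi \in \MCG(\mathcal{S}_A,\eta_A)$ provides an orientation-preserving self-diffeomorphism of $(\mathcal{S}_A,\eta_A)$, which that proof upgrades to a triangle auto-equivalence $T_\phi$, and tracing the construction together with the uniqueness clause in Theorem~\ref{IntroTheoremEquivalencesFukayaLikeCategories}(2) (the curves of objects already pin down a mapping class) yields $\Psi(T_\phi)\simeq\phi$.

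For (2), a spherical object $X$ has $\bigoplus_i \Ext^i(X,X)$ equal to the cohomology of a sphere; translating this through the object--curve correspondence — self-extensions count graded self-intersections and endomorphisms of the attached local system — forces $\gamma_X$ to be a primitive simple closed loop carrying the trivial rank-one local system. Writing the spherical twist as $T_X(Y)=\Cone\!\big(\Hom^{\bullet}(X,Y)\otimes X \to Y\big)$ and feeding this into the description of mapping cones from \cite{OpperPlamondonSchroll}, one identifies the curve of $T_X(Y)$, for every indecomposable $Y$, with the curve obtained by coherently smoothing all intersections of $\gamma_Y$ with $\gamma_X$, that is with $D_{\gamma_X}(\gamma_Y)$. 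By Theorem~\ref{IntroTheoremEquivalencesFukayaLikeCategories}(2) this gives $\Psi(T_X)(\gamma_Y)\simeq_{\ast}D_{\gamma_X}(\gamma_Y)$ for all $Y$; since the curves $\gamma_Y$ contain a filling family of arcs, on which $\simeq_{\ast}$ coincides with homotopy, and a mapping class is determined by its action on such curves, we conclude $\Psi(T_X)=D_{\gamma_X}$ in $\MCG(\mathcal{S}_A,\eta_A)$.

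For (3), assume $A$ is triangular and take $T\in\mathscr{K}$, so $\Psi(T)=\mathrm{id}$. By Theorem~\ref{IntroTheoremEquivalencesFukayaLikeCategories}(2) the functor $T$ fixes the underlying curve of every indecomposable object; since the indecomposable projectives $P_v$ correspond to boundary-to-boundary arcs, which carry no nontrivial local system, this yields $T(P_v)\cong P_v[n_v]$ for integers $n_v$. Comparing $\Hom^{\bullet}$-spaces and using that $Q$ is connected and that the $\Hom$-complex between projectives is concentrated in degree $0$ forces all $n_v$ to coincide, so after composing with a shift we may assume $T$ fixes $A$ in degree $0$; then $T$ restricts to a self-equivalence of $\Perf(A)$ fixing the basic tilting object $A$ and is therefore the standard equivalence $-\otimes^{L}_A {}_{1}A_{\varphi}$ attached to some $\varphi\in\Aut(A)$ (triangularity is what lets us conclude the auxiliary shift was in fact trivial, e.g.\ because $\Psi([1])$ has infinite order while $\Psi$ has finite image on algebra automorphisms). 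Hence $\mathscr{K}\subseteq\out(A)$, and more precisely $\mathscr{K}$ consists of the classes whose representatives induce the identity on $\mathcal{S}_A$. Finally one classifies: modulo an inner automorphism, any automorphism of a gentle algebra factors as a graph automorphism composed with one that rescales arrows and one that adds longer paths to arrows (a coordinate transformation); a nontrivial graph automorphism permutes arcs of $\mathcal{S}_A$ nontrivially and so gives a nontrivial mapping class, whereas rescalings and coordinate transformations leave the surface data untouched. Therefore $\mathscr{K}$ is generated by the groups of rescaling equivalences and coordinate transformations.

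The main obstacle is the standardness step in part (3): proving that an auto-equivalence fixing every indecomposable object is induced by an algebra automorphism. This fails for general finite-dimensional algebras, so the triangularity hypothesis must be used in an essential way — both to exclude exotic auto-equivalences and to kill the residual global shift, via the rigidity of the projective modules and a comparison of orders in $\MCG(\mathcal{S}_A,\eta_A)$. Once standardness is in hand, the concluding description of $\out(A)$ as generated by graph automorphisms, rescalings, and coordinate transformations is a careful but routine bookkeeping with the quiver and relations of a gentle algebra, and the identification of the subgroup acting trivially on $\mathcal{S}_A$ is immediate.
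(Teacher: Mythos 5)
Your overall architecture for parts (1) and (3) matches the paper's (surjectivity via the tilting object built from the images of the projective arcs plus Rickard; kernel analysis via standardness of equivalences fixing the projectives, which is exactly where Chen's result for quivers without oriented cycles enters). But part (2) has two genuine gaps. First, it is not true that sphericality forces $\gamma_X$ to be a simple closed loop: a spherical object may be a $\tau$-invariant \emph{arc} object, represented by the boundary segment of a component $B$ with a single marked point and $\omega_B=0$ (the degenerate member of an $\mathbb{A}^1_k$- or $\mathbb{P}^1_k$-family, e.g.\ $P^\bullet_\alpha$ for a maximal antipath $\alpha$). The correct statement, and the one the paper proves, is that in this case $\Psi(T_X)$ is the Dehn twist about the boundary loop around $B$, which is not the curve of $X$. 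Your argument silently excludes this case. Second, your identification of the curve of $T_X(Y)$ with $D_{\gamma_X}(\gamma_Y)$ for \emph{every} indecomposable $Y$ is not licensed by the axioms: Property 3) of Definition \ref{DefinitionFukayaLikeQuintuples} only describes the cone of a \emph{single} basis morphism $\basis(p)$, whereas the spherical twist triangle involves the full evaluation map $\Hom^*(X,Y)\otimes X\to Y$, i.e.\ all intersections of $\gamma_X$ and $\gamma_Y$ simultaneously. The paper avoids ever having to compute such a cone by producing a triangulation $\Delta$ with $\iota(\gamma_X,\delta)\le 1$ for all $\delta\in\Delta$ (Lemma \ref{LemmaSimpleLoopGoodTriangulation}, a nontrivial cut-and-reglue construction) and then invoking Disarlo's rigidity to compare $\Psi(T_X)$ and $D_{\gamma_X}$ on that triangulation alone. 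Without either that lemma or a justification of the multi-intersection cone formula, your part (2) does not close.

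Two smaller points. In part (3) your parenthetical that "$\Psi([1])$ has infinite order" is wrong: the shift lies in $\ker\Psi$, so no order comparison in the mapping class group can control the residual shift (and none is needed for the statement as formulated). And your concluding classification of $\out(A)\cap\ker\Psi$ as "graph automorphism $\circ$ rescaling $\circ$ coordinate transformation" is asserted rather than proved; the paper's argument has real content here, namely that if $\sigma(\alpha)$ involves a parallel path $\beta\neq\alpha$ then membership in $\ker\Psi$ forces $P^\bullet_\alpha$ to be $\tau$-invariant, hence $\alpha$ to be a maximal antipath and $\beta$ a maximal admissible path — which is precisely the configuration in which a coordinate transformation is defined — with the Kronecker quiver as a separate case. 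Your sketch does not rule out an automorphism adding a parallel path in a configuration that is not of this special type.
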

\noindent The definition of the groups in Theorem \ref{IntroTheoremShortExactSequence} (\ref{Itempoint}) is given in Section \ref{SectionKroneckerAlgebra} and Section \ref{SectionRescalingEquivalences}.
We expect Theorem \ref{IntroTheoremShortExactSequence} (\ref{Itempoint}) to hold true in general. Under the assumption that every auto-equivalence of a gentle algebra is standard, this follows from our results. Theorem \ref{IntroTheoremShortExactSequence} combines the assertions of Theorem \ref{TheoremSphericalTwistsDehnTwists}, Theorem \ref{TheoremKernelOfPsi} and Theorem \ref{TheoremImageOfPsi}.\medskip

\noindent As we were finalizing this paper, it has come to our attention that the derived equivalence classification of gentle algebras (Theorem \ref{IntroCorollaryDerivedInvariant}) was also proved in \cite{AmiotPlamondonSchroll} by classifying tilting and silting objects in $\mathcal{D}^b(A)$. It seems that our proof that gentle algebras with equivalent surface models are derived equivalent (Theorem \ref{TheoremEquivalentSurfacesImplyDerivedEquivalence}) and their proof are similar and build on ideas in \cite{LekiliPolishchukGentle}. It seems further to be the case that  our constructions of a line field on the surface of a gentle algebra are related. On the other hand, our proof that the graded surface of a gentle algebra is a derived invariant relies on Theorem \ref{IntroTheoremEquivalencesFukayaLikeCategories} and differs from the proof in \cite{AmiotPlamondonSchroll}.\bigskip

\noindent \textbf{Structure of the paper.} \ After recalling relevant notions and facts related to marked surfaces, line fields and local systems in Section \ref{SectionPreliminaries}, we introduce surface-like categories in Section \ref{SectionFukayaLikeCategories}, where we discuss their basic properties and provide the theoretic foundation for subsequent sections. This includes the notion of ``degeneration-closed'' families of objects which are stable under the Auslander-Reiten translation.  Of particular importance in this section is the definition of \textit{segment objects} and the dichotomy of \textit{boundary morphisms} and \textit{interior morphisms}, which mimics the geometric notion of a boundary segment as well as the dichotomy of interior and boundary points of a marked surface.\medskip 

\noindent In Section \ref{SectiondiffeomorphismInducedByEquivalences}, we show how the relationship between diffeomorphisms of a marked surface and isomorphisms of its \textit{arc complex} can be exploited to define the map $\Psi$ in Theorem \ref{IntroTheoremEquivalencesFukayaLikeCategories}.\medskip 

\noindent In Section \ref{SectionOndiffeomorphisms},  we then prove that the diffeomorphism $\Psi(T)$ of an equivalence $T$ between surface-like categories satisfies various properties as claimed in Theorem \ref{IntroTheoremEquivalencesFukayaLikeCategories}. However, the construction of $\Psi(T)$ only guarantees control over the action of $\Psi(T)$ on triangulations of the corresponding surface and it is a priori not clear to which extent $\Psi(T)$ is a good geometric realization of $T$.  In order to overcome this problem, we introduce the notion of a triangulation of a surface-like category (c.f.\ Section \ref{SectionTriangulationsFukayaLikeCategories}) and show that objects in a surface-like category are determined (in a suitable way) by their interaction with a triangulation of the category  in the same way as curves are determined (up to homotopy) by their intersections with a triangulation of their ambient surface (see Section \ref{SectionOnCharacteristicSequences}).\medskip

\noindent In Section \ref{SectiondiffeomorphismDehntwists} we prove that the diffeomorphism of a spherical twist with respect to a spherical object $X$ is the Dehn twist about a simple loop associated with $X$.\medskip 

\noindent Section \ref{SectionKernelOfPsi} and Section \ref{SectionImageOfPsi} contain the proof of Theorem \ref{IntroTheoremShortExactSequence}.

\noindent  Finally, Section \ref{SectionHomeorphismsInduceEquivalences} contains a geometric characerization of tilting complexes in bounded derived categories of gentle algebras and the proof of Theorem \ref{IntroCorollaryDerivedInvariant}.\medskip

\subsection*{Acknowledgements}
I like to thank Igor Burban, Wassilij Gnedin and Alexandra Zvonareva for many helpful discussions on the subject. My research was supported by the DFG grants BU 1866/4-1 and  the Collaborative Research Centre on ``Symplectic Structures in Geometry, Algebra and Dynamics'' (CRC/TRR 191). \smallskip

\section*{Conventions} \noindent  We work over a field $k$ and write $k^{\times}$ for its group of units. Unless stated otherwise every module over a finite dimensional $k$-algebra will be assumed to be finite-dimensional. For an algebra $A$, we write $\mathcal{D}^b(A)$ for its bounded derived category of finite dimensional left $A$-modules and $\Perf(A)$ for its subcategory of perfect complexes. 

The set of natural numbers contains $0$ by definition. If $a,b \in \mathbb{Z}$ and $n$ is an integer valued variable, we write $n \in [a,b]$ instead of $a \leq n \leq b$ and similar for all other types of intervals. 
Arrows of a quiver are composed from left to right. The source (resp. target) of an arrow $\alpha$ is denoted by $s(\alpha)$ (resp.\@ $t(\alpha)$).

\medskip

\section{Preliminaries}\label{SectionPreliminaries}
\noindent In this section, we recall the definitions of marked surfaces and related objects and give an overview on line fields and winding numbers.
 \smallskip
\subsection{Marked surfaces}
\begin{definition}\label{DefinitionMarkedSurface}A \textbf{marked surface} is a pair $\mathcal{S}=(S, \marked)$ consisting of a smooth, oriented compact surface $S$ with a non-empty boundary and a subset of marked points $\marked \subseteq S$ such that $B \cap \marked \neq \emptyset$ for all connected components $B \subseteq \partial S$.

\end{definition}

 \noindent For the rest of this section let $\mathcal{S}=(S, \marked)$ denote a marked surface. The elements of $\marked \setminus \partial S$ are called \textbf{punctures}. Note that by definition our marked surface always contain at least one marked point on the boundary.
  A \textbf{curve} on $\mathcal{S}$ is a map $\gamma:I \rightarrow S$, where $I$ is a compact interval or or the unit circle and $\partial I=\gamma^{-1}(\marked)$. A curve $\gamma$ is called \textbf{arc} if $I$ is an interval. Otherwise, we call it a \textbf{loop}. An arc $\gamma: I \rightarrow S$ is said to be \textbf{finite} (resp \textbf{semi-finite}) if both (resp. exactly one) of its end points lie in $\partial S$. An arc, which is not finite is called \textbf{infinite}.
 A \textbf{homotopy} between curves is a homotopy of paths in the usual sense. However, we require such a homotopy to be constant at end points and that the interiors of the intermediate curves of the homotopy to avoid punctures. A \textbf{boundary curve} is a curve, which is homotopic to a curve on the boundary. A \textbf{boundary segment} is an arc which connects two neighbouring marked points on the boundary.
 We often consider lifts of curves to a universal cover $\tilde{\mathcal{S}}$. By the lift of a loop, we mean an the corresponding map defined on the real line.
\begin{convention}
	If not said otherwise we will always assume curves to be not contractible and loops to be \textbf{primitive}, i.e.\@\ not homotopic to a loop which factors through a non-trivial covering map $S^1 \rightarrow S^1$. 
\end{convention}
\subsubsection{Marked surfaces with unmarked boundary components}\label{SectionUnmarkedBoundaryComponents}
Although Definition \ref{DefinitionMarkedSurface} requires every boundary component to contain at least one marked point but allows for punctures, it is worth pointing out that we could as well have chosen to allow for boundary components without marked points (``unmarked components'') and no punctures and define infinite arcs in a different way. Indeed, by gluing a once-punctured disc to every unmarked boundary component we obtain a new marked surface without unmarked component and punctures. In this picture, the homotopy class of an infinite arc ending on a puncture $p$ on the new surface corresponds to a non-compact curve which winds in clockwise-direction around the boundary component in the old surface corresponding to $p$. This justifies the terminology of ``infinite'' arcs.

\subsubsection{Oriented intersections and minimal position}  Given distinct curves $\gamma_1:I_1 \rightarrow S$ and $\gamma_2: I_2 \rightarrow S$ on $S$, we write $\gamma_1 \overrightarrow{\cap} \gamma_2$ for the set of \textbf{oriented intersections}, i.e.\ the set of pairs $(s_1,s_2)$ such that $\gamma_1(s_1)=\gamma_2(s_2)$ and such that $\gamma_i(s_i)$ is either an interior point, or a point on the boundary such that, locally around the intersection, $\gamma_1$   ``lies before'' $\gamma_2$ in the counter-clockwise orientation as shown Figure \ref{FigureDirectedBoundaryIntersection}.
	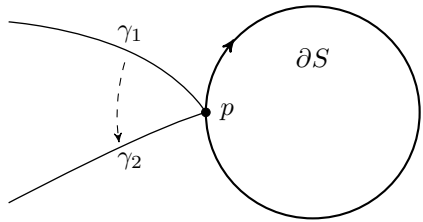
\begin{figure}[H]
		\begin{displaymath}
		\begin{tikzpicture}[scale=0.8]
		\draw[thick, 
		decoration={markings, mark=at position 0.4 with {\arrow{<}}},
		postaction={decorate}
		](0,0) circle (50pt);
		\filldraw (-50pt, 0) circle (2pt);
		\draw (-40pt,0) node{$p$};
		\draw (0,0.9) node {$\partial S$};
		\draw [line width=0.5, color=black] plot  [smooth, tension=1] coordinates {  (-50pt,0) (-3, 1) (-5, 1.5)  };
		\draw [line width=0.5, color=black] plot  [smooth, tension=1] coordinates {  (-50pt,0) (-3, -0.5) (-5, -1.5)  };
		\draw (-3,1.3) node{$\gamma_1$};
		\draw (-3,-0.8) node{$\gamma_2$};
		
		\draw[dashed, ->] ({3.2*cos(165)},{3.2*sin(165)}) arc (164:188:3.2);
		\end{tikzpicture}
		\end{displaymath}
		\caption{An oriented boundary intersection $p$ from $\gamma_1$ to $\gamma_2$.} \label{FigureDirectedBoundaryIntersection}
	\end{figure}  
\noindent	If it causes no ambiguity, we identify an intersection point with its image in $S$. Note that for every oriented interior intersection $p=(t,t') \in \gamma_1 \overrightarrow{\cap} \gamma_2$, we have $\overline{p}\coloneqq (t',t) \in \gamma_2 \overrightarrow{\cap} \gamma_1$ and we call $\overline{p}$ the \textbf{dual} of $p$. We further denote by $\gamma_1 \cap \gamma_2$ the disjoint union of $\gamma_1 \overrightarrow{\cap} \gamma_2$ and $\gamma_2 \overrightarrow{\cap} \gamma_1$. Frequently, we also use the term \textbf{self-intersection} of a curve $\gamma: I \rightarrow S$ for pairs $(s_1, s_2)$ of distinct elements $s_1, s_2 \in I$ such that $\gamma(s_1)=\gamma(s_2)$.  

\begin{ndefinition}
	A set of curves $\{\gamma_1, \dots, \gamma_m\}$ is said to be in \textbf{minimal position} if for all (not necessarily distinct) $i,j \in [1,m]$, the number of (self-)intersections  is minimal within their respective homotopy classes.
\end{ndefinition}
\noindent Without further notice in subsequent parts of this paper, we make use following results about curves:
\begingroup
\setlength{\leftmargini}{20pt}
\begin{enumerate}
 \setlength\itemsep{1em}
	\item As pointed out in \cite{Thurston}, it follows from \cite{FreedmanHassScott} and \cite{Neumann-Coto}, that every  finite set of curves can be homotoped to a set of curves in minimal position and given a set $\{ \gamma_1, \dots, \gamma_m\}$  in minimal position and for any set of curves $\{\gamma_{m+1}, \dots, \gamma_n\}$, there exists a set of curves $\{\gamma_{m+1}', \dots, \gamma_n'\}$ such that $\gamma_i' \simeq \gamma_i$ for all $i \in (m,n]$ and  $\{\gamma_1, \dots, \gamma_m, \gamma_{m+1}', \dots, \gamma_n'\}$ is in minimal position.
	
	\item If $\{\gamma_1, \gamma_2\}$ are in minimal position, then any of their lifts to the universal cover of $S$ intersect at most once in the interior, see  \cite{OpperPlamondonSchroll}, Lemma 3.4.
\end{enumerate}
\endgroup
\

\noindent Given curves $\gamma, \gamma'$, we denote by $\iota(\gamma, \delta)$ the minimal number of intersections between any pair of curves $\gamma'$ and $\delta'$ from the homotopy classes of $\gamma$ and $\delta$. The integer $\iota(\gamma, \delta)$ is called the \textbf{geometric intersection number} of $\gamma $ and $\delta$. A curve with no interior self-intersections is called \textbf{simple}.\ \smallskip

\subsection{Local systems}\label{SectionLocalSystems}
\begin{definition}
The \textbf{fundamental groupoid}  $\pi_1(X)$ of a topological space $X$ is a category with objects given by the set $\{x \, | \, x \in X\}$. The set of morphisms $x \rightarrow y$ between two points $x,y \in X$ is the set of homotopy classes of paths from $x$ to $y$. Morphisms are composed by concatenation of paths. A $k$-linear \textbf{local system} on a curve $\gamma: I \rightarrow S$ is a contravariant functor $\pi_1(I) \rightarrow k-\module$. A morphism between local systems is a natural transformation of functors.
\end{definition} 
\noindent A local system amounts to a choice of a vector space at every point in the parameter space of a curve and compatible choices of isomorphisms between them 

A local system on a loop is equivalent to the choice of an automorphism of a finite dimensional vector space (which depends on the choice of a base point). In particular, isomorphism classes  of indecomposable local systems of dimension $d$ are in one-to-one correspondence with polynomials $(X-\lambda)^d$, $\lambda \in k^{\times}$.\smallskip

\noindent In contrast, there exist a unique indecomposable local system on every arc. It has dimension $1$.
\ \smallskip

\subsection{Line fields and graded surfaces}\label{SectionLineFields}
\subsubsection{Line fields and winding numbers}\label{SectionLineFieldsWindingNumbers}
\begin{definition}
A \textbf{line field} on a smooth marked surface $S$ with punctures $\mathscr{P}$ is a continuous section of the projectivized tangent bundle over $S \setminus \mathscr{P}$, i.e. \@ a map $\eta: S \setminus \mathscr{P} \rightarrow \mathbb{P}(TS)$ such that $\pi \circ \eta=\text{Id}_{S \setminus \mathscr{P}}$, where $\pi: \mathbb{P}(TS) \rightarrow S$ denotes the projection.
\end{definition}
\noindent In other words, a line field $\eta$ is a continuous choice of $1$-dimensional subspaces in the tangent planes of the surface, possibly with singularities at punctures. It determines a trivialization of the projectivized tangent bundle away from punctures.

Every no-where vanishing vector field $\mathscr{X}: S \rightarrow TS$ on $S$ induces a line field by virtue of  the projection of the complement of the zero section in $TS$  onto $\mathbb{P}(TS)$.  However, not every line field arises in this way since the choice of $\mathscr{X}$ amounts to a continuous choice of  orientations on the lines described by $\eta$.

In light of our discussion in Section \ref{SectionUnmarkedBoundaryComponents}, we  extend a line field $\eta$ from an unpunctured surface with unmarked boundary components to the corresponding surface with punctures by virtue of the radial projection $D^2 \setminus \{(0,0)\} \rightarrow S^1$.

\medskip

\noindent Every line field give rise to a function $\omega=\omega_{\eta}$ which associates an integer to every immersed loop $\gamma$ on $S$ called the \textbf{winding number} of $\gamma$.  The idea is to integrate the ``differences''  between the lines $\eta(p)$ and the lines $\mathbb{P}(TS)_p$ determined by the derivative of $\gamma$ (which is non-zero by assumption). 

Regarding the submanifold $\eta(S) \subseteq \mathbb{P}(TS)$ as a class $[\eta(S)] \in H_1(\mathbb{P}(TS), \mathbb{Z})$,  the intersection pairing $(-,-)$ gives rise to a map 
\[\begin{tikzcd} \omega_{\eta}: H_1(\mathbb{P}(TS), \mathbb{Z}) \arrow{r} & \mathbb{Z} \\ u \arrow[mapsto]{r} & ([\eta(S)], u).\end{tikzcd}\]

\noindent If $\gamma \subseteq S$ is an immersed loop, we set $\omega_{\eta}(\gamma)\coloneqq \omega_{\eta}([\dot\gamma])$, where $\dot\gamma$ denotes the derivative of $\gamma$ regarded as a loop in $\mathbb{P}(TS)$.  In other words, $\omega_{\eta}(\gamma)$ equals the number of points (counted with sign) at which $\dot \gamma$ and $\eta \circ \gamma$ agree.

 By definition, $\omega_{\eta}(\gamma)$ only depends on the regular homotopy class of $\gamma$. A \textbf{regular homotopy} of immersed loops is a homotopy such that every intermediate loop is immersed. 
 
Moreover, $\omega_{\eta}$ is determined by its values on all \textbf{unobstructed loops}, i.e.\ loops which contain no nullhomotopic loops. Indeed, $\omega_{\eta}(f)=1$ for the homology class $f \in H_1(\mathbb{P}(TS), \mathbb{Z})$ of any fiber of $\mathbb{P}(TS)$ with the induced orientation and the homology class of the derivative of a loop is the sum of the class of an unobstructed loop and the (inverse) homology classes of fibers of $\mathbb{P}(TS)$. By Theorem 5.5 in \cite{Chillingworth} (see also Lemma 2.4.,\cite{Chillingworth}), two unobstructed loops are homotopic if and only if they are regular homotopic. An unobstructed loop with vanishing winding number is called a \textbf{$\eta$-gradable loop}, or just gradable loop if the choice of $\eta$ is apparent from the context.   \medskip

\subsubsection{Graded surfaces and graded curves}
\ \smallskip

\noindent
\begin{definition}
A \textbf{graded surface} is a pair $(\mathcal{S}, \eta)$ consisting of a marked surface $\mathcal{S}$ and a line field $\eta$ on $\mathcal{S}$. A \textbf{grading} on a  smooth, immersed path $\gamma: I \rightarrow S$ is
a homotopy class of a path $\mathfrak{g}$ from $\eta \circ \gamma$ to $\dot\gamma$. A grading on a piecewise smooth path $\gamma$ is a collection of gradings on its smooth segments.
\end{definition}
\noindent Often, we do not distinguish between a grading and a respresentative of its homotopy class.\smallskip

\noindent Gradings on $\gamma$ are in bijection with the integers $\mathbb{Z}$ and there exists a canonical action of $\mathbb{Z}$ on grading.  To make this more explicit, let $(\gamma, \mathfrak{g})$ be a smooth, graded curve. Then, for any point $p$ of  $\gamma$ the corresponding integer is associated with $u \in \pi_1(\mathbb{P}(TS)_{\gamma(p)}, \eta(p)) \cong \mathbb{Z}$, where $u$ is the concatenation of $\mathfrak{g}|_{\{p\} \times [0,1]}$ followed by a counter-clockwise, non-surjective path from $\dot \gamma(p)$ to $\eta(p)$. 

This does not depend on the choice of $p$. There is a canonical action of $\mathbb{Z}$ on gradings of $\gamma$ which correspond to the $n \mapsto n+1$ on the integers. The action of $n \in \mathbb{Z}$ of $\mathfrak{g}$ is denoted by $\mathfrak{g}[n]$.\medskip

\noindent Let $(\gamma, \mathfrak{g})$ and $(\delta, \mathfrak{d})$ be graded, smooth paths which are transversal and let $p \in \gamma \overrightarrow{\cap} \delta$. The \textbf{degree} $\deg(p) \in \mathbb{Z}$ of $p$ is the integer associated with $u \in \pi_1(\mathbb{P}(TS)_{\gamma(p)}, \eta(p))$, where $u$ is the concatenation of $\mathfrak{d}|_{\{p\} \times [0,1]}$ followed by first, a clockwise, non-surjective path from $\dot \delta(p)$ to $\dot \gamma(p)$ $\dot \gamma(p)$ and then followed by inverse path of $\mathfrak{g}|_{\{p\} \times [0,1]}$. Note that this differs slightly from the definition in \cite{HaidenKatzarkovKontsevich}. It follows that $\deg(\overline{p})=1-\deg(p)$. \medskip

\noindent We want define winding  numbers for piecewise smooth loops. Suppose that $\gamma: I \rightarrow \mathcal{S}$ is a graded, immersed loop which is smooth everywhere except at pairwise distinct points $z_1, \dots, z_m$ on the boundary which are totally ordered according to the orientation of $\gamma$. For $i \in [1,m]$, let $\gamma_i\coloneqq \gamma|_{[z_i, z_{i+1}]}$ be a smooth segment (indices modulo $m$). We assume that for all $i \in [1,m]$, $\gamma_i$ and $\gamma_{i+1}$ intersect transversally. We write $\gamma_{\text{sm}}$ for a smoothing of $\gamma$ which agrees with $\gamma$ everywhere outside a union of small neighborhoods around the points $p_i$.

\begin{lem}\label{LemmaWindingNumberPiecewiseSmoothCurves} Let $\gamma$ be a piecewise smooth, graded loop satisfying the conditions above. Then, the 
\[
    \omega(\gamma_{\text{sm}})=\sum_{i=1}^m{\deg(p_i)} - \sum_{i=1}^m{\sigma_i},
\]

\noindent where 
\[\sigma_i= {\begin{cases}1, & \textrm{if $\gamma_{i-1}$ meets $\gamma_{i}$ at $p_i$ from the right hand side};\\  0, & \text{otherwise.}  
		\end{cases}}\]
\end{lem}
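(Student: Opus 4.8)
The plan is to reduce the winding number of $\gamma_{\mathrm{sm}}$ to a sum of local contributions, one for each smoothed corner $p_i$, by comparing $\gamma_{\mathrm{sm}}$ with the genuinely piecewise-smooth loop $\gamma$. Recall that $\omega_\eta$ is additive along concatenations of paths in $\mathbb{P}(TS)$ once we keep track of the gradings: if I fix the given grading $\mathfrak{g}$ on $\gamma$, then on each smooth segment $\gamma_i$ the path $\mathfrak{g}|_{\gamma_i}$ from $\eta\circ\gamma_i$ to $\dot\gamma_i$ has a well-defined ``total turning'' relative to $\eta$, and the winding number of a closed smooth loop is exactly the sum of these segmentwise turnings plus the corrections incurred when one passes from the end of $\gamma_i$ to the start of $\gamma_{i+1}$. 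For a smooth loop those corrections vanish; for $\gamma_{\mathrm{sm}}$ the smoothing inserts, near each $p_i$, a short arc interpolating between $\dot\gamma_{i-1}(p_i)$ and $\dot\gamma_i(p_i)$, and the correction is precisely the amount this interpolating arc turns relative to $\eta$, measured in $\pi_1(\mathbb{P}(TS)_{\gamma(p_i)},\eta(p_i))\cong\mathbb{Z}$.

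First I would make the smoothing explicit: inside a small disk $U_i$ around $p_i$, replace the corner formed by $\gamma_{i-1}$ and $\gamma_i$ by a smooth arc whose tangent direction rotates monotonically (through the non-surjective side, since $U_i$ can be taken disjoint from singularities of $\eta$ and the corner angle is less than $\pi$ by transversality) from $\dot\gamma_{i-1}(p_i)$ to $\dot\gamma_i(p_i)$. Then I would express $\omega(\gamma_{\mathrm{sm}})$ as $\sum_i c_i$, where $c_i$ is the integer in $\pi_1(\mathbb{P}(TS)_{\gamma(p_i)},\eta(p_i))$ obtained by concatenating: $\mathfrak{g}$ restricted to $\gamma_i$ traversed back to $p_i$ in one direction, the smoothing arc's tangent path, and $\mathfrak{g}$ restricted to $\gamma_{i-1}$; this is a standard unwinding that uses $\omega_\eta(f)=1$ on fibers and the fact that the class of $\dot\gamma_{\mathrm{sm}}$ decomposes as the class of the underlying loop plus fiber classes. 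The next step is the bookkeeping identity $c_i=\deg(p_i)-\sigma_i$. Here I would unpack the definition of $\deg(p_i)$ for the transversal intersection of $\gamma_{i-1}$ and $\gamma_i$ at $p_i$: it is the class of $\mathfrak{g}_{i-1}$ followed by a clockwise non-surjective turn from $\dot\gamma_{i-1}(p_i)$ to $\dot\gamma_i(p_i)$ followed by the inverse of $\mathfrak{g}_i$. Comparing this with $c_i$, the two agree up to whether the monotone rotation in the smoothing arc runs in the same rotational sense as the clockwise non-surjective turn in the definition of degree, or in the opposite sense (in which case it differs by a full loop around the fiber, i.e.\ by $1$). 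That dichotomy is exactly whether $\gamma_{i-1}$ approaches $p_i$ ``from the right'' relative to $\gamma_i$, which is the definition of $\sigma_i$; a careful picture with the counter-clockwise orientation of $S$ pins down the sign.

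The main obstacle I expect is precisely this last sign reconciliation: matching the ``clockwise, non-surjective'' convention in the definition of $\deg$ (which itself is noted to differ slightly from the Haiden--Katzarkov--Kontsevich convention) against the direction in which the tangent vector of the smoothing arc sweeps, and against the left/right side convention defining $\sigma_i$. I would handle it by doing the computation once in a local model — a flat disk with $\eta$ the horizontal line field, $\gamma_i$ and $\gamma_{i-1}$ two line segments meeting transversally at the origin — and verifying both cases ($\sigma_i=0$ and $\sigma_i=1$) directly, then invoking homotopy-invariance and additivity of $\omega_\eta$ to conclude that the local contributions are the only ones and that the global formula follows by summation over $i$. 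The relation $\deg(\overline p)=1-\deg(p)$ recorded in the excerpt is a useful consistency check that the orientation conventions have been applied correctly throughout.
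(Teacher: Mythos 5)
Your proposal is correct and follows essentially the same route as the paper: the paper likewise reduces the identity to a local observation at each smoothed corner, namely that the gradings induced on the smoothing from the two incoming segments differ by a shift determined by the degree of the corner intersection and the side from which the segments meet, and then sums these local contributions around the loop. Your version simply makes explicit the sign bookkeeping (the $\sigma_i$ dichotomy and the comparison of the smoothing arc's rotation with the clockwise non-surjective turn in the definition of $\deg$) that the paper's one-line local observation leaves implicit.
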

\begin{proof}
The assertion is a local statement and the proof reduces to the following oberservation. Let $(\delta_1,\mathfrak{g}_i)$ ($i=1,2$) be immersed, graded, simple paths which are transverse such that the end point of $\delta_1$ is the start point of $\delta_2$ and such that the corresponding intersection $p\in \delta_1\overrightarrow{\cap} \delta_2$ has degree $0$. Let $\delta$ be the smoothing of $p$ which agrees with $\delta_1$ and $\delta_2$ outside a small neighborhood $U$ of $p$ which does not contain any other end point of $\delta_1$ or $\delta_2$. Then, for each $i \in [1,2]$, there exists a unique grading $\mathfrak{f}_i$ on $\delta$ which agrees with $\mathfrak{g}_i$ on the portion of $\delta_i$ which lies outside of $U$. Then, $\mathfrak{f}_2[1]=\mathfrak{f}_1$.
\end{proof}

We refer to the expression $\omega(\gamma_{\text{sm}})+ \sum_{i=1}^m{\sigma_i}$ from Lemma \ref{LemmaWindingNumberPiecewiseSmoothCurves} as the \textbf{winding number of the piecewise smooth curve $\gamma$}.

\medskip

\section{Surface-like categories}\label{SectionFukayaLikeCategories}

\noindent In this section we introduce the notion of a surface-like category which is our main object of study in this paper.
Their definition is heavily inspired by the geometric nature of derived categories of gentle algebras (cf. \cite{OpperPlamondonSchroll} and \cite{LekiliPolishchukGentle}) and partially wrapped Fukaya categories as defined in \cite{Auroux}.\medskip

\noindent Let $\T$ be a $k$-linear triangulated category and let $[1]$ denote its shift functor.  We assume that $\T$ is Krull-Schmidt and that its class of isomorphism classes of indecomposable objects is a set.

\begin{definition}\label{DefinitionFukayaLikeQuintuples}Let $(\mathcal{S}, \eta)$ be a graded marked surface.   
A quintuple $(\T, \mathcal{S}, \eta, \gamma, \beta)$ is called \textbf{surface-like} and $(\mathcal{S}, \eta)$ is called a \textbf{surface model} of $\T$, if all of the following six relations are satisfied.\smallskip

	\begin{addmargin}[1.5em]{0em}
\noindent \textbf{1) Indecomposable objects \& curves}: \ \ The map $\gamma$ is a bijection  from \medskip
\begin{itemize}
\setlength\itemsep{0.5em}
\item the isomorphism classes of indecomposable objects, to
\item pairs $([\gamma], [\mathcal{V}])$ consisting of \ \medskip
\begin{itemize} \setlength\itemsep{0.5em}
\item the homotopy class of an unoriented graded curve $\gamma$, and
\item the isomorphism class of an indecomposable $k$-linear local system $\mathcal{V}$ on $\gamma$. 
\end{itemize} 
\end{itemize}
Moreover, we require that $\gamma$ is compatible with shifts.
\medskip

	\noindent 	In what follows let $X_1, X_2, X_3 \in T$ be indecomposable objects corresponding to arcs or loops equipped with a $1$-dimensional local system. Let  $(\gamma_{i}, \mathfrak{g}_i, \mathcal{V}_i)$ ($1 \leq i \leq 3$) be a representative of $\gamma(X_i)$. We assume that $\{\gamma_1, \gamma_2, \gamma_3\}$ is in minimal position.  \ \smallskip
		
\noindent	\textbf{2) Intersections \& morphisms}: \ \ The following is true: \ \smallskip
	
		\begin{itemize}
		\setlength\itemsep{1em}
		 \item[I)] There exists an injection $\basis$ of $\gamma_1 \overrightarrow{\cap} \gamma_2$ into a basis of $\Hom^*(X_1, X_2)$ consisting of morphisms. Moreover, $\basis$ is compatible with the gradings.
		 \item[II)] For every intersection $q \in \gamma_1 \overrightarrow{\cap} \gamma_2$ at a puncture, there exists a family of morphisms $\left(\basis(q)(j)\right)_{j \in \mathbb{N}}$ and $m \in \mathbb{Z}$, such that 
		 \[\basis(q)(j) \in \Hom(X_1, X_2[m+j\cdot w_q])\]
		and  $\basis(q)(0)=\basis(q)$, where $w_q$ is the winding number of the simple loop, which winds around $q$ once in clockwise direction.

			\item[III)] If $\gamma_1$ and $\gamma_2$ are not homotopic loops, then a basis of $\Hom^*(X_1, X_2)$ is given by the set consisting of \ \smallskip
			\begin{itemize}
			\setlength\itemsep{1ex}
				\item[a)] all  morphisms $\basis(p)$ for all $p \in\gamma_1 \overrightarrow{\cap} \gamma_2$, which are not punctures, and
				\item[b)] all morphisms $\basis(q)(j)$, where $q \in \gamma_1 \overrightarrow{\cap} \gamma_2$ is a puncture and $j \in \mathbb{N}$. 
			\end{itemize}  

			\item[IV)] If $\gamma_1$ and $\gamma_2$ are homotopic loops, then $\basis$ is not surjective and the quotient of $\Hom^*(X_1, X_2)$ by the image of $\basis$ is spanned by the residue class of an isomorphism and the residue class of a connecting morphism $h$ in an Auslander-Reiten triangle
			\[\begin{tikzcd} X_2[n] \arrow{r} & Y \arrow{r} & X_1 \arrow{r}{h} & X_2[n+1]. \end{tikzcd}\]

			\item[V)]	If $Y \in T$ is represented by a loop with a local system of dimension at least $2$, then \[\dim \Hom^*(Y,Y)~\geq~3.\]
			
		\end{itemize}
		
	\noindent	\textbf{3) Mapping cones \& resolutions of intersections}: \ \ Let  $p \in \gamma_1 \overrightarrow{\cap} \gamma_2$ be different from a puncture. Then, the resolution of $p$ (Figure \ref{FigureResolutionOfCrossings}) is a representative of the mapping cone of $\basis(p)$.
		\begin{figure}[H]
			\begin{displaymath}
			\begin{tikzpicture}
			\draw[white] (-1.5,0)--(7.5,0);

			\draw (0, -1)--(0,1);
			\draw (-1,0)--(1,0);
			\draw[->] (2,0)--(4,0);
			\draw[rounded corners] (6,-1)--(6,0)--(7,0);
			\draw[rounded corners] (6,1)--(6,0)--(5,0);			
			\filldraw (0,0) circle (2pt);
			\draw (0.3,0.3) node{$p$};
			\draw (0,-1.3) node{$\gamma_1$};
			\draw (-1.3, 0) node{$\gamma_2$};
			
			\end{tikzpicture}\end{displaymath}
			\begin{displaymath}
			\begin{tikzpicture}
		\draw[white] (-1.5,0)--(7.5,0);
			\draw (0, 0)--(0,1);
			\draw (-1,0)--(0,0);
			\draw[->] (2,0)--(4,0);

			\draw[rounded corners] (6,1)--({6+0.2*cos(135)},{0+0.2*sin(135)})--(5,0);
			\draw[decoration={markings, mark=at position 0.6 with {\arrow{<}}},
			postaction={decorate}
			] ({6.5},{-0.5}) circle ({sqrt(0.5)});
			
			\filldraw (0,0) circle (2pt);
			\filldraw (6,0) circle (2pt);
			\draw (0.3,0.3) node{$p$};
			
			\draw (0,1.3) node{$\gamma_1$};
			\draw (-1.3, 0) node{$\gamma_2$};
			\draw[decoration={markings, mark=at position 0.6 with {\arrow{<}}},
			postaction={decorate}
			] ({0.5},{-0.5}) circle ({sqrt(0.5)});
	
			\end{tikzpicture}\end{displaymath}
			\caption{} \label{FigureResolutionOfCrossings}
		\end{figure} \ \\
		
	\noindent	\textbf{4) Compositions \& immersed triangles}: \ \ For $i \in \{1,2\}$, let $q_i \in \gamma_{i} \overrightarrow{\cap} \gamma_{i+1}$ of degree $0$. We assume that if $\gamma_i \simeq \gamma_j$ for all $i, j \in [1,3]$, then $\gamma_1, \gamma_2$ and $\gamma_3$ are arcs. The following is true.\smallskip
	
		\begin{itemize}
		\setlength\itemsep{0.5em}

			\item[I)]  Then,
		$\basis(q_2) \circ \basis(q_1)$ is a linear combination of precisely those morphisms $\basis(q_3)$ ($q_3 \in \gamma_1 \overrightarrow{\cap} \gamma_3$), such that there exist lifts $\widetilde{\gamma}_i$ of $\gamma_i$ to the universal cover of $\mathcal{S}$ intersecting in an \textit{intersection triangle}, a \textit{fork} or a \textit{double-bigon} as shown in Figure \ref{FigureIntersectionTriangle}.

			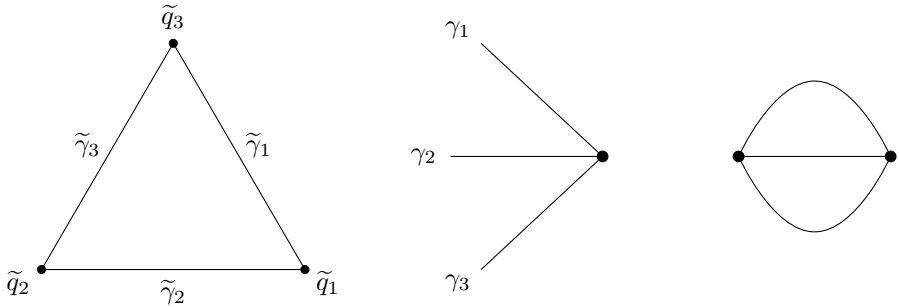
\begin{figure}[H]
				\begin{displaymath}\arraycolsep=10pt
				\begin{array}{ccc}
		{		
				\begin{tikzpicture}
				\draw[white] (0,2)--(0,-2);
				\foreach \i in {1,2,3}
				{
					\draw  ({2*cos(90+\i * 120)},{2*sin(90+\i *120)})--({2*cos(210+\i * 120)},{2*sin(210+\i *120)});
					\filldraw ({2*cos(90+\i * 120)},{2*sin(90+\i *120)}) circle (1.5pt);
					\draw ({2.35*cos(90-\i * 120)},{2.35*sin(90-\i *120)}) node{$\widetilde{q}_{\i}$};
					\draw ({1.3*cos(150-\i * 120)},{1.3*sin(150-\i *120)}) node{$\widetilde{\gamma}_{\i}$};
				}
				\end{tikzpicture}
			} & 
		{				\begin{tikzpicture}
		\draw[white] (0,2)--(0,-2.5);

			\filldraw (-1,0) circle (2pt);
			
			\foreach \i in {1,2,3}
			{
				\pgfmathsetmacro\v{\i-2}
				\draw (-1,0)--({3*cos(180+\v*30)},{3*sin(180+\v*30)});
				\draw ({3.35*cos(180+\v*30)},{3.35*sin(180+\v*30)}) node{$\gamma_{\i}$};
			} 
			\end{tikzpicture}
		}   &

				{\begin{tikzpicture}
				\draw[white] (0,2)--(0,-2.5);
				
					\filldraw (-1,0) circle (2pt);
					\filldraw (-3,0) circle (2pt);
					\foreach \i in {1,2,3}
					{
						\draw [color=black] plot  [smooth, tension=1] coordinates {   (-1,0) (-2,2-\i) (-3,0)};	
					}

					\end{tikzpicture}}
		\end{array}
				\end{displaymath}
				\caption{From left to right: A intersection triangle, a fork and a double-bigon.} \label{FigureIntersectionTriangle}
			\end{figure}

			\item[II)] Suppose $\gamma(X_1)=\gamma(X_3)$ are homotopy classes of loops and $q_1=q_2 \in \mathcal{S}$. Then $\basis(q_2) \circ \basis(q_1)$ is not a linear combination of morphisms associated to intersections.
			\item[III)] If $p \in \gamma_1 \overrightarrow{\cap} \gamma_2$ is a puncture and $q$ denotes the corresponding self-intersection of $\gamma_2$, then $\basis(p)(n)=\basis(q)^n \circ \basis(p)(0)$ and $\basis(q)(n)=\basis(q)^n$. 
		\end{itemize} \ \\

\noindent	\textbf{5) Auslander-Reiten theory}: \ \  Let $(\gamma, \mathfrak{g})$ be a graded loop, let $Q \in k[t]$ be irreducible and for all $i \geq 0$, let $\mathcal{V}_i$ be an indecomposable local system of type $Q^i$. The, the Auslander-Reiten quiver of $\T$ contains a homogeneous tube 
		
		\[
		\begin{tikzcd}
		\cdots \arrow[bend right]{r} &
		X_3 \arrow[bend right]{r} \arrow[bend right]{l} &
		X_2 \arrow[bend right]{r} \arrow[bend right]{l} &
		X_1, \arrow[bend right]{l}
		\end{tikzcd}
		\]
		
\		\smallskip

\noindent where $X_i \in \T$ is indecomposable and $\gamma(X_i)=(\gamma,\mathfrak{g}, \mathcal{V}_i)$. In particular, $X_i$  is $\tau$-invariant.\\

\end{addmargin}
\noindent A triangulated category $\T$ is called \textbf{surface-like} if there exists a surface-like quintuple for $\T$.
\end{definition}
\smallskip

\subsubsection{Conventions and notation concerning surface-like categories}
\ \smallskip

\noindent Let $(\T, \mathcal{S}, \eta, \gamma, \basis)$ be a surface-like quintuple. In most parts of this paper we only consider indecomposable objects in a surface-like category which are represented by either arcs or loops with a $1$-dimensional local system and the term ``indecomposable'' in a surface-like triangulated category refers to an indecomposable object of the above type unless stated otherwise (usually indicated by the word ``arbitrary'').

Frequently, we write $X_{\gamma}$ for the indecomposable object (in the above sense), whose associated homotopy class of curves contains the curve $\gamma$. Moreover, we often omit local systems and regard $\gamma(X)$ as the homotopy class of curves associated with $X$. If $X$ is indecomposable, $\gamma_X$ shall denote a curve in minimal position in the homotopy class $\gamma(X)$. 

Given two indecomposable objects $X ,Y \in \T$ (in the sense above) and representing arcs $\gamma_X \in \gamma(X)$, $\gamma_Y \in \gamma(Y)$ in minimal position, we say that $f=\sum_{i=1}^m{f_i}$ is a \textbf{standard decomposition} of a morphism $f: X \rightarrow Y$ with respect to $\gamma_X$ and $\gamma_Y$, if each $f_i$ is a non-zero multiple of $\basis(p_i)$ for some intersection $p_i \in \gamma_X \overrightarrow{\cap} \gamma_Y$ and $p_i \neq p_j$ for all $i \neq j$.

 We say that an indecomposable object $X \in\T$ is a (finite or infinite) \textbf{arc object} if $\gamma(X)$ is a homotopy class of (finite or infinite) arcs. Otherwise, we call it a \textbf{loop object}.\medskip

\noindent In particular, all loop objects are $\tau$-invariant and we will see that -- with finitely many exceptions up to shift-- these are the only $\tau$-invariant objects.\medskip

\subsection{Derived categories of gentle algebras are surface-like}\label{SectionGentleSurfacelLike}
\noindent As pointed out earlier, bounded derived categories of gentle algebras are prototypical examples of surface-like categories.  We prove the following.

\begin{thm}\label{TheoremGentleAlgebrasFukayaLike} Let $A$ be a gentle algebra. Let $\mathcal{S}_{A}$ denote the surface of $A$ as constructed in \cite{OpperPlamondonSchroll}, where we replace each boundary component without marked points by a puncture. Then, there exists a surface-like quintuple $(\mathcal{D}^b(A), \mathcal{S}_{A}, \eta_A, \gamma, \basis)$.
\end{thm}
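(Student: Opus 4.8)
The plan is to verify the six relations in Definition \ref{DefinitionFukayaLikeQuintuples} one by one, taking as input the dictionary between $\mathcal{D}^b(A)$ and the geometry of $\mathcal{S}_A$ established in \cite{OpperPlamondonSchroll} together with the grading/line-field refinement from \cite{LekiliPolishchukGentle}. First I would fix the data of the quintuple: the classification of indecomposable objects of $\mathcal{D}^b(A)$ into string and band complexes gives the bijection $\gamma$ onto homotopy classes of graded curves with indecomposable local systems, where string complexes correspond to (finite, semi-finite or infinite) arcs carrying the unique $1$-dimensional local system, and band complexes correspond to loops equipped with an indecomposable local system determined by the Jordan block data $(\lambda, d) \leftrightarrow (X-\lambda)^d$; compatibility with shifts is built into the grading shift $\mathfrak{g} \mapsto \mathfrak{g}[1]$. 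This settles Relation~1. For the line field $\eta_A$ itself I would recall its construction (from the combinatorics of the gentle algebra, i.e.\ the "arrow = winding" recipe) and note that this is precisely the datum making string complexes into \emph{graded} curves with the correct degree shifts.

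\textbf{Middle relations.} For Relation~2 (intersections and morphisms), I would invoke the description of $\Hom^*(X_1,X_2)$ between indecomposables in $\mathcal{D}^b(A)$ in terms of "graded intersections" of the corresponding curves: oriented interior intersections and oriented boundary intersections each contribute a basis element in the prescribed degree, giving the injection $\basis$ compatible with gradings (part I). For a puncture — which in our normalization replaces an unmarked boundary component, hence corresponds to an "infinite" end winding around that component — the relevant $\Hom$-groups acquire an infinite family indexed by $\mathbb{N}$, coming from the arbitrarily-many-times-around morphisms; the degree shift per turn is exactly the winding number $w_q$ of the simple loop around the puncture (part II). Parts III and IV are the statement that these graded intersections \emph{exhaust} $\Hom^*$ except in the degenerate case of two homotopic bands, where the extra two dimensions are precisely the identity/scalar endomorphisms and the connecting map of the Auslander--Reiten triangle inside the homogeneous tube; this is the standard $\Ext$/$\Hom$ computation for band modules over a gentle algebra. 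Part~V is the elementary observation that a band with a $\geq 2$-dimensional local system has endomorphism ring of dimension $\geq 3$ (identity plus a nonzero nilpotent plus the AR connecting morphism, or simply $\End$ of the local system has dimension $\geq 2$ and the tube contributes more). Relation~3 is the geometric model of mapping cones: resolving an oriented intersection $p$ at an interior or boundary point yields a curve (or pair of curves) representing $\Cone(\basis(p))$, which is exactly the content of the mapping-cone part of the dictionary in \cite{OpperPlamondonSchroll}. Relation~4 is the statement that the composition $\basis(q_2)\circ\basis(q_1)$ of degree-$0$ morphisms is governed by immersed triangles (resp.\ forks, double-bigons) between lifts to the universal cover — this is the $A_\infty$/triangle-counting description of compositions; parts II and III handle the band-specific multiplicities and the puncture case (powers of the "once around" self-intersection morphism). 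Relation~5 is the assertion that each band gives a homogeneous tube in the AR quiver with $\tau$-invariant objects indexed by local systems of type $Q^i$, which is classical for gentle (more generally string) algebras.

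\textbf{Main obstacle.} The genuinely delicate point, and where I expect to spend the most effort, is the \emph{punctured} case — i.e.\ reconciling the surface of \cite{OpperPlamondonSchroll} (which uses unmarked boundary components) with the normalization adopted here (glue in a once-punctured disc, per Section~\ref{SectionUnmarkedBoundaryComponents}), and checking that under this translation the infinite-arc morphisms organize into the families $\basis(q)(j)$ with the correct winding-number-graded shifts $m + j\cdot w_q$ and satisfy the compatibility $\basis(p)(n) = \basis(q)^n\circ\basis(p)(0)$, $\basis(q)(n)=\basis(q)^n$ of Relation~4~III. The \cite{OpperPlamondonSchroll} dictionary is phrased for finite/semi-finite arcs and finite-dimensional $\Hom$-spaces; extending the morphism count to infinite arcs ending at punctures requires either a direct computation with the infinite strings (projective resolutions of the corresponding complexes and an explicit basis of the resulting $\Hom$-complex) or a limiting argument along a sequence of curves approximating the infinite arc, and one must track gradings carefully throughout. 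Everything else is, modulo bookkeeping, a matter of quoting and assembling the established correspondences; I would therefore structure the write-up so that Relations~1,~3,~5 and the non-puncture cases of~2 and~4 are dispatched quickly by citation, and the puncture analysis is isolated as the technical heart of the proof.
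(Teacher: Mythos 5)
Your overall architecture (verify the axioms of Definition \ref{DefinitionFukayaLikeQuintuples} one at a time, citing \cite{OpperPlamondonSchroll} and \cite{ArnesenLakingPauksztello} for the object/morphism dictionary) matches the paper, and your treatment of Relations 1, 2, 3 and 5 agrees with how the paper dispatches them (Theorem \ref{TheoremBijectionObjectsCurves}, Theorem \ref{TheoremInjectionMorphisms}, and \cite{Bobinski} for the homogeneous tubes). The gap is in Relation 4. You propose to dispatch the composition axiom ``quickly by citation'' as the ``$A_\infty$/triangle-counting description of compositions,'' but the paper explicitly identifies this as the one axiom that does \emph{not} follow from the existing literature --- ``what remains to be shown is that compositions of morphisms can be understood geometrically'' --- and it is the entire content of Theorem \ref{TheoremComposition}. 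There is no off-the-shelf statement in \cite{OpperPlamondonSchroll} or \cite{ArnesenLakingPauksztello} asserting that $\basis(q_2)\circ\basis(q_1)$ is a linear combination of exactly those $\basis(q_3)$ whose lifts form intersection triangles, forks or double-bigons, so the citation you are relying on does not exist.

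The argument that has to be supplied runs as follows. One writes $\basis(p)$ and $\basis(q)$ as ALP basis maps constructed by propagating a single component along the subsurface $S_{\tilde{p}}$ of the universal cover (Section \ref{SectionMorphismsOfIntersection}); a component $u = u_2\circ u_1$ of the composite lives in a disc contained in the intersection of $S_{\tilde{p}}$ and $S_{\tilde{q}}$, and propagating $u$ yields an ALP map $h$ which --- by the fact that an ALP map is determined by any one of its components (Lemma 4.3 of \cite{ArnesenLakingPauksztello}) --- appears in the standard decomposition of the composite unless it cancels; one then rules out cancellation against terms factoring through other projectives $Q^2 \neq P^2$ because any such term would be a component of the very same ALP map. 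Only after this can one conclude that nonvanishing of the composite forces the three lifts to intersect pairwise and hence bound a triangle, fork or double-bigon, and conversely. By contrast, the issue you single out as the technical heart --- the puncture normalization and the families $\basis(q)(j)$ --- is handled by the paper rather lightly: non-compactness of the overlap $S_{\tilde{p}}$ makes the ALP map eventually cyclic, which yields Property 4 III), and Property 4 II) follows from compactness of the overlap of $S_{\tilde{p}}$ and $S_{\tilde{q}}$. So your effort is allocated to the wrong place: the composition axiom, not the punctures, is where the new mathematics lives.
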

\noindent This essentially follows from the results in \cite{OpperPlamondonSchroll}. However, what remains to be shown is that compositions of morphisms can be understood geometrically. In order to give a proof, we recall some of the results in \cite{OpperPlamondonSchroll}.  In particular, we included a short description surface of a gentle algebra and the maps $\gamma$ and $\basis$. The line field $\eta_A$ is described in Section \ref{SectionGradedSurfaceGentleAlgebra}.

\subsubsection{The graded surface of a gentle algebra}\label{SectionGradedSurfaceGentleAlgebra}
\ \smallskip

\noindent Our main reference is Section 1 in \cite{OpperPlamondonSchroll}. To begin with, we recall the definition of a gentle algebra.

\begin{definition}
A pair $(Q,I)$ consisting of a quiver $Q$ and an ideal $I$ of $kQ$ is called \textbf{gentle} if
	\begin{enumerate}
\setlength\itemsep{0.5em}
	
		\item $Q$ is a finite quiver;
		\item $I$ is admissible, i.e.\ $R^m\subset I \subset R^2$ for some $m \geq 0$, where $R=(Q_1)$ is the ideal generated by the arrows of $Q$;
		\item $I$ is generated by paths of length $2$;
		\item for every arrow $\alpha$ of $Q$, there is at most one arrow $\beta$ such that $\alpha\beta \in I$;
		at most one arrow $\gamma$ such that $\gamma\alpha \in I$;
		at most one arrow $\beta'$ such that $\alpha\beta' \notin I$;
		and at most one arrow $\gamma'$ such that $\gamma'\alpha \notin I$.
	\end{enumerate}
A \textbf{gentle algebra} is an algebra of the form $kQ /I$ for a gentle quiver $(Q,I)$.
\end{definition}

Rather than giving a constructive description of the surface, it is more convenient for our purposes to define the surface of a gentle algebra as the unique surface with certain properties.
The surface of a gentle algebra $A$ is defined in terms of its gentle quiver. It is up to diffeomorphism the unique marked surface $\mathcal{S}_A=(S_A, \marked)$ with the following four properties:

\begin{enumerate}
\setlength\itemsep{0.5em}

    \item There exists a set $\{L_x \, | \, x \in Q_0\}$ of pairwise disjoint embedded paths (called \textit{laminates}) which  intersect the boundary only at its end points and transversally.

    \item The complement of all laminates in $S_A$ is a collection of discs $\{ \Delta_p \, | \, p \in \marked\}$ and for each $p \in \marked$, $\partial \Delta_p$ contains $p$ on its boundary and no other marked point.
    
    \item There exists a bijection between the set of those discs $\Delta_p$ such that $ \partial \Delta_p$ contains more than one laminate and the maximal admissible paths in $(Q,I)$. Moreover, if $\Delta_p$ corresponds to an admissible path $\alpha_n \dots \alpha_1$, then the ordered sequence of laminates on $\partial \Delta_p \setminus \{p\}$ (following the clockwise orientation of the boundary) is  $L_{s(\alpha_1)}, \dots, L_{s(\alpha_n)}, L_{t(\alpha_n)}$ .
\end{enumerate}
The surface $\mathcal{S}_A$ has no punctures but contains unmarked boundary components if $A$ has infinite global dimension.\medskip 

\noindent We turn $\mathcal{S}_A$ into a graded surface by gluing a line field $\eta_A$ from line fields $\eta_p$  on the closures of the discs $\Delta_p$. Let $\eta_p$ be any line field such that the following is true.

\begin{itemize}
\setlength\itemsep{0.5em}

    \item For all $p \in \marked$ and all laminates $L \subset \partial \Delta_p$, $\eta_p$ and $\dot L$ are parallel at all points of $L$.

    \item Any segment of $\partial \mathcal{S}$ between two consecutive laminates in $\partial \Delta_p$, which does not contain $p$, has winding number $1$.
    
\end{itemize}
Then, $\eta_A$ is the unique line field which restricts to $\eta_p$ on $\Delta_p$ for all $p \in \marked$. This determines $\eta$ uniquely up to homotopy. Figure \ref{FigureFoliationEtaP} shows the foliation defined by $\eta_p$. One may think of the line field $\eta_p$ as having a singularity outside of $\Delta_p$.

   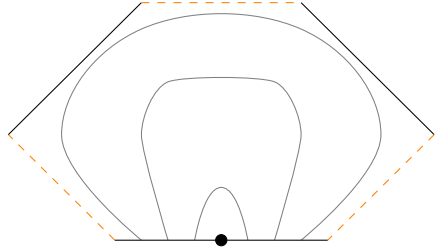
\begin{figure}[H]

	\centering

	\begin{tikzpicture}[scale=0.7]

\draw[dashed, orange] (-2,-2)--(-4,0);
\draw (-4,0)--(-1.5,2.5);
\draw[dashed, orange]  (-1.5,2.5)--(1.5,2.5);
\draw (1.5,2.5)--(4,0);
\draw[dashed, orange]  (4,0)--(2,-2);
\draw (-2,-2)--(2,-2);

\filldraw (0,-2) circle(3pt);

\draw[opacity=0.5]  plot[smooth, tension=.7] coordinates {(-1.5,-2) (-3,0) (-1.5,2) (1.5,2) (3,0) (1.5,-2)};
\draw[opacity=0.5]  plot[smooth, tension=.4] coordinates {(-1,-2) (-1.5,0) (-1,1) (1,1) (1.5,0) (1,-2)};
\draw[opacity=0.5] plot[smooth, tension=1.2] coordinates {(-0.5,-2) (0,-1) (0.5,-2)};

 	\end{tikzpicture}
	\caption{The foliations of a line field $\eta_p$. Dashed lines correspond to laminates. Solid black line indicate boundary segments.}  \label{FigureFoliationEtaP}
\end{figure} 
For the remainder of Section \ref{SectionGentleSurfacelLike}, we assume that all curves and the laminates are in minimal position with the laminates.
Given a curve $\gamma$ on $\mathcal{S}$, the intersections with the laminates dissect $\gamma$ into \textit{segments} which start and end at consecutive intersections.
    \begin{lem}\label{LemmaWindingNumberGentle}
    Let $\gamma$ be a curve. Denote by $\gamma_1, \dots, \gamma_m$ the segments of $\gamma$ and by $\Delta_i$ the disc which contains $\gamma_i$. Then, 
    \[\begin{array}{ccc} {\omega_{\eta_A}(\gamma)=\sum_{i=1}^m {\omega_{\eta_A}(\gamma_i)}} & \text{and} & 
    {\omega_{\eta_A}(\gamma_i) = \begin{cases} \phantom{-}1, & \text{if } p_i \text{ is on the left hand side of }\gamma_i \text{ in }\Delta_i; \\ -1 & \text{otherwise},  \end{cases}} \end{array}\]
where $p_i \in \Delta_i$ is the unique marked point.
        \end{lem}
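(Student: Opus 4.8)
The plan is to prove the two assertions of Lemma~\ref{LemmaWindingNumberGentle} in the order stated: first the additivity $\omega_{\eta_A}(\gamma)=\sum_i \omega_{\eta_A}(\gamma_i)$, and then the explicit local formula for each $\omega_{\eta_A}(\gamma_i)$. For the additivity, I would set up $\gamma$ as a piecewise smooth loop whose breakpoints are exactly the transversal intersections $p_1,\dots,p_m$ of $\gamma$ with the laminates, and invoke Lemma~\ref{LemmaWindingNumberPiecewiseSmoothCurves}, which computes $\omega(\gamma_{\mathrm{sm}})=\sum_i \deg(p_i)-\sum_i\sigma_i$ for a smoothing. The key point is that at each laminate crossing $p_i$ one may choose the grading on the segments so that the intersection has degree $0$: by the first defining property of $\eta_A$, the line field is parallel to $\dot L$ along every laminate $L$, so $\eta_A\circ L$ and $\dot L$ coincide projectively along $L$, and one can pick gradings on $\gamma_{i-1}$ and $\gamma_i$ compatibly with the laminate's canonical grading. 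With $\deg(p_i)=0$ for all $i$, Lemma~\ref{LemmaWindingNumberPiecewiseSmoothCurves} gives $\omega(\gamma_{\mathrm{sm}})=-\sum_i\sigma_i$, and the winding number of the piecewise smooth curve $\gamma$ — defined right after that lemma as $\omega(\gamma_{\mathrm{sm}})+\sum_i\sigma_i$ — is thus $0$ relative to these choices; unwinding the bookkeeping then exhibits $\omega_{\eta_A}(\gamma)$ as the sum of the contributions of the individual segments $\gamma_i$, i.e. $\sum_i\omega_{\eta_A}(\gamma_i)$, where each $\omega_{\eta_A}(\gamma_i)$ is interpreted as the winding number of the loop obtained by closing $\gamma_i$ up along the laminate arcs (which contribute nothing, being $\eta_A$-parallel).

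For the second assertion, the computation is entirely local to one disc $\Delta_p$, so I would reduce to the following model situation: $\Delta_p$ is a disc carrying the line field $\eta_p$ whose foliation is as in Figure~\ref{FigureFoliationEtaP}, $\gamma_i$ is a simple arc in $\Delta_p$ running between two boundary arcs (laminate pieces), entering and leaving $\eta_p$-parallel to the laminates at its endpoints, and $p$ is the unique marked point on $\partial\Delta_p$. One then closes $\gamma_i$ into a loop by running back along the boundary of $\Delta_p$ (the piece not containing $p$, say) and computes the winding number of that loop against $\eta_p$. By the second defining property of $\eta_A$, each boundary segment between consecutive laminates not containing $p$ has winding number $1$; combining this with the prescribed behavior of $\eta_p$ near $p$ (the ``singularity outside $\Delta_p$'' picture), a direct computation using the Poincaré–Hopf / index count for the foliation in Figure~\ref{FigureFoliationEtaP} shows the closed loop has winding number $0$ when $p$ lies on one side of $\gamma_i$ and $\pm 2$ (hence $\omega_{\eta_A}(\gamma_i)=\pm 1$ after subtracting the boundary contribution) when it lies on the other; the sign is $+1$ precisely when $p$ is on the left of $\gamma_i$. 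The cleanest way to run this is to fix a convenient reference arc through $\Delta_p$ with $p$ on a chosen side, compute its value directly from the foliation, and then observe that moving $\gamma_i$ to the other side of $p$ changes the winding number by exactly the winding number of a small loop encircling $p$, which by the choice of $\eta_p$ equals $2$.

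The main obstacle I anticipate is the sign bookkeeping: getting the correction terms $\sigma_i$ in Lemma~\ref{LemmaWindingNumberPiecewiseSmoothCurves}, the ``left/right'' conventions for oriented intersections from Figure~\ref{FigureDirectedBoundaryIntersection}, and the ``left hand side of $\gamma_i$ in $\Delta_i$'' convention to all line up so that the final formula has the asserted sign. In particular one must be careful that the degree-$0$ normalization of the laminate crossings is consistent around the whole loop $\gamma$ (no monodromy obstruction), which is where the hypothesis that the laminates are $\eta_A$-parallel is used in an essential, global way; and one must verify that the local model for $\eta_p$ really does produce winding number $1$ on the relevant boundary segment and the claimed index at $p$, matching Figure~\ref{FigureFoliationEtaP}. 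Once the conventions are pinned down, both parts are short; the risk is entirely in the signs, so I would double-check the local formula on the smallest example (e.g. $A=k$, a one-vertex gentle algebra, where $\mathcal{S}_A$ is a disc with one laminate) before asserting it in general.
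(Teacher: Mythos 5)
Your proposal diverges from the paper's argument, and the divergence introduces a genuine error. The paper proves the additivity in one line: $\omega_{\eta_A}(\gamma)$ is by definition the algebraic intersection number of $[\dot\gamma]$ with $[\eta_A(S)]$ in $\mathbb{P}(TS)$, i.e.\ the signed count of points where $\dot\gamma$ is parallel to $\eta_A$; since $\gamma$ meets the laminates transversally and the laminates are $\eta_A$-parallel, none of these parallelism points occurs at a laminate crossing, so the count splits as a sum over the open segments $\gamma_i$. Your route through Lemma \ref{LemmaWindingNumberPiecewiseSmoothCurves} does not apply here: that lemma concerns loops with genuine corners at \emph{boundary} points, where consecutive smooth pieces meet \emph{transversally} and the corner carries a degree $\deg(p_i)$ and a side-index $\sigma_i$. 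At a laminate crossing $\gamma$ is smooth — the segments $\gamma_{i-1},\gamma_i$ are restrictions of one smooth curve and meet tangentially at an interior point — so there is no corner, no transversal self-intersection whose degree you could normalize to $0$, and no $\sigma_i$. Moreover your intermediate conclusion ($\omega(\gamma_{\mathrm{sm}})+\sum_i\sigma_i=0$) would force the winding number of \emph{every} such curve to vanish, which is plainly false; the ``monodromy obstruction'' you dismiss by appealing to $\eta_A$-parallelism of the laminates is precisely the quantity $\omega_{\eta_A}(\gamma)$ you are trying to compute, since the degrees $g(p_i)$ in equation (\ref{Equation2}) drift by $\pm1$ across each disc.

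For the second formula, the quantity $\omega_{\eta_A}(\gamma_i)$ should be read as the signed count of parallelism points of $\dot\gamma_i$ with $\eta_A$ along the open segment, not as the winding number of a loop obtained by closing $\gamma_i$ up along laminates (such a closure runs \emph{along} an $\eta_A$-parallel curve, so the count degenerates there and you would have to perturb, reintroducing the corner bookkeeping you are worried about). With the correct reading, the claim is purely local in $\Delta_i$: a segment with $p_i$ on its left can be homotoped, transversally to the laminates and with endpoints sliding along the $\eta_A$-parallel laminates (which does not change the count), to the boundary segment of $\partial\Delta_i$ not containing $p_i$, whose winding number is $1$ by the second defining property of $\eta_{p_i}$; reversing the orientation negates the count, giving $-1$ when $p_i$ is on the right. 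Your alternative step — that switching sides of $p_i$ changes the answer by ``the winding number of a small loop encircling $p_i$, which equals $2$'' — is not available as stated: $p_i$ is a marked point on $\partial S$, not an interior singularity of $\eta_A$, so there is no such loop in $S$, and the figures $0$ and $\pm2$ in your sketch are asserted rather than derived.
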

\begin{proof}
Since $\gamma$ is transversal to all laminates and laminates are parallel to $\eta_A$, the algebraic intersection number of $\dot \gamma$ with $\eta_A(\mathcal{S})$ is the sum of the algebraic intersection numbers of its segments. 
\end{proof}

\noindent Let  $(\gamma, \mathfrak{g})$ be a graded curve. We equip the laminates with the grading corresponding to $0 \in \mathbb{Z}$. We write $\gamma \overrightarrow{\cap} L$ for the union of all sets $\gamma \overrightarrow{\cap} L_x$, where $L_x$ is a laminate. It inherits an order from the orientation of $\gamma$. The grading $\mathfrak{g}$ defines a function 
\[\begin{tikzcd} g: &  \gamma \overrightarrow{\cap} L \arrow{r} & \mathbb{Z} \\
& p \arrow[mapsto]{r} & \deg(p). \end{tikzcd}\]
Let $p \in L \overrightarrow{\cap} \gamma$ with successor $p'$. Let $\delta$ denote the segment of $\gamma$ between $p$ and $p'$ and write $\Delta$ for the disc which contains $\delta$. Then,
 \begin{equation}\label{Equation2}
  g(p') = \begin{cases}
              g(p) + 1, & \textrm{if the unique marked point of $\Delta$ is on the left of $\delta$}; \\
              g(p) - 1, & \textrm{otherwise.}
             \end{cases}
\end{equation}
Unravelling the definitions, we see that a choice of $\mathfrak{g}$ is equivalent to a choice of a function $g: \gamma \overrightarrow{\cap} L \rightarrow \mathbb{Z}$ satisfying (\ref{Equation2}).

\subsubsection{The bijection between objects and curves}\label{SectionBijectionObjectsCurves}
\ \smallskip

\noindent Our main reference is Section 2 of \cite{OpperPlamondonSchroll}. Rather than describing $\gamma$, it is easier to describe its inverse $X_{(-)}$, which associates an object in $\mathcal{K}^{b,-}(A-\proj)$ to any triple $(\gamma, \mathfrak{g}, \mathcal{V})$ of a graded curve $(\gamma, \mathfrak{g})$ on $\mathcal{S}_A$ (as a surface without punctures) and a local system $\mathcal{V}$ on $\gamma$.\medskip

\noindent Let $q_1, \dots, q_l$ denote the totally ordered sequence of intersections of $\gamma$ with the laminates. We write $L_{x_i}$ for the laminate which contains $q_i$. In case of loops, indices are considered modulo $l$. Denote by $\gamma_i$ the segment of $\gamma$ between $q_i$ and $q_{i+1}$ and denote by $\Delta_i$ the disc which contains $\gamma_i$.  We choose the orientation of $\gamma_i$ such that the marked point of $\Delta_i$ lies on the left. We denote by $g: \gamma \overrightarrow{\cap} L \rightarrow \mathbb{Z}$ the function corresponding to $\mathfrak{g}$.\medskip

\noindent Then, $X_{(\gamma, \mathcal{V})}$ is the following complex. 

\begin{itemize}
\setlength\itemsep{0.5em}

    \item In degree $n \in \mathbb{Z}$, $X_{(\gamma, \mathcal{V})}$ is given by the direct sum  
    \[\bigoplus_{i \in [1,l], g(q_i)=n}{P_{x_i} \otimes_k \mathcal{V}(q_i)},\]
    where $P_{x_i}$ is the indecomposable projective module $Ax_i$.

    \item The differential $\partial$ of $X_{(\gamma, \mathcal{V})}$ is defined as $\partial= \sum_{i} \left(\partial_i \otimes_k \mathcal{V}(\gamma_i)\right)$, where $\partial_i$ is the map between $P_{x_i}$ and $P_{x_{i+1}}$ induced by the following admissible path $u_i$.\smallskip
    
    \noindent Let $\alpha=\alpha_m \cdots \alpha_1$ denote the maximal admissible path corresponding to $\Delta_i$. If $L_{y_1}, \dots, L_{y_s}$ is the totally ordered sequence of laminates in $\partial \Delta_i \setminus \marked$ between $L_{x_i}$ and $L_{x_{i+1}}$ (including the two), then $u_i$ is the subpath of $\alpha$ which passes through the vertices $y_1, \dots, y_s$ in the given order.
\end{itemize}
The isomorphism class of $X_{(\gamma, \mathcal{V})}$ depends on $\gamma$ only up to homotopy and on $\mathcal{V}$ only up to isomorphism. We may assume that $\mathcal{V}$ is trivial on all segments $\gamma_i$ except for $\gamma_1$. In this case and if $\gamma$ is an arc (resp. loop), then $X_{(\gamma, \mathcal{V})}$ is a \textbf{string complex} (resp. \textbf{band complex}) in the sense of \cite{BekkertMerklen}, see also \cite{BurbanDrozd2004}.
\begin{thm}[Theorem 2.12, \cite{OpperPlamondonSchroll}]\label{TheoremBijectionObjectsCurves}
$X_{(-)}$ is invertible and its inverse $\gamma(-)$ satisfies Property 1) of Definition \ref{DefinitionFukayaLikeQuintuples}.
\end{thm}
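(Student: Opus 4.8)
The plan is to deduce the statement from the classification of indecomposable objects in $\mathcal{D}^b(A)$ by \emph{string} and \emph{band complexes} due to Bekkert--Merklen \cite{BekkertMerklen} (see also \cite{BurbanDrozd2004}), matching their combinatorial parameters with homotopy classes of graded curves and isomorphism classes of local systems. Recall that, viewing $\mathcal{D}^b(A)$ inside $\mathcal{K}^{b,-}(A\text{-}\proj)$, every indecomposable object is isomorphic to the string complex of a reduced string $w$ in $(Q,I)$, or to the band complex of a primitive band $b$ together with an indecomposable $k[t,t^{-1}]$-module, i.e.\ a Jordan block with eigenvalue $\lambda \in k^{\times}$ and size $d \geq 1$; moreover two such complexes are isomorphic precisely when the underlying strings (resp.\ bands) agree up to inversion (resp.\ up to inversion and cyclic rotation) and, in the band case, the Jordan data coincide.

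First I would set up the dictionary between strings/bands and curves via the laminates. Property (3) of the characterization of $\mathcal{S}_A$ in Section \ref{SectionBijectionObjectsCurves} translates a subpath of a maximal admissible path into a sequence of consecutive laminates on the boundary of the corresponding disc $\Delta_p$. Given a curve $\gamma$ in minimal position with the laminates, its intersections with the laminates cut $\gamma$ into segments $\gamma_1,\dots,\gamma_l$, each lying in a unique disc $\Delta_i$; recording, for each $i$, whether the marked point of $\Delta_i$ lies to the left or to the right of $\gamma_i$ together with the two bounding laminates produces an alternating walk of direct and inverse admissible paths, i.e.\ exactly a reduced string if $\gamma$ is an arc and a primitive band if $\gamma$ is a primitive loop; conversely every reduced string or primitive band is realized by such a curve. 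Under this correspondence the homological grading is governed precisely by the function $g\colon \gamma \overrightarrow{\cap} L \to \mathbb{Z}$ of equation (\ref{Equation2}): choosing a grading $\mathfrak{g}$ on $\gamma$ amounts to placing the projective summands $P_{x_i}$ in cohomological degrees $g(q_i)$, and $\mathfrak{g}\mapsto\mathfrak{g}[1]$ shifts every value of $g$ by $1$, matching the shift functor on complexes; this gives compatibility of $\gamma(-)$ with shifts.

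Next I would prove bijectivity. For surjectivity, every string or band complex arises from some $(\gamma,\mathfrak{g},\mathcal{V})$: the walk underlying a string or band, being a concatenation of admissible subpaths glued at common vertices, is traced by a curve running through the discs $\Delta_i$ as prescribed, and the gentle conditions (at most one composable and one non-composable continuation at each arrow) are exactly what ensures that consecutive segments join without creating a contractible bigon with a laminate, so the resulting curve is in minimal position with the laminates. For injectivity, if $X_{(\gamma,\mathcal{V})}\cong X_{(\gamma',\mathcal{V}')}$ then the associated strings/bands agree up to the allowed symmetries by \cite{BekkertMerklen}; one then checks that minimal position with the laminates is a homotopy normal form, so arcs (resp.\ primitive loops) with the same reduced laminate--segment sequence are homotopic. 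For loops, inversion of the walk and cyclic rotation correspond respectively to reversing the orientation and to changing the base segment, i.e.\ to the \emph{unoriented} homotopy class, while the Jordan datum $(\lambda,d)$ with $\lambda\in k^{\times}$ is precisely an indecomposable $k$-linear local system of dimension $d$ on the loop; on an arc the unique indecomposable local system is one-dimensional. Combined with the grading bookkeeping above, this yields a bijection onto pairs $([\gamma],[\mathcal{V}])$ as in Property 1) of Definition \ref{DefinitionFukayaLikeQuintuples}.

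The main obstacle is the homotopy-normal-form statement itself: that sending a curve to its reduced laminate--segment sequence descends to a bijection on homotopy classes, and dually that the chain-homotopy type of $X_{(\gamma,\mathcal{V})}$ depends on $\gamma$ only through its homotopy class and on $\mathcal{V}$ only up to isomorphism. This requires a careful analysis of how a curve may be homotoped across the laminate decomposition --- removing bigons with a laminate and sliding a segment past a marked point --- and matching these moves with the elementary reductions of strings/bands and with explicit null-homotopies of the differential. This is exactly what is carried out in \cite{OpperPlamondonSchroll}, Section 2, and it parallels the corresponding statement for the partially wrapped Fukaya category in \cite{HaidenKatzarkovKontsevich}; accordingly I would import it rather than reprove it.
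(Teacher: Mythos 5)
Your proposal is correct and consistent with what the paper does: the paper states Theorem \ref{TheoremBijectionObjectsCurves} as an imported result from \cite{OpperPlamondonSchroll} and gives no proof of its own, and your sketch (matching the Bekkert--Merklen string/band classification with laminate--segment sequences, with the grading controlled by equation (\ref{Equation2}) and Jordan data corresponding to indecomposable local systems) is exactly the strategy of the cited reference. You correctly isolate the genuinely hard step --- that the reduced laminate--segment sequence is a homotopy normal form and that $X_{(\gamma,\mathcal{V})}$ depends only on the homotopy class of $\gamma$ --- and defer it to \cite{OpperPlamondonSchroll}, which is precisely the dependence the paper itself accepts by citing the theorem wholesale.
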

\smallskip

\subsubsection{The morphism of an intersection}\label{SectionMorphismsOfIntersection}
\ \smallskip

\noindent We have the following result.

\begin{thm}[Theorem 3.3, Remark 3.8, Theorem 4.2, \cite{OpperPlamondonSchroll};  Proposition 5.16, \cite{ArnesenLakingPauksztello}]\label{TheoremInjectionMorphisms}
There exists an injection $\basis$ satisfying Property 2) and Property 3) in Definition \ref{DefinitionFukayaLikeQuintuples} with respect to $\gamma(-)$.
\end{thm}
\noindent The definition of $\basis$ is given in Section 3, \cite{OpperPlamondonSchroll}. and we present its construction in a suitable way. \smallskip

\noindent Let $(\gamma_i, \mathfrak{g}_i, \mathcal{V}_i)$ ($i=1,2$) be graded curves on $\mathcal{S}_A$ equipped with local systems. To be clear, $\gamma_i$ is a curve on the punctured surface. However, in what follows, we have to consider the corresponding curves on the unpunctured version of $\mathcal{S}_A$. In particular, we are dealing with non-compact curves which wind around one or two  unmarked boundary components in clockwise direction.  

\noindent The map $\basis$ associates to an intersection a chain map in the basis of $\mathcal{C}^{-, b}(A-\proj)$ (resp. $\mathcal{K}^{-, b}(A-\proj)$) as described by Arnesen, Laking and Pauksztello \cite{ArnesenLakingPauksztello}. We refer to elements of such bases as \text{ALP maps}.\medskip

\noindent The construction of $\basis$  can be described as follows. Let $p \in \gamma_1 \overrightarrow{\cap} \gamma_2$. We may assume that $p$ is of degree $0$ and want to describe a morphism $\basis(p): X_{(\gamma_1, \mathfrak{g}_1, \mathcal{V}_1)} \rightarrow X_{(\gamma_2, \mathfrak{g}_2, \mathcal{V}_2)}$.
We choose lifts $\widetilde{\gamma}_i$ ($i \in \{1,2\})$ of $\gamma_i$ to a universal cover $\widetilde{\mathcal{S}}_A$ which intersect in a lift $\widetilde{p}$ of $p$. Then, $\widetilde{p}$ is the only intersection of the two curves. Moreover, $\widetilde{\gamma}_i$ is simple and intersects every lift of a laminate at most once.
The lifts of marked points and $\eta_A$ turn $\widetilde{\mathcal{S}}_A$ into a graded  marked surface and the collection of all lifts of laminates cut $\widetilde{\mathcal{S}}_A$ into discs.

The collection of the discs in $\widetilde{\mathcal{S}}_A$ which contains segments of both $\widetilde{\gamma}_1$ and $\widetilde{\gamma}_2$ form a (not necessarily compact) subsurface $S_{\tilde{p}}$ as in Figure \ref{FigureSurfaceOfIntersection}. We denote by $\delta_i$ the segment of $\widetilde{\gamma}_i$ which is contained in $S_{\tilde{p}}$. An intersection of $\delta_i$ with a laminate corresponds uniquely to an intersection of $\gamma_i$ with a laminate.

\begin{figure}[H]
	\centering
	\begin{tikzpicture}[scale=0.8]
	\draw [line width=0.5, color=blue] plot  [smooth, tension=1] coordinates {  (-3.5,1.5) (-1, 0.5) (2, 0.5) (4, -0.5) (6, -0.5) (8.5, -1.5)};
	\draw [line width=0.5, color=red] plot  [smooth, tension=1] coordinates {  (-3.5,-1.5) (-1, -0.5) (2, -0.5) (3, 0.5) (6, 0.5) (8.5, 1.5)};
	\draw [line width=0.5, color=black] plot  [smooth, tension=1] coordinates {  (-3.5,-1) (-2, 0) (-3.5, 1)};
	\draw [line width=0.5, color=black] plot  [smooth, tension=1] coordinates {  (8.5,-1) (7, 0) (8.5, 1)};
	\draw [line width=0.5, color=black] plot  [smooth, tension=1] coordinates {  (8.5,2) (7, 1.5) (4, 1) (3,2.5)};	
	\draw [line width=0.5, color=black] plot  [smooth, tension=1] coordinates {  (8.5,-2) (7, -1.5) (2, -1) (0.5, -3)};	
	\draw [line width=0.5, color=black] plot  [smooth, tension=1] coordinates {  (-3.5,2) (-2, 1.5) (1, 1) (2,2.5)};
	\draw [line width=0.5, color=black] plot  [smooth, tension=1] coordinates {  (-3.5,-2) (-1.5, -1.5) (-0.5, -3)};		
	\draw [line width=1, color=black, dashed] plot  [smooth, tension=1] coordinates {  (-3.5,1) (-3.5, 2)};
	\draw [line width=1, color=black, dashed] plot  [smooth, tension=1] coordinates {  (-3.5,-1) (-3.5, -2)};
	\draw [line width=1, color=black, dashed] plot  [smooth, tension=1] coordinates {  (8.5,1) (8.5, 2)};
	\draw [line width=1, color=black, dashed] plot  [smooth, tension=1] coordinates {  (8.5,-1) (8.5, -2)};
	\draw [line width=1, color=black, dashed] plot  [smooth, tension=1] coordinates {  (3,2.5) (2, 2.5)};
	\draw [line width=1, color=black, dashed] plot  [smooth, tension=1] coordinates {  (0.5,-3) (-0.5, -3)};
	\draw [line width=1, color=black, dashed] plot  [smooth, tension=1] coordinates {  (-1.5, -1.5) (-1,0) (-2,1.5)};
	\draw [line width=1, color=black, dashed] plot  [smooth, tension=1] coordinates {  (2,-1) (1,0) (1,1)};
	\draw [line width=1, color=black, dashed] plot  [smooth, tension=1] coordinates {  (3,-0.88) (3.5,0) (4,1)};
	\draw [line width=1, color=black, dashed] plot  [smooth, tension=1] coordinates {  (5, -1.09) (6,0) (7,1.5)};
	
	\draw[color=blue] (-4, 1.5) node{$\delta_1$};
	\draw[color=red] (-4, -1.5) node{$\delta_2$};
	
	\end{tikzpicture}
	\caption{The surface $S_{\tilde{p}}$: Dashed curves indicate laminates. The blue and the red curve show $\delta_1$ and $\delta_2$.}  \label{FigureSurfaceOfIntersection}
\end{figure}
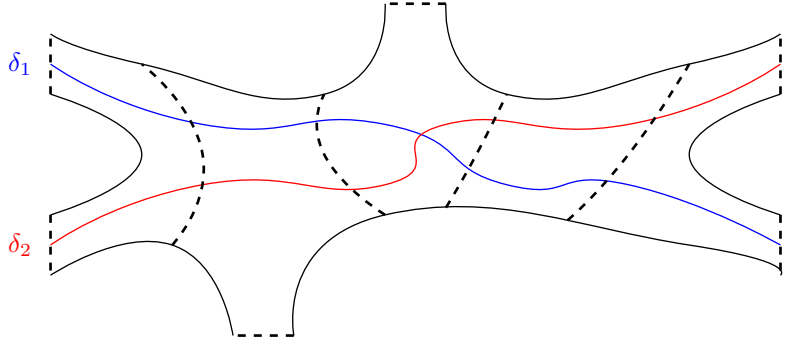
\noindent We assume that $\delta_1$ and $\delta_2$ are oriented in such a way that they cross the laminates in $S_{\tilde{p}}$  in the same order. The ALP map $\basis(p)$ is constructed in a recursive procedure by ``propagating maps along $S_{\tilde{p}}$''.\medskip

\noindent We use the notation from Section \ref{SectionBijectionObjectsCurves} and write $q_i^j$ (resp. $L_{x_i^j}$ and so forth) for the intersections of $\gamma_j$ with the laminates (resp. for the laminate containing $q_i^j$). 

Let $i$ and $i'$ be such that $f_1(q_i^1)=f_2(q_{i'}^2)$ and such that the lifts of $q_i^1$ and  $q_{i'}^2$ in $S_{\tilde{p}}$ lie on the boundary of a disc $\Delta$. As in Section \ref{SectionBijectionObjectsCurves}, $q_i^1$ and $q_{i'}^2$ give rise to a map $f_{(i,i')}: P_{x_i^1} \otimes \mathcal{V}_1(q_i^1) \rightarrow P_{x_{i'}^2} \otimes \mathcal{V}_2(q_{i'}^2)$. We denote by $\partial_i^j$ the component of the differential $\partial^j$ of $X^j=X_{(\gamma_i, \mathcal{V}_i)}$ defined by the segment between $q_i^1$ and $q_{i+1}^1$. Then, the composition $f_{(i,i')} \circ \partial_i^1$ is defined and non-zero if and only if a lift of $q_{i+1}^1$ lies on $\partial \Delta$ and comes before $q_i^1$ in the total order of $\partial \Delta \setminus \widetilde{\marked}$, where $\widetilde{\marked}$ denotes the set of lifts of marked points. In this case, the pair $(i+1, i'+1)$ defines a map $f_{(i+1,i'+1)}$ as before. Moreover, $f_{(i,i')} \circ \partial_i^1=\partial_{i'+1}^2 \circ f_{(i+1,i'+1)}$. Analogous constructions define the map $f_{(i+1,i'+1)}$ in case $\partial^2_i \circ f_{(i,i')} \neq 0$ and the map $f_{(i-1,i'-1)}$ under the appropriate assumptions. 

Performing the previous step as often as possible, we obtain maps $f_{(j,j')}$ for various pairs $(j,j')$ and their sum is a chain map. Its homotopy class is the desired morphism $\basis(p)$. The observation that elements in the ALP basis can be reconstructed from any of their components was made in Lemma 4.3, \cite{ArnesenLakingPauksztello}.

\begin{rem}\label{RemarkIntersectionsGentle}\
\begin{enumerate}
\setlength\itemsep{0.5em} 

    \item Different choices of the initial pair $(i,i')$ may give different but homotopic chain maps.
    
    \item\label{Point2} If  $p \in \gamma_1 \overrightarrow{\cap} \gamma_2$ is a puncture, it means that the surface $S_{\tilde{p}}$ as above is non-compact and the lifts of $\gamma_1$ and $\gamma_2$ intersect ``at infinity''. In this case, either $\basis(p)$ consists of a single component or is not finitely supported and the components become cyclic eventually.
    
    \item The construction provides us with a chain map in the ALP basis of $\mathcal{C}^{-, b}(A-\proj)$ even if $\delta_1$ and $\delta_2$ do not intersect. However, if the overlap (the surface $S_{\tilde{p}}$ in case of intersections) is compact the resulting maps are null-homotopic. If the overlap is not compact and we are not in the situation of (\ref{Point2}), then $\gamma_1$ and $\gamma_2$ are homotopic loops. In this case, the associated ALP map is invertible  or the connecting morphism of an Auslander-Reiten triangle.

\end{enumerate}

\end{rem}
\smallskip

\subsubsection{Composition of morphisms in the derived category of a gentle algebra}
\ \smallskip

\begin{thm}\label{TheoremComposition}
The map $\basis(-)$ satisfies Property 4) of Definition \ref{DefinitionMarkedSurface}.
\end{thm}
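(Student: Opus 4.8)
The goal is to show that the ALP maps $\basis(-)$ for the derived category of a gentle algebra satisfy the composition law (Property 4) of Definition~\ref{DefinitionFukayaLikeQuintuples}): given intersections $q_1 \in \gamma_1 \overrightarrow{\cap} \gamma_2$ and $q_2 \in \gamma_2 \overrightarrow{\cap} \gamma_3$ of degree~$0$, the composite $\basis(q_2)\circ\basis(q_1)$ is precisely the linear combination of those $\basis(q_3)$, $q_3 \in \gamma_1 \overrightarrow{\cap}\gamma_3$, for which lifts $\widetilde\gamma_1,\widetilde\gamma_2,\widetilde\gamma_3$ cobound an intersection triangle, a fork, or a double-bigon, together with the analogous statements II) and III). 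The first reduction I would make is that, since $\basis(q_i)$ is reconstructed from any single one of its components (Lemma~4.3 in \cite{ArnesenLakingPauksztello}, as recalled after Figure~\ref{FigureSurfaceOfIntersection}), and composition of chain maps is computed componentwise, everything is controlled by the local ``propagation'' picture on the universal cover $\widetilde{\mathcal{S}}_A$. So I would fix compatible lifts $\widetilde\gamma_1,\widetilde\gamma_2,\widetilde\gamma_3$, lifts $\widetilde q_1,\widetilde q_2$ of $q_1,q_2$, and work entirely with the overlap subsurfaces $S_{\widetilde q_1}$ (discs meeting $\delta_1$ and $\delta_2$) and $S_{\widetilde q_2}$ (discs meeting $\delta_2$ and $\delta_3$).

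The heart of the argument is a careful bookkeeping of which ALP components survive composition. Write $\basis(q_1)=\sum_{(i,i')} f_{(i,i')}$ and $\basis(q_2)=\sum_{(i',i'')} g_{(i',i'')}$ in the notation of Section~\ref{SectionMorphismsOfIntersection}, indexed by matching laminate-intersections. The component of $\basis(q_2)\circ\basis(q_1)$ in homological degree given by $q_i^1$ is $\sum_{i'} g_{(i',i'')}\circ f_{(i,i')}$, a sum over indices $i'$ for which $\gamma_2$'s intersection $q_{i'}^2$ appears simultaneously in the support of $\basis(q_1)$ and of $\basis(q_2)$; this forces $\delta_2^{(1)}$ (the part of $\widetilde\gamma_2$ in $S_{\widetilde q_1}$) and $\delta_2^{(2)}$ (the part in $S_{\widetilde q_2}$) to overlap along a common run of laminate-crossings. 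Each individual composite $g\circ f$ is a map $P_{x}\otimes\mathcal V\to P_{z}\otimes\mathcal W$ induced by a pair of subpaths of maximal admissible paths sharing the vertex of $\gamma_2$, hence -- by the gentle relations, at most one arrow out of / into each vertex lies outside $I$ -- is either zero or again induced by a single admissible path. I would then identify, disc by disc along the common overlap, the combinatorial configuration of $\widetilde\gamma_1,\widetilde\gamma_2,\widetilde\gamma_3$: the three-arc configurations that produce a nonzero surviving component are exactly the ones where the three lifts close up into a triangle of laminate-bounded discs (intersection triangle), degenerate with a shared marked point (fork), or bound two adjacent bigons (double-bigon), and in each such case the resulting chain map is visibly the ALP map $\basis(q_3)$ of the third intersection $q_3$ that the two outer arcs acquire; conversely, whenever such a configuration exists the composite component is nonzero. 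The sign/coefficient matching is immediate because ALP maps are defined up to scalar and the propagation recursion is deterministic once one component is fixed.

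The remaining cases are comparatively short. For II), if $\gamma_1$ and $\gamma_3$ are homotopic loops and $q_1=q_2$ as points of $\mathcal{S}_A$, then on the universal cover the lifts $\widetilde\gamma_1$ and $\widetilde\gamma_3$ are translates of $\widetilde\gamma_2$ by a nontrivial deck transformation, so no compact triangle/fork/double-bigon can form; by Remark~\ref{RemarkIntersectionsGentle}(3) the only components of the composite come from a non-compact overlap and yield either an isomorphism or an Auslander--Reiten connecting morphism -- in particular not a linear combination of morphisms associated to intersections. For III), the puncture case, I would note that a puncture $p\in\gamma_1\overrightarrow\cap\gamma_2$ corresponds to a non-compact $S_{\widetilde p}$ whose discs eventually repeat cyclically around the unmarked boundary component (Remark~\ref{RemarkIntersectionsGentle}(2)); composing with the self-intersection $q$ of $\gamma_2$ simply shifts the cyclic propagation by one full period, which by the explicit description of the periodic ALP components gives $\basis(p)(n)=\basis(q)^n\circ\basis(p)(0)$ and $\basis(q)(n)=\basis(q)^n$. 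The main obstacle I anticipate is the central case: proving the \emph{converse} direction cleanly -- that every nonzero component of $\basis(q_2)\circ\basis(q_1)$ genuinely comes from one of the three listed geometric configurations and from no other intersection $q_3$ -- because this requires ruling out ``accidental'' nonzero compositions coming from overlaps that do not close up, which is precisely where the gentleness of $(Q,I)$ (the at-most-one-arrow conditions) and the minimal-position hypothesis must be used in full force, together with the fact (recalled in the preliminaries) that lifts of curves in minimal position meet at most once in the interior.
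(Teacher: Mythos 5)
Your proposal follows essentially the same route as the paper: work componentwise on the universal cover with the overlap subsurfaces $S_{\tilde p}$, $S_{\tilde q}$, use the fact that an ALP map is determined by any single component to propagate a nonzero composite component to the full map $\basis(q_3)$, identify the triangle/fork/double-bigon configurations as exactly those producing a surviving component, and rule out accidental cancellation because two cancelling components would have to belong to the same ALP map. The only differences are cosmetic — you treat case III) explicitly (which the paper leaves implicit) and phrase case II) via deck transformations rather than via compactness of $S_{\tilde p}\cap S_{\tilde q}$ — so the argument is sound and matches the paper's.
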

\begin{proof}

Let $\gamma_1, \gamma_2$ and $\gamma_3$ be graded curves on $\mathcal{S}_A$ which are in minimal position with the laminates and let $p \in \gamma_1 \overrightarrow{\cap} \gamma_2$ and $q \in \gamma_2 \overrightarrow{\cap} \gamma_3$ be of degree $0$. Write $X^i=X_{(\gamma_i, \mathcal{V}_i)}$.
Let $u: P^1 \rightarrow P^3$ be a component of the chain map $\basis(q) \circ \basis(p)$ between indecomposable projective modules $P^1$ and $P^3$ of $X^1$ and $X^3$, i.e.\ $u$ is induced by an admissible path. By construction, there exist components $u_1: P^1 \rightarrow P^2$ of $\basis(p)$ and $u_2:P^2 \rightarrow P^3$ of $\basis(q)$ such that $u$ and $u_2 \circ u_1$ agree up to scalar. Propagating $u$ along the complexes  as in Section \ref{SectionMorphismsOfIntersection}, we obtain a chain map $h: X^1 \rightarrow X^3$. Following the reasoning in the proof of Proposition 4.1, $h$ appears in the standard decomposition of $\basis(q) \circ \basis(p)$ with respect to the ALP basis of $C^{b,-}(A-\proj)$. For appropriate lifts $\tilde{p}$ and $\tilde{q}$ of $p$ and $q$, it follows that the intersection of subsurfaces $S_{\tilde{p}}$ and $S_{\tilde{q}}$ contains the disc which contains the intersection corresponding to $P^1$, $P^2$ and $P^3$. Moreover, the intersection are distributed as in Figure \ref{FigureForkComposition}.

\begin{figure}[H]
		\begin{displaymath}
		\begin{tikzpicture}
		\filldraw ({2*cos(10)},0)  circle (2pt); 
		
		\foreach \u in {4} 
		{\draw [line width=0.5, color=orange, dashed] plot  [smooth, tension=1] coordinates {   ({2*cos(-10+360/5*\u+360/5)}, {2*sin(-10+360/5*\u+360/5)}) ({1.3*cos(360/5*\u+360/10)}, {1.3*sin(360/5*\u+360/10)}) ({2*cos(10+360/5*\u)}, {2*sin(10+360/5*\u)})};
		}
		
		\foreach \u in {1} 
		{
			\draw [ line width=0.5, color=orange, dashed] plot  [smooth, tension=1] coordinates {   ({2*cos(-10+360/5*\u+360/5)}, {2*sin(-10+360/5*\u+360/5)}) ({1.3*cos(360/5*\u+360/10)}, {1.3*sin(360/5*\u+360/10)}) ({2*cos(10+360/5*\u)}, {2*sin(10+360/5*\u)})};
		}
		
		\foreach \u in {1, 2,3, 4, 5} 
		{
			\draw [color=black, dashed, orange] plot  [smooth, tension=1] coordinates {   ({2*cos(-10+360/5*\u+360/5)}, {2*sin(-10+360/5*\u+360/5)}) ({1.3*cos(360/5*\u+360/10)}, {1.3*sin(360/5*\u+360/10)}) ({2*cos(10+360/5*\u)}, {2*sin(10+360/5*\u)})};
		}
		\foreach \u in {5} 
		{
			\draw[line width=0.75] ({2*cos(-10+360/5*\u)}, {2*sin(-10+360/5*\u)})--({2*cos(+10+360/5*\u)}, {2*sin(+10+360/5*\u)});
		}
		\foreach \u in {1,2,3,4}
		{
		\draw[ thick, color=black] ({2*cos(-10+360/5*\u)}, {2*sin(-10+360/5*\u)})--({2*cos(+10+360/5*\u)}, {2*sin(+10+360/5*\u)});
		
		}

		\foreach \u in {2,3} %
		{
			\draw [line width=0.5, color=black, dotted] plot  [smooth, tension=1] coordinates {    ({1.3*cos(360/5*\u-360/10)}, {1.3*sin(360/5*\u-360/10)}) ({0.75*cos(360/5*\u)}, {0.75*sin(360/5*\u)}) ({1.3*cos(360/5*\u+360/10)}, {1.3*sin(360/5*\u+360/10)})  } [decoration={markings, mark=at position 0.51 with {\arrow{>}}}, postaction={decorate} ];
			
			\filldraw ({1.3*cos(360/5*\u-360/10)}, {1.3*sin(360/5*\u-360/10)}) circle(1.5pt);
		
		}
		
		\draw [line width=0.5, color=black, dotted] plot  [smooth, tension=1] coordinates {    ({1.3*cos(360/5*2-360/10)}, {1.3*sin(360/5*2-360/10)}) (0,0) ({1.3*cos(360/5*3+360/10)}, {1.3*sin(360/5*3+360/10)})  } [decoration={markings, mark=at position 0.51 with {\arrow{>}}}, postaction={decorate} ];
		\filldraw  ({1.3*cos(360/5*3+360/10)}, {1.3*sin(360/5*3+360/10)}) circle(1.5pt);

		
			\draw ({1.7*cos(360/5*2-360/10)}, {1.7*sin(360/5*2-360/10)}) node{$P^{1}$};
			\draw ({1.7*cos(360/5*3-360/10)}, {1.7*sin(360/5*3-360/10)}) node{$P^{2}$};
			\draw ({1.7*cos(360/5*4-360/10)}, {1.7*sin(360/5*4-360/10)}) node{$P^{3}$};

		\end{tikzpicture}
		\end{displaymath}
		\caption{The components $u$, $u_1$ and $u_2$.} \label{FigureForkComposition}
	\end{figure}
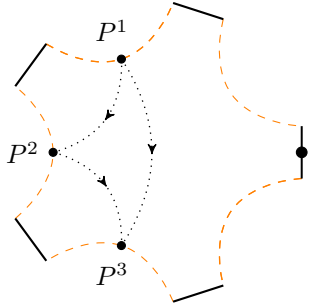

If $h \not \simeq 0$, then again, by the reasoning in the proof of Proposition 4.1, it appears in a standard decomposition of $\basis(q) \circ \basis(p)$ considered as a morphism in the homotopy category. It implies that, if $h \not \simeq 0$

and $\widetilde{\gamma_i}$ is the lift of $\gamma_i$ corresponding to the choices of $\tilde{p}$ and $\tilde{q}$, then $\widetilde{\gamma_1}$ and $\widetilde{\gamma_3}$ intersect in a point corresponding to $h$. Therefore, the lifts intersect pairwise and exactly once. Thus, they form a triangle, a fork or a double-bigon.

Next, assume that lifts $\widetilde{\gamma}_1, \widetilde{\gamma}_2, \widetilde{\gamma}_3$  intersect in lifts of $p, q$ and an intersection $q' \in \gamma_1 \overrightarrow{\cap} \gamma_3$. 

We want to show that a multiple of $\basis(q')$ appears in a standard decomposition of $\basis(q) \circ \basis(p)$. As before, Our assumptions guarantee that we find a disc as in Figure \ref{FigureForkComposition} which implies that there exist components $u_1:P^1 \rightarrow P^2$ and $u_2: P^2 \rightarrow P^3$ of $\basis(p)$ and $\basis(q)$ respectively such that $u=u_2 \circ u_1$ is a multiple of a component of $\basis(q')$ and suppose that $\basis(q')$ is not a summand of $\basis(q) \circ \basis(p)$. Since ALP maps are determined by any of their components, this requires $u$ to cancel with a sum of other maps of the form  $v:P^1 \xrightarrow{v_1} Q^2 \xrightarrow{v_2} P^3$ with $Q^2 \neq P^2$ and where $v_1$ and $v_2$ are components of $\basis(p)$ and $\basis(q)$. However, it this case $u$ and $v$ are components of the same ALP map.\smallskip

\noindent Property 4) II) in Definition \ref{DefinitionFukayaLikeQuintuples} follows from compactness of the intersection of $S_{\tilde{p}}$ and $S_{\tilde{q}}$.
\end{proof}

\begin{proof}[Proof of Theorem \ref{TheoremGentleAlgebrasFukayaLike}] We have to verify Properties 1)--5) of Definition \ref{DefinitionFukayaLikeQuintuples}.
They follow from  Theorem \ref{TheoremBijectionObjectsCurves}, Theorem \ref{TheoremInjectionMorphisms}, Theorem 4.2 in \cite{OpperPlamondonSchroll}, Theorem \ref{TheoremComposition} and the description of Auslander-Reiten triangles for band complexes from \cite{Bobinski}.
\end{proof}
\smallskip

\subsection{Auslander-Reiten theory of arc objects}
We show the existence of Auslander-Reiten triangles for arc objects in a surface-like category and give a geometric description of the Auslander-Reiten translation and the connecting morphism in Auslander-Reiten triangles. Our result is based purely on geometric nature of surface-like categories and provides a new perspective on the combinatorial proofs of related results for gentle algebras in \cite{ArnesenLakingPauksztello} and \cite{OpperPlamondonSchroll}.

\begin{definition}\label{DefinitionFractionalTwist}Let $B \subseteq \partial \mathcal{S}$ be a component with $N$ marked points. Let $D$ be a tubular neighborhood of  $B \subseteq \partial S$ with an orientation preserving diffeomorphism 
\[\begin{tikzcd}\phi:D \arrow{r} & \{z \in \mathbb{C} \, | \,  1 \leq \lVert z \rVert \leq 2\},\end{tikzcd}\]

\noindent such that 
$\phi(B \cap \marked)$ is the set of roots of unity of order $N$.
 The \textbf{fractional twist} $\tau_B: S \rightarrow S$ is the map which extends the identity of $\overline{S\setminus D}$ and restricts to the map
\[\begin{tikzcd}
x \arrow[mapsto]{r} & \phi^{-1}\left(\phi(x) \cdot \exp\left(\frac{2\pi i}{|I|} \cdot (2-\left\lVert \phi(x)\right\rVert) \right)\right)\end{tikzcd}\] 

on $D$, where $\exp$ denotes the exponential function.
\end{definition}
\noindent The diffeomorphism $\tau_B$ rotates the surface in a neighborhood of $B$ in counter-clockwise direction, as shown in Figure \ref{FigureFractionalTwist}. 
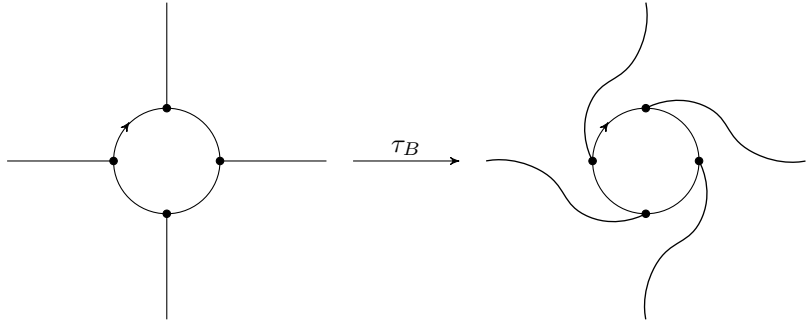
\begin{figure}[H]
\begin{displaymath}
\begin{tikzpicture}[scale=0.7]
\draw[decoration={markings, mark=at position 0.4 with {\arrow{<}}},
postaction={decorate}
] (0,0) circle (1);
\foreach \x in {0,1,2,3}
{
	\filldraw ({1*cos(\x*180*0.5)},{1*sin(\x*180*0.5)}) circle (2pt);
	\draw ({3*cos(\x*180*0.5)},{3*sin(\x*180*0.5)})--({1*cos(\x*180*0.5)},{1*sin(\x*180*0.5)});
}

\draw[->] (3.5, 0)--(5.5,0);
\draw (4.5,0.25) node{$\tau_B$};

\draw[decoration={markings, mark=at position 0.4 with {\arrow{<}}},
postaction={decorate}
] (9,0) circle (1);
\foreach \x in {0,1,2,3}
{
	\filldraw ({1*cos(\x*180*0.5)+9},{1*sin(\x*180*0.5)}) circle (2pt);
	\draw [line width=0.5, color=black] plot  [smooth, tension=1] coordinates  { ({3*cos(\x*180*0.5)+9},{3*sin(\x*180*0.5)})  ({2*cos(\x*180*0.5+5)+9},{2*sin(\x*180*0.5+5)}) ({1.5*cos(\x*180*0.5+45)+9},{1.5*sin(\x*180*0.5+45)}) ({1*cos(\x*180*0.5+90)+9},{1*sin(\x*180*0.5+90)})};
	
}

\end{tikzpicture}
\end{displaymath}
\caption{The action of $\tau_B$ on arcs} \label{FigureFractionalTwist}
\end{figure}
\noindent The isotopy class of $\tau_B$ relative to the boundary is independent of the choice of $D$ and $\phi$. Further, $\tau_B$ and $\tau_{B'}$ commute up to isotopy relative to the boundary. The diffeomorphism $\tau$ is defined as the product of all $\tau_B$ and is well-defined up to isotopy relative to the boundary. Note that $\tau$ is isotopic to the identity (not relative to the boundary). Thus, $\tau(\gamma)$ is naturally graded.  For every finite, graded arc $\gamma$ in minimal position, there exists a distinguished intersection $\gamma \overrightarrow{\cap} \tau(\gamma)$ of degree $1$, see Figure \ref{FigureDistinguishedIntersection}.\medskip 

\begin{figure}[H]
\centering
\begin{tikzpicture}[scale=0.8]

\draw (1.5,2) node{$\gamma$};
\draw (1.5,0.75) node{$\tau(\gamma)$};

\filldraw (2,1) circle (2pt);
\filldraw (-1,1) circle (2pt);
\filldraw (3,2) circle (2pt);
\filldraw (-2,0) circle (2pt);

\draw[decoration={markings, mark=at position 0.4 with {\arrow{<}}},
postaction={decorate}
]  (3,1) ellipse (1 and 1);
\draw[decoration={markings, mark=at position 0.4 with {\arrow{<}}},
postaction={decorate}
]  (-2,1) ellipse (1 and 1);
\draw  plot[smooth, tension=.7] coordinates {(3,2) (2,2) (-1,0) (-2,0)};
\draw  plot[smooth, tension=.7] coordinates {(2,1) (-1,1)};
\end{tikzpicture}
\caption{} \label{FigureDistinguishedIntersection}
\end{figure}

\noindent We prove that the diffeomorphism $\tau$ is a geometric incarnation of the Auslander-Reiten translation in the following sense.

\begin{prp}\label{PropositionExistenceARTrianglesForArcs} Let $\gamma$ be a finite arc on $\mathcal{S}$. If $p \in \gamma \overrightarrow{\cap} \tau \gamma$ is the distinguished intersection, then $\basis(p)$ is the connecting morphism of an Auslander-Reiten triangle in $\T$.  
\end{prp}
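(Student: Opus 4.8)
The strategy is to reduce the statement to a purely geometric verification of the defining properties of a surface-like category, using the abstract characterization of Auslander-Reiten triangles in terms of the functor $\Hom(-,X)$ and the combinatorics of Property 2) IV) in Definition \ref{DefinitionFukayaLikeQuintuples}. First I would recall that for a finite arc object $X = X_\gamma$, the object $X[-1]$ and its translate under $\tau$ are again arc objects, and that by definition $\tau\gamma$ is (up to homotopy) in minimal position with $\gamma$. The distinguished intersection $p \in \gamma \overrightarrow{\cap} \tau\gamma$ of degree $1$ from Figure \ref{FigureDistinguishedIntersection} gives, via $\basis$, a morphism $\basis(p) \colon X_\gamma \to X_{\tau\gamma}[1]$ (the shift coming from the degree). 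I would then build the cone $C$ of $\basis(p)$ using Property 3): the resolution of the intersection $p$ produces a representative curve (or curves) for $C$, and by inspecting the picture near the two boundary components touched by $\gamma$ (as in Figure \ref{FigureDistinguishedIntersection}), one sees the resolution yields exactly the arc/union of arcs whose object is the middle term $E$ of the conjectured AR triangle $X_{\tau\gamma}[1] \to E \to X_\gamma \xrightarrow{\basis(p)} X_{\tau\gamma}[2]$ — or rather, shifting, the triangle $\tau X[-1]\text{-type} \to E \to X \to$.

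\textbf{Key steps.} (1) Identify $\tau\gamma$ geometrically and fix the distinguished degree-$1$ intersection $p$; deduce that $\basis(p)\in\Hom(X_\gamma, X_{\tau\gamma}[1])$. (2) Apply the mapping-cone property (Property 3)) to resolve $p$ and read off the middle term of the triangle on $\basis(p)$; here I expect the resolution to be a boundary-parallel or otherwise ``small'' curve near the relevant boundary components, so that $E$ is either zero on the nose (if $\gamma$ is already close to the boundary) or a sum of shorter arc objects. (3) Verify the AR property: it suffices to show $\basis(p)$ is a non-split epimorphism in the appropriate sense and that every non-isomorphism $Z \to X_\gamma$ factors through the middle term $E$. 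Concretely, I would use the intersection-counting description of $\Hom$ (Property 2) I), III)) together with the composition rule (Property 4) I), immersed triangles) to show that any morphism $h\colon Z \to X_\gamma$ that does not lift to $E$ must compose non-trivially with $\basis(p)$ to give a morphism $Z \to X_{\tau\gamma}[1]$; but a careful look at intersection triangles with one side being the distinguished intersection forces $h$ to be (a multiple of) an isomorphism, exactly because $p$ is the unique degree-$1$ self-proximal intersection. (4) Conclude by the standard characterization: a triangle $A \to E \to X \xrightarrow{w} A[1]$ is an AR triangle iff $X$ is indecomposable, $A$ is indecomposable, and $w \neq 0$ does not factor through any proper... — and then identify $A = X_{\tau\gamma}[1]$, i.e.\ $A[-1] = \tau X$ in the sense that $X_{\tau\gamma}$ is the AR translate of $X_\gamma$, which is consistent with Property 5) and the known AR theory recalled in the proof of Theorem \ref{TheoremGentleAlgebrasFukayaLike}.

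\textbf{Main obstacle.} The delicate point is step (3): showing that $\basis(p)$ is \emph{almost split}, i.e.\ that it is not merely some non-zero element of $\Hom(X_\gamma,\tau X_\gamma[1])$ but precisely the connecting morphism. The issue is that $\Hom^*(X_\gamma, X_{\tau\gamma})$ may contain several intersections and it is not a priori obvious that the degree-$1$ distinguished one is the ``right'' candidate rather than, say, a shift of another intersection. To handle this I would lean on Property 2) IV): when $\gamma$ and $\tau\gamma$ are \emph{not} homotopic loops (which is the case here, since $\gamma$ is an arc), the basis of $\Hom^*$ is given purely by intersections, and a winding-number computation (using Lemma \ref{LemmaWindingNumberGentle} and the compatibility of $\basis$ with gradings) pins down the degree of each intersection; the distinguished one is characterized as the unique intersection whose resolution near the boundary ``decreases complexity'', matching the geometric picture of the AR triangle of a string. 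A secondary obstacle is dealing with arcs $\gamma$ both of whose endpoints lie on the same boundary component, where $\tau$ acts by a single rotation and the two tubular neighborhoods in Figure \ref{FigureDistinguishedIntersection} coincide — this degenerate case should be handled by passing to the universal cover, where the two lifts of the boundary component are distinct, so the generic picture applies.
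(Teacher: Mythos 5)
Your overall strategy --- reduce to the almost-vanishing characterization of the connecting morphism and then verify $\basis(p)\circ f=0$ for every non-invertible $f$ by analysing intersection triangles and forks of lifts in the universal cover --- is exactly the route the paper takes, and your step (2) (computing the cone of $\basis(p)$ and identifying the middle term $E$) is not actually needed for it. The genuine gap sits inside your step (3). When a composition $\basis(p)\circ\basis(q)$ survives, the configuration of lifts forces $\delta\simeq\gamma$ or $\delta\simeq\tau(\gamma)$ with $q$ an intersection at a shared endpoint of two \emph{homotopic} arcs; you then assert that this ``forces $h$ to be (a multiple of) an isomorphism.'' In the axiomatic setting nothing in Definition \ref{DefinitionFukayaLikeQuintuples} says that the basis morphism attached to a common endpoint of two homotopic arcs is invertible (for gentle algebras it happens to be the identity, but the proposition is stated for an arbitrary surface-like $\T$). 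The paper needs a separate lemma for precisely this point (Lemma \ref{LemmaBoundaryPointsAreInvertible}), proved by showing that $\Hom(Y,\basis(q))$ is surjective for every $Y$ via another application of the fork/triangle composition rule. Without that input your argument only locates the offending $\basis(q)$ at an endpoint of homotopic arcs; it does not yet let you conclude that every non-invertible $f\colon X'\to X$ is a combination of basis morphisms killed by $\basis(p)$.

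Two smaller problems. The ``main obstacle'' you single out (pinning down the degrees of the various intersections) is a red herring: the degree of $p$ plays no role, one simply works with $\basis(p)\in\Hom(X,\tau X[m])$ for whatever $m$ it happens to live in, and the tool you propose, Lemma \ref{LemmaWindingNumberGentle}, is specific to the surface of a gentle algebra and is not available for a general surface-like category. Also, you never treat the case where $\gamma$ is a boundary segment, in which the distinguished intersection $p$ lies on $\partial\mathcal{S}$ and the universal-cover picture degenerates to a chain of half-discs along one boundary line; the paper handles this as a separate case. Your suggestion of passing to the universal cover when both endpoints of $\gamma$ lie on the same boundary component is sound and is indeed how the paper avoids that degeneracy when $p$ is interior.
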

As a preparation, we show the following.
\begin{lem}\label{LemmaBoundaryPointsAreInvertible}
	Let $\gamma_1, \gamma_2$ be homotopic arcs on $\mathcal{S}$ in minimal position and let $p \in \gamma_1 \overrightarrow{\cap} \gamma_2$ be a marked point. If $p \in \partial S$, then $\basis(p)$ is invertible. 
\end{lem}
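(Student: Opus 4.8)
The plan is to show that the mapping cone of $\basis(p)$ vanishes. First I would replace $\gamma_1,\gamma_2$ by concrete representatives of their (common) homotopy class in minimal position: since two homotopic arcs can be made disjoint in their interiors, an innermost-bigon argument shows that in minimal position $\gamma_1$ and $\gamma_2$ have no interior intersections, so $\gamma_2$ is a small push-off of $\gamma_1$ meeting it only at the two (possibly coinciding) common endpoints $a,b\in\marked$. As a curve meets $\marked$ only at its endpoints, the hypothesised marked point $p\in\gamma_1\overrightarrow{\cap}\gamma_2$ is one of $a,b$; write $p=a$, and note that $p$ is a common endpoint of both arcs. This is exactly where the hypothesis $p\in\partial S$ enters: $p$ is a genuine boundary marked point, not a puncture.

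The core step is then Property 3) of Definition \ref{DefinitionFukayaLikeQuintuples}: a representative of $\Cone(\basis(p))$ is obtained by resolving the intersection $p$. Locally this replaces the two arc-ends at $p$ by a single strand passing $p$ on the interior side; globally the resulting curve is the concatenation of $\gamma_1$ and $\gamma_2$ along $p$, i.e. the path running along $\gamma_1$ into $p$ and back out along $\gamma_2$, which has both endpoints equal to the remaining common endpoint $b$. Since $\gamma_1\simeq\gamma_2$ relative to their endpoints, this concatenation is null-homotopic, and by the standing convention a contractible curve represents the zero object. Hence $\Cone(\basis(p))\cong 0$, and its triangle $X_1\xrightarrow{\basis(p)}X_2\to 0\to X_1[1]$ forces $\basis(p)$ to be an isomorphism. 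This also clarifies the role of the hypothesis: had $p$ been a puncture, the resolution would produce a curve winding around that puncture rather than a contractible one, consistently with the fact that such $\basis(p)$ need not be invertible (cf. Property 2) II)).

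The step I expect to require the most care is the precise identification of the resolution at a boundary marked point which happens to be a common endpoint of both arcs, together with the bookkeeping that lets Property 3) --- literally stated for resolutions that are honest curves --- yield $\Cone(\basis(p))=0$ rather than something one must reinterpret. As a fallback I would keep a more algebraic route in reserve: by Property 2) III) the morphisms $\basis(q)$, $q\in\gamma_1\overrightarrow{\cap}\gamma_2$, form a basis of $\Hom^*(X_1,X_2)$, and running Property 4) I) for three nested push-offs whose chosen lifts all share the lift of $p$ --- so that these lifts form a \emph{fork} in the sense of Figure \ref{FigureIntersectionTriangle} --- exhibits a nonzero composite $\basis(p')\circ\basis(p)$ which, with the symmetric composition, would give a two-sided inverse; however this uses that $X_1\cong X_2$ up to shift and that $\End^*(X_1)$ is correspondingly small, which is delicate when $\gamma_1$ has essential self-intersections. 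The cone computation avoids this issue entirely, so I expect it to be the shortest and most robust argument.
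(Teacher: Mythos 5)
Your main argument takes a genuinely different route from the paper's. The paper proves invertibility by showing that post-composition with $f=\basis(p)$ is surjective: for a basis morphism $\basis(q)$ with target $X_{\gamma_2}$ it produces, via lifts to the universal cover in minimal position, a fork or intersection triangle through a lift of $p$, and then invokes the composition axiom (Property 4) of Definition \ref{DefinitionFukayaLikeQuintuples}) to factor $\basis(q)$ through $f$ --- essentially the fallback you sketch. Your primary argument instead computes the cone, and its geometric core is right: since a curve meets $\marked$ only at its endpoints, $p$ is a common endpoint of both arcs, and the resolution of that boundary intersection is the concatenation $\gamma_2\ast\gamma_1^{-1}$, which is null-homotopic rel its endpoints precisely because $\gamma_1\simeq\gamma_2$. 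The cone argument, when it applies, is shorter and makes the role of the hypothesis $p\in\partial S$ transparent.

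Two caveats, one of which is the reason the paper does not argue this way. First, a minor factual slip: homotopic arcs in minimal position need \emph{not} have disjoint interiors --- if $\gamma_1$ has essential self-intersections, each of them forces two unremovable interior crossings between $\gamma_1$ and a parallel copy. You do not actually use this claim (the resolution at $p$ is local), but the sentence is false as written. Second, and more seriously: the step $\Cone(\basis(p))=0$ invokes ``the standing convention [that] a contractible curve represents the zero object''. No such convention is stated in the paper; the Convention only declares that curves are \emph{assumed} non-contractible, and the bijection in Property 1) is defined only on non-contractible curves, so Property 3) as literally stated assigns no meaning to a contractible resolution. One can argue the zero-object reading is forced by consistency with the gentle-algebra prototype (there $\basis$ at a shared endpoint is the identity, whose cone is zero, and the resolution is contractible), but it is an extrapolation beyond the letter of the axioms, and the paper's proof deliberately stays inside Property 4) to avoid it. Relatedly, when the arcs are closed (both endpoints at $p$) the set $\gamma_1\overrightarrow{\cap}\gamma_2$ contains several elements located at the point $p$, and for the pairings that are not endpoint-to-matching-endpoint your resolution is $\gamma\ast\gamma$ rather than a contractible arc --- consistent with the fact that those morphisms are genuinely non-invertible --- so your argument needs to specify which intersection at $p$ is meant (a caveat the paper's own statement and proof share). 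If you make the zero-object convention explicit and fix that bookkeeping, your proof stands; otherwise the composition route is the safe one.
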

\begin{proof}
	We prove the assertion when $p$ is on the boundary. The other case is similar. Set $f\coloneqq\basis(p)$. It suffices to show that $\Hom(Y, f)$ is surjective.

Let $\gamma_3\simeq \gamma_1$, such that $\{\gamma_1, \gamma_2, \gamma_3\}$ is in minimal position. Let $\widetilde{\gamma}_1,\widetilde{\gamma}_2$ be lifts of $\gamma_1$ and $\gamma_2$ to the universal cover of $S$, which intersect at their end points. By replacing $\gamma_3$ by another representative we may assume that the lift $\widetilde{p} \in \widetilde{\gamma}_1 \overrightarrow{\cap} \widetilde{\gamma}_2$ of $p$ is an element in $\delta \overrightarrow{\cap} \widetilde{\gamma}_1$ and $\delta \overrightarrow{\cap}\widetilde{\gamma}_2$ for some lift $\delta$ of $\gamma_3$.\\
	 Let $q \in \gamma_3 \overrightarrow{\cap}\gamma_2$ and denote $\widetilde{\gamma}_3$ the lift of $\gamma_3$, which intersects $\widetilde{\gamma}_2$ in a lift $\widetilde{q} \in \widetilde{\gamma}_3 \overrightarrow{\cap} \widetilde{\gamma}_2$ of $q$. As all arcs are in minimal position,   $\widetilde{\gamma}_3 \overrightarrow{\cap}\widetilde{\gamma}_1$ contains a single element $\widetilde{q}'$ and $\widetilde{\gamma}_1$ and $\widetilde{\gamma}_2$ bound a disc. By construction, $\widetilde{p}, \widetilde{q}$ and $\widetilde{q}'$ form a fork or the corners of a triangle in clockwise order. Thus, $\basis(q) \circ \basis(p)$ is a non-zero multiple of $\basis(q')$, where $q' \in \gamma_3 \overrightarrow{\cap} \gamma_1$ is the projection of $\widetilde{q}'$. Since $q$ was arbitrary and $\Hom(Y,f)$ is linear, this shows that $\Hom(Y,f)$ is surjective.
\end{proof}

\begin{rem} In case of gentle algebras, the map which is associated to the end points of an arc is the identity morphism. 
\end{rem}

\begin{proof}[Proof of Proposition \ref{PropositionExistenceARTrianglesForArcs}]
	Choose $X=X_{\gamma}$ and $Y=X_{\tau \gamma}$ and set $h\coloneqq \basis(p)$, i.e.\@ $h \in \Hom(X, Y[m])$ for some $m \in 
	\mathbb{Z}$.

	Since $\T$ is Krull-Schmidt, it suffices to show that $h \circ f$ vanishes for all non-invertible morphisms $f: X' \rightarrow X$, where $X'$ is any type of  indecomposable object in $\T$. By Definition \ref{DefinitionFukayaLikeQuintuples} 2ii), it therefore suffices to prove that $h \circ \basis(q)=0$ for all intersections $q \in \delta \overrightarrow{\cap} \gamma$, where $\delta$ is any curve such that $\gamma$ and $\delta$ are in minimal position. 
	
	Suppose, $q$ is an interior point. By Definition \ref{DefinitionFukayaLikeQuintuples}, $h \circ \basis(q) \neq 0$ only if there exists a triangle in the universal cover of $\mathcal{S}$ bounded by subarcs of lifts $\widetilde{\delta}$, $\widetilde{\gamma}$ and $\widetilde{\tau(\gamma)}$  of $\delta, \gamma$ and $\tau(\gamma)$ in clockwise order. Since $\{\gamma, \tau(\gamma), \delta\}$ are in minimal position, $\widetilde{\delta}$ intersects each of $\widetilde{\gamma}$ and $\widetilde{\tau(\gamma)}$ at most once in the interior. Since $\widetilde{\tau \gamma}$ and $\widetilde{\gamma}$ have neighboring end points we conclude that at least one of the corners of the triangle lies on the boundary. 
	Suppose that two of the corners of such a triangle were boundary intersections, then lifts of $\gamma, \tau(\gamma)$ and $\delta$ would be arranged in one of the ways shown in Figure \ref{FigureExistenceARTriangles}.
	\begin{figure}[H]
		\begin{displaymath}
		\begin{array}{ccc}
		
		{\begin{tikzpicture}
			\draw[->] (-1*1.5,0)--(2*1.5,0);
			\draw [<-](-1*1.5, 2)--(2*1.5, 2);
			
			\draw (0,0)--(1.5, 2);
			\draw (1.5, 0)--(0, 2);
			\draw (0, 2)--(0,0);
			\foreach \i in {0,...,1}
			{
				\filldraw (\i*1.5, 2) circle (2pt);
				\filldraw (\i*1.5, 0) circle (2pt);
			}
			\draw (-0.3,1) node{$\widetilde{\delta}$};
			\draw (1.7,1.4) node{$\widetilde{\tau(\gamma)}$};
			\draw (1.6,0.5) node{$\widetilde{\gamma}$};
			\end{tikzpicture}}
		& {\quad \quad} &
		{\begin{tikzpicture}
			\draw[<-] (1*1.5,0)--(-2*1.5,0);
			\draw[->] (1*1.5, 2)--(-2*1.5, 2);
			
			\draw (0,0)--(-1.5, 2);
			\draw (-1.5, 0)--(0, 2);
			\draw (0, 2)--(0,0);
			\foreach \i in {0,...,1}
			{
				\filldraw (-\i*1.5, 2) circle (2pt);
				\filldraw (-\i*1.5, 0) circle (2pt);
			}
			\draw (0.3,1) node{$\widetilde{\delta}$};
			\draw (-1.6,1.5) node{$\widetilde{\gamma}$};
			\draw (-1.7,0.6) node{$\widetilde{\tau(\gamma)}$};
			\end{tikzpicture}}
		\end{array}
		\end{displaymath}
		\caption{} \label{FigureExistenceARTriangles}
	\end{figure}
	Note that we do not assume that the shown boundary components are distinct. However, in order for $p$ to be interior, all endpoints of the lifts must be pairwise distinct.
	In any case, we observe that the unique intersection of $\widetilde{\tau(\gamma)}$ and $\widetilde{\delta}$ only defines an element in $\tau(\gamma) \overrightarrow{\cap} \delta$ but not in $ \delta\overrightarrow{\cap}\tau(\gamma)$, implying that $h \circ \basis(q)=0$.\\
	In case only one of the corners lies on the boundary, then again, because  every pair of lifts intersects at most once in the interior, it follows that $\delta \simeq \gamma$ or $\delta \simeq \tau(\gamma)$ showing that $\basis(q)$ is invertible as shown in Lemma \ref{LemmaBoundaryPointsAreInvertible}.

	Similarly, if $p \in \partial \mathcal{S}$, then $\gamma$ and $\tau(\gamma)$ are arcs connecting consecutive marked points on the boundary and at all lifts of $p$ to a universal cover of $\mathcal{S}$, there exist lifts of $\gamma$ and $\tau(\gamma)$, which intersect as in Figure \ref{FigureExistenceARTriangles2}.
	\begin{figure}[H]
		\begin{displaymath}
		\begin{tikzpicture}
		\draw[->] (-1,0)--(3,0);
		\foreach \i in {0,...,2}
		{
			\filldraw (\i,0) circle (2pt);
		}
		\foreach \i in {0,1}
		{
			\draw ({\i+1+0.5*sin(0)}, {0}) arc (0:180:0.5);
		}
		
		\end{tikzpicture}
		\end{displaymath}
		\caption{} \label{FigureExistenceARTriangles2}
	\end{figure}
	In particular, if $h \circ \basis(q) \neq 0$, for some $q$ and $\delta$ as above, $q$ is required to be a boundary intersection and certain lifts of $\gamma, \tau(\gamma)$ and $\delta$ are required to form a fork. However, because $\basis(q)$ is a morphism from an object representing $\delta$ to an object representing $\gamma$, $q$ cannot coincide with the unique intersection of the lifts of $\gamma$ and $\tau(\gamma)$, unless $\delta \simeq \gamma$, in which case $\basis(q)$ is invertible according to Lemma \ref{LemmaBoundaryPointsAreInvertible}. In any case, it follows that $h \circ \basis(q)=0$.
\end{proof}
\begin{cor}\label{CorollaryAuslanderReitenTranslationForArcObjects}
	Let $X \in \T$ be an arc object and let $\gamma \in \gamma(X)$. Then, the Auslander-Reiten translate of $X$ exists and $\tau(\gamma) \in \gamma(\tau X)$.
\end{cor}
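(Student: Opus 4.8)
The corollary is essentially a reformulation of Proposition~\ref{PropositionExistenceARTrianglesForArcs}, so the plan is to extract it by tracking degrees and shifts. Fix a graded representative $\gamma_X\in\gamma(X)$ in minimal position; since $\tau$ is isotopic to the identity, $\tau(\gamma_X)$ inherits a natural grading, and by hypothesis there is a distinguished intersection $p\in\gamma_X\overrightarrow{\cap}\tau(\gamma_X)$ of degree $1$ (Figure~\ref{FigureDistinguishedIntersection}). Set $h\coloneqq\basis(p)$. Proposition~\ref{PropositionExistenceARTrianglesForArcs} says that $h$ is the connecting morphism of an Auslander--Reiten triangle in $\T$, and the grading-compatibility of $\basis$ (Property 2.I) of Definition~\ref{DefinitionFukayaLikeQuintuples}) together with $\deg p=1$ gives $h\in\Hom\!\left(X_{\gamma_X},X_{\tau(\gamma_X)}[1]\right)=\Hom(X,X_{\tau(\gamma_X)}[1])$.

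\textbf{Identifying the translate.} Recall that the connecting morphism of an Auslander--Reiten triangle $A\to E\to C\xrightarrow{\,h\,}A[1]$ has source equal to the third term $C$ and target equal to $A[1]=(\tau C)[1]$; moreover such a triangle is determined by $C$. In our situation the source of $h$ is $X$, so $h$ is the connecting morphism of the Auslander--Reiten triangle \emph{ending at $X$}; in particular that triangle exists, i.e.\ $\tau X$ is defined. Comparing targets yields $(\tau X)[1]=X_{\tau(\gamma_X)}[1]$, hence $\tau X\cong X_{\tau(\gamma_X)}$. Since $\gamma(-)$ is a bijection (Property 1)) and $\gamma(X_{\tau(\gamma_X)})$ is by definition the homotopy class of the naturally graded curve $\tau(\gamma_X)$, we get $\tau(\gamma_X)\in\gamma(\tau X)$. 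Finally, $\tau$ is a diffeomorphism and thus preserves homotopy classes, so $\tau(\gamma)\simeq\tau(\gamma_X)$ for the original representative $\gamma$, giving $\tau(\gamma)\in\gamma(\tau X)$.

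\textbf{Main subtlety.} The only point that needs care is the bookkeeping of the shift: one must ensure that it is the natural grading on $\tau(\gamma_X)$ — and not some shift of it — that represents $\tau X$, equivalently that $h$ has degree exactly $1$ and not $1+n$ for some $n\neq 0$. This is precisely what the normalisation ``the distinguished intersection has degree $1$'' records; it is forced by $\tau$ being isotopic to the identity, so that the grading on $\tau(\gamma_X)$ is transported from that of $\gamma_X$ with no correction, together with the explicit local picture at the distinguished intersection. With this in place, no further input is required, and the corollary follows directly from the proposition.
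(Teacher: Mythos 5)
Your proof is correct and follows exactly the route the paper intends: the paper states this corollary with no separate proof, treating it as an immediate consequence of Proposition~\ref{PropositionExistenceARTrianglesForArcs}, and your argument simply makes explicit the shift bookkeeping (the distinguished intersection having degree $1$, so $h\in\Hom(X,X_{\tau(\gamma)}[1])$ identifies $X_{\tau(\gamma)}$ as $\tau X$) together with the uniqueness of the Auslander--Reiten triangle ending at $X$.
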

\noindent In analogy to Corollary 6.3 in \cite{Bobinski}, the previous Corollary enables us to identify the arcs which give rise to Auslander-Reiten triangles with indecomposable middle term:
\begin{cor}\label{CorollaryIndecomposableMiddleTermARTriangle}
	Let $X \in T$ be an arc object and let
	\triangle[X][Y][Z][][][][1]
	be an Auslander-Reiten triangle. Then, $Y$ is indecomposable if and only if $\gamma(X)$ (or, equivalently, $\gamma(Z)$) contains a boundary segment.
\end{cor}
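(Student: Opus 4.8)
The plan is to identify the middle term $Y$ with the resolution of a distinguished intersection, and then reduce the indecomposability of $Y$ to a connectivity statement about that resolution. Since $X$ is an arc object, so is its Auslander--Reiten translate, and by Corollary \ref{CorollaryAuslanderReitenTranslationForArcObjects} I would represent $Z = \tau^{-1}X$ by a finite graded arc $\gamma = \gamma_Z$ in minimal position and $X$ by $\tau\gamma$. Applying Proposition \ref{PropositionExistenceARTrianglesForArcs} to $\gamma$ produces an Auslander--Reiten triangle whose connecting morphism is $\basis(p)$, where $p \in \gamma \overrightarrow{\cap} \tau\gamma$ is the distinguished intersection; by uniqueness of Auslander--Reiten triangles this is, up to isomorphism, the given triangle, so $Y$ agrees (up to shift) with $\Cone(\basis(p))$.

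Next I would observe that $p$ is never a puncture: punctures lie on unmarked boundary components, which $\tau$ fixes, whereas $p$ is produced near an endpoint of $\gamma$. Hence Property 3 of Definition \ref{DefinitionFukayaLikeQuintuples} applies and $\Cone(\basis(p))$ is represented by the resolution of $p$ from Figure \ref{FigureResolutionOfCrossings}; moreover $\basis(p)$ is not invertible, being the connecting morphism of an Auslander--Reiten triangle between non-isomorphic objects, so this resolution is non-empty. Therefore $Y$ is indecomposable if and only if the resolution of $p$ is a single connected curve. This reduces the statement to the geometric claim that the resolution of the distinguished intersection of $\gamma$ and $\tau\gamma$ is connected if and only if $\gamma$ is a boundary segment; since $\tau$ carries boundary segments to boundary segments, this is the same as $\gamma_X = \tau\gamma$, hence $\gamma(X)$ (equivalently $\gamma(Z)$), containing a boundary segment.

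For the geometric claim, the easy direction is when $\gamma$ is the boundary segment joining neighbouring marked points $a, a'$: as $\tau$ restricts on $\partial S$ to the map sending each marked point to its successor, $\tau\gamma$ is the boundary segment from $a'$ to the successor $a''$ of $a'$, the two meet only at the marked point $a'$, this is the (unique, hence distinguished) intersection, and resolving it concatenates $\gamma$ and $\tau\gamma$ and pushes the result off $a'$, yielding a single arc from $a$ to $a''$. For the converse I would argue that if $\gamma$ is not a boundary segment then the distinguished intersection $p$ is an interior intersection (cf.\ Figure \ref{FigureDistinguishedIntersection}); resolving a transverse interior intersection of two arcs reconnects the four local strands into two arcs, as in the left picture of Figure \ref{FigureResolutionOfCrossings}, and each is a non-contractible arc because $\{\gamma, \tau\gamma\}$ is in minimal position, so the resolution is disconnected and $Y$ decomposes. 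I expect this converse to be the main obstacle: one must examine the collar of each boundary component on which $\gamma$ ends and track how $\tau$ drags the endpoints of $\gamma$ to see that the degree-one intersection produced by the twist is genuinely interior unless $\gamma$ hugs the boundary between consecutive marked points; the degenerate boundary components carrying only one or two marked points require a separate but routine check.
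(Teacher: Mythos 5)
Your proposal is correct and is essentially the argument the paper intends (the corollary is stated without an explicit proof, as a consequence of Proposition \ref{PropositionExistenceARTrianglesForArcs} and Corollary \ref{CorollaryAuslanderReitenTranslationForArcObjects}): identify $Y$ with the resolution of the distinguished intersection $p\in\gamma\overrightarrow{\cap}\tau\gamma$ and observe that this resolution is a single curve exactly when $\gamma$ is a boundary segment. The one point worth tightening is the converse: the cleanest reason the two resolved strands are two non-trivial arcs is the universal-cover picture of Figure \ref{FigureExistenceARTriangles} — each strand joins two lifted marked points that are distinct precisely because $\gamma$ is not a boundary segment — rather than an appeal to minimal position (and $p$ is never a puncture simply because a finite arc has no endpoint at a puncture).
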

\smallskip

\subsection{The Serre pairing}\label{SectionSerrePairing}
We give a geometric descrition of the Serre pairing.
\noindent Let $\gamma$ be a finite arc on $\mathcal{S}$ and let $p \in \gamma \overrightarrow{\cap} \tau \gamma$ be the distinguished intersection. For an indecomposable object $U \in \T$, the map $h:=\basis(p)$ gives rise to a perfect pairing 

\begin{displaymath}\begin{tikzcd} (-,-)_h: \Hom(X, U) \times \Hom(U,\tau X[1]) \arrow{r} & k, \end{tikzcd}\end{displaymath}

\noindent defined by the formula $(f,g)_h= \eta(g \circ f)$, where $\eta \in  {\Hom(X,\tau X[1])}^{\ast}$ is the projection onto $h$ with respect to the basis associated with all intersections $\gamma \overrightarrow{\cap} \tau(\gamma)$.
\medskip
 
 \noindent The diffeomorphism $\tau: \mathcal{S} \rightarrow \mathcal{S}$ is isotopic to the identity (not relative to the boundary) and, if $\gamma_U \in \gamma(U)$ is a curve in minimal position with $\gamma$, then such an isotopy induces a bijection 
\[\begin{tikzcd}\mathbb{S}:\gamma \overrightarrow{\cap} \gamma_U \arrow{r} & \gamma_U \overrightarrow{\cap} \tau \gamma.\end{tikzcd}\]
  From the geometric description of compositions it follows
 $\left(\basis(q), \basis(\mathbb{S}(q'))\right)_h \neq 0$ for $q,q'  \in \gamma \overrightarrow{\cap} \gamma_U$ if and only if  $q=q'$.

\subsection{The category of perfect objects}
\ \smallskip

\noindent Corollary \ref{CorollaryAuslanderReitenTranslationForArcObjects} motivates the following definition of a perfect subcategory in every surface-like category.
\begin{definition}
	Let $\T$ be a surface-like category. Define the \textbf{category of perfect objects} $\Perf(\T)$ as the full subcategory of $\T$ containing all objects $X \in \T$ such that for all object $Y \in \T$, $\Hom^*(X,Y)$ has finite dimension.

\end{definition}

\noindent We make the following observations: \smallskip

\begin{itemize}
    \setlength\itemsep{0.5em}
    
    \item  It follows from the five lemma that $\Perf(\T)$ is a triangulated subcategory and that triangle equivalences between surface-like categories restrict to their perfect subcategories.
    
    \item The category $\Perf(\T)$ consists of all objects which are finite direct sums of finite arc objects and loop objects. In particular, if $\T=\mathcal{D}^b(A)$ for some gentle algebra $A$, then $\Perf(\T)$ coincides with the usual definition of the category of perfect complexes.

\end{itemize}
\smallskip

\subsection{Fractional Calabi-Yau objects and families of \texorpdfstring{$\tau$}{tau}-invariant objects}\label{SectionFractionalCalabiYauTauInvariantFamilies}

\noindent We characterize fractional Calabi-Yau objects in surface-like categories and recover the geometric interpretation of derived invariants for (graded) gentle algebras in \cite{OpperPlamondonSchroll} and \cite{LekiliPolishchukGentle}. These invariants were introduced in \cite{AvellaAlaminosGeiss} and extended in \cite{LekiliPolishchukGentle}. Our observations lead us to a natural notion families of $\tau$-invariant objects in the spirit of ``families of band modules'' which are indexed by a continuous parameter.
\medskip

\noindent {Recall that an object $X \in \mathcal{T}$ in a Hom-finite triangulated Krull-Schmidt category $\mathcal{T}$ with Serre functor $\mathbb{S}$ is \textbf{$(m,n)$-fractional Calabi-Yau} if and only if $\mathbb{S}^mX \cong X[n]$.} By Theorem I.2.4, \cite{ReitenVanDenBergh}, the existence of a Serre functor of $\mathcal{T}$ is equivalent to the property that $\mathcal{T}$ has  Auslander-Reiten triangles. In particular, $X$ is $(m,n)$-fractional Calabi-Yau if and only if $\tau^mX \cong X[n-m]$, where $\tau=\mathbb{S}[-1]$ is the Auslander-Reiten translation. 
\begin{prp}\label{PropositionFractionallyCalabiYauArcs}
	Let $X \in \T$ be an arc object. The following is true.
	\begin{enumerate}
	\setlength\itemsep{0.5em}
	\item There exist $m,n \in \mathbb{Z}$ such that $\tau^m X \cong X[n]$ if and only if $\gamma(X)$ contains a boundary arc.
	\item  If $\gamma(X)$ contains an arc with image in a component $B \subseteq \partial S$, then $X$ is $(\#(B \cap \marked), -\omega_B)$-fractional Calabi-Yau, where $\omega_B$ denotes the winding number of the simple boundary loop which winds around $B$ in clockwise direction.
	\end{enumerate}
	
\noindent In particular, $\tau X \cong X$ if and only if $\gamma(X)$ contains a boundary arc with image in a component $B$ with a single marked point and $\omega_B=0$. 
\end{prp}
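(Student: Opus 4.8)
The plan is to pass to the surface via the geometric model of the Auslander--Reiten translation, and then reduce both assertions to statements about the diffeomorphism $\tau$ and about winding numbers. Fix a representative $\gamma=\gamma_X$ in minimal position. By Corollary \ref{CorollaryAuslanderReitenTranslationForArcObjects}, $\tau X$ is represented by the graded curve $\tau(\gamma)$, where $\tau\colon\mathcal{S}\to\mathcal{S}$ is the fractional twist of Definition \ref{DefinitionFractionalTwist}, and more generally $\tau^{m}X$ is represented by $\tau^{m}(\gamma)$. Since $\gamma(-)$ is a bijection compatible with shifts and the local system on an arc is unique, for $m\neq 0$ and $n\in\mathbb{Z}$ one has $\tau^{m}X\cong X[n]$ if and only if $\tau^{m}(\gamma)\simeq\gamma[n]$ as graded curves; in particular $\tau^{m}X\cong X[n]$ holds for \emph{some} $n$ precisely when $\tau^{m}(\gamma)$ and $\gamma$ represent the same unoriented curve up to homotopy. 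This turns the statement into surface topology.

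For (1) I would first isolate the topological content: for a finite arc $\gamma$, the conditions ``$\gamma(X)$ contains a boundary arc'', ``$\gamma$ can be homotoped off $\partial_{B}$ for every boundary component $B$'' (where $\partial_{B}$ is a fixed simple closed curve cutting off a collar of $B$), and ``$\gamma$ lies up to homotopy in a single such collar'' are all equivalent. Indeed, a connected arc disjoint from all $\partial_{B}$ with endpoints on $\partial S$ must lie in one collar $B\times[0,1]$, and an arc in an annulus with both endpoints on one boundary circle and missing the other circle is homotopic rel endpoints into that circle, hence is a boundary arc; the converse is clear. Next I would use that $\tau_{B}^{\#(B\cap\marked)}$ is a boundary Dehn twist $D_{B}$ about $B$, that the $\tau_{B}$ for distinct $B$ commute, and that if $\tau^{m}$ fixes $[\gamma]$ then so does $\tau^{km}$. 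Choosing $m$ divisible by $\operatorname{lcm}_{B}\#(B\cap\marked)$ presents $\tau^{m}$ as a multi-twist $\prod_{B}D_{B}^{\pm m/\#(B\cap\marked)}$ with all exponents nonzero, about the pairwise disjoint, pairwise non-isotopic curves $\partial_{B}$. If $\gamma$ is a boundary arc it is disjoint from all $\partial_{B}$, so this multi-twist fixes $[\gamma]$ and $\tau^{m}X\cong X[n]$ for the integer $n$ recording the induced shift of the grading. Conversely, if $\tau^{m}X\cong X[n]$ for some $m\geq 1$, then $\tau^{m}(\gamma)\simeq\gamma$, hence $\tau^{km}$ fixes $[\gamma]$ for a suitable multiple $km$, and the standard fact that a non-trivial multi-twist about disjoint curves fixes a curve class only when the curve is disjoint from every twisting curve forces $\gamma$ to be a boundary arc.

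For (2) and the concluding assertion, let $\gamma$ be a boundary arc with image in $B$ and put $N=\#(B\cap\marked)$. Taking $\gamma$ on $\partial S$, only $\tau_{B}$ moves it, and $\tau_{B}^{N}=D_{B}$ fixes $\partial S$ pointwise; hence $\tau^{N}(\gamma)=\gamma$ as an unoriented curve and $\tau^{N}X\cong X[n]$ for some $n\in\mathbb{Z}$. To identify $n$, observe that the grading on $\tau^{N}(\gamma)$ is obtained by transporting that of $\gamma$ along the isotopy of $\tau^{N}$ to the identity, an isotopy that sweeps $\gamma$ once around $B$; thus $n$ is the grading monodromy of this loop. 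I would compute it by concatenating the $N$ boundary segments of $B$, carrying the $\tau$-transported gradings so that each corner is the distinguished intersection of degree $1$ supplied by Proposition \ref{PropositionExistenceARTrianglesForArcs}, into the boundary loop and running the bookkeeping of Lemma \ref{LemmaWindingNumberPiecewiseSmoothCurves}. This expresses $n$ in terms of $\omega_{B}$ and $N$ and yields that $X$ is $(N,-\omega_{B})$-fractional Calabi--Yau. Specialising to $N=1$ gives the final ``in particular'': for a component with a single marked point the boundary arc is $\tau$-invariant exactly when the corresponding shift vanishes, and part (1) guarantees that no other arc object can be $\tau$-invariant.

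The main obstacle is this last computation: translating ``the family of boundary segments sweeping once around $B$'' into the winding number $\omega_{B}$ requires carefully distinguishing the $\tau$-transported gradings -- which are consistent around the loop only up to the very shift $n$ one is trying to compute -- from an honest consistent grading of the boundary loop, and it is sensitive to the orientation conventions entering Lemma \ref{LemmaWindingNumberPiecewiseSmoothCurves} and Definition \ref{DefinitionFractionalTwist}. Everything else is a formal consequence of the geometric dictionary together with standard facts about Dehn twists and multi-twists on surfaces.
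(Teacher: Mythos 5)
Your proposal is essentially correct, but the two halves relate to the paper's proof differently. For part (1) you take a genuinely different route: the paper disposes of the ``only if'' direction by lifting $\gamma$ to the universal cover and observing directly that $\tau^m\gamma\simeq\gamma$ forces $\gamma$ to be boundary-parallel, whereas you pass to a power of $\tau$ that is a multi-twist $\prod_B D_B^{k_B}$ and invoke the rigidity of multi-twists. That works, and it cleanly explains why $m$ must be $\#(B\cap\marked)$, but it needs two caveats the universal-cover argument avoids: the curves $\partial_B$ can be inessential or mutually isotopic (unpunctured disc, cylinder), in which cases the ``standard fact'' is vacuous or must be restated, and one must check separately that the conclusion still holds there (it does, since in those cases every finite arc disjoint from the $\partial_B$ is already a boundary arc); and the representing arc $\gamma$ need not be simple, so the intersection-number inequality for multi-twists has to be applied in the form valid for non-simple arcs (e.g.\ via the annular cover of $\partial_B$). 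For part (2) your plan is the paper's plan, but you stop exactly where the paper does the work: the ``main obstacle'' you flag is resolved by Lemma \ref{LemmaWindingNumberPiecewiseSmoothCurves} together with Proposition \ref{PropositionExistenceARTrianglesForArcs}. Concretely, the composite of the connecting morphisms gives
\[X \rightarrow \tau X[1] \rightarrow \cdots \rightarrow \tau^m X[m]\cong X[n+m],\]
so the total degree accumulated around the piecewise smooth loop built from the segments $\tau^i\gamma$ and the distinguished corners $p_i$ is $n+m$; each corner contributes $\sigma_i=1$, and the smoothed loop is the counter-clockwise boundary loop, whence $-\omega_B=(n+m)-m=n$. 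There is no circularity in the ``transported gradings'' issue you worry about, because the discrepancy around the loop is by definition the shift $n$ and enters the formula only through the sum $\sum\deg(p_i)=n+m$. So the argument closes; I would just ask you to either carry out this one-line computation or cite the two lemmas that make it immediate.
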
 	
\begin{proof}
	Lifting $\gamma \in \gamma(X)$ to the universal cover, we see that $\tau^m\gamma \simeq \gamma$ implies that $\gamma$ is homotopic to a boundary arc with image in a component $B \subseteq \partial S$ and $\tau^m \gamma \simeq \gamma$ if and only if $m$ equals the number of marked points on $B$. Suppose $\tau^m X \cong X [n]$ and let $\gamma \in \gamma(X)$ be a boundary arc contained in a component $B \subset \partial S$. For each $i \in [0,m)$, denote $p_i$  the distinguished intersection in $\tau^i\gamma \overrightarrow{\cap} \tau^{i+1} \gamma$ corresponding to a connecting morphism $\tau^iX \rightarrow\tau^{i+1}X[1]$ of an Auslander-Reiten triangle ending in $\tau^i X$ , see Lemma \ref{PropositionExistenceARTrianglesForArcs}. We obtain a sequence of morphisms
	\[X \rightarrow \tau X[1] \rightarrow \tau^2 X[2] \rightarrow \cdots \rightarrow \tau^m X[m]\cong X[n+m].\]
	The arcs $\tau^i \gamma$ and the intersections $p_0, \dots, p_m$ determine a piecewise smooth loop which is homotopic to a simple loop which winds around $B$ in counter-clockwise direction.
	It follows $-\omega_B=(n+m)- m \cdot 1=n$.
\end{proof}

\noindent The previous lemma shows that shift orbits of fractionally Calabi-Yau objects are encoded in the number of marked points on the boundary components and winding numbers of boundary curves. However, as shown in \cite{AvellaAlaminosGeiss}, this is what the AAG-invariant of a gentle algebra counts. We recover Theorem 6.1, \cite{OpperPlamondonSchroll} and Theorem 3.2.2, \cite{LekiliPolishchukGentle}.
\begin{cor}\label{AGInvariantFukaya}
Let $A$ be a gentle algebra and let $B_1, \dots, B_n$ be the boundary components of $S_A$. Let $n_{B_i}$ be defined as in Lemma \ref{PropositionFractionallyCalabiYauArcs}. The AAG-invariant of $A$ is given by the  set of pairs $(m_i, m_i - \omega_{B_i})$ ($i \in [1,n]$), where 
 $m_i$ is given by the number of marked points on $B_i$ and $\omega_{B_i}$ is defined as in Proposition \ref{PropositionFractionallyCalabiYauArcs}.
\end{cor}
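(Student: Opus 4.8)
The statement is Corollary \ref{AGInvariantFukaya}, which identifies the AAG-invariant of a gentle algebra $A$ with the finite set of pairs $(m_i, m_i - \omega_{B_i})$ indexed by the boundary components $B_i$ of $S_A$. The plan is to reduce this entirely to Proposition \ref{PropositionFractionallyCalabiYauArcs} together with the original description of the AAG-invariant in \cite{AvellaAlaminosGeiss}, since the hard geometric content (that fractionally Calabi-Yau arc objects are precisely boundary arcs, and the identification of the Calabi-Yau dimensions with $\#(B\cap\marked)$ and $-\omega_B$) has already been established in Proposition \ref{PropositionFractionallyCalabiYauArcs}. Theorem \ref{TheoremGentleAlgebrasFukayaLike} guarantees that $\mathcal{D}^b(A)$ is surface-like with model $(\mathcal{S}_A, \eta_A)$, so Proposition \ref{PropositionFractionallyCalabiYauArcs} applies verbatim.

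\textbf{Step 1: recall the combinatorial description of the AAG-invariant.}
First I would recall from \cite{AvellaAlaminosGeiss} that the AAG-invariant of a gentle algebra is a function on $\mathbb{Z}^2$, or equivalently a finite multiset of pairs of non-negative integers, extracted from the combinatorics of so-called ``forbidden paths'' (maximal antipaths) and maximal (non-)cyclic paths of the gentle quiver $(Q,I)$. Concretely, each pair $(p,q)$ in the invariant records the length of a maximal non-trivial permitted threading together with the length of the complementary forbidden threading going around a cycle. The key point, made precise in \cite{AvellaAlaminosGeiss}, is that these pairs are in bijection with ``cyclic'' data of the quiver, and by \cite{OpperPlamondonSchroll} (or equivalently the surface construction recalled in Section \ref{SectionGradedSurfaceGentleAlgebra}) this cyclic data is exactly the combinatorics of the boundary components $B_1,\dots,B_n$ of $S_A$: a boundary component $B_i$ corresponds to one such cycle, the number of marked points $m_i$ on $B_i$ counts one length, and the ``winding'' contribution gives the other.

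\textbf{Step 2: match the first coordinate.}
I would then observe that by Proposition \ref{PropositionFractionallyCalabiYauArcs}, for an arc object $X$ whose curve lies on the boundary component $B_i$, one has $\tau^{m_i}X \cong X[m_i - \omega_{B_i}]$ with $m_i = \#(B_i \cap \marked)$, and moreover $m_i$ is minimal with the property $\tau^{m_i}\gamma \simeq \gamma$ (this minimality was shown inside the proof of Proposition \ref{PropositionFractionallyCalabiYauArcs}). Thus the $\tau$-period of the shift-orbit of such an object is exactly $m_i$. On the other hand, the first coordinate of a pair in the AAG-invariant, interpreted representation-theoretically as in \cite{AvellaAlaminosGeiss} and reinterpreted via the string/band combinatorics, is precisely this period. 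Hence the first coordinates match component by component.

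\textbf{Step 3: match the second coordinate and the obstacle.}
Finally, I would identify the second coordinate $m_i - \omega_{B_i}$ with the degree-shift $n$ appearing in $\tau^{m_i}X \cong X[n]$, which Proposition \ref{PropositionFractionallyCalabiYauArcs} computes as $n = -\omega_{B_i}$; but the AAG-convention records the pair as $(m_i, m_i - \omega_{B_i})$ rather than $(m_i, -\omega_{B_i})$ because the AAG second entry counts the \emph{total} homological shift accumulated along the $m_i$ Auslander-Reiten triangles, which in the surface picture is the piecewise-smooth loop winding number $-\omega_{B_i} + m_i$ appearing in the displayed computation at the end of the proof of Proposition \ref{PropositionFractionallyCalabiYauArcs} (there the identity $-\omega_B = (n+m) - m$ is recorded, so $n + m = -\omega_B + m$ is exactly the AAG second coordinate). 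The main obstacle is purely bookkeeping: one must align the sign and normalization conventions of \cite{AvellaAlaminosGeiss} with the conventions for winding numbers, gradings and shifts used here, and check that the correspondence ``boundary component $\leftrightarrow$ cyclic datum of the quiver'' from \cite{OpperPlamondonSchroll} is a bijection that sends $B_i$ to the pair producing $(m_i, m_i - \omega_{B_i})$; once this dictionary is in place, the corollary follows immediately from Proposition \ref{PropositionFractionallyCalabiYauArcs} and the cited theorems, with no further geometric input required.
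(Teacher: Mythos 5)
Your proposal is correct and follows essentially the same route as the paper: the paper treats this corollary as an immediate consequence of Proposition \ref{PropositionFractionallyCalabiYauArcs} together with the fact (from \cite{AvellaAlaminosGeiss} and the cited theorems of \cite{OpperPlamondonSchroll} and \cite{LekiliPolishchukGentle}) that the AAG-invariant records exactly the $\tau$-periods and accumulated shifts of the fractional Calabi-Yau shift-orbits, which is what you spell out in Steps 2 and 3. Your identification of the second coordinate as $n+m=m_i-\omega_{B_i}$ via the displayed computation $-\omega_B=(n+m)-m$ matches the paper's intended reading.
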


\subsection{Projective and affine coordinate transformations}\label{SectionKroneckerAlgebra}
\ \smallskip

\noindent In the presence of $\tau$-invariant arc objects the topological distinction between arcs and loops is no longer reflected in $\T$ as can be illustrated by the Kronecker quiver and certain canonical algebras of tubular type.
\subsubsection{The Kronecker quiver and the projective linear group}
\ \smallskip

\noindent  Let $A$ denote the Kronecker algebra, i.e. \@ the path algebra of the quiver
	\begin{displaymath}
	\begin{tikzcd}
	x \arrow[bend left]{r}{\alpha} \arrow[bend right, swap]{r}{\beta} & y,
	\end{tikzcd}
	\end{displaymath} 
	 The algebra $A$ is gentle with surface model $(\mathcal{S}_A, \eta_A)$, where $\mathcal{S}_A$ is a cylinder with one marked point on each boundary component and $\eta_A$ is the unique homotopy class of line fields such that $\omega_B=0$ for every boundary component $B$. There exists a natural embedding 
	 \[\begin{tikzcd}\PGL_2(k) \arrow[hookrightarrow]{r} & \Aut(\mathcal{D}^b(A)),\end{tikzcd}\]
	  the elements of the image of which we call  \textbf{ projective coordinate transformations}. Namely, let $M \in \GL_2(k)$ be an invertible matrix and denote by $\sigma=\sigma_M$ the $k$-linear automorphism of $A$ satisfying $\sigma(x)=x$, $\sigma(y)=y$ and
	 
	\[\begin{pmatrix}\sigma(\alpha) \\ \sigma(\beta)\end{pmatrix}= M\begin{pmatrix}\alpha \\ \beta\end{pmatrix}. \] \smallskip
	
\noindent	Denote by $_{\sigma}A$ the $A$-bimodule on the vector space $A$ with its regular right $A$-module structure whose left $A$-module structure is twisted by $\sigma$. By this, we mean that $a \in A$ acts on $x \in {_{\sigma}A}$ by $a.x\coloneqq  \sigma(a) \cdot x$, where $\cdot$ denotes the multiplication of $A$. The derived functor of $_{\sigma}A \otimes -$ defines an element in $\Aut(\mathcal{D}^b(A))$ and two of such functors associated with $M$ and $M'$ are isomorphic if and only if $M$ and $M'$ define the same element in $\PGL_2(k)$.\medskip

	\noindent Consider the family $\{B_{\lambda, \mu} \, | \, (\lambda, \mu) \neq (0,0)\}$ of $\tau$-invariant $A$-modules
	\begin{displaymath}
	\begin{tikzcd}[scale=1.5]
	 k \arrow[bend left]{r}{\lambda} \arrow[bend right, swap]{r}{\mu} & k,
	\end{tikzcd}
	\end{displaymath}
	which are quasi-isomorphic to the complexes
	
	\begin{displaymath}
	\begin{tikzcd}
	\cdots \arrow{r} & 0 \arrow{r} & P_{y} \arrow{rr}{\lambda \alpha + \mu \beta} && P_{x} \arrow{r}  & 0 \arrow{r} & \cdots,
	\end{tikzcd}
	\end{displaymath}	
	
	The family of objects $B_{\lambda, \mu}$ corresponds to the skyscraper sheaf of the point $(\lambda : \mu)$ under Beilinsons's equivalence $\mathcal{D}^b(\Coh \mathbb{P}^1) \cong \mathcal{D}^b(A)$ and the $\PGL_2(k)$-action corresponds to  the action of $\Aut(\mathbb{P}^1)$.\smallskip
		
\noindent The object $B_{\lambda, \mu}$ is a loop object if and only if $\lambda, \mu \neq 0$ and in that case is represented by the unique loop equipped with a $1$-dimensional indecomposable local system. The arc objects $B_{0, \mu}$ and $B_{\lambda, 0}$ are represented by the boundary segments. It is not difficult to see that $\PGL_2(k)$ acts transitively on the isomorphism classes of $B_{\lambda, \mu}$ in this way.\medskip
\subsubsection{Canonical algebras and affine transformations}
\ \smallskip

\noindent In a similar vein, one defines algebra automorphisms for certain canonical algebras. For every gentle quiver $(Q,I)$ a coordinate transformation may be defined for every subquiver $Q'$ of $Q$ of the form.
	\begin{displaymath}
	\begin{tikzcd}[row sep=1ex]
	x \arrow[bend left]{rr}{\alpha} \arrow[bend right, swap]{dr}{\beta_1} & & y \\ & \cdots \arrow[bend right, swap]{ur}{\beta_m}
	\end{tikzcd}
	\end{displaymath} 
where $\alpha$ is a maximal antipath in $Q$ and $\beta=\beta_m \cdots \beta_1$ is a maximal admissible path in $Q$. By a \textbf{maximal antipath} we mean that if $\delta \in Q_1$, then $\delta \alpha \not \in I$ (resp. $\alpha \delta \not \in I$) whenever the composition of arrows is defined.

\noindent For any such subquiver $Q'$ and every tuple $\Lambda=(\lambda_{\alpha}, \lambda_{\beta}, \lambda_1, \dots, \lambda_m) \in (k^{\times})^{m+1}$, one defines an algebra automorphism $\sigma=\sigma_{\Lambda}$ by requiring that
\[ \sigma(x) = \begin{cases} \lambda_{\alpha} \alpha + \lambda_{\beta} \beta, & \text{if }x=\alpha; \\ \lambda_i \beta_i & \text{if }x=\beta_i; \\ x & \text{otherwise.} \end{cases}\]

\noindent We call $\sigma$ an \textbf{affine coordinate transformation}. The complexes $P_{\lambda \alpha + \mu \beta}^{\bullet}$, given by
\begin{displaymath}
	\begin{tikzcd}
	\cdots \arrow{r} & 0 \arrow{r} & P_{y} \arrow{rr}{\lambda \alpha + \mu \beta} && P_{x} \arrow{r}  & 0 \arrow{r} & \cdots,
	\end{tikzcd}
	\end{displaymath}	
	where $\beta=\beta_m \cdots \beta_1$,
	are $\tau$-invariant band complexes for all pairs $(\lambda, \mu) \in k^{\times} \times k$ but not for $\lambda=0$. Moreover, the complexes of $(\lambda, \mu)$ and $(\lambda', \mu')$ are isomorphic if and only if $\lambda^{-1} \mu= \lambda^{\prime-1} \mu'$. \bigskip

\subsubsection{Families of $\tau$-invariant objects}\label{SectionFamiliesTauInvariantObjects} 
\ \smallskip

\noindent As illustrated by the Kronecker algebra in the previous section, it may happen that an equivalence of surface-like categories maps loop objects to ``degenerations''  of such objects and, as a consequence, whether a $\tau$-invariant object is represented by an arc or a loop can not be distinguished within the triangulated category. However, Corollary \ref{CorollaryEquivalencesPreserveFamilies} below shows that every triangle equivalence between surface-like categories maps a $\tau$-invariant family consisting of  loop objects and their degenerations in a coherent way.\medskip

\noindent Our observations in Section \ref{SectionKroneckerAlgebra} motivate the following definitions. 
\begin{definition}\label{DefinitionEquivalenceRelationDegenerationsOfLoops}Let $(\T, \mathcal{S}, \eta, \gamma, \basis)$ be a surface-like quintuple and $\mathcal{S}=(S, \marked)$. The equivalence relation $\sim_{\ast}$ is defined as the relation which identifies every gradable boundary loop around a boundary component $B$ with a single marked point with unique segment of $B$. Define $\simeq_{\ast}$ as the equivalence relation generated by the homotopy relation and $\sim_{\ast}$.
\end{definition}
\noindent Given an object $X \in \T$, we denote by $\gamma_{\ast}(X)$ its $\simeq_{\ast}$-equivalence class.

\begin{definition}\label{DefinitionFamilyTauInvariantObjects}
	A \textbf{family of $\tau$-invariant objects} in $\T$ is a collection $\mathcal{X}=\{X_i\}_{i\in I}$ of arbitrary indecomposable $\tau$-invariant objects in $\T$, which sit at the mouth of their  homogeneous tubes, such that $X_i \not \cong X_j$ in $\T/[1]$ for all $i \neq j$ and there exists a $\simeq_{\ast}$-equivalence class $\gamma_{\ast}(\mathcal{X})$ with the following property:\smallskip
	
\begin{changemargin}{0.5cm}{0.5cm} 
\textit{If $X \in \T$ is indecomposable and sits at the mouth of a homogeneous tube of the Auslander-Reiten quiver, then a shift of $X$ is isomorphic to an object in $\mathcal{X}$ if and only if $\gamma_{\ast}(X)=\gamma_{\ast}(\mathcal{X})$.}
\end{changemargin}

\noindent	An arc object in $\mathcal{X}$ is called a \textbf{degenerated object} of the family. A family  $\mathcal{X}$ of $\tau$-invariant objects is further called \ \smallskip
	\begingroup
	\begin{itemize}
	\setlength\itemsep{0.5em}
		\item \textbf{$k^{\times}$-family}, if $\gamma_{\ast}(\mathcal{X})$ contains no degenerated object;
		\item \textbf{$\mathbb{A}^1_k$-family}, if $\gamma_{\ast}(\mathcal{X})$ contains precisely one degenerated object;
		\item \textbf{$\mathbb{P}^1_k$-family}, if $\gamma_{\ast}(\mathcal{X})$ contains  two degenerated objects.\ \smallskip
	\end{itemize} 
	\endgroup

\end{definition}

\noindent Since every $\simeq_{\ast}$-class contains at most two homotopy classes of arcs it follows that every family of $\tau$-invariant objects belongs to one of the three types above.

\begin{exa}

We have seen in Section  \ref{SectionFamiliesTauInvariantObjects}  that the modules $(B_{\lambda,\mu})_{[\lambda:\mu] \in \mathbb{P}^1_k}$ form a $\mathbb{P}^1_k$-family of of objects in case of the Kronecker algebra. In the general case of canonical algebras, the complexes $(P_{\alpha + \lambda \beta}^{\bullet})_{\lambda \in \mathbb{A}^1_k}$  form an $\mathbb{A}^1_k$-family of $\tau$-invariant objects.  
\end{exa}

\noindent  The following lemma shows that the presence of $\mathbb{P}^1_k$-families is a rare phenomenon.

\begin{lem}\label{LemmaP1FamiliesKroneckerOnly} Let $\T$ be a surface-like category with surface model $(\mathcal{S}, \eta)$.
Then $\T$ contains a $\mathbb{P}^1_k$-family of $\tau$-invariant objects if and only if $(S, \marked)$ is a cylinder with a single marked point on each boundary component and $\omega=0$. There exists at most one such family (up to isomorphisms of objects).
\end{lem}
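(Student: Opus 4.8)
The plan is to characterize when a surface-like category admits a $\mathbb{P}^1_k$-family purely in terms of the data $(S,\marked,\eta)$, using the combinatorial structure of $\simeq_{\ast}$-equivalence classes of arcs and loops together with the winding number calculus from Section \ref{SectionLineFields}. First I would recall from Definition \ref{DefinitionFamilyTauInvariantObjects} that a $\mathbb{P}^1_k$-family exists exactly when some $\simeq_{\ast}$-class $\gamma_{\ast}(\mathcal{X})$ contains a gradable loop \emph{and} two distinct degenerated (arc) objects. Since every $\simeq_{\ast}$-class contains at most two homotopy classes of arcs, and, by Definition \ref{DefinitionEquivalenceRelationDegenerationsOfLoops}, a class can only merge a loop with an arc via the relation $\sim_{\ast}$ (which identifies a gradable boundary loop around a component $B$ carrying a \emph{single} marked point with the unique boundary segment of $B$), the two degenerated objects must be boundary segments of two such components $B_1$, $B_2$, each with exactly one marked point, and the underlying loop must be a boundary loop simultaneously homotopic (as a free loop) to loops around $B_1$ and around $B_2$.

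The key geometric step is then: a primitive loop $\gamma$ that is freely homotopic to the boundary loop of $B_1$ and also to the boundary loop of $B_2$ forces $B_1$ and $B_2$ to cobound an annulus, i.e.\ $S$ is (after possibly capping off there is nothing else) a cylinder with $B_1$, $B_2$ its two boundary components, each carrying a single marked point. This is the standard fact that two disjoint simple closed curves that are freely homotopic bound an annulus (here applied to the core curves of collar neighborhoods of $B_1$ and $B_2$), combined with the observation that such an annulus is a whole connected component of $S$ and $S$ has non-empty boundary meeting every component in a marked point. So the surface is the cylinder. For the line field condition, I would invoke the gradability requirement from Definition \ref{DefinitionEquivalenceRelationDegenerationsOfLoops}: the boundary loop around $B_i$ must be $\eta$-gradable, i.e.\ have vanishing winding number, so $\omega_{B_1}=\omega_{B_2}=0$; on a cylinder the two boundary winding numbers determine the homotopy class of $\eta$ (since the relative Euler class/winding data is the only obstruction, cf.\ the winding number discussion preceding Lemma \ref{LemmaWindingNumberPiecewiseSmoothCurves} and the classification used in \cite{LekiliPolishchukGentle}), hence $\eta$ is the unique homotopy class with $\omega=0$. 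Conversely, on this cylinder with $\omega=0$ the unique primitive loop carries, by Definition \ref{DefinitionFukayaLikeQuintuples} Property 5) and Property 1), a family of $\tau$-invariant loop objects indexed by $k^{\times}$ together with the two boundary segments as degenerations, and Proposition \ref{PropositionFractionallyCalabiYauArcs} (its final clause, $\tau X\cong X$ iff $\gamma(X)$ is a boundary arc at a component with one marked point and $\omega_B=0$) shows these boundary segments are $\tau$-invariant and lie in the same $\simeq_{\ast}$-class as the loop; this produces the required $\mathbb{P}^1_k$-family. Uniqueness up to isomorphism of objects follows because the cylinder has a unique primitive loop up to homotopy and only one $1$-dimensional local system type per point of $k^{\times}$, so the family is determined.

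The main obstacle I anticipate is the rigidity argument in the middle paragraph: one must be careful that "freely homotopic to two different boundary loops" genuinely pins down the global topology rather than just a local annular neighborhood — in particular ruling out that $\gamma$ could be a \emph{separating} curve in a larger surface with extra genus or extra boundary/punctures on the other side. The resolution is that, being homotopic to a boundary loop of $B_1$, the loop $\gamma$ bounds on one side an annulus to $B_1$, and being homotopic to a boundary loop of $B_2$ it bounds on the other side an annulus to $B_2$; gluing these two annuli along $\gamma$ exhibits the component of $S$ containing $\gamma$ as a closed-up cylinder, and since $S$ is required to have boundary (Definition \ref{DefinitionMarkedSurface}) this component \emph{is} the cylinder and, as components are disjoint, $S$ equals it. A secondary technical point is justifying that on a cylinder the line field homotopy class is determined by the pair of boundary winding numbers and that both must vanish; this is where I would lean on the winding number formalism recalled in Section \ref{SectionLineFieldsWindingNumbers} together with the explicit classification of line fields on surfaces (the relevant statement being used already in the proof of Theorem \ref{IntroCorollaryDerivedInvariant} and attributed to \cite{LekiliPolishchukGentle}).
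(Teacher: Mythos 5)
Your proposal is correct and takes essentially the same route as the paper: the paper's (much terser) proof likewise reduces a $\mathbb{P}^1_k$-family to a $\simeq_{\ast}$-class containing two degenerated boundary segments, observes that the cylinder is the only compact surface with two distinct but (freely) homotopic boundary components, and then invokes Proposition \ref{PropositionFractionallyCalabiYauArcs} for the $\tau$-invariance and winding-number conditions. Your expanded annulus-gluing argument and the explicit treatment of the line field are just fleshed-out versions of the same steps.
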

\begin{proof}
	Note that the cylinder is the only compact surface with two distinct but homotopic boundary components meaning that the corresponding simple boundary curves (or their inverses) are homotopic. Thus, the assertion follows from Proposition \ref{PropositionFractionallyCalabiYauArcs}.
\end{proof}

\noindent It is not difficult to conclude from the previous lemma that if $A$ is a gentle algebra, then $\mathcal{D}^b(A)$ contains a $\mathbb{P}^1$-family if and only if $A$ is the Kronecker algebra.\bigskip

\subsection{Boundary objects and segment objects}\label{SectionBoundarySegmentObjects}

The assertion of Proposition \ref{PropositionFractionallyCalabiYauArcs} suggests the following definition.
\begin{definition}\label{DefinitionBoundarySegmentObject}
	Let $X \in \Perf(\T)$  be an arbitrary indecomposable. We say that $X$ is a \textbf{boundary object} if the following hold true:
	\begin{itemize}
	\setlength\itemsep{1ex}
		\item[1)] $X$ is fractional Calabi-Yau.
		\item[2)] If $\tau X\cong X$, then for all indecomposable objects $Y \in \T$, 
		\[\max\{\dim \Hom^*(X, Y), \dim \Hom^*(Y,X)\} \leq 2.\]
		\item[3)] There exists a non-zero morphism between $X$ and a perfect object which is not $\tau$-invariant.
	\end{itemize}	
	A boundary object $X$ is further called a \textbf{segment object} if the middle term of any Auslander-Reiten triangle starting in $X$ is indecomposable.
\end{definition}
\noindent It follows from its definition that the image of a boundary (resp. segment) object under a triangle equivalence is again a boundary (resp. segment) object .
The following follows from Property 2 V) of Definition \ref{DefinitionFukayaLikeQuintuples}.
\begin{lem}
A loop object which is a boundary object sits at the mouth of its homogeneous tube and its associated local systems are $1$-dimensional.
\end{lem}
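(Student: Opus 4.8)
The plan is to extract the bound on the dimension of the local system from Property 2 V) of Definition~\ref{DefinitionFukayaLikeQuintuples}, and then to read off the position in the tube from Property 5). Let $X$ be a boundary object in the sense of Definition~\ref{DefinitionBoundarySegmentObject} whose associated curve $\gamma(X)$ is the homotopy class of a loop $\gamma$ (with grading $\mathfrak{g}$), and let $\mathcal{V}$ be the associated indecomposable $k$-linear local system, so that $\gamma(X)$ is represented by $(\gamma,\mathfrak{g},\mathcal{V})$. Since loop objects are $\tau$-invariant (Property 5) of Definition~\ref{DefinitionFukayaLikeQuintuples}), we have $\tau X\cong X$, so condition 2) of Definition~\ref{DefinitionBoundarySegmentObject} is available for $X$.

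The decisive step is to rule out $\dim_k\mathcal{V}\geq 2$. If it held, Property 2 V) of Definition~\ref{DefinitionFukayaLikeQuintuples} would give $\dim\Hom^*(X,X)\geq 3$. On the other hand, applying condition 2) of Definition~\ref{DefinitionBoundarySegmentObject} with $Y=X$ — legitimate because $X$ is $\tau$-invariant and, being itself an indecomposable object, falls under the quantifier of that condition — forces $\dim\Hom^*(X,X)\leq 2$. This contradiction shows $\dim_k\mathcal{V}=1$, which is the second assertion of the Lemma; by the classification of indecomposable local systems on a loop recalled in Section~\ref{SectionLocalSystems}, $\mathcal{V}$ is then of type $(t-\lambda)$ for a unique $\lambda\in k^{\times}$.

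For the first assertion I would invoke Property 5) once more, now with the irreducible polynomial $Q=t-\lambda$ and, for $i\geq 1$, local systems $\mathcal{V}_i$ of type $Q^{i}$: it produces a homogeneous tube of the Auslander--Reiten quiver whose objects $X_i$ satisfy $\gamma(X_i)=(\gamma,\mathfrak{g},\mathcal{V}_i)$ and whose quasi-simple object — the one at the mouth — is $X_1$, carrying the one-dimensional local system $\mathcal{V}_1$. Since $\mathcal{V}_1\cong\mathcal{V}$ and $\gamma(-)$ is a bijection (Property 1) of Definition~\ref{DefinitionFukayaLikeQuintuples}), we conclude $X\cong X_1$, so $X$ sits at the mouth of its homogeneous tube. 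I do not anticipate a genuine obstacle; the only point needing a word of care is the self-application $Y=X$ in condition 2), which is precisely where the $\tau$-invariance of loop objects enters.
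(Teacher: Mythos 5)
Your proof is correct and is essentially the paper's own argument spelled out: the paper disposes of the lemma by citing Property 2 V) of Definition \ref{DefinitionFukayaLikeQuintuples}, i.e.\ exactly your contradiction between $\dim\Hom^*(X,X)\geq 3$ for a local system of dimension $\geq 2$ and the bound $\dim\Hom^*(X,X)\leq 2$ coming from condition 2) of Definition \ref{DefinitionBoundarySegmentObject} (available since loop objects are $\tau$-invariant), with the mouth-of-tube statement then read off from Property 5) and the bijectivity of $\gamma(-)$. Nothing to add.
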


 \noindent The following proposition characterizes boundary objects and segment objects.
\begin{prp}\label{PropositionCharacterizationBoundaryObjects}		
	Let $X \in \Perf(\T)$ be an arbitrary indecomposable. Then,
	\begin{enumerate}	\setlength\itemsep{1ex}
		\item $X$ is a boundary object if and only if it is represented by a simple boundary arc or boundary loop with $1$-dimensional local system.
		\item $X$ is a segment object if and only if it is represented by a boundary segment or a boundary loop with $1$-dimensional local system.
	\end{enumerate} 
\end{prp}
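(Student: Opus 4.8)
The plan is to prove both directions by exploiting the characterization of fractional Calabi--Yau arc objects in Proposition~\ref{PropositionFractionallyCalabiYauArcs} together with the Hom-dimension bounds encoded in Definition~\ref{DefinitionFukayaLikeQuintuples}, Property~2, in particular parts IV) and V). First I would treat the ``if'' direction. Suppose $X$ is represented by a simple boundary arc $\gamma$ with image in a boundary component $B$, or by a boundary loop with a $1$-dimensional local system. In the arc case, Proposition~\ref{PropositionFractionallyCalabiYauArcs} immediately gives that $X$ is fractional Calabi--Yau, so condition~1) holds; and since $\gamma$ is a boundary arc there is a non-$\tau$-invariant perfect object (e.g.\ represented by any arc intersecting $\gamma$, or the neighbouring boundary segments) admitting a non-zero morphism to or from $X$, giving condition~3). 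For condition~2), if $\tau X\cong X$ then $B$ has a single marked point and $\omega_B=0$; in that case $\gamma$ is homotopic to a simple boundary loop and for any indecomposable $Y$ the set of oriented intersections $\gamma\overrightarrow{\cap}\gamma_Y$ (in minimal position) has at most one element on each side after one applies the ``lifts intersect at most once'' fact, so together with the Auslander--Reiten correction term of Property~2 IV) the relevant $\Hom^*$-spaces have dimension $\le 2$. For the boundary-loop case, the previous lemma together with Property~2 V) already pins down $1$-dimensionality of the local system and sitting at the mouth of the tube; condition~2) then follows from the explicit Hom-computation for homotopic loops in Property~2 IV), and conditions~1) and 3) hold because such a loop is $\tau$-invariant and admits morphisms to arc objects crossing it. Finally, for the segment-object refinement one invokes Corollary~\ref{CorollaryIndecomposableMiddleTermARTriangle}: the middle term of an Auslander--Reiten triangle starting at an arc object $X$ is indecomposable iff $\gamma(X)$ contains a boundary segment, and a boundary loop is automatically a segment object since its AR-triangle (a homogeneous tube) has indecomposable middle term.

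For the ``only if'' direction I would argue by elimination. Let $X\in\Perf(\T)$ be an arbitrary indecomposable boundary object. Since $\Perf(\T)$ consists of finite arc objects and loop objects, $X$ is one of these. If $X$ is a loop object, condition~1) (fractional Calabi--Yau, equivalently $\tau$-invariance since loop objects are $\tau$-invariant) is automatic, but condition~2) forces $\dim\Hom^*(X,X)\le 2$, which by Property~2 V) rules out local systems of dimension $\ge 2$; hence $X$ is a boundary loop with $1$-dimensional local system. If instead $X$ is a finite arc object, condition~1) together with Proposition~\ref{PropositionFractionallyCalabiYauArcs}(1) forces $\gamma(X)$ to contain a boundary arc; it remains to upgrade ``boundary arc'' to ``\emph{simple} boundary arc''. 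Here is where I expect the main work: I would show that a non-simple boundary arc violates either condition~2) or condition~3). Concretely, a boundary arc with a self-intersection wraps around $B$ more than once; if $B$ has a single marked point and $\omega_B=0$ this produces, via the geometric description of morphisms, a representing curve $\gamma_Y$ (for a suitable $\tau$-invariant $Y$, e.g.\ the boundary loop around $B$) with $\ge 3$ oriented intersections contributing independent morphisms, contradicting condition~2); if $B$ has $\ge 2$ marked points or $\omega_B\ne 0$ then $X$ is not $\tau$-invariant, and condition~3) must be examined directly --- but in fact in that regime every finite boundary arc (simple or not) is already non-$\tau$-invariant and the constraint that distinguishes simple from non-simple must come from condition~2) applied with $Y$ a loop object, again through an intersection count.

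The hard part will be precisely this last point: ruling out non-simple boundary arcs using only the axioms of a surface-like category rather than algebra-specific computations. The natural tool is the combination of the ``any two curves in minimal position have lifts meeting at most once'' fact (quoted from \cite{OpperPlamondonSchroll}, Lemma~3.4) with Property~2 III)--IV), which converts the count of oriented intersections into the dimension of $\Hom^*$; a non-simple boundary arc has a self-intersection, and one must engineer a witness object $Y$ (most economically, the simple boundary loop around $B$, or $\tau X$ itself when $\tau X\cong X$) whose representing curve meets $\gamma_X$ in enough points to push some $\Hom^*$-dimension past $2$. I would also need to handle the degenerate-family subtlety flagged by $\simeq_{\ast}$: when $B$ has one marked point and $\omega_B=0$, the simple boundary arc and the simple boundary loop lie in the same $\simeq_{\ast}$-class, so one must be careful that the witness $Y$ is genuinely not a shift of $X$ --- this is exactly where Lemma~\ref{LemmaP1FamiliesKroneckerOnly} and the cylinder exception enter, and I would quarantine that case and verify condition~2) for it by the explicit Beilinson/Kronecker picture of Section~\ref{SectionKroneckerAlgebra}. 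Modulo that case analysis, the proof assembles from results already available in the excerpt.
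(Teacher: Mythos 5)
Your ``if'' direction and the reduction of the segment-object clause to Corollary \ref{CorollaryIndecomposableMiddleTermARTriangle} follow the paper's route and are fine. The genuine gap is in the ``only if'' direction for loop objects. You argue that condition 2) forces $\dim\Hom^*(X,X)\le 2$, that Property 2 V) then rules out local systems of dimension $\ge 2$, and conclude ``hence $X$ is a boundary loop.'' Nothing you have said excludes a \emph{simple non-boundary} loop carrying a one-dimensional local system: such an object is $\tau$-invariant, sits at the mouth of its homogeneous tube, and has $\dim\Hom^*(X,X)=2$, so it passes every test you apply. This is exactly where the paper invests its real effort: Lemma \ref{LemmaTooManyIntersectionsNonBoundary} shows that for every simple loop $\gamma$ which is not a boundary curve and every $m\in\mathbb{N}$ there is an arc $\delta$ with $\iota(\delta,\gamma)\ge m$, so that condition 2) fails with $Y=X_{\delta}$. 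That lemma is a nontrivial topological construction (cut along $\gamma$, assemble the arcs $E_j^i$, and verify minimal position via the bigon criterion, with a separate argument for the once-holed torus), and your proposal neither proves nor cites anything playing its role.

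A secondary problem concerns your treatment of non-simple boundary arcs, which you rightly flag as the delicate point but do not resolve. Your proposed mechanism is condition 2) ``applied with $Y$ a loop object,'' yet you simultaneously observe that condition 2) is gated on $\tau X\cong X$ and hence has no content when $B$ carries several marked points or $\omega_B\neq 0$; that subcase is left open in your write-up. Where condition 2) does apply, the economical witness is $Y=X$ itself: an interior self-intersection of $\gamma_X$ already forces $\dim\Hom^*(X,X)\ge 3$ (this is the argument used in Lemma \ref{LemmaCharacterizationEssentialObjects}), so no auxiliary loop object is needed; and by Proposition \ref{PropositionFractionallyCalabiYauArcs}, $\tau$-invariance confines a boundary arc object to a component with a single marked point and vanishing winding number, which is precisely the regime in which the paper asserts the equivalence of simplicity with condition 2). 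As written, your case analysis for the arc case does not close.
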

\begin{proof}
The third condition of Definition \ref{DefinitionBoundarySegmentObject} is equivalent to the assumption that $\gamma_X$ intersects a finite arc and hence equivalent to the assumption that $\gamma_X$ is not a loop around a puncture. 
Due to Proposition \ref{PropositionFractionallyCalabiYauArcs}, simplicity of the representing arc of an arc object is equivalent to condition 2) of Definition \ref{DefinitionBoundarySegmentObject}. By Corollary \ref{CorollaryIndecomposableMiddleTermARTriangle} the condition that the middle term of an Auslander-Reiten triangle starting (or equivalently, ending) in $X$ is indecomposable, translates to the property that the end points of an arc $\gamma$ representing $X$ have to be consecutive elements. In other words $\gamma$ is a boundary segment. We therefore assume that $X$ is represented by a loop $\gamma$ and hence is $\tau$-invariant. 

If $\gamma$ is homotopic to a boundary loop, it does not intersect any other  loop which is in minimal position with $\gamma$. Furthermore, it intersects arcs only in a neighborhood of their end points and hence at most twice. If the local system associated to $X$ has dimension $1$, then each intersection determines a single basis element $\Hom^*(X,Z)$ (resp.\@ $\Hom^*(Z,X)$). Finally, simplicity of $\gamma$ implies that $\dim \Hom^*(X,X) \leq 2$.

	Conversely,  let $\gamma$ be a gradable loop and  $U \in \Perf(\T)$ be a representative of $\gamma$. In particular, $U$ is $\tau$-invariant. Assume that $U$ satisfies condition 2) and therefore that $\dim \Hom^*(U,U) \leq 2$. It follows from Definition \ref{DefinitionFukayaLikeQuintuples}, 2\,ii) that the local system associated to $U$ is $1$-dimensional and that $\gamma$ can be chosen to be simple. Suppose $\gamma$ is not homotopic to a boundary curve. 
	Then it follows from Lemma \ref{LemmaTooManyIntersectionsNonBoundary} below that for each $m \in \mathbb{N}$, there exists an arc $\delta$ such that $\iota(\delta, \gamma) \geq m$ -- a contradiction to 2).
\end{proof}

\begin{lem}\label{LemmaTooManyIntersectionsNonBoundary}
Let $\gamma$ be a simple loop on a marked surface  $S$ which is not a boundary curve and let $m \in \mathbb{N}$. Then, there exists an arc $\delta$ such that $\iota(\delta, \gamma) \geq m$, where $\iota(-,-)$ denotes the geometric intersection number. 
\end{lem}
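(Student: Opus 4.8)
The plan is to use a covering/hyperbolic-geometry argument: a simple loop $\gamma$ which is not homotopic into the boundary is either non-separating, or it separates $S$ into two pieces neither of which is an annulus with core $\gamma$; in both cases one can find arcs that are ``forced'' to cross $\gamma$ many times by running them around a handle or through the two complementary pieces repeatedly. Concretely, I would first reduce to the case where $S$ has been given a hyperbolic structure with geodesic boundary (the finitely many marked points play no role here — we only care about the underlying surface and the isotopy class of $\gamma$), and replace $\gamma$ by its geodesic representative. Then intersection numbers $\iota(\delta,\gamma)$ are computed by counting transverse geodesic intersections, and the problem becomes: produce arcs whose geodesic representatives cross the fixed geodesic $\gamma$ at least $m$ times.

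\emph{First I would} dispose of the non-separating case. If $\gamma$ is non-separating, pick a simple arc (or loop) $\delta_0$ with $\iota(\delta_0,\gamma)=1$ — such a ``dual'' curve exists because $[\gamma]\neq 0$ in $H_1(S,\partial S)$, so some relative $1$-cycle pairs non-trivially with it, and it can be taken simple. Now let $D_\gamma$ be the Dehn twist about $\gamma$ and set $\delta = D_\gamma^{\,m}(\delta_0)$: by the standard formula for geometric intersection numbers under twisting (e.g. $\iota(D_\gamma^k\delta_0,\gamma) = |k|\,\iota(\delta_0,\gamma)^2$ up to the usual bounded error, which here is exact since $\iota(\delta_0,\gamma)=1$ and one can arrange $\delta_0$ to meet $\gamma$ efficiently), we get $\iota(\delta,\gamma)\ge m$. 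If $\delta_0$ was a loop rather than an arc, first band-sum it with a short arc to the boundary to turn it into an arc without decreasing its intersection with $\gamma$.

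\emph{Then I would} handle the separating case. If $\gamma$ separates $S$ into $S_1$ and $S_2$, the hypothesis that $\gamma$ is not a boundary curve means that neither $S_i$ is an annulus whose only other boundary component is $\gamma$; since $\partial S\neq\emptyset$ and every boundary component carries a marked point, at least one $S_i$, say $S_1$, contains a boundary component of $S$, and because $S_1$ is not an annulus with core $\gamma$ it contains either a boundary component of $S$ plus extra topology, or is a pair of pants or larger. In any such $S_1$ one can find a properly embedded arc $a_1\subseteq S_1$ with both endpoints on $\gamma$ which is not boundary-parallel in $S_1$ (this is where the non-annulus hypothesis is used — an annulus has no such arc). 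Symmetrically choose an arc $a_2\subseteq S_2$ with endpoints on $\gamma$; if $S_2$ is an annulus take $a_2$ to be a spanning arc. Concatenating alternating copies of (twisted translates of) $a_1$ and $a_2$ along $\gamma$ produces, for every $m$, a properly embedded arc $\delta$ in $S$ (with endpoints pushed off to marked points on $\partial S$, possible since every boundary component is marked) that crosses $\gamma$ at least $m$ times; efficiency of the count is again checked by passing to geodesic representatives, where each subarc $a_i$ is replaced by a geodesic segment and the concatenation is a geodesic arc meeting the geodesic $\gamma$ transversely exactly the prescribed number of times.

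\emph{The hard part will be} making the arc $\delta$ genuinely \emph{efficient} with respect to $\gamma$ — i.e. that the constructed $\delta$ has no bigons with $\gamma$, so that the naive crossing count equals $\iota(\delta,\gamma)$. The cleanest way around this is to avoid ad hoc cut-and-paste entirely and work on the hyperbolic surface: put a complete hyperbolic metric with geodesic boundary on $S$, take $\gamma$ geodesic, and build $\delta$ directly as a geodesic arc. Two distinct geodesics on a hyperbolic surface bound no bigons, so the number of transverse intersection points of the geodesic $\delta$ with the geodesic $\gamma$ \emph{is} $\iota(\delta,\gamma)$ by definition; it then suffices to produce geodesic arcs with arbitrarily many crossings, which follows from the two constructions above once phrased in the universal cover $\mathbb{H}^2$: a lift of $\gamma$ is a bi-infinite geodesic, its $\pi_1(S)$-translates are disjoint or asymptotic, and one only has to exhibit a geodesic arc whose endpoints on $\partial\mathbb{H}^2$ are separated from their translates by at least $m$ of these lifts — which is arranged by applying a high power of the hyperbolic element corresponding to $\gamma$ (for the non-separating case) or alternately the elements stabilizing lifts of $a_1,a_2$ (for the separating case). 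I would present the proof in this hyperbolic-geometry language to keep it short and rigorous.
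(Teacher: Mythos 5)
There is a genuine gap, and it sits in your non-separating case. The Dehn twist $D_\gamma$ fixes $\gamma$ up to isotopy, and geometric intersection number is a mapping-class invariant, so
\[\iota\bigl(D_\gamma^{\,m}(\delta_0),\gamma\bigr)=\iota\bigl(D_\gamma^{\,m}(\delta_0),D_\gamma^{\,m}(\gamma)\bigr)=\iota(\delta_0,\gamma)=1\]
for every $m$. The formula you invoke is (a version of) $\iota(D_a^{\,k}(b),c)\geq |k|\,\iota(a,b)\,\iota(a,c)-\iota(b,c)$, which is vacuous when $c=a$ because $\iota(\gamma,\gamma)=0$. Concretely, $D_\gamma^{\,m}(\delta_0)$ spirals $m$ times inside an annular neighbourhood of $\gamma$ but is monotone in the transverse direction, so it crosses the core exactly once. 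The same defect resurfaces in your closing universal-cover paragraph: the hyperbolic element $g$ stabilizing a lift $\widetilde{\gamma}$ preserves the two sides of $\widetilde{\gamma}$ and merely permutes the other (pairwise disjoint) lifts of $\gamma$, so moving an endpoint by $g^m$ does not force the straightened geodesic arc to separate $m$ lifts from its endpoints. To repair this branch you could twist about a \emph{dual} simple closed curve $c$ with $\iota(c,\gamma)=1$ (applied to an arc meeting $c$), but the cleaner fix is to drop the separating/non-separating dichotomy altogether: your separating-case construction works verbatim when $S\setminus\gamma$ is connected, with the two copies of $\gamma$ lying on the same component of the cut surface. That uniform cut-and-concatenate argument is exactly the paper's proof.

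Two smaller points. First, a concatenation of geodesic segments is a broken geodesic, not a geodesic, so "the concatenation is a geodesic arc meeting $\gamma$ exactly the prescribed number of times" needs the straightening step made explicit (count the lifts of $\gamma$ separating the two endpoints of a lift of $\delta$); also "the elements stabilizing lifts of $a_1,a_2$" does not parse, since arcs are not stabilized by deck transformations. The paper instead verifies minimal position downstairs via the bigon criterion: a bigon would have to be cut off by one of the constituent arcs $a_i$, contradicting the fact that $a_i$ is not boundary-parallel in its piece. Second, the paper isolates one exceptional case your sketch would also have to confront, namely $S$ a torus with one boundary component, where the arcs built in the cut surface necessarily intersect each other and the bigon criterion must be replaced by a slightly stronger minimal-position criterion. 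The separating half of your argument is essentially the paper's proof and is recoverable; the non-separating half as written proves nothing.
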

	\begin{proof}
	We cut $S$ along $\gamma$ and denote by $S'$ the resulting surface, i.e.\@ $S'$ contains distinct boundary components $B_1,B_2$, such that $S' \setminus \left(B_1 \cup B_2\right)=S\setminus \gamma$.
	We may assume that the connected component $V$ of $S'$ which contains  $B_1$ on its boundary contains a marked point of $S$ which is not a puncture. For now, we assume that $S'$ is not connected.  None of the components of $S'$ is homeomorphic to a disc or a cylinder as otherwise $\gamma$ would be contractible or a boundary curve. There exist $m$ disjoint simple non-separating arcs $E_1^1, \dots, E_1^m$ in $S'$ connecting pairwise distinct points in $B_1$, such that neither of them is homotopic to a boundary arc. For example, we can choose homotopic but disjoint simple arcs $E_1^2, \dots, E_1^m$. In the same way, we may choose disjoint simple arcs $E_2^1, \dots, E_2^m$ such that  for all $i \in (1,m)$, $E_2^i$ starts on the endpoint of $E_1^i$ and ends on the start point of $E_1^{i+1}$ when regarded as paths in $S$. Furthermore, we choose disjoint simple arcs $W_1, W_2$ in $V$ and require that 
	\begin{itemize}
	\setlength\itemsep{1ex}
		\item for all $i \in [1,m]$, $W_1 \cap E_1^i= \emptyset$;
		\item the arcs $W_1$ and $W_2$ have no interior intersections;
		\item the end point (resp.\@ start point) of $W_1$ (resp.\@ $W_2$) agrees with the start point (resp.\@ end point) of $E_2^1$ (resp.\@ $E_2^m$);
		\item $s(W_1)$ and $t(W_2)$ are marked boundary points.
	\end{itemize}
	The concatenation $\delta\coloneqq W_2 \ast E_2^m \ast E_1^m \ast \cdots \ast E_2^1 \ast W_1$, as indicated by the symbol $\ast$, is a simple arc in $S$ and $|\gamma \cap \delta|=m$.  Suppose $\{\gamma, \delta\}$ is not in minimal position, then by \cite{FarbMargalit}, Proposition 1.7, $\gamma$ and $\delta$ form a bigon. By the assumptions we made on the $E_j^i$, such a bigon must be formed by a subarc $\gamma'$ of $\gamma$ and an arc $E_{b}^a$ for some $a \in [1,m]$ and some $b \in [1,2]$. But this implies that $E_b^a$ is homotopic to  $\delta'$ which contradicts our assumptions on $E_{b}^a$.
	
The same proof works if $S'$ is connected with a difference in case $S$ is a torus with one boundary component. Namely, in this case, $S'$ has genus $0$ and $3$ boundary components and arcs $E_1^i$ and $E_2^j$ as above will intersect in any case so that the bigon argument has to be replaced by \cite{Thurston}, Lemma 10, to verify that the arc $\delta$ we obtain in the same way as before is in minimal position with $\gamma$.
\end{proof}

\medskip

\subsection{Semi-perfect objects}
\ \smallskip
\noindent Segment objects allow us to characterize objects which correpond to semi-finite arcs. In what follows, let $(\T, \mathcal{S}, \eta, \gamma, \basis)$ denote  a surface-like quintuple.
\begin{definition}\label{DefinitionSemiPerfect}
Let  An indecomposable object $X \in \T \setminus \Perf(\T)$ is called \textbf{semi-perfect} if there exists a non-zero morphism between $X$ and a segment object in $\T$.
\end{definition}
Since triangle equivalences map segment objects to segment objects, the property of being semi-perfect is preserved under triangle equivalences betweeen surface-like categories.
\begin{lem}
A indecomposable object $X \in \T$ is semi-perfect if and only if $\gamma(X)$ is semi-finite. 
\end{lem}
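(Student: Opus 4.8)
The plan is to prove both directions by reducing the geometric notion of a semi-finite arc to detection by morphisms with segment objects, which by Proposition \ref{PropositionCharacterizationBoundaryObjects} correspond to boundary segments and boundary loops with one-dimensional local systems. So first I would unravel the definitions: $X \in \T \setminus \Perf(\T)$ means, by the description of $\Perf(\T)$, that $X$ is an infinite or semi-finite arc object, or a loop object around a puncture. (A loop around a puncture is never in $\Perf(\T)$ because $\Hom^*$ into the family of objects winding around the puncture is infinite-dimensional by Property 2 II).) Thus the classification of non-perfect indecomposables leaves three cases, and I must show that exactly the semi-finite ones admit a non-zero morphism to or from a segment object.

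For the ``if'' direction, suppose $\gamma(X)$ is semi-finite, so $\gamma_X$ has exactly one endpoint on $\partial S$, say at a marked boundary point $p$, and the other endpoint at a puncture. Near $p$ there is a boundary segment $\beta$ incident to $p$; I would put $\gamma_X$ and $\beta$ in minimal position and observe that, in a neighbourhood of $p$, $\gamma_X$ necessarily produces an oriented intersection with $\beta$ (or $\beta$ with $\gamma_X$) at the marked point $p$ — this is the local picture of an arc meeting a boundary segment at a shared endpoint, which always yields a boundary intersection in one of the two orientations, as in Figure \ref{FigureDirectedBoundaryIntersection}. By Property 2 I) this intersection gives a non-zero basis morphism in $\Hom^*(X, X_\beta)$ or $\Hom^*(X_\beta, X)$, and $X_\beta$ is a segment object by Proposition \ref{PropositionCharacterizationBoundaryObjects}. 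Hence $X$ is semi-perfect.

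For the ``only if'' direction, suppose $X$ is semi-perfect, so there is a non-zero morphism between $X$ and some segment object $Z$; by Proposition \ref{PropositionCharacterizationBoundaryObjects}, $Z$ is represented by a boundary segment or a boundary loop with one-dimensional local system. Since $X \notin \Perf(\T)$, the curve $\gamma_X$ is an infinite arc, a semi-finite arc, or a loop around a puncture. I would rule out the first and third cases. If $\gamma_X$ is an infinite arc (both ends at punctures) or a loop around a puncture, then $\gamma_X$ can be homotoped into the complement of a neighbourhood of the boundary — more precisely, it has a representative disjoint from every boundary segment and from every boundary loop (a boundary loop is homotopic into $\partial S$, and a curve with both ``ends at infinity'' winding only around punctures avoids a collar of $\partial S$). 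Put $\gamma_X$ in minimal position with a representative $\gamma_Z$ of $\gamma(Z)$: then $\gamma_X \cap \gamma_Z = \emptyset$, so by Property 2 III) (or IV), since $\gamma_X$ and $\gamma_Z$ are not homotopic loops — $\gamma_Z$ is a boundary curve and $\gamma_X$ is not) we get $\Hom^*(X, Z) = \Hom^*(Z, X) = 0$, contradicting semi-perfectness. Therefore $\gamma_X$ is semi-finite.

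The main obstacle is the ``only if'' direction's disjointness claim: I must argue carefully that an infinite arc or puncture-loop admits a representative disjoint from every boundary segment and every simple boundary loop, i.e.\ that such a curve is homotopic into $S$ minus an open collar of $\partial S$, and that minimal position then genuinely forces zero intersection number; this uses that boundary segments live in the boundary and that an infinite arc's homotopy class (in the punctured-surface picture) is determined by its winding around punctures, never around boundary components. One subtlety to handle is the case where $Z$ is itself $\tau$-invariant (a boundary loop): then one should still apply Property 2 III) rather than IV), since $\gamma_X$ is an arc or a puncture-loop and hence not homotopic to the boundary loop $\gamma_Z$. Once disjointness is established, the conclusion is immediate from the basis descriptions in Definition \ref{DefinitionFukayaLikeQuintuples}.
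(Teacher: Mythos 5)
Your proof is correct and follows essentially the same route as the paper's: a boundary endpoint of a semi-finite arc yields a non-zero morphism to/from the object of an adjacent boundary segment, while a curve with no endpoint on $\partial S$ can be pushed off a collar of the boundary and hence has no intersections with any boundary segment or boundary loop in minimal position. The only discrepancy is your claim that loops around punctures lie outside $\Perf(\T)$ — by the paper's description $\Perf(\T)$ contains all loop objects, so that case is vacuous — but since you rule it out anyway this does not affect the argument.
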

\begin{proof}
If an arc $\gamma$ on $\mathcal{S}$ which has an end point $p \in \partial \mathcal{S}$, then $p$ determines a non-zero a morphism between $X$ and any segment object corresponding to a boundary segment with end point $p$. On the other hand an arc which starts and ends at a puncture has no intersections with a boundary curve if they are in minimal position.
\end{proof}

\medskip

\subsection{Interior morphisms and boundary morphisms}\label{SectionSpaceInteriorMorphisms}

\noindent Building on the notion of segment objects, we explore how the distinction between boundary and interior of  a surface can be re-interpreted algebraically as a dichotomy of two classes of morphisms yielding distinguished subspaces of  ``interior''  morphisms between objects in surface-like categories. We recall that, unless stated otherwise, by an indecomposable object we mean an arc object or loop object associated with a $1$-dimensional local system.
\begin{definition}
	Let $X,Y \in \T$ be indecomposable objects each of which is not $\tau$-invariant or sits at the mouth  of a homogeneous tube. A morphism  $f: X \rightarrow Y[i]$ is called  \textbf{interior}, if $X$ or $Y$ is contained in the tube of a $\tau$-invariant boundary object, or, both of the following conditions are satisfied.

	\begin{itemize}
	\setlength\itemsep{1ex}
		\item[i)] The connecting morphism of any Auslander-Reiten triangle ending in a segment object $U \in \Perf(\T)$ does not factor through $f$.
		\item[ii)] The morphism $f$ factors through an object in $\Perf(\T)$.
	\end{itemize}
	If $f$ is not an interior morphism, we call it a \textbf{boundary morphism}.
\end{definition}
\begin{notation}
	For indecomposable objects $X_1, X_2 \in \T$, let $\Hom_{\Int}(X_1, X_2)$ denote the subset of interior morphisms and we set \[\Hom_{\Int}^*(X_1, X_2)\coloneqq \bigoplus_{n \in \mathbb{Z}}{\Hom_{\Int}(X_1, X_2[n])}.\]   	
\end{notation}

\noindent Note that being an interior morphism is invariant under triangle equivalences in the following sense.
\begin{lem}\label{LemmaEquivalencesPreserveInteriorMorphisms}
	Let  $\T, \T'$ be surface-like categories and let  $T: \T \rightarrow \T'$ be a triangle equivalence. Then, the isomorphisms
	\[\begin{tikzcd}\Hom_{\T}(-,-) \arrow{r} & \Hom_{\T'}(T(-),T(-))\end{tikzcd}\] 
	restrict to bijections between the subsets of interior morphisms.
\end{lem}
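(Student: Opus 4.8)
The strategy is to show that each of the two defining conditions for an interior morphism is phrased entirely in terms of data that a triangle equivalence preserves, so that $f$ is interior in $\T$ if and only if $T(f)$ is interior in $\T'$. First I would observe that the exceptional clause — ``$X$ or $Y$ lies in the tube of a $\tau$-invariant boundary object'' — is invariant under $T$: by the remark following Definition \ref{DefinitionBoundarySegmentObject}, $T$ maps boundary objects to boundary objects, and since $T$ commutes with $[1]$ and with the Auslander--Reiten translation (equivalently, with the Serre functor, which exists on $\Perf$ by Corollary \ref{CorollaryAuslanderReitenTranslationForArcObjects} and the Reiten--Van den Bergh theorem cited in Section \ref{SectionFractionalCalabiYauTauInvariantFamilies}), it maps a homogeneous tube to a homogeneous tube and hence carries the tube of a $\tau$-invariant boundary object to the tube of a $\tau$-invariant boundary object. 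So it remains to treat morphisms $f: X \to Y[i]$ for which neither $X$ nor $Y$ is in such a tube, and to check conditions i) and ii).

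For condition ii) — that $f$ factors through an object of $\Perf(\T)$ — invariance is immediate from the first bullet point in Section \ref{SectionSerrePairing}'s preceding discussion: $\Perf(\T)$ is a triangulated subcategory and triangle equivalences between surface-like categories restrict to equivalences $\Perf(\T) \to \Perf(\T')$; a factorization $X \to P \to Y[i]$ with $P \in \Perf(\T)$ is carried by $T$ to a factorization $T(X) \to T(P) \to T(Y)[i]$ with $T(P) \in \Perf(\T')$, and conversely using a quasi-inverse of $T$. For condition i), the key point is that it is expressed purely via segment objects and their Auslander--Reiten triangles: by Definition \ref{DefinitionBoundarySegmentObject} and the remark after it, $T$ sends segment objects in $\Perf(\T)$ to segment objects in $\Perf(\T')$; since $T$ is a triangle equivalence it sends the Auslander--Reiten triangle ending in a segment object $U$ to the Auslander--Reiten triangle ending in $T(U)$ (Auslander--Reiten triangles are characterized purely categorically, and every segment object in $\Perf(\T')$ arises as $T(U)$ for some segment object $U$), hence it sends the connecting morphism to the connecting morphism up to a scalar. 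Therefore the connecting morphism of the triangle ending in $U$ factors through $f$ if and only if the connecting morphism of the triangle ending in $T(U)$ factors through $T(f)$.

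Combining these observations: if $f$ is interior, then either one of $X,Y$ is in the tube of a $\tau$-invariant boundary object — and then the same holds for $T(X), T(Y)$, so $T(f)$ is interior — or conditions i) and ii) hold for $f$, and by the paragraph above they hold for $T(f)$, so again $T(f)$ is interior. The reverse implication follows by applying the same argument to a quasi-inverse $T^{-1}$ of $T$, which is again a triangle equivalence between surface-like categories. This shows that the canonical isomorphisms $\Hom_{\T}(X,Y[n]) \xrightarrow{\sim} \Hom_{\T'}(T(X),T(Y)[n])$ restrict to bijections on the subsets of interior morphisms, and hence also on the complementary subsets of boundary morphisms.

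**Main obstacle.** The only delicate point is the bookkeeping around condition i): one must be careful that ``the connecting morphism factors through $f$'' is an honest equivalence-invariant statement — this requires knowing both that $T$ takes segment objects to segment objects \emph{surjectively} (so that no segment object on the target side is missed) and that the connecting morphism of an Auslander--Reiten triangle is well-defined up to scalar and is transported correctly by a triangle functor. Both facts are available from the results already cited (the remark after Definition \ref{DefinitionBoundarySegmentObject} and the categorical characterization of Auslander--Reiten triangles), so the argument is essentially a matter of assembling them cleanly rather than proving anything substantially new.
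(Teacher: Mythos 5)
Your argument is correct and is precisely the reasoning the paper leaves implicit: the lemma is stated without proof because every clause of the definition of an interior morphism (membership in the tube of a $\tau$-invariant boundary object, non-factorization of connecting morphisms of Auslander--Reiten triangles ending in segment objects, and factorization through $\Perf(\T)$) is phrased in equivalence-invariant terms, with the preservation of boundary and segment objects already noted after Definition \ref{DefinitionBoundarySegmentObject} and the preservation of $\Perf$ noted in the bullet points following its definition. Your care about surjectivity of $T$ on segment objects (via the quasi-inverse) is exactly the right point to check.
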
 

\noindent Our main result on interior morphisms is the following and provides justification for the term ``interior''.
\begin{prp}\label{PropositionInteriorMorphismsVectorSpace}
	Let $X, Y \in \T$ be indecomposable. Then, $\Hom_{\Int}(X, Y)$ is a subvector space of $\Hom(X, Y)$. Moreover,
	\begin{enumerate}
			\setlength\itemsep{1ex}
	\item If $X$ or $Y$ is $\tau$-invariant, then 
	\[\Hom_{\Int}(X,Y)=\Hom(X,Y).\]
	\item If neither $X$ nor $Y$ are $\tau$-invariant, then $\Hom_{\Int}(X,Y)$ is generated as a vector space by the morphisms of interior intersections between representatives of $X$ and $Y$.
	\end{enumerate}
\end{prp}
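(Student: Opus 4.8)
The plan is to reduce everything to the case where neither $X$ nor $Y$ is $\tau$-invariant, since the $\tau$-invariant case is handled by the convention built into the definition: if $X$ or $Y$ lies in the tube of a $\tau$-invariant boundary object then every morphism is declared interior, and more generally if $X$ or $Y$ is $\tau$-invariant we must show every morphism is interior. For the latter, I would argue that a $\tau$-invariant object $Z$ (sitting at the mouth of a homogeneous tube) is represented by a loop, hence $\gamma_Z$ can be homotoped off any boundary segment; so condition i) is automatic because the connecting morphism of an Auslander--Reiten triangle ending in a segment object $U$ is $\basis(p)$ for the distinguished intersection $p \in \gamma_U \overrightarrow{\cap} \tau\gamma_U$, and any factorization through $f\colon X \to Y[i]$ would, via the geometric description of compositions (Property 4 of Definition \ref{DefinitionFukayaLikeQuintuples}), force an intersection configuration incompatible with $\gamma_Z$ being disjoint from $\gamma_U$. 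For condition ii), I would observe that $Z$ itself is a perfect object (a loop object lies in $\Perf(\T)$), so every morphism factors through a perfect object trivially. This establishes part (1) and also shows $\Hom_{\Int}(X,Y)$ is all of $\Hom(X,Y)$, hence a subspace, in that case.

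For the main case, assume neither $X$ nor $Y$ is $\tau$-invariant, so both are represented by arcs $\gamma_X, \gamma_Y$ (or non-$\tau$-invariant loop objects, which we can treat the same way since their representing curves have no special role here), put in minimal position. By Property 2 of Definition \ref{DefinitionFukayaLikeQuintuples}, $\Hom^*(X,Y)$ has a basis indexed by the oriented intersections $\gamma_X \overrightarrow{\cap} \gamma_Y$ (plus the families at punctures), and each such intersection $p$ is either an interior point of $S$ or a marked boundary point. I claim $\Hom_{\Int}(X,Y)$ is precisely the span of the $\basis(p)$ for $p$ an interior intersection; this is statement (2), and once it is proved the subspace assertion is immediate. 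For the inclusion ``interior $\supseteq$ span of interior intersections'': given an interior intersection $p$, I would verify conditions i) and ii) directly. Condition ii) — factoring through a perfect object — follows because the mapping cone of $\basis(p)$ is, by Property 3 (resolution of crossings), represented by the curve obtained by resolving $p$; resolving an interior intersection of two arcs with at least one finite endpoint yields curves that are ``shorter'' in an appropriate sense, and iterating one can factor $\basis(p)$ through finite arc objects. More robustly, I would use the octahedral axiom on $X \xrightarrow{\basis(p)} Y$ to realize $\basis(p)$ as a composite $X \to P \to Y$ with $P$ a finite arc object read off from the resolved curve near $p$; the fact that $p$ is an interior point is exactly what makes the ``two halves'' of the resolved curve both finite arcs (a boundary intersection at a marked point $m$ would produce a resolved curve still incident to $m$, giving a boundary-segment-like factorization instead). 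Condition i) — that the connecting morphism of an AR-triangle ending in a segment object $U$ does not factor through $\basis(p)$ — follows from the geometric description of compositions in Property 4 together with Proposition \ref{PropositionExistenceARTrianglesForArcs}: such a factorization $X' \to X \xrightarrow{\basis(p)} Y \to \cdots$ realizing the distinguished intersection of $\gamma_U$ with $\tau\gamma_U$ would demand lifts forming an intersection triangle or fork with a corner at a boundary point, which is exactly the configuration ruled out in the proof of Proposition \ref{PropositionExistenceARTrianglesForArcs}.

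For the reverse inclusion ``interior $\subseteq$ span of interior intersections'': given an arbitrary $f \in \Hom_{\Int}(X,Y[i])$, write its standard decomposition $f = \sum_j \lambda_j \basis(p_j)$ with the $p_j$ distinct intersections in $\gamma_X \overrightarrow{\cap} \gamma_Y$. I must show every $p_j$ occurring with $\lambda_j \neq 0$ is an interior intersection, equivalently that if some $p_j$ is a marked boundary point then $f$ is not interior. If $p_j$ is a boundary intersection at a marked point $m$, I would produce a segment object $U$ — namely one represented by a boundary segment incident to $m$ — such that the connecting morphism of the AR-triangle ending in $U$ factors through $f$: geometrically, the distinguished intersection realizing that connecting morphism can be routed through $m$ and composed with $\basis(p_j)$ to violate condition i). Here I need that the boundary morphism $\basis(p_j)$ cannot be cancelled in the standard decomposition by the other terms — this uses the injectivity of $\basis$ and the linear independence of distinct basis morphisms — so that the factorization genuinely ``sees'' $p_j$. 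The main obstacle I anticipate is precisely this cancellation bookkeeping: making rigorous that an interior morphism cannot be a linear combination in which a boundary intersection's contribution is obscured, and dually that each interior intersection individually satisfies conditions i) and ii) rather than only some combinations of them. This is where the careful use of the geometric composition rule (intersection triangles, forks, double-bigons) and the uniqueness of the distinguished intersection from Proposition \ref{PropositionExistenceARTrianglesForArcs} will do the real work; the subspace statement is then a formal consequence of the explicit basis description.
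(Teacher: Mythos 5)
Your overall route coincides with the paper's: reduce to the case where neither object is $\tau$-invariant, identify $\Hom_{\Int}(X,Y)$ with the span of the basis morphisms of interior intersections, and, for the converse inclusion, manufacture a segment object from a boundary segment incident to a boundary intersection point. But two steps are genuinely incomplete or would fail as written.

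First, your verification of condition ii) for an interior intersection $p$ --- that $\basis(p)$ factors through $\Perf(\T)$ --- cannot be run through the mapping cone or the octahedral axiom. The resolution of $p$ represents $\Cone(\basis(p))$ and sits in a triangle $X \to Y \to \Cone(\basis(p))$; neither this triangle nor the octahedral axiom (which presupposes a composition) produces a factorization $X \to P \to Y$ of $\basis(p)$ itself, and the ``two halves of the resolved curve'' are summands of the cone, not factoring objects. What is needed, and what the paper isolates as Lemma \ref{LemmaMorphismFactorsThroughPerfIffPuncture}, is a direct construction: choose lifts $\widetilde{\gamma}_X, \widetilde{\gamma}_Y$ to the universal cover meeting in a lift of $p$ and an auxiliary simple \emph{finite} arc crossing each of them exactly once so that the three intersections form an intersection triangle; Property 4) then exhibits $\basis(p)$, up to scalar, as a composite through the perfect object of that arc. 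The same lemma also shows that morphisms of intersections at punctures do \emph{not} factor through $\Perf(\T)$ --- which you need in order to exclude puncture intersections (these are interior points of $S$) from the spanning set in part (2); after mentioning the puncture families once, your argument is silent about them.

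Second, the ``cancellation bookkeeping'' you explicitly defer is the actual content of the proposition, not a technicality. The paper's resolution: with $U$ the segment object of the boundary segment $\delta$ joining $p_j$ to its successor and $\alpha, \beta$ the morphisms encoded by $p_j$, one computes each $\beta \circ f_k \circ \alpha$ inside $\Hom(U, \tau U[1])$, which is one-dimensional and spanned by $\basis(q)$ for the distinguished intersection $q \in \delta \overrightarrow{\cap} \tau\delta$; the composition rule forces $\beta \circ f_k \circ \alpha = 0$ for all $k \neq j$ because the relevant intersections form no fork, while $\beta \circ f_j \circ \alpha$ is a non-zero multiple of $\basis(q)$, so nothing can cancel and the connecting morphism really factors through $f$. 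The mirror computation --- compositions with interior-intersection morphisms land in the span of interior intersections of $\gamma_U$ and $\tau\gamma_U$, which is zero --- is what establishes condition i) in the forward direction; your appeal to the proof of Proposition \ref{PropositionExistenceARTrianglesForArcs} gestures at this, but that argument concerns the specific triple $\gamma, \tau\gamma, \delta$ and does not by itself rule out the configurations arising here. (A minor point: not every $\tau$-invariant object is a loop object --- see Proposition \ref{PropositionFractionallyCalabiYauArcs} --- but the $\tau$-invariant arc objects are boundary objects and hence fall under the first clause of the definition, as your reduction implicitly uses.)
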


\noindent In order to  prove Proposition \ref{PropositionInteriorMorphismsVectorSpace}, we need to characterize morphisms which factor through $\Perf(\T)$. For objects $X, Y \in \T$, let $\Hom(X,Y)_{\Perf(\T)}$ denote the set of morphisms $X \rightarrow Y$ which factor as a composition $X \rightarrow P \rightarrow Y$, with $P \in \Perf(\T)$. Since $\Perf(\T)$ is closed under direct summands and finite direct sums, this set is a subspace of $\Hom(X,Y)$. 
\begin{lem}\label{LemmaMorphismFactorsThroughPerfIffPuncture}
	Let $X, Y \in \T \setminus \Perf(\T)$ be indecomposable and let $f \in \Hom(X,Y)$. Then, $f \in \Hom(X,Y)_{\Perf(\T)}$ if and only if
	\begin{itemize}
			\setlength\itemsep{1ex}
		\item $X$ is not isomorphic to a shift of $Y$ and $\displaystyle f$ is a sum of morphisms associated with intersections different from punctures, or
		\item $X$ is isomorphic to a shift of $Y$ and $\displaystyle f$ is a sum of morphisms associated with intersections different from endpoints. 
	\end{itemize}
\end{lem}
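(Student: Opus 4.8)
The plan is to compute the morphism spaces in question geometrically and identify when a class represented by an intersection lies in the image of a map that factors through a perfect object. First I would set up notation: fix curves $\gamma_X$ and $\gamma_Y$ in minimal position representing the non-perfect indecomposables $X$ and $Y$, both of which are infinite arcs or loops around punctures (since $X,Y\notin\Perf(\T)$). By Proposition \ref{PropositionInteriorMorphismsVectorSpace}-type reasoning and Property 2) of Definition \ref{DefinitionFukayaLikeQuintuples}, a basis of $\Hom^*(X,Y)$ is given by the morphisms $\basis(p)$ for intersections $p\in\gamma_X\overrightarrow{\cap}\gamma_Y$ that are not punctures, together with the families $\basis(q)(j)$ for puncture-intersections $q$ (and, when $\gamma_X\simeq\gamma_Y$, an extra isomorphism/connecting morphism class). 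I would treat the two bullets — $X$ not a shift of $Y$, and $X$ a shift of $Y$ — separately.

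The key geometric input is this: a basis element $\basis(p)$ factors through a \emph{finite} arc object or a loop object (i.e.\ through $\Perf(\T)$) precisely when one can ``insert'' a finite arc $\delta$ transverse to both $\gamma_X$ near the intersection and $\gamma_Y$, so that by Property 4) I) (intersection triangles / forks) the composite $\basis(p_2)\circ\basis(p_1)$ along $\delta$ recovers a nonzero multiple of $\basis(p)$. For an intersection $p$ in the \emph{interior} of $\mathcal{S}$ this is always possible: locally around $p$ one finds a small finite boundary segment $\delta$ cutting across both curves, forming the required intersection triangle, and the object $X_\delta$ is a finite arc object, hence perfect. The same works for an intersection $p$ at a boundary marked point when the corresponding endpoints of $\gamma_X$ and $\gamma_Y$ are \emph{not} the same endpoint of the same arc — one can still slip a boundary segment past $p$. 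The obstruction occurs exactly at a puncture (no transverse finite arc exists near a puncture, since any arc through a neighbourhood of a puncture that meets $\gamma_X$ there is forced to be infinite and not perfect), and, in the case $\gamma_X\simeq\gamma_Y$, at the common endpoint (where $\basis(p)$ is, up to scalar, an isomorphism or the AR-connecting morphism, and factoring it through a perfect object would make $X\in\Perf(\T)$, contradiction). This dichotomy is what the two bullets encode.

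So the steps are: (1) reduce to basis elements by linearity, using that $\Hom(X,Y)_{\Perf(\T)}$ is a subspace spanned by those $\basis(p)$ it contains; (2) for $p$ an interior intersection, explicitly exhibit a finite arc $\delta$ and intersections $p_1\in\gamma_X\overrightarrow{\cap}\delta$, $p_2\in\delta\overrightarrow{\cap}\gamma_Y$ of degree $0$ forming an intersection triangle with $p$, so that Property 4) I) gives $\basis(p_2)\circ\basis(p_1)=c\cdot\basis(p)$ with $c\neq 0$, whence $\basis(p)$ factors through the perfect object $X_\delta$; (3) for $p$ a boundary endpoint-intersection in the case $X\not\cong Y[n]$, run the same construction with $\delta$ a boundary segment near $p$, noting the endpoints being distinct gives enough room; (4) conversely, show that if $\basis(p)$ factors through a perfect object $P$, decomposing the factorisation into basis morphisms and using Property 4) I) again forces the mediating intersections to trace out a triangle whose third vertex is $p$ — and one checks this is impossible when $p$ is a puncture (the surface-of-intersection $S_{\tilde p}$ is non-compact, so by Remark \ref{RemarkIntersectionsGentle}-type behaviour the composite along any finite $\delta$ cannot hit the puncture class), and impossible when $p$ is the shared endpoint and $X\cong Y[n]$ (factoring an isomorphism through $P\in\Perf(\T)$ would put $X$ in $\Perf(\T)$).

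The main obstacle I anticipate is step (4), the converse direction: controlling \emph{all} possible factorisations $X\to P\to Y$ and ruling out cancellation, i.e.\ showing that no linear combination of ``triangle'' composites through various perfect objects can produce $\basis(q)$ for $q$ a puncture while every individual composite misses it. The tool for this is the rigidity of ALP-type bases — a morphism in the basis is determined by any one of its components (as used in the proof of Theorem \ref{TheoremComposition}) — which forces any term appearing in a composite $\basis(p_2)\circ\basis(p_1)$ to be pinned down by a single local component, and a local analysis at the puncture shows that component cannot be the one defining $\basis(q)$ unless the ``intermediate'' object is already non-perfect. A secondary subtlety is the homotopic-loops case in the $X\cong Y[n]$ branch, where one must separately dispatch the isomorphism class and the AR-connecting-morphism class, both handled by the observation that an object admitting a self-map factoring through $\Perf(\T)$ that is invertible (resp.\ the AR connecting map) would itself be forced into $\Perf(\T)$, contradicting $X\notin\Perf(\T)$.
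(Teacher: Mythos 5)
Your proposal follows essentially the same route as the paper's proof: reduce to basis elements by linearity, factor each morphism of a non-puncture intersection through a finite arc object via an intersection triangle in the universal cover (Property 4), handle the shared-endpoint case by noting that an invertible morphism factoring through $\Perf(\T)$ would force $X\in\Perf(\T)$, and rule out punctures by observing that any composite through an indecomposable perfect object is a sum of non-puncture basis morphisms. The cancellation issue you single out as the main obstacle in step (4) is in fact vacuous -- the span of the non-puncture basis morphisms is a subspace containing every composite through a perfect object, so linear independence of the basis already excludes $\basis(q)$ for $q$ a puncture, with no need for ALP-type rigidity (which would in any case not be available in the general axiomatic setting in which the lemma is stated).
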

\begin{proof}Let $\gamma_X, \gamma_Y$ be representatives of $X$ and $Y$ in minimal position and let $f=\sum_{i=1}^m{f_i}$ be a standard decomposition into basis elements. Denote by $p_i \in \gamma_X \overrightarrow{\cap} \gamma_Y$ the corresponding intersection of $f_i$. Assume that $p_0$ is a puncture and suppose there exists $Z \in\Perf(\T)$ and morphisms $g:~X~\rightarrow~Z$ and $h: Z \rightarrow Y$, such that $h \circ g=f$. 
	
It is sufficient to show that for any two morphisms $g': X \rightarrow Z'$ and $h': Z' \rightarrow Y$ with $Z' \in \Perf(\T)$ indecomposable, $\basis(p_0)$ cannot occur as a summand of $h' \circ g'$ if written as a linear combination of basis vectors arising from intersections.  However, since $Z'$ is represented by a finite arc or loop, $g'$ and $h'$ are sums of morphisms associated to intersections which are not punctures. By Definition \ref{DefinitionFukayaLikeQuintuples} 4), a composition of such basis elements is a sum of morphisms associated to intersections, which are not punctures.

	Next, assume that none of the $p_i$ is a puncture. Since $\Perf(\T)$ is closed under finite direct sums, it is sufficient to show that each $\basis(p_i)$ factors through $\Perf(\T)$. Let $\widetilde{\gamma}_X, \widetilde{\gamma}_Y$ be arcs on the universal cover $\widetilde{S}$ of $\mathcal{S}$ representing $X$ and $Y$, such that $\widetilde{\gamma}_X$ and $\widetilde{\gamma}_Y$ intersect in a lift of $p_i$. Let $p_X$ (resp.\@ $p_Y$) be an interior end point of $\widetilde{\gamma}_X$ (resp.\@ $\widetilde{\gamma}_Y$), i.e.\@ $p_X$ and $p_Y$ are punctures.
	In case $p_i$ lies in the interior, denote by $\gamma$ a simple finite arc in $\widetilde{S}$ which crosses $\widetilde{\gamma}_X$ and $\widetilde{\gamma}_Y$ both once. By construction, $p_i$ and the unique interior intersections $q_X$ and $q_Y$ of $\widetilde{\gamma}$ with $\widetilde{\gamma}_X$ and $\widetilde{\gamma}_Y$ form a triangle. Hence $\widetilde{\gamma}$, $p_X$ and $p_Y$  give rise to an object $Z$ and morphisms $\basis(q_X): X \rightarrow Z$ and $\basis(q_Y): Z \rightarrow Y$, such that $\basis(q_Y) \circ \basis(q_X)$ is a multiple of $\basis(p_i)$.
	
	If $p_i$ lies on the boundary, we choose lifts $\widetilde{\gamma}_X$ and $\widetilde{\gamma}_Y$ as above, which intersect exactly once in a lift $q$ of $p$ and choose $\widetilde{\gamma}$ to be a finite arc which intersects $\widetilde{\gamma}_X$ and $\widetilde{\gamma}_Y$ in its end point $q$ and which lies between $\widetilde{\gamma}_X$ and $\widetilde{\gamma}_Y$ locally around $q$. Then $f$ factors through the morphisms determined by $q$.\end{proof}

\begin{rem}
Again, one direction of the equivalence in Lemma \ref{LemmaMorphismFactorsThroughPerfIffPuncture} becomes obvious in case of gentle algebras as a morphism associated with an intersection $p$
is supported in finitely many homological degrees if and only if $p$ is not a puncture. \end{rem}
\begin{proof}[Proof of Proposition \ref{PropositionInteriorMorphismsVectorSpace}]

	If $X$ or $Y$ is contained in the tube of a $\tau$-invariant boundary object, then by definition $f$ is interior. We therefore assume that neither of them is contained in such a tube.\\
	It follows from Lemma \ref{LemmaMorphismFactorsThroughPerfIffPuncture} that $f$ factors through $\Perf(\T)$ if and  only if none of the $p_i$ is a puncture.
	Set $f_i\coloneqq  \lambda_i \cdot \basis(p_i)$. Let $U$ be a segment object, $\gamma_U$ a representing boundary segment or a boundary loop. Let $h: U \rightarrow \tau U[1]$ be non-zero. As $\Hom(U, \tau U[1])$ is $1$-dimensional, $h$ is unique up to a scalar and a connecting morphism in an Auslander-Reiten triangle. Suppose $h$ factors through $f$. 
	Let $i \in [1,m]$ be such that $f_i$ is represented by an interior intersection.\newline
	We distinguish two cases. First, assume that $U$ is an arc object. Then it follows from Property 4) in Definition \ref{DefinitionFukayaLikeQuintuples} that each composition $f_i \circ \alpha$ and $\beta \circ f_i$, with $\alpha:U \rightarrow X$, $\beta: Y \rightarrow \tau U[1]$ being morphisms associated to intersections, is either zero or a sum of morphisms associated with interior intersections. The same applies to $\beta \circ f_i \circ \alpha$. But a multiple of a morphism in $\Hom(U, \tau U[1])$ is represented by the distinguished  (boundary) intersection in $\gamma_U \overrightarrow{\cap} \tau \gamma_U$, proving $\beta \circ f_i \circ \alpha=0$. Hence, if $h$ factors through $f$ as a map $\beta \circ f \circ \alpha$, then  $h$ equals the sum of all $\beta \circ f_j \circ \alpha$, such that $p_j \not \in \partial \mathcal{S}$.\\
	Next, suppose that $U$ is a loop object and denote by $B$ the boundary component which contains $\gamma_U$. None of the  morphisms in $\Hom(U,U[1])$ is represented by an intersection.
	Thus, for $\beta, \alpha$ as before, $\beta \circ f_i \circ \alpha$ is zero, unless $\beta \circ f_i$ and $\alpha$ correspond to dual interior intersections (see Definition \ref{DefinitionFukayaLikeQuintuples} 4 ii)) and in this case, $p_i$ is the unique marked point on $B$.
	This shows that $f$ is interior if all $p_j$ are interior.\\
	Conversely, suppose that for some $i \in [1, m]$, $p_i$ lies on a component $B \subseteq \partial S$. By assumption, $X$ and $Y$ are not $\tau$-invariant arc objects as such objects are boundary objects by Proposition \ref{PropositionFractionallyCalabiYauArcs}.
	We show that $f$ is a boundary morphism. Let  $\delta$ be a simple boundary arc with image in $B$, connecting $p_i$ and its successor, and let $U \in \T$ be a representative of $\delta$. By Definition \ref{DefinitionFukayaLikeQuintuples} 4 i), $p_i$ encodes morphisms $\alpha: U[m] \rightarrow X$ and $\beta: Y \rightarrow \tau U[m+1]$ for some $m \in \mathbb{Z}$ and $\beta \circ f_i \circ \alpha$ is a multiple of $\basis(q)$, where $q \in \delta \overrightarrow{\cap} \tau \delta$ denotes the distinguished intersection.  Finally, we observe that for all $p_j$ with $j \neq i$, $\beta \circ f_j \circ \alpha=0$ as none of the corresponding intersections form a fork but $\Hom^*(U,U)=\Hom(U,U[1])$ is represented by a boundary intersection. Hence, $\beta \circ f \circ \alpha$ is a multiple of $\basis(q)$.
	
	We have seen that if $X$ or $Y$ is $\tau$-invariant, then every morphism $X \rightarrow Y$ is interior. Otherwise, $\Hom_{\Int}(X,Y)$ is spanned by all morphisms associated with interior intersections of representing curves of $X$ and $Y$.
\end{proof}
\smallskip

\section[{D}iffeomorphisms induced by equivalences]{Diffeomorphisms induced by equivalences}\label{SectiondiffeomorphismInducedByEquivalences}
\smallbreak \noindent This section is mainly concerned with the construction of homeomorpisms from triangle equivalences between surface-like categories. It is based on the study of simplicial automorphisms of the  arc complex of a surface and their connection to diffeomorphisms of the surface as studied in \cite{Disarlo}. 
Throughout this section, let $\mathcal{S}=(S, \marked)$ and $\mathcal{S}'=(S', \marked')$ denote marked surfaces.\
 
\subsection{Arc complexes and essential objects}\label{SectionArcComplexesEssentialObjects}
\ \smallskip

\noindent To begin with, we recall the definition of a simplicial complex.
\begin{definition}
An (abstract) \textbf{simplicial complex} is a set $V$ and a collection $K$ of finite subsets of $V$ such that for all $v \in V$, $\{v\} \in K$ and such that $K$ is closed under taking subsets. An $m$-simplex of $K$ is a set $M \in K$ with $m+1$ elements. \ \medskip

 \noindent The $0$-simplices (which we refer to as \textbf{vertices}) are nothing but the sets $\{v\}$. The $1$-simplices are also called \textbf{edges}.
\end{definition}
\noindent Next, we present the definition of an arc complex and its vertices \textit{essential arcs}.
\begin{definition}\label{DefinitionArcComplex}
	An arc on $\mathcal{S}$ is called \textbf{essential} if it is simple, i.e.\@ has no interior self-intersections, and it is not homotopic to a boundary segment connecting consecutive marked points. The \textbf{arc complex} $A_{\ast}=A_{\ast}(\mathcal{S})$ of $\mathcal{S}$ is the cell complex defined as follows. The vertices of $A_{\ast}$ are in one-to-one correspondence with homotopy classes of essential arcs on $\mathcal{S}$. A set of distinct vertices $\{v_0, \dots, v_{n}\}$ of $A_{\ast}$ is contained in an $n$-simplex if and only if there exists a set of simple representatives $\gamma_i$ of $v_i$, such that $\gamma_i$ and $\gamma_j$ have no interior intersections for all $i \neq j$.
\end{definition}
\begin{exa}Let $S=[0,1]\times S^1$ be a cylinder and $\marked=\{(0,1), (1,1)\}$. Denote by $\gamma_m: S^1 \rightarrow S$ the arc which runs around the cylinder $m$ times in clockwise direction, i.e. $\gamma_m( e^{\pi i \varphi})=(\varphi, e^{-m \pi i\varphi})$ for all $\varphi \in [0,1]$.  Then, the arcs $\gamma_m$ are the vertices of $A_{\ast}(S, \marked)$ and $\gamma_m$ and $\gamma_n$ are connected by an edge if and only if $|m-n|=1$. This arc complex does not contain simplices of higher dimension.
\end{exa}

\begin{rem}\label{RemarkFlagComplex}
\begin{itemize}
\setlength\itemsep{1ex}
\item[]

\item The arc complex of $\mathcal{S}$ is empty if and only if $S$ is a disc and $\marked$ contains at most $3$ marked points and no punctures.
\item Two essential arcs are homotopic if and only if they are isotopic, i.e.\@ there exists a continuous family of diffeomorphisms, which restricts to a homotopy of the given arcs. We refer to \cite{FarbMargalit} for a proof of the case of loops without self-intersections.
\item  It follows from \cite{Epstein1966}, Theorem 3.1 that $A_{\ast}(\mathcal{S})$ is  a \textit{flag complex}, i.e.\@ $n+1$ vertices of $A_{\ast}(\mathcal{S})$ constitute an $n$-simplex if and only if they are pairwise connected by an edge. Thus, $A_{\ast}(\mathcal{S})$ is determined by its set of vertices and edges.
\end{itemize}
\end{rem}
\noindent In order to deal with exceptional cases, it will be useful to consider the \textbf{extended arc complex} $\overline{A}_{\ast}(\mathcal{S})$ which we define as the flag complex with vertices given by the homotopy classes of all simple arcs. As before two homotopy classes in $\overline{A}_{\ast}(\mathcal{S})$ are connected by an edge if and only if they can be realized simultaneously by simple arcs with disjoint interiors. 
\medskip

\subsection{Isomorphisms of arc complexes and diffeomorphisms}\label{SectionIsomorphismsArcComplexesdiffeomorphisms}
\noindent If $A_{\ast}(\mathcal{S})$ is non-empty, then diffeomorphisms of $\mathcal{S}$ act on $A_{\ast}(\mathcal{S})$ and $\overline{A}_{\ast}(\mathcal{S})$ by simplicial automorphisms. By a \textbf{diffeomorphism of marked surfaces} $F:\mathcal{S} \rightarrow \mathcal{S}'$, we mean a diffeomorphism $F: S \rightarrow S'$ which restricts to a bijection $\marked \rightarrow \marked'$.  A \textbf{simplicial isomorphism} between simplicial complexes $A$ and $A'$ is a bijection between their sets of vertices which induces a bijection between their sets of simplices.

 Given any diffeomorphism $F:\mathcal{S} \rightarrow \mathcal{S}'$, it induces a bijection between homotopy classes of essential arcs on $\mathcal{S}$ and essential  arcs on $\mathcal{S}'$. By Remark \ref{RemarkFlagComplex}, it admits a unique extension to a simplicial isomorphism $A_{\ast}(F):A_{\ast}(\mathcal{S}) \rightarrow A_{\ast}(\mathcal{S}')$. In a similar way, $F$ induces a simplicial isomorphism $\overline{A}_{\ast}(F):\overline{A}_{\ast}(\mathcal{S}) \rightarrow \overline{A}_{\ast}(\mathcal{S}')$. It is clear that $A_{\ast}(F)$ and $\overline{A}_{\ast}(F)$ do not change if we replace $F$ by an isotopic diffeomorphism.\bigskip
 
\noindent Let $\Diffeo(\mathcal{S}, \mathcal{S}')$   denote the equivalence classes of  diffeomorphisms $\mathcal{S} \rightarrow \mathcal{S}'$ modulo isotopies in $\mathcal{S}$ and $\mathcal{S'}$, which fix marked points. By $\Diffeo^+(\mathcal{S}, \mathcal{S}')$ we mean the subset of orientation preserving diffeomorphisms $\Diffeo(\mathcal{S}, \mathcal{S}')$.  For two simplicial complexes $C,C'$ denote by $\Simp(C,C')$ the set of simplicial isomorphisms. The assignments $F \mapsto A_{\ast}(F)$  and $F \mapsto \overline{A}_{\ast}(F)$ determine maps 

\[\begin{tikzcd}\Phi:\Diffeo(\mathcal{S}, \mathcal{S}') \arrow{r} & \Simp(A_{\ast}(\mathcal{S}), A_{\ast}(\mathcal{S}')),\end{tikzcd}\]
and
\[\begin{tikzcd}\overline{\Phi}:\Diffeo(\mathcal{S}, \mathcal{S}') \arrow{r} & \Simp(\overline{A}_{\ast}(\mathcal{S}), \overline{A}_{\ast}(\mathcal{S}')),\end{tikzcd}\]
which are homomorphism of groups if $\mathcal{S}=\mathcal{S}'$ with multiplication being given by composition. If $\mathcal{S}=\mathcal{S}'$, the subgroup $\MCG(\mathcal{S})=\Diffeo^+(\mathcal{S},\mathcal{S})$ is called the \textbf{mapping class group} of $\mathcal{S}$.
\bigskip

\noindent A natural question to ask is whether $\Phi$ is surjective or injective. An answer is given in \cite{Disarlo}. In order to state it and for later reference, it is convenient to introduce the following definition.
\begin{definition}\label{DefinitionSpecialSurface}
	We call a marked surface $\mathcal{S}$  \textbf{special} if its arc complex is empty or is of dimension at most $1$.
\end{definition}

\begin{thm}[Theorem 1.1 \& Theorem 1.2, \cite{Disarlo}]\label{TheoremDisarlo}
	Let $\mathcal{S}, \mathcal{S}'$ be marked surfaces, such that $A_{\ast}\left(\mathcal{S}\right)$ is non-empty and isomorphic to $A_{\ast}\left(\mathcal{S}'\right)$. Then, $\Phi$ is surjective. If $\mathcal{S}$ is non-special, then $\Phi$ is a bijection. 
\end{thm}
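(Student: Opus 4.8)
Since this statement is imported from \cite{Disarlo}, the plan is to outline the argument there, which follows the Ivanov-style rigidity paradigm for simplicial complexes built from surfaces. Fix a simplicial isomorphism $\phi \colon A_{\ast}(\mathcal{S}) \to A_{\ast}(\mathcal{S}')$; the goal is to produce a diffeomorphism $F$ of marked surfaces with $A_{\ast}(F) = \phi$, and, when $\mathcal{S}$ is non-special, to show that $F$ is unique up to isotopy. The starting point is that $A_{\ast}$ is a flag complex (Remark \ref{RemarkFlagComplex}), so it is determined by its $1$-skeleton, and that its maximal simplices are exactly the isotopy classes of ideal triangulations of $\mathcal{S}$. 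Every ideal triangulation of $\mathcal{S}$ contains the same number of arcs, a quantity determined by $\chi(S)$ together with the numbers of punctures, of boundary components, and of marked points; hence $\dim A_{\ast}(\mathcal{S}) = \dim A_{\ast}(\mathcal{S}')$. One then checks that these finer topological invariants are themselves recoverable from the simplicial structure --- for instance from the pattern of links of vertices and from counting essential arcs of the various topological types --- which forces $S$ and $S'$ to be homeomorphic through a map carrying marked points to marked points; in particular $\mathcal{S}$ is special if and only if $\mathcal{S}'$ is.

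The core step is the reconstruction of $F$ from a triangulation. Choose an ideal triangulation $T = \{a_0, \dots, a_d\}$ of $\mathcal{S}$; then $T' := \phi(T)$ is an ideal triangulation of $\mathcal{S}'$. The triangulated surface $(\mathcal{S}, T)$ is obtained by gluing its complementary triangles along their edges, and the crucial lemma gives a purely simplicial characterization of this gluing data --- most importantly of when two arcs of a triangulation are edges of a common complementary triangle, and of the resulting adjacency graph of the triangles. (One standard route: a flip of an interior arc $a_i$ of $T$ replaces it by the other diagonal of the quadrilateral formed by the two triangles meeting along $a_i$, so tracking which vertices appear under sequences of flips recovers the adjacency pattern. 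A simplicial isomorphism carries triangulations to triangulations and, since it preserves codimension-$1$ faces of maximal simplices, carries flips to flips, hence preserves all of this data.) Gluing the triangles of $(\mathcal{S}', T')$ according to the transported pattern yields a homeomorphism $F_0 \colon \mathcal{S} \to \mathcal{S}'$ sending $T$ to $T'$ and respecting marked points; after a standard smoothing we may take $F_0$ to be a diffeomorphism. No control of orientation is needed here, since $\Phi$ takes values in all of $\Diffeo(\mathcal{S}, \mathcal{S}')$; orientation is dealt with separately in the paper via the line field.

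It remains to see that $A_{\ast}(F_0) = \phi$ and, in the non-special case, that $F$ is determined up to isotopy. For the first point, an essential arc $b$ is determined up to isotopy by its minimal intersection pattern with the fixed triangulation $T$, and this pattern is visible in $A_{\ast}$ --- e.g.\ by an induction on flip-distance from $T$, since $b$ lies in some triangulation reached from $T$ by a sequence of flips, and $\phi$ transports this sequence. Hence the simplicial automorphism $A_{\ast}(F_0)^{-1} \circ \phi$ of $A_{\ast}(\mathcal{S})$ fixes $T$ pointwise and preserves all flip data, which by the reconstruction above forces it to be the identity; thus $A_{\ast}(F_0) = \phi$ and $\Phi$ is surjective. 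For injectivity when $\mathcal{S}$ is non-special, suppose $A_{\ast}(F) = A_{\ast}(G)$ for $F, G \in \Diffeo(\mathcal{S}, \mathcal{S}')$; then $H := G^{-1}F$ is a diffeomorphism of $\mathcal{S}$ fixing the isotopy class of every essential arc. Since $\mathcal{S}$ is non-special it admits an ideal triangulation whose arcs fill $S$, and the Alexander method shows that a homeomorphism fixing the isotopy classes of a filling system of arcs is isotopic to the identity; hence $H \simeq \mathrm{id}$ and $F \simeq G$. The non-special hypothesis is precisely what excludes the short list of low-complexity exceptions (discs with few marked points, annuli with one marked point on each boundary component, and the like), where a nontrivial finite-order symmetry acts trivially on the small arc complex and injectivity genuinely fails.

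The main obstacle is the combinatorial reconstruction underlying the first two paragraphs: proving that the abstract isomorphism type of $A_{\ast}$ pins down the topological type of the marked surface, and that the gluing pattern of an ideal triangulation can be read off from the simplicial data. Both require explicit simplicial descriptions of topological features --- complementary triangles, triangle adjacency, marked-point incidence, separating versus non-separating arcs --- and both come with a substantial list of low-complexity special cases that must be handled individually; carrying this out carefully, and correctly identifying the ``special'' surfaces for which injectivity fails, is where the genuine work lies.
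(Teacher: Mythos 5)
This statement is imported verbatim from \cite{Disarlo} (Theorem 1.1 and Theorem 1.2 there); the paper offers no proof of it, so there is no internal argument to compare your proposal against. Taken on its own terms, your outline is a credible account of the standard arc-complex rigidity paradigm: maximal simplices are ideal triangulations, all of the same cardinality; the gluing pattern of a triangulation is encoded simplicially; surjectivity follows by reconstructing a homeomorphism from a transported triangulation; and injectivity in the non-special case follows from the Alexander method applied to a filling system of arcs, with the special surfaces being exactly the low-complexity exceptions where a nontrivial mapping class acts trivially on the (at most one-dimensional) arc complex. Two caveats if you intend to flesh this out: first, the flip-tracking step needs care because not every arc of an ideal triangulation is flippable (the interior arc of a self-folded triangle, and arcs at punctures in degenerate configurations), so ``$\phi$ carries flips to flips'' does not by itself recover all adjacency data; Disarlo's actual argument proceeds rather by induction on the topological complexity of the surface, cutting along arcs and analyzing links of simplices, which sidesteps this. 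Second, the claim that the topological type of $(S,\marked)$ is recoverable from the abstract isomorphism type of $A_{\ast}$ is itself a nontrivial theorem requiring its own case analysis, not a routine count of invariants. Neither caveat affects the correctness of the statement you are justifying, only the completeness of the sketch.
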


\noindent 

We discuss the cases of special surfaces and provide a complete list of them in Section \ref{SectionSpecialSurfaces}. \medskip

\subsection{The simplicial isomorphism of an equivalence}
\ \smallskip

\noindent The main ingredient of our approach to study equivalences is the construction of a map
\[\begin{tikzcd} A_{\ast}: \Equ(\T, \T') \arrow{r} & \Simp(A_{\ast}(\mathcal{S}), A_{\ast}(\mathcal{S}')),\end{tikzcd}\]
where $\Equ(\T, \T')$ denotes the class of equivalence classes of $k$-linear triangle equivalences modulo natural isomorphisms. The set $\Aut(\T):=\Equ(\T, \T)$ is a group with multiplication given by composition.
 The idea is to find a diffeomorphism $\mathcal{S} \rightarrow \mathcal{S}'$, which realizes the action of $T$ on $[1]$-orbits of isomorphism classes in terms of its action on homotopy classes on curves. This is the approach we pursue below. For the rest of this section we fix surface-like categories $(\T, \mathcal{S}, \eta, \gamma, \basis)$ and $(\T', \mathcal{S}', \eta', \gamma, \basis)$. \medskip
  
\noindent We propose the following algebraic counterpart of essential arcs.

\begin{definition}\label{DefinitionEssentialObjects}
	Let $X \in \T$ be an arbitrary indecomposable. Then $X$ is called \textbf{essential} if it satisfies the following conditions.
	\begin{itemize}
	\setlength\itemsep{1ex}
		\item[1)] If $X \in \Perf(\T)$, then the middle term $Z$ of any Auslander-Reiten triangle \triangle[X][Z][\tau^{-1}(X)][][][][1] 
		is  decomposable. 
		\item[2)]  $\dim \Hom^*(X,X) \leq 2$. 
	\end{itemize}
\end{definition} \

\noindent It is clear that if $T: \T \rightarrow \T'$ is a triangle equivalence, then $X \in \T$ is essential if and only if $T(X)$ is essential.
\noindent Our first observation about essential objects is the following.
\begin{lem}\label{LemmaEssentialObjectsNotTauInvariant}
	Let $X \in \Perf(\T)$ be essential. Then $\tau X$ is not isomorphic to a shift of $X$. In particular, essential objects are arc objects.
\end{lem}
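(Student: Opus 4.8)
The plan is to argue by contradiction. Suppose $X \in \Perf(\T)$ is essential and $\tau X \cong X[n]$ for some $n \in \mathbb{Z}$; in particular $X$ is fractional Calabi--Yau, so by Proposition \ref{PropositionFractionallyCalabiYauArcs} the class $\gamma(X)$ contains a boundary arc whose image lies in a boundary component $B \subseteq \partial S$. Write $N \coloneqq \#(B \cap \marked)$ for the number of marked points on $B$. I will show that the two defining conditions of an essential object (Definition \ref{DefinitionEssentialObjects}) force $B$ to have a single marked point, while simultaneously being impossible once $B$ has a single marked point --- or, more precisely, that if $B$ has a single marked point then $X$ is represented by a boundary arc around $B$ which we can analyse directly and still reach a contradiction with condition 1).

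First I would handle the case $N \geq 2$. If $\gamma(X)$ contains a boundary arc whose endpoints are \emph{not} consecutive marked points on $B$, then by Corollary \ref{CorollaryIndecomposableMiddleTermARTriangle} the middle term of the Auslander--Reiten triangle starting in $X$ is decomposable --- wait, that is the \emph{good} direction for condition 1). So the obstruction must come from condition 2): I would show $\dim \Hom^*(X,X) \geq 3$ when the boundary arc wraps a component with $N \geq 2$ marked points. Concretely, a boundary arc $\gamma$ around such a $B$ has interior self-intersections (it wraps around $B$), and using Property 2) of Definition \ref{DefinitionFukayaLikeQuintuples} together with the fact that a representative in minimal position has several oriented self-intersections, one produces at least three linearly independent graded endomorphisms (the identity, the shift-class of a connecting morphism of the Auslander--Reiten triangle via Property 2 IV), and at least one further morphism coming from a genuine self-intersection). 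This contradicts $\dim \Hom^*(X,X) \leq 2$.

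Then I would treat the remaining case $N = 1$: $\gamma(X)$ contains a boundary arc $\gamma$ around a component $B$ with exactly one marked point. By Proposition \ref{PropositionFractionallyCalabiYauArcs} we then have $\tau X \cong X$ (taking $m = 1$), and moreover $X$ is a \emph{segment object} precisely when $\omega_B = 0$; in general $\gamma$ is a boundary segment of $B$ (its two endpoints are the unique marked point, hence trivially consecutive). But then Corollary \ref{CorollaryIndecomposableMiddleTermARTriangle} says the middle term $Z$ of the Auslander--Reiten triangle $X \to Z \to \tau^{-1}X \to X[1]$ is \emph{indecomposable}, directly violating condition 1) of Definition \ref{DefinitionEssentialObjects}. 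This contradiction completes the proof that no essential $X \in \Perf(\T)$ can be $\tau$-periodic. The final clause ``essential objects are arc objects'' then follows: a loop object is always $\tau$-invariant (it sits in a homogeneous tube by Property 5) of Definition \ref{DefinitionFukayaLikeQuintuples}, or see the conventions following that definition), hence cannot be essential.

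The main obstacle I anticipate is the self-intersection count in the case $N \geq 2$: one must be careful that the candidate morphisms produced by distinct oriented self-intersections (and by Property 2 IV)) are genuinely linearly independent in $\Hom^*(X,X)$, and that they are not accidentally identified with the identity or with each other in low-dimensional or degenerate configurations (e.g.\ when $B$ is one of the special surfaces). Isolating exactly which boundary components force $\dim \Hom^*(X,X) \geq 3$ --- equivalently, checking that the only boundary arcs with $\dim \Hom^*(X,X) \leq 2$ are the boundary \emph{segments} --- is essentially the content of Proposition \ref{PropositionCharacterizationBoundaryObjects}, so I would lean on that: an essential $X \in \Perf(\T)$ satisfying 2) is in particular a boundary object, hence by Proposition \ref{PropositionCharacterizationBoundaryObjects} represented by a simple boundary arc or a boundary loop; the loop case is excluded by Lemma immediately preceding, and a simple boundary arc on a component with $N \geq 2$ marked points still has indecomposable-middle-term Auslander--Reiten triangles only when its endpoints are consecutive, i.e.\ it is a boundary segment on a single-marked-point component --- reducing everything to the $N=1$ analysis above.
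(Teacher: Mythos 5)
Your argument for arc objects is essentially the paper's, but it has a genuine gap at the loop case, and the circularity shows up precisely in your last sentence. Definition \ref{DefinitionEssentialObjects} allows $X$ to be an arbitrary indecomposable, in particular a loop object, and Proposition \ref{PropositionFractionallyCalabiYauArcs} is stated only for arc objects; so when $X$ is a loop object your opening step ``$\gamma(X)$ contains a boundary arc'' does not apply and your main contradiction never engages with loops. You then close by saying loop objects are $\tau$-invariant ``hence cannot be essential'' --- but that implication is the first assertion of the lemma, which at that point you have only established for arc objects. The missing argument is short but necessary, and it is what the paper does: for a loop object, condition 2) of essentiality together with Property 2 V) of Definition \ref{DefinitionFukayaLikeQuintuples} forces the local system to be $1$-dimensional, so $X$ sits at the mouth of its homogeneous tube (Property 5)), and then the Auslander--Reiten triangle starting in $X$ has indecomposable middle term, contradicting condition 1) of essentiality.

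Two smaller points. First, your case $N \geq 2$ is vacuous: $\tau X \cong X[n]$ forces $\tau\gamma \simeq \gamma$, and the proof of Proposition \ref{PropositionFractionallyCalabiYauArcs} shows this happens only when $B$ carries a single marked point; the self-intersection count you sketch there is not needed (and the claim that a boundary arc on such a $B$ must have interior self-intersections is false for a boundary segment between consecutive marked points). Second, in the case $N=1$ you cannot conclude that $\gamma$ is a boundary segment merely because its endpoints coincide: a boundary arc winding several times around $B$ also satisfies $\tau\gamma\simeq\gamma$, is not a boundary segment, and Corollary \ref{CorollaryIndecomposableMiddleTermARTriangle} would then give a decomposable middle term. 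You must first use condition 2) ($\dim\Hom^*(X,X)\leq 2$) to force $\gamma$ to be simple, hence an honest boundary segment --- this is exactly how the paper argues.
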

\begin{proof}
	Suppose $\tau X \cong X[n]$ for some $n \in \mathbb{Z}$. By Proposition \ref{PropositionFractionallyCalabiYauArcs}, $X$ must be represented by a loop or a boundary arc $\gamma$ contained in a boundary component with a single marked point.
	Suppose $\gamma$ is a loop. By condition 1) in Definition \ref{DefinitionEssentialObjects} and condition 3) in Definition \ref{DefinitionFukayaLikeQuintuples}, it has to sit at the base of its homogeneous tube, as otherwise $\dim \Hom^*(X,X) > 2$. However, this implies that the middle term of any Auslander-Reiten triangle starting in $X$ is indecomposable.
	
If $\gamma$ is an arc, it must be homotopic to a simple arc because of the second condition of Definition \ref{DefinitionEssentialObjects} and hence $\gamma$ is a boundary segment. Thus, the middle term of every Auslander-Reiten triangle starting in $X$ is indecomposable.
\end{proof}
\begin{lem}\label{LemmaCharacterizationEssentialObjects}
	Let $X \in \T$ be an arbitrary indecomposable. Then $X$ is essential if and only if $\gamma(X)$ contains an essential arc.
\end{lem}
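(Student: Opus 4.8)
The plan is to show each direction using the characterizations already established in the excerpt, essentially reducing ``essential object'' to a purely geometric condition on $\gamma(X)$. First I would observe that by Lemma \ref{LemmaEssentialObjectsNotTauInvariant}, any essential object is an arc object, so in the forward direction we may assume $\gamma(X)$ is a homotopy class of arcs. Condition 2) in Definition \ref{DefinitionEssentialObjects}, namely $\dim\Hom^*(X,X)\le 2$, translates into simplicity of the representing arc: indeed, if $\gamma_X$ had an interior self-intersection, one could produce a self-intersection point $p\in\gamma_X\overrightarrow{\cap}\gamma_X$ together with its dual $\overline p$, and these contribute at least two elements to a basis of $\Hom^*(X,X)$ beyond the identity morphism (compare the reasoning in Proposition \ref{PropositionCharacterizationBoundaryObjects}, which uses Definition \ref{DefinitionFukayaLikeQuintuples} 2\,V) and the bound on $\dim\Hom^*(X,X)$ for simple loops/arcs). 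Hence $\gamma_X$ is simple. Condition 1) says the middle term of an Auslander–Reiten triangle through $X$ is decomposable; by Corollary \ref{CorollaryIndecomposableMiddleTermARTriangle}, for an arc object this is equivalent to $\gamma(X)$ not containing a boundary segment. Combining simplicity with the failure to be a boundary segment is precisely the definition of an essential arc (Definition \ref{DefinitionArcComplex}), so $\gamma(X)$ contains an essential arc.

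For the converse, suppose $\gamma(X)$ contains an essential arc $\gamma_X$; in particular $\gamma_X$ is simple and not homotopic to a boundary segment. Simplicity of $\gamma_X$ forces $\dim\Hom^*(X,X)\le 2$: the only oriented self-intersections of a simple arc are at its (at most two) endpoints, and each endpoint contributes a single basis morphism by Property 2\,I) and 2\,III) of Definition \ref{DefinitionFukayaLikeQuintuples} (with no puncture contributions, since a simple arc has no interior self-intersections and the endpoint morphisms are, in the gentle case, the identity — cf.\ the remark after Lemma \ref{LemmaBoundaryPointsAreInvertible}); this gives condition 2). For condition 1), first note that if $X\notin\Perf(\T)$ there is nothing to check, so assume $X\in\Perf(\T)$, i.e.\ $\gamma_X$ is a finite arc. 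Since $\gamma_X$ is not a boundary segment, Corollary \ref{CorollaryIndecomposableMiddleTermARTriangle} shows the middle term of any Auslander–Reiten triangle starting (equivalently ending) in $X$ is decomposable. Hence $X$ is essential.

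The main obstacle I anticipate is the careful bookkeeping in the step ``$\dim\Hom^*(X,X)\le 2$ if and only if the representing arc is simple'', because one must handle the various cases of Definition \ref{DefinitionFukayaLikeQuintuples} 2): interior self-intersections (which come in dual pairs, contributing $\ge 2$ extra morphisms), self-intersections at punctures (which by Property 2\,II) contribute an entire $\mathbb N$-indexed family, immediately violating the bound), and endpoint morphisms (which must be counted as contributing exactly the identity-type morphism, not more). One must also ensure that a non-simple arc whose only self-intersections happen to have degree issues cannot sneak past; but since the bound $\dim\Hom^*(X,X)\le 2$ is on the total graded dimension and each interior self-intersection $p$ yields both $\basis(p)$ and $\basis(\overline p)$ with $\deg\overline p = 1-\deg p$ by linear independence from the identity, a single interior self-intersection already pushes the dimension to at least $3$. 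This is exactly the kind of argument run in the proof of Proposition \ref{PropositionCharacterizationBoundaryObjects}, so the work is to transcribe it in the essential-object setting rather than to invent anything new.
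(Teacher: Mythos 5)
Your proof follows the same route as the paper's own: Lemma \ref{LemmaEssentialObjectsNotTauInvariant} reduces to arc objects, the dual pair of basis morphisms attached to an interior self-intersection shows that condition 2) of Definition \ref{DefinitionEssentialObjects} forces the representing arc to be simple (and conversely), and Corollary \ref{CorollaryIndecomposableMiddleTermARTriangle} identifies condition 1) with the arc not being a boundary segment. The differences are cosmetic — you argue directly via Corollary \ref{CorollaryIndecomposableMiddleTermARTriangle} where the paper cites Proposition \ref{PropositionCharacterizationBoundaryObjects}, and you make explicit the vacuity of condition 1) for non-perfect objects and the bookkeeping at endpoints via Lemma \ref{LemmaBoundaryPointsAreInvertible} — so the proposal is correct to the same extent as the paper's own argument.
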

\begin{proof}If $X$ is represented by a simple arc $\gamma$, then $\dim \Hom^*(X,X) \leq 2$ since it has at most two boundary intersections. Proposition \ref{PropositionCharacterizationBoundaryObjects} and Lemma \ref{LemmaEssentialObjectsNotTauInvariant} imply that every essential object is represented by an essential arc.\\
	For the converse, note that every interior self-intersection of a curve $\gamma$ gives rise to two distinct intersections of $\gamma$ with any perturbation $\gamma'$ of $\gamma$. In particular, the presence of an interior self-intersection forces $\dim \Hom_{\Int}^*(X,X) \geq 2$ and $\dim \Hom^*(X,X) \geq 3$. This follows from condition 2) ii) in Definition \ref{DefinitionFukayaLikeQuintuples} and the fact that the endpoints of an arc give rise to an automorphism of any its representing objects by Lemma \ref{LemmaBoundaryPointsAreInvertible}.
\end{proof}
\noindent Next, we define the map $A_{\ast}(-)$.
\begin{definition}
Let $T: \T \rightarrow T'$ a triangle equivalence. Define $A_{\ast}(T) \in \Simp(A_{\ast}\left(\mathcal{S}), A_{\ast}(\mathcal{S}')\right)$ as the unique simplicial isomorphism  which sends a vertex $\gamma(X) \in A_{\ast}(\mathcal{S})$, $X \in \T$ essential, to $\gamma(T(X)) \in A_{\ast}(\mathcal{S}')$.
\end{definition}
\noindent We recall that the arc complex of $\mathcal{S}$ is empty if and only if $\mathcal{S}$ is a disc without punctures and at most $3$ marked points.
\begin{lem}\label{LemmaAastWellDefined}Assume that $A_{\ast}(\mathcal{S}) \neq \emptyset$. Then, $A_{\ast}(-)$ is well-defined and is a group homomorphism if $(\T, \mathcal{S}, \eta, \gamma, \basis)=(\T', \mathcal{S}', \eta, \gamma, \basis)$.
\end{lem}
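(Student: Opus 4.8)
The plan is to verify the two assertions separately. For well-definedness, the key point is that $A_*(T)$ must be shown to be a genuine simplicial isomorphism, not merely a map on vertices. First I would observe that the assignment on vertices is well-defined because, by Lemma \ref{LemmaCharacterizationEssentialObjects}, $X$ is essential if and only if $\gamma(X)$ contains an essential arc, and the image of an essential object under a triangle equivalence is again essential (as noted immediately after Definition \ref{DefinitionEssentialObjects}); moreover $\gamma(T(X))$ depends only on the isomorphism class of $T(X)$, hence only on the natural-isomorphism class of $T$ and the isomorphism class of $X$. Since $\gamma$ is a bijection between isomorphism classes of indecomposables and homotopy classes of graded curves with local systems (Property 1 of Definition \ref{DefinitionFukayaLikeQuintuples}), and essential arcs correspond to essential objects, the map on vertex sets is a bijection whose inverse is induced by a quasi-inverse of $T$.

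Next I would check that $A_*(T)$ preserves simplices. By Remark \ref{RemarkFlagComplex}, $A_*(\mathcal{S})$ is a flag complex, so it suffices to check that $A_*(T)$ and its inverse preserve edges. A pair of distinct vertices $\gamma(X), \gamma(Y)$ spans an edge exactly when $X, Y$ admit representatives $\gamma_X, \gamma_Y$ with no interior intersections. Here the main obstacle — and the step requiring the most care — is translating "no interior intersection" into a purely categorical condition invariant under $T$. The natural candidate is $\Hom_{\Int}^*(X,Y) = 0 = \Hom_{\Int}^*(Y,X)$: by Proposition \ref{PropositionInteriorMorphismsVectorSpace}(2), when neither $X$ nor $Y$ is $\tau$-invariant (which holds by Lemma \ref{LemmaEssentialObjectsNotTauInvariant} since essential objects are arc objects, in fact finite or semi-finite arc objects represented by essential arcs), $\Hom_{\Int}^*(X,Y)$ is spanned by the morphisms of interior intersections of $\gamma_X$ and $\gamma_Y$ in minimal position. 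Since $\{\gamma_X, \gamma_Y\}$ can always be put in minimal position and essential arcs are simple, having disjoint interiors is equivalent to having zero geometric interior intersection number, which is equivalent to $\Hom_{\Int}^*(X,Y) = 0 = \Hom_{\Int}^*(Y,X)$. By Lemma \ref{LemmaEquivalencesPreserveInteriorMorphisms}, $T$ restricts to bijections on interior morphisms, so this condition is preserved by $T$ and by any quasi-inverse; hence $A_*(T)$ preserves edges in both directions, and by the flag property it is a simplicial isomorphism. One should also record that $A_*(\mathcal{S}') \neq \emptyset$, which follows since $A_*(\mathcal{S}) \neq \emptyset$ and the two complexes have isomorphic (non-empty) vertex sets.

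Finally, the functoriality statement: when $(\T,\mathcal{S},\eta,\gamma,\basis) = (\T',\mathcal{S}',\eta,\gamma,\basis)$, for triangle equivalences $T_1, T_2$ and an essential object $X$ we have $A_*(T_2 \circ T_1)(\gamma(X)) = \gamma(T_2(T_1(X))) = A_*(T_2)(\gamma(T_1(X))) = A_*(T_2)\bigl(A_*(T_1)(\gamma(X))\bigr)$, using at the middle step that $T_1(X)$ is again essential; since a simplicial isomorphism of the flag complex $A_*(\mathcal{S})$ is determined by its action on vertices, $A_*(T_2 \circ T_1) = A_*(T_2) \circ A_*(T_1)$. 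The identity functor clearly maps to the identity automorphism, so $A_*(-)$ is a group homomorphism $\Aut(\T) \to \Simp(A_*(\mathcal{S}), A_*(\mathcal{S}))$. I expect the genuinely delicate point to be the edge-preservation argument, specifically making sure the equivalence "disjoint interiors $\Leftrightarrow$ vanishing interior $\Hom$" is airtight for the relevant (finite and semi-finite) arc objects and does not secretly require excluding degenerate configurations already handled by Lemma \ref{LemmaEssentialObjectsNotTauInvariant} and Proposition \ref{PropositionCharacterizationBoundaryObjects}.
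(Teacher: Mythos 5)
Your proposal is correct and follows essentially the same route as the paper: bijection on vertices via the correspondence between essential objects and essential arcs, reduction to edge-preservation via the flag property, and the identification of "disjoint interiors" with vanishing of $\Hom^*_{\Int}$ via Proposition \ref{PropositionInteriorMorphismsVectorSpace} and Lemma \ref{LemmaEquivalencesPreserveInteriorMorphisms}. The extra care you take in invoking Lemma \ref{LemmaEssentialObjectsNotTauInvariant} to justify applying part (2) of Proposition \ref{PropositionInteriorMorphismsVectorSpace} is a reasonable elaboration of a step the paper leaves implicit.
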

\begin{proof} Let $\gamma$ be an essential arc. By Lemma \ref{LemmaCharacterizationEssentialObjects}, $\gamma$ is represented by essential object $X_{\gamma}$. Let $T: \T \rightarrow \T'$ be a triangle equivalence. Then, $T(X_{\gamma})$ is essential and represented by a homotopy class of essential arcs. The assignment $\gamma \mapsto \gamma(T(X_{\gamma}))$ gives rise to a well defined (invertible) map between the vertices of $A_{\ast}(\mathcal{S})$ and $A_{\ast}(\mathcal{S}')$. We claim that this maps extends to a simplicial isomorphism $A_{\ast}(T): A_{\ast}(\mathcal{S}) \rightarrow A_{\ast}(\mathcal{S}')$. It is sufficient to prove that $\gamma$ and $\gamma'$ are connected by an edge if and only if $\gamma(T(X_{\gamma}))$ and $\gamma(T(X_{\gamma'}))$ are connected by an edge. This follows from the fact that $A_{\ast}(-)$ is a flag complex (see Remark \ref{RemarkFlagComplex}).\\
By Proposition \ref{PropositionInteriorMorphismsVectorSpace}, $\gamma$ and $\gamma'$ are connected by an edge if and only if the space of interior morphisms $\Hom^*_{\Int}(X_{\gamma}, X_{\gamma'})$ trivial. Thus, the claim follows from Lemma \ref{LemmaEquivalencesPreserveInteriorMorphisms}.\\
Note that $A_{\ast}(T)$ is the identity if $T$ is isomorphic to the identity functor proving that $A_{\ast}(-)$ is well-defined. The second assertion follows immediately from the definition of $A_{\ast}(-)$.
\end{proof}

\noindent It is a consequence of the previous lemma and Theorem \ref{TheoremDisarlo} that in case $A_{\ast}(\mathcal{S}) \neq \emptyset$, the surface $\mathcal{S}$ is a triangle invariant of the surface-like category $\T$.
	\begin{cor}
	Assume that $\mathcal{S}$ is non-empty. If $\T$ and $\T'$ are triangle equivalent, then $\mathcal{S} \cong \mathcal{S}'$. In particular, if $A,A'$ are derived equivalent gentle algebras, then $\mathcal{S}_A \cong \mathcal{S}_{A'}$.
	\end{cor}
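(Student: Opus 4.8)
The plan is to deduce the statement directly from Lemma~\ref{LemmaAastWellDefined} together with Theorem~\ref{TheoremDisarlo}, splitting into the generic case and a small list of exceptional (special) surfaces. So suppose $\T$ and $\T'$ are triangle equivalent via $T: \T \to \T'$, and that $\mathcal{S}$ is non-empty; we must produce a diffeomorphism of marked surfaces $\mathcal{S} \to \mathcal{S}'$. First I would dispose of the case $A_{\ast}(\mathcal{S}) = \emptyset$: by Remark~\ref{RemarkFlagComplex} this means $S$ is a disc with at most three marked points and no punctures, and in that case the category $\Perf(\T)$ has only finitely many indecomposables (up to shift) with prescribed Hom-dimensions, so one can read off the number of marked points from the triangulated structure; I would argue that a triangle equivalence then forces $\mathcal{S}'$ to be a disc with the same number of marked points, hence diffeomorphic to $\mathcal{S}$.

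Now assume $A_{\ast}(\mathcal{S}) \neq \emptyset$. By Lemma~\ref{LemmaAastWellDefined}, $T$ induces a simplicial isomorphism $A_{\ast}(T) : A_{\ast}(\mathcal{S}) \to A_{\ast}(\mathcal{S}')$; in particular $A_{\ast}(\mathcal{S})$ and $A_{\ast}(\mathcal{S}')$ are isomorphic simplicial complexes, and both are non-empty. If $\mathcal{S}$ is non-special, Theorem~\ref{TheoremDisarlo} says that $\Phi : \Diffeo(\mathcal{S},\mathcal{S}') \to \Simp(A_{\ast}(\mathcal{S}), A_{\ast}(\mathcal{S}'))$ is a bijection, so there is a (unique, up to isotopy) diffeomorphism of marked surfaces $H : \mathcal{S} \to \mathcal{S}'$ with $A_{\ast}(H) = A_{\ast}(T)$. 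In particular $\mathcal{S} \cong \mathcal{S}'$ as marked surfaces. If $\mathcal{S}$ is special (its arc complex has dimension at most $1$), the first part of Theorem~\ref{TheoremDisarlo} still gives surjectivity of $\Phi$, hence a diffeomorphism $H$ realising $A_{\ast}(T)$, which again yields $\mathcal{S} \cong \mathcal{S}'$; here I would appeal to the case-by-case discussion of special surfaces promised in Section~\ref{SectionSpecialSurfaces} to confirm that a simplicial isomorphism of their arc complexes does indeed come from a diffeomorphism (for these surfaces the arc complex, being low-dimensional, still determines the surface).

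Finally, the assertion for gentle algebras is a direct application: if $A$ and $A'$ are derived equivalent, then $\mathcal{D}^b(A) \simeq \mathcal{D}^b(A')$ as triangulated categories, and by Theorem~\ref{TheoremGentleAlgebrasFukayaLike} these are surface-like with surface models $\mathcal{S}_A$ and $\mathcal{S}_{A'}$; since $\mathcal{S}_A$ is always non-empty when $A_{\ast}(\mathcal{S}_A)$ is (and when it is empty one argues as above for the disc), the first part of the Corollary gives $\mathcal{S}_A \cong \mathcal{S}_{A'}$.

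I expect the main obstacle to be the bookkeeping around special and empty arc complexes: for those surfaces Theorem~\ref{TheoremDisarlo} only guarantees surjectivity of $\Phi$, not injectivity, so one must either invoke the explicit classification in Section~\ref{SectionSpecialSurfaces} or give an independent argument (counting indecomposables, or computing the Euler form / AAG-type data from Corollary~\ref{AGInvariantFukaya}) that the triangulated structure pins down the diffeomorphism type. The generic non-special case, by contrast, is essentially immediate from Lemma~\ref{LemmaAastWellDefined} and Theorem~\ref{TheoremDisarlo}.
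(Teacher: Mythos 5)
Your proof is correct and follows essentially the same route as the paper: the corollary is deduced directly from Lemma~\ref{LemmaAastWellDefined} (which produces the simplicial isomorphism $A_{\ast}(T)$) together with the surjectivity statement of Theorem~\ref{TheoremDisarlo}. Your closing worry about special surfaces is a non-issue for this particular statement, since only the existence of \emph{some} diffeomorphism realising $A_{\ast}(T)$ is needed, and surjectivity of $\Phi$ holds for all surfaces with non-empty arc complex; likewise the case $A_{\ast}(\mathcal{S})=\emptyset$ is excluded by the hypothesis (the ``$\mathcal{S}$ non-empty'' in the statement is to be read as non-emptiness of the arc complex, as the sentence preceding the corollary makes clear).
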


\subsubsection{The diffeomorphism of an equivalence}
\label{SectionThediffeomorphismOfEquivalence}
\ \smallskip 

\noindent Given an equivalence $T \in \Equ(\T, \T')$ and assuming that $\mathcal{S}$ is non-special, we set $\Psi(T)\coloneqq  \Phi^{-1} \circ A_{\ast}(T)$. This defines a map
\[\begin{tikzcd}\Psi: \Equ(\T, \T') \arrow{r} & \Diffeo(\mathcal{S}, \mathcal{S}'). \end{tikzcd}\]
 which is a homomorphism of groups in case $\T$ and $\T'$ and their models are identical. \ \medskip

\noindent For the remainder of this section, however, we assume that $\mathcal{S}$ and $\mathcal{S}'$ are non-special. By construction, $\Psi$ satisfies the following.
\begin{cor}\label{CorollaryPsiRealizesActionOnEssentialObjects}
	Let $T: \T \rightarrow \T'$ be a triangle equivalence. Then, $\Psi(T)(\gamma(X))=\gamma(T(X))$ for all essential objects $X \in \T$.
\end{cor}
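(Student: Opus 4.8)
The plan is simply to unwind the definitions of $\Psi(T)$ and of the map $A_{\ast}(-)$ on diffeomorphisms; no further geometric input is required. First I would record that, because $\mathcal{S}$ is non-special, its arc complex $A_{\ast}(\mathcal{S})$ is non-empty by Definition \ref{DefinitionSpecialSurface}, and the simplicial isomorphism $A_{\ast}(T)$ furnished by Lemma \ref{LemmaAastWellDefined} identifies it with $A_{\ast}(\mathcal{S}')$. Hence Theorem \ref{TheoremDisarlo} applies and tells us that
\[
\Phi\colon \Diffeo(\mathcal{S},\mathcal{S}') \longrightarrow \Simp\bigl(A_{\ast}(\mathcal{S}),A_{\ast}(\mathcal{S}')\bigr)
\]
is a bijection, so that $\Psi(T)=\Phi^{-1}(A_{\ast}(T))$ is a well-defined class of diffeomorphisms and satisfies $A_{\ast}(\Psi(T))=\Phi(\Psi(T))=A_{\ast}(T)$ as simplicial isomorphisms of arc complexes.

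Next I would take an essential object $X\in\T$ and evaluate this equality of simplicial isomorphisms on the vertex $\gamma(X)$. By Lemma \ref{LemmaCharacterizationEssentialObjects}, $\gamma(X)$ is indeed the homotopy class of an essential arc, hence a vertex of $A_{\ast}(\mathcal{S})$; and $T(X)$ is essential, so $\gamma(T(X))$ is a vertex of $A_{\ast}(\mathcal{S}')$. On the left-hand side, by construction $A_{\ast}(F)$ for a diffeomorphism $F$ is the simplicial isomorphism sending the homotopy class of an arc $\delta$ to the homotopy class of $F\circ\delta$, so $A_{\ast}(\Psi(T))(\gamma(X))=\Psi(T)(\gamma(X))$. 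On the right-hand side, $A_{\ast}(T)$ was defined precisely so that it sends $\gamma(X)$ to $\gamma(T(X))$. Equating the two expressions yields $\Psi(T)(\gamma(X))=\gamma(T(X))$, which is the claim.

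I do not expect any genuine obstacle: the statement is a bookkeeping consequence of the construction of $\Psi$ through the identification (via \cite{Disarlo}) of isotopy classes of diffeomorphisms with simplicial automorphisms of the arc complex. The only points that deserve a line of care are that the non-special hypothesis is exactly what is needed for $\Phi^{-1}$ — and hence $\Psi(T)$ — to be defined, and that $\gamma(-)$ descends to isomorphism classes of indecomposables (Definition \ref{DefinitionFukayaLikeQuintuples}, Property 1), so that $\gamma(T(X))$ is unambiguous even though $T(X)$ is only determined up to isomorphism.
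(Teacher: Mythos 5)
Your proof is correct and is exactly the argument the paper intends: the corollary is stated there with the one-line justification ``by construction,'' and your unwinding of $\Psi(T)=\Phi^{-1}\circ A_{\ast}(T)$ evaluated on the vertex $\gamma(X)$, using Lemma \ref{LemmaCharacterizationEssentialObjects} and the non-special hypothesis to invoke Theorem \ref{TheoremDisarlo}, is precisely that construction made explicit.
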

\noindent In light of our discussion of $\tau$-invariant families in the bounded derived category of the Kronecker algebra in Section \ref{SectionKroneckerAlgebra}, we can not expect Corollary \ref{CorollaryPsiRealizesActionOnEssentialObjects} to hold true for arbitrary indecomposable objects in $\T$. However, in the forthcoming sections we will show a generalization of Corollary \ref{CorollaryPsiRealizesActionOnEssentialObjects}, where we replace equality by the equivalence relation $\simeq_{\ast}$ from Section \ref{SectionFractionalCalabiYauTauInvariantFamilies}.
\medskip

\section{On diffeomorphisms induced by equivalences}\label{SectionOndiffeomorphisms}
\noindent In this section we investigate the properties of the diffeomorphism $\Psi(T)$ which is attached to an equivalence $T:\T \rightarrow \T'$ of surface-like categories. The most important results we  obtain are that $\Psi(T)$ preserves the orientation and winding numbers of loops. The key ingredient, however, is the following result which asserts that $\Psi(T)$ is a geometric realization of $T$.

\begin{thm}\label{TheoremPsiRealizesAction}
	Let $(\T, \mathcal{S}, \eta, \gamma, \basis)$ and $(\T', \mathcal{S}', \eta', \gamma, \basis)$ be surface-like quintuples and $T: \T \rightarrow \T'$ be a triangle equivalence. If $\mathcal{S}$ is non-special and $X \in \T$ is indecomposable, then 
	\[\Psi(T)(\gamma_{\ast}(X))=\gamma_{\ast}(T(X)).\]
\end{thm}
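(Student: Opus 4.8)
The statement we want is that the diffeomorphism $\Psi(T)$ realizes the action of $T$ not merely on essential (arc) objects --- which is Corollary \ref{CorollaryPsiRealizesActionOnEssentialObjects} --- but on \emph{all} indecomposables, up to the coarsened homotopy relation $\simeq_\ast$. The plan is to reduce the general case to the already-known essential case by using the fact that a curve is determined (up to $\simeq_\ast$) by how it sits relative to the essential arcs, and then to track this combinatorial data through $T$. First I would fix an indecomposable $X\in\T$ with representative $\gamma_X$ in minimal position, and choose an auxiliary finite collection of essential arcs $\delta_1,\dots,\delta_r$ (a triangulation, or enough essential arcs to "pin down" $\gamma_X$) that is in minimal position with $\gamma_X$. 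On the algebraic side these correspond to essential objects $U_1,\dots,U_r$, and Property 2) of Definition \ref{DefinitionFukayaLikeQuintuples} says that $\dim\Hom^\ast_{\Int}(X,U_i)$ (and $\dim\Hom^\ast_{\Int}(U_i,X)$) records the number of interior intersections $\gamma_X\cap\delta_i$. By Lemma \ref{LemmaEquivalencesPreserveInteriorMorphisms}, $T$ preserves these dimensions: $\dim\Hom^\ast_{\Int}(X,U_i)=\dim\Hom^\ast_{\Int}(T(X),T(U_i))$, and by Corollary \ref{CorollaryPsiRealizesActionOnEssentialObjects}, $T(U_i)$ is represented by $\Psi(T)(\delta_i)$. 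Hence $\Psi(T)(\gamma_X)$ has the same intersection numbers with each $\Psi(T)(\delta_i)$ as $\gamma_{T(X)}$ does.

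The next step is to upgrade "same intersection numbers with every essential arc" to "$\simeq_\ast$-equivalent". For this I would invoke the standard fact (from the theory of curves on surfaces, e.g.\ via bigon-removal arguments, cf.\ the references cited in Section \ref{SectionPreliminaries}) that a simple arc or loop in a surface is determined up to homotopy by its geometric intersection numbers with a fixed triangulation or with the full collection of essential arcs; the only ambiguity is precisely the collapsing of a gradable boundary loop around a one-marked-point component to the boundary segment, which is exactly the relation $\sim_\ast$ built into $\simeq_\ast$. One subtlety: $\gamma_X$ need not be simple, so I would either work with the coarser invariant that still suffices (intersection numbers with a triangulation determine non-simple curves too, once one fixes the number of self-intersections, which is itself read off from $\dim\Hom^\ast_{\Int}(X,X)$ as in the proof of Lemma \ref{LemmaCharacterizationEssentialObjects}), or reduce to the simple case by first handling arc objects and loop objects separately. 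The gradings need to be tracked as well: $\basis$ is compatible with gradings and $\Psi(T)$ sends $\eta$ to (the homotopy class of) $\eta'$, so degrees of intersections are preserved, which ensures we land in the right shift orbit --- but since $\simeq_\ast$ and $\gamma_\ast$ are defined for unoriented ungraded curves this is mostly a consistency check rather than an essential ingredient.

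I expect the main obstacle to be the case where $X$ is a loop object, or more precisely a $\tau$-invariant object whose $\simeq_\ast$-class contains a degenerated (arc) object: as the Kronecker example in Section \ref{SectionKroneckerAlgebra} shows, $T$ can send a loop object to an arc object in the same $\mathbb{P}^1_k$- or $\mathbb{A}^1_k$-family, so one genuinely cannot hope for more than $\simeq_\ast$-equivalence here, and one must verify that $\Psi(T)$ respects the family structure. The tool for this should be Corollary \ref{CorollaryEquivalencesPreserveFamilies} (equivalences preserve $\tau$-invariant families) together with the characterization of boundary/segment objects in Proposition \ref{PropositionCharacterizationBoundaryObjects}: a $\tau$-invariant family has a well-defined $\simeq_\ast$-class $\gamma_\ast(\mathcal{X})$, $T$ permutes these families, and one checks that the induced permutation agrees with $\Psi(T)$ by testing against essential arcs exactly as above. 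A second, more technical point is justifying that finitely many essential arcs (a triangulation) suffice to separate $\simeq_\ast$-classes on a non-special surface; here one uses that non-special surfaces have triangulations by essential arcs and quotes the relevant lemma on reconstructing curves from their coordinates with respect to a triangulation. Once these two points are in place, the equality $\Psi(T)(\gamma_\ast(X))=\gamma_\ast(T(X))$ follows by combining the preserved intersection data with the reconstruction statement.
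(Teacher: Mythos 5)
There is a genuine gap at the step where you pass from ``same intersection data'' to ``$\simeq_\ast$-equivalent''. Your plan records only the \emph{dimensions} $\dim\Hom^\ast_{\Int}(X,U_i)$, i.e.\ the unordered intersection counts of $\gamma_X$ with the arcs of a triangulation, and then appeals to a ``standard fact'' that these counts determine the curve. That fact is false for the curves at hand: an indecomposable object can be represented by a non-simple arc or loop, and two non-homotopic non-simple curves (even with the same number of self-intersections) can meet every edge of a fixed triangulation the same number of times. What actually determines the $\simeq_\ast$-class is the \emph{ordered} (linear or cyclic) sequence of intersections with the triangulation, which is Lemma \ref{LemmaIntersectionsDetermineCurves} --- and recovering that ordering from the triangulated structure is precisely the hard part of the theorem. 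The paper does this by introducing triangulations of $\T$ that separate punctures, the notions of pure and arrow morphisms (Definition \ref{DefinitionArrowMorphisms}, Corollary \ref{CorollaryCharacterizationArrowMorphisms}), and then \emph{characteristic sequences}: pairs of dual bases $(\phi_j,\overline{\phi}_j)$ of $\bigoplus_Y\Hom^\ast(X,Y)$ and $\bigoplus_Y\Hom^\ast(\tau^{-1}Y,X)$ in which consecutive terms are linked by composition with arrow morphisms, so that adjacency of intersections along $\gamma_X$ becomes a categorical condition. Proposition \ref{PropositionCharacteristicSequenceDefineObjectTauInvariant} shows these sequences pin down $\gamma_\ast(X)$, Lemma \ref{LemmaCharacteristicSequencePreservedUnderEquivalences} shows $T$ preserves them, and the theorem follows. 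Note also that a single such sequence does not suffice --- one needs the sequence together with its dual --- which is a subtlety your sketch does not anticipate.

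A second, smaller problem is circularity: you propose to handle $\tau$-invariant families by invoking Corollary \ref{CorollaryEquivalencesPreserveFamilies}, but in the paper that corollary is a \emph{consequence} of Theorem \ref{TheoremPsiRealizesAction} (together with Theorem \ref{TheoremSpecialSurfaces}), not an input to it. The degenerate loop/arc ambiguity is instead absorbed directly into the reconstruction statement: Proposition \ref{PropositionCharacteristicSequenceDefineObjectTauInvariant} only recovers the $\simeq_\ast$-class, which is exactly where the collapsing of a gradable boundary loop to its boundary segment is permitted. Your overall reduction to the essential case via Corollary \ref{CorollaryPsiRealizesActionOnEssentialObjects} and your identification of the loop/degeneration phenomenon as the obstruction to a stronger statement are both correct; what is missing is the mechanism for encoding the order of intersections categorically.
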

Its proof is found in Section \ref{SectionPreserveOrientation} and the necessary preparation occupies the majority of this section. 
\noindent An extension of the theorem to special surfaces is Theorem \ref{TheoremSpecialSurfaces}. Both theorems show that families of $\tau$-invariant objects are preserved under triangle equivalences in the following sense.
\begin{cor}\label{CorollaryEquivalencesPreserveFamilies}
	Let $\mathcal{X}$ be a family of $\tau$-invariant objects. Then, with the notation of Theorem \ref{TheoremPsiRealizesAction}, $T(\mathcal{X})$ is a family of $\tau$-invariant objects of the same type as $\mathcal{X}$. 
\end{cor}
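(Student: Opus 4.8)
The plan is to derive Corollary \ref{CorollaryEquivalencesPreserveFamilies} directly from Theorem \ref{TheoremPsiRealizesAction} (or its special-surface counterpart Theorem \ref{TheoremSpecialSurfaces}) together with the characterization of the three types of families in terms of degenerated objects contained in a $\simeq_\ast$-class. Let $\mathcal{X} = \{X_i\}_{i \in I}$ be a family of $\tau$-invariant objects with associated $\simeq_\ast$-class $\gamma_\ast(\mathcal{X})$. First I would check that $T(\mathcal{X}) = \{T(X_i)\}_{i\in I}$ again satisfies the defining properties of Definition \ref{DefinitionFamilyTauInvariantObjects}: the objects $T(X_i)$ are indecomposable, pairwise non-isomorphic in $\T'/[1]$ (since $T$ is an equivalence commuting with $[1]$), $\tau$-invariant (as $T$ commutes with the Serre functor, hence with $\tau$, because the Serre functor is unique up to isomorphism), and they sit at the mouth of their homogeneous tubes (the property of sitting at the mouth of a homogeneous tube is intrinsic to the Auslander--Reiten structure and hence preserved by any triangle equivalence).

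The core point is to exhibit a $\simeq_\ast$-class $\gamma_\ast(T(\mathcal{X}))$ witnessing that $T(\mathcal{X})$ is a family. I claim $\gamma_\ast(T(\mathcal{X})) := \Psi(T)(\gamma_\ast(\mathcal{X}))$ works; this is a legitimate $\simeq_\ast$-class because $\Psi(T)$ is a diffeomorphism of marked surfaces (preserving orientation and the homotopy class of the line field, hence gradability and winding numbers of boundary loops), so it descends to a bijection on $\simeq_\ast$-classes. To verify the defining property, let $X' \in \T'$ be indecomposable sitting at the mouth of a homogeneous tube. Writing $X' \cong T(X)$ for a unique (up to iso) indecomposable $X \in \T$, the object $X$ also sits at the mouth of a homogeneous tube. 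By Theorem \ref{TheoremPsiRealizesAction} (respectively Theorem \ref{TheoremSpecialSurfaces}) we have $\gamma_\ast(T(X)) = \Psi(T)(\gamma_\ast(X))$, so $\gamma_\ast(X') = \gamma_\ast(T(\mathcal{X}))$ if and only if $\Psi(T)(\gamma_\ast(X)) = \Psi(T)(\gamma_\ast(\mathcal{X}))$, which by injectivity of $\Psi(T)$ on $\simeq_\ast$-classes is equivalent to $\gamma_\ast(X) = \gamma_\ast(\mathcal{X})$, and this in turn — by the defining property of the family $\mathcal{X}$ — holds if and only if a shift of $X$ lies in $\mathcal{X}$, i.e. a shift of $X'$ lies in $T(\mathcal{X})$. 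This establishes that $T(\mathcal{X})$ is a family of $\tau$-invariant objects.

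Finally, the type of a family ($k^\times$-, $\mathbb{A}^1_k$-, or $\mathbb{P}^1_k$-family) is determined by the number of degenerated objects in its associated $\simeq_\ast$-class, equivalently by the number of homotopy classes of arcs it contains. Since $\Psi(T)$ is a diffeomorphism of marked surfaces, it sends arcs to arcs and sends the $\simeq_\ast$-class $\gamma_\ast(\mathcal{X})$ bijectively onto $\gamma_\ast(T(\mathcal{X}))$; moreover a degenerated object of $\mathcal{X}$ is an arc object whose arc lies in $\gamma_\ast(\mathcal{X})$, and its image under $T$ is an arc object (essential-ness and arc-ness are preserved, or more simply: being represented by a finite arc is detected by $\Psi(T)$) represented by the corresponding arc in $\gamma_\ast(T(\mathcal{X}))$. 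Hence $\gamma_\ast(\mathcal{X})$ and $\gamma_\ast(T(\mathcal{X}))$ contain the same number of arc homotopy classes, so $\mathcal{X}$ and $T(\mathcal{X})$ have the same type. The main obstacle is purely bookkeeping: one must be careful that $\Psi(T)$ genuinely induces a well-defined bijection on $\simeq_\ast$-classes (which requires knowing it preserves gradability of boundary loops around components with one marked point — this is guaranteed by the fact, proven earlier in this section, that $\Psi(T)$ preserves orientation and winding numbers) and that the notion of "degenerated object" transfers correctly, but no new geometric input beyond Theorem \ref{TheoremPsiRealizesAction} is needed.
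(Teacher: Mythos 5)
Your argument is correct and is exactly the deduction the paper intends: the corollary is stated there without a written proof, as a direct consequence of Theorem \ref{TheoremPsiRealizesAction} (and Theorem \ref{TheoremSpecialSurfaces} for special surfaces), and your verification --- that $\Psi(T)$ descends to a bijection on $\simeq_{\ast}$-classes because it preserves marked points, orientation and winding numbers, and that the defining property of a family transports along this bijection via $\gamma_{\ast}(T(X))=\Psi(T)(\gamma_{\ast}(X))$ --- fills in precisely the intended details.

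One parenthetical claim in your last paragraph is false, though harmlessly so: you assert that ``arc-ness is preserved,'' i.e.\ that $T$ sends a degenerated (arc) object of the family to an arc object. This is exactly what fails in general and is the raison d'\^etre of $\simeq_{\ast}$: for the Kronecker algebra a projective coordinate transformation moves the arc objects $B_{0,1}$, $B_{1,0}$ to loop objects $B_{\lambda,\mu}$ inside the same $\mathbb{P}^1_k$-family (Section \ref{SectionKroneckerAlgebra}), so membership of the subset of degenerated objects is not invariant under $T$. Your conclusion survives because, as you also say, the type is read off from the number of homotopy classes of arcs contained in the $\simeq_{\ast}$-class itself, and that count is preserved since $\Psi(T)$ is a diffeomorphism of marked surfaces taking arcs to arcs and loops to loops; which members of the family happen to be arc objects is irrelevant. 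I would simply delete the offending parenthetical.
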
\smallskip

\noindent A direct consequence of Theorem \ref{TheoremPsiRealizesAction} is that if $X$ is not $\tau$-invariant, then any curve representing the object $T(X)$ is homotopic to the image $\Psi(T)(\gamma_X)$ of a representative $\gamma_X$ of $X$.

\subsubsection{Structure of the proof of Theorem \ref{TheoremPsiRealizesAction}}
\ \smallskip

\noindent We  outline the proof of Theorem \ref{TheoremPsiRealizesAction}. The idea is to reduce the general case to the case of essential objects in Corollary \ref{CorollaryPsiRealizesActionOnEssentialObjects}. We achieve this in the following way. The homotopy class of a curve is uniquely determined by its sequence of intersections with a triangulation assuming that the edges of the triangulation and the curves are in minimal position. Pursuing the approach that diffeomorphism invariant geometric properties correspond to triangle invariant properties in the surface-like category, we define what it means to be a \textit{triangulation} of $\T$ and translate intersections of the curve and the triangulation into a \textit{characteristic sequence of morphisms} between $X$ and objects of a triangulation. We show that these sequences are sufficient to recover the $\simeq_{\ast}$-class of an object.\medskip

\noindent Although the above ideas are comparatively simple in nature, their implementation  require us to be suprisingly careful about the definition of such sequences of morphisms. This seems to be a consequence of the failure of $\gamma(-)$ and the correspondence between morphisms and intersections to be canonical.\medskip

\subsection{Triangulations}\label{SectionTriangulations} We introduce the notion of a triangulation of a surface-like category which is inspired by the concept of a triangulation of a surface.

\subsubsection{Triangulations on surfaces}\label{SectionTriangulationsSurfaces}
\ \smallskip

\noindent By a \textbf{triangulation} of a marked surface $\mathcal{S}=(S, \marked)$, we mean a maximal collection $\Delta$ of homotopy classes of essential arcs on $\mathcal{S}$ with pairwise disjoint interiors. In other words, a triangulation is a simplex  in $A_{\ast}(\mathcal{S})$ of maximal dimension. By abuse of notation, we do not distinguish between a triangulation and a (fixed) choice of representatives for its homotopy classes in minimal position. The arcs of a triangulation cut $S$ into regions, the closure of which we call \textbf{triangles}. This possibly includes self-folded triangles and triangles with one or two sides given by boundary segments.

 From time to time it is convenient to add all boundary segments to a triangulation $\Delta$. The set of homotopy classes $\overline{\Delta}$ we obtain in this way is called an \textbf{extended triangulation} and corresponds to a simplex of maximal dimension of the extended arc complex.
If $\Delta$ is an (extended) triangulation and $\gamma$ is a non-contractible curve we say that $\gamma$ \textbf{$\Delta$-admissible} if $\{\gamma\}\cup \Delta$ is in minimal position and $\gamma$ is not homotopic to an arc of $\Delta$.\medskip

\noindent For technical reasons we will frequently restrict ourselves to particular types triangulations of the following kind. Let $\Delta$ be a triangulation and denote $\Delta' \subseteq \Delta$ its subset of homotopy classes of finite arcs. The triangulation $\Delta$ is said to  \textbf{separate punctures} if $\Delta'$ contains a  triangulation of $(S, \marked \cap\ \partial S)$ and $\Delta$ only contains finite or semi-finite arcs.
\begin{lem}\label{LemmaSeparatingPunctureTriangulationExist}
Every marked surface $(S, \marked)$ admits a triangulation which separates punctures.
\end{lem}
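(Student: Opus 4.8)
The plan is to construct a triangulation which separates punctures in two stages: first triangulate the ``core'' of the surface obtained by forgetting the punctures, and then handle the punctures one at a time by connecting each to a boundary marked point. Write $\mathscr{P}=\marked\setminus\partial S$ for the set of punctures and let $\marked_\partial=\marked\cap\partial S$. First I would triangulate the marked surface $(S,\marked_\partial)$; such a triangulation exists by the classical fact that every marked surface with at least one boundary marked point on every boundary component admits an ideal triangulation (recall that by Definition \ref{DefinitionMarkedSurface} every boundary component carries a marked point, so $\marked_\partial$ meets each boundary component). Call this triangulation $\Delta'$; by construction all of its arcs are finite arcs with endpoints in $\marked_\partial$, and its triangles cut $S$ into regions none of which contains more than the punctures in its interior.

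Next I would deal with the punctures. Each puncture $p\in\mathscr{P}$ lies in the interior of a unique triangle $t_p$ of $\Delta'$ (after an isotopy we may assume no puncture lies on an arc of $\Delta'$). Since $t_p$ has at least one vertex in $\marked_\partial$ — indeed all three ``ideal vertices'' of any triangle in an ideal triangulation of $(S,\marked_\partial)$ are marked boundary points — I can choose a simple arc $\sigma_p$ inside $t_p$ connecting $p$ to one of these boundary vertices, and for distinct punctures in the same triangle these arcs can be chosen with pairwise disjoint interiors (a triangle with $k$ interior punctures can be subdivided by $k$ such arcs, or more, in the obvious fan/spiral pattern). Each $\sigma_p$ is a semi-finite arc: one endpoint is a puncture, the other a boundary marked point. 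Adding all the arcs $\sigma_p$ to $\Delta'$ and then completing to a maximal such collection $\Delta$ (using the fact, cited in the excerpt, that any set of curves in minimal position extends to a triangulation) yields a triangulation of $(S,\marked)$. By construction $\Delta$ contains $\Delta'$, which is a triangulation of $(S,\marked_\partial)=(S,\marked\cap\partial S)$, so the first defining condition of ``separates punctures'' holds.

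It remains to check that $\Delta$ contains only finite or semi-finite arcs, i.e.\ no infinite arc (an arc with both endpoints at punctures) appears in $\Delta$. This is the step I expect to need the most care. The point is that once $\Delta'$ triangulates $(S,\marked_\partial)$ and each puncture has already been joined to the boundary by some $\sigma_p$, every remaining region in the complement of $\Delta'\cup\{\sigma_p\}_p$ is a polygon (a ``disc'') at least one of whose vertices is a boundary marked point; hence any essential arc added in the completion step may be chosen to have an endpoint at that boundary vertex, so it is finite or semi-finite, and no infinite arc is ever forced. Concretely, one observes that an infinite arc $\delta$ with both endpoints at punctures $p,q$ necessarily crosses $\Delta'\cup\{\sigma_p\}_p$ unless it lies in a single region of that subdivision, and every such region is a disc with a boundary vertex; so $\delta$ can be replaced by a semi-finite arc without changing minimal position, and therefore a \emph{maximal} collection extending $\Delta'\cup\{\sigma_p\}_p$ can be taken to consist entirely of finite or semi-finite arcs. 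Thus $\Delta$ separates punctures, completing the proof.
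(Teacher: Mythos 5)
Your proof is correct and follows essentially the same route as the paper's: first triangulate $(S,\marked\cap\partial S)$ using that every boundary component carries a marked point, then reduce to triangulating each punctured triangle by joining punctures to its boundary vertices so that only finite and semi-finite arcs are needed. The paper phrases the second step as an induction on the number of punctures in a triangle, while you use an explicit fan of arcs followed by a completion argument, but these are the same idea.
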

\begin{proof} First, note that the marked surface $(S, \marked \cap {\,\partial S})$ has a triangulation $\Delta'$ as by definition, $\partial S \neq \emptyset$ and every boundary component contains a marked point. As $\Delta'$ cuts $S$ into topological discs, the proof of the existence of a triangulation of $(S, \marked)$ which separates punctures reduces to the assertion that every punctured  triangle with any number of punctures admits such a triangulation. The latter follows by induction on the number of punctures. The construction is outlined in Figure \ref{FigureTriangulationSeparatesPunctures}.

\end{proof}
\begin{figure}[H]
	\begin{displaymath}
	\begin{tikzpicture}[scale=1.5]
	\draw[orange] (-0.35,{1/sqrt(3)+0.15})--(-1,0);
	\draw[orange] (-0.35,{1/sqrt(3)+0.15})--(0, {sqrt(3)});
	
	\draw[orange] (+0.35,{1/sqrt(3)+0.15})--(+1,0);
	\draw[orange] (+0.35,{1/sqrt(3)+0.15})--(0, {sqrt(3)});
	
	\draw [line width=0.5, orange] plot  [smooth, tension=0.3] coordinates { (-1,0) ({-0.15}, {1/sqrt(3)}) (0, {sqrt(3)})  };
	\draw [line width=0.5, orange] plot  [smooth, tension=0.3] coordinates { (1,0) ({0.15}, {1/sqrt(3)}) (0, {sqrt(3)})  };
	
	\draw (-1,0)--(0,{sqrt(3)});
	\draw (-1,0)--(1,0);
	\draw (1,0)--(0,{sqrt(3)});
	
	\filldraw (-0.35,{1/sqrt(3)+0.15}) circle (0.75pt);
	\filldraw (0.35, {1/sqrt(3)+0.15}) circle (0.75pt);

	\end{tikzpicture}
	\end{displaymath}
	\caption{A triangulation of a twice-punctured triangle which separates punctures.} \label{FigureTriangulationSeparatesPunctures}
\end{figure}
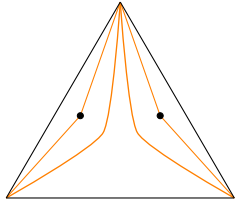

\noindent The following will be useful in subsequent parts of this paper.
\begin{lem}\label{LemmaTriangulationSeparatesPuncturesLoopAroundPuncture}Let $\Delta$ be a triangulation which separates punctures and let $\gamma$ be $\Delta$-admissible. Then, $\gamma$ intersects a finite arc of $\Delta$ or $\gamma$ is homotopic to a loop around a puncture.
\end{lem}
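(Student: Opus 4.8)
The statement asserts that if $\Delta$ separates punctures and $\gamma$ is $\Delta$-admissible, then either $\gamma$ crosses a finite arc of $\Delta$ or $\gamma$ is homotopic to a loop around a puncture. The plan is to argue by contradiction: suppose $\gamma$ avoids every finite arc of $\Delta$ (in minimal position), and show $\gamma$ must be a loop around a puncture. Since $\Delta$ separates punctures, the subset $\Delta'\subseteq\Delta$ of finite arcs contains a triangulation of $(S,\marked\cap\partial S)$, so the finite arcs of $\Delta$ already cut $S$ into topological discs, each of which is a (possibly self-folded) triangle with sides among the finite arcs and boundary segments. If $\gamma$ is in minimal position with $\Delta$ and disjoint from every finite arc of $\Delta$, then $\gamma$ is entirely contained in the interior of one such disc $D$, together with the punctures lying inside $D$ and the semi-finite arcs of $\Delta$ cutting $D$ up.

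First I would set up the picture inside a single disc $D$ coming from the triangulation $\Delta'$ of $(S,\marked\cap\partial S)$: $D$ is a disc (a triangle) containing some punctures $p_1,\dots,p_r$ in its interior, and $\Delta$ restricts to a triangulation of $D$ relative to those punctures, built only from semi-finite and finite arcs — but by assumption $\gamma$ meets no finite arc, so inside $D$ we only see semi-finite arcs, each running from a point of $\partial D$ (actually from a vertex, i.e.\ a marked point on the boundary of $D$) to one of the punctures. Next I would observe that $\gamma$, being a curve on the punctured surface with endpoints only at marked points, has its interior avoiding punctures; since $\gamma\subseteq \Int(D)\setminus\{p_1,\dots,p_r\}$ and $\gamma$ is in minimal position with the semi-finite arcs of $\Delta$ inside $D$, the key point is to show that $\gamma$ cannot cross any of these semi-finite arcs either without either leaving $D$ (impossible) or creating an excess intersection contradicting minimal position, unless $\gamma$ is forced to wind around a puncture. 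Concretely, the complement of the semi-finite arcs inside $D$ is a union of discs each containing exactly one puncture on its ``boundary at infinity''; a non-contractible curve confined to such a region with no endpoints there must be a loop encircling that puncture. The case analysis splits according to whether $\gamma$ is an arc or a loop: if $\gamma$ is an arc, both its endpoints are marked points, but the only marked points accessible from $\Int(D)$ without crossing a finite arc are the boundary vertices of $D$ and the punctures inside $D$; chasing the possibilities (using $\Delta$-admissibility, i.e.\ $\gamma\not\simeq$ an arc of $\Delta$, and minimal position) shows an arc would have to cross a finite arc, a contradiction. Hence $\gamma$ is a loop, and a loop confined to $D\setminus\{p_1,\dots,p_r\}$ that is non-contractible and primitive is homotopic to a loop around one of the $p_i$.

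The main obstacle I anticipate is handling the semi-finite arcs carefully: $\Delta$ is allowed to contain semi-finite arcs (those are exactly what connect the boundary to the punctures), and $\gamma$ is permitted to cross them, so I cannot simply say ``$\gamma$ lives in a disc.'' The real content is that after cutting $D$ along all the semi-finite arcs of $\Delta$ that it contains, one obtains regions each of which is (homotopy-equivalently) a once-punctured disc whose boundary consists of arcs of $\Delta$ and pieces of $\partial D$; a closed curve trapped in such a region with interior missing the puncture and with no marked endpoints in the interior is, up to homotopy rel the puncture, either contractible or a power of the loop around the puncture — and primitivity plus non-contractibility (our standing conventions on curves) pin it down to the simple loop around that puncture. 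I would phrase this last step using the universal cover of the once-punctured disc, or equivalently $\pi_1$ of a once-punctured disc being $\mathbb{Z}$ generated by the puncture loop, together with the fact that $\gamma$ being in minimal position with the semi-finite arcs means it does not backtrack across them. A secondary technical point is the self-folded triangle case, where a finite arc of $\Delta$ may bound a once-punctured monogon; there $\gamma$-admissibility forbids $\gamma$ from being homotopic to that arc, and the interior region is again a once-punctured disc, so the same conclusion applies, but I would state it explicitly to be safe.
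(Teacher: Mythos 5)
Your localization step is fine: if $\gamma$ misses every finite arc of $\Delta$ (including shared boundary endpoints, which count as intersections here), it lives in a single complementary polygon $D$ of the finite arcs and, if it is an arc, both endpoints must be punctures. But the decisive step is broken. You allow $D$ to contain several punctures $p_1,\dots,p_r$ and then assert that a non-contractible primitive loop in $D\setminus\{p_1,\dots,p_r\}$ is homotopic to a loop around one of the $p_i$; this is false for $r\ge 2$ (already a loop enclosing two punctures in a twice-punctured disc is primitive, non-contractible, and not a puncture loop). Your attempted repair -- cutting $D$ along the semi-finite arcs and analyzing a curve ``trapped'' in a once-punctured region -- does not apply, because $\gamma$ is explicitly allowed to cross the semi-finite arcs, so it is not confined to any region of that cut. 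As written, the argument therefore does not rule out the curves that wind around several punctures.

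The missing ingredient is exactly where the hypothesis ``$\Delta$ separates punctures'' must be used together with maximality of $\Delta$: since $\Delta$ contains no infinite arcs, no two distinct punctures can be vertices of a common triangle of $\Delta$ (an edge between them would be infinite), and consequently every complementary polygon of the finite arcs of $\Delta$ contains at most one puncture. This is the route the paper takes, phrased as: all semi-finite arcs crossed by $\gamma$ end at the \emph{same} puncture $p$, because two consecutive crossings of $\gamma$ with arcs of $\Delta$ bound a common triangle, and two distinct punctures on its boundary would force an arc of $\Delta$ joining them. Once you know $r\le 1$, your own argument closes immediately: $r=0$ makes $\gamma$ contractible; $r=1$ makes $D\setminus\{p\}$ an annulus with $\pi_1\cong\mathbb{Z}$, an arc from $p$ to $p$ in $D$ is contractible, and a primitive non-contractible loop is the puncture loop. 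So the fix is a single additional lemma (at most one puncture per polygon of the finite arcs), not a restructuring of your proof.
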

\begin{proof}
	Suppose $\gamma$  has no intersection with a finite arc of $\Delta$. In particular, $\gamma$ is a loop or an arc both end points of which are punctures. Since $\gamma$ is $\Delta$-admissible, it intersects at least one arc $\delta \in \Delta$ and denote by $p$ the unique puncture which is an end point of $\delta$. Let $\delta' \in \Delta$ be a another arc which is crossed by $\gamma$. Then, $\delta'$ start or ends on a puncture $q$ and we show that $q=p$. If $p \neq q$, then $p$ and $q$ lie on the boundary of a triangle and hence there exists an arc $\epsilon \in \Delta$ connecting $p$ and $q$. This contradicts our assumption on $\Delta$ and hence $p=q$. We conclude that $\gamma$ is not an arc as otherwise it would be contractible. Hence $\gamma$ is a loop which intersects only infinite arcs of $\Delta$ with end point $p$ proving that $\gamma$ is a loop around $p$.
\end{proof}

\noindent A well-known property of triangulations is that they allow us to relate the homotopy classes of a curve with its sequence of intersections with the triangulation. We rephrase this fact in a suitable way.
\begin{definition}\label{DefinitionOrderingSets}Let $\Delta$ be an (extended) triangulation of $\mathcal{S}$ and let $\gamma$ be an oriented $\Delta$-admissible curve. By the \textbf{sequence of oriented intersections} of $\gamma \overrightarrow{\cap}\Delta$ we mean the set $\bigcup_{\delta \in \Delta} \gamma \overrightarrow{\cap} \delta$ with a linear or cyclic order defined as follows.
\begin{enumerate}
\setlength\itemsep{1ex}
\item If the $\simeq_{\ast}$-class of $\gamma$ contains a loop, $\gamma \overrightarrow{\cap} \Delta$ is cyclicly ordered according to the orientation of $\gamma$.
\item If the $\simeq_{\ast}$-class of $\gamma$ does not contains loops, $\gamma \overrightarrow{\cap} \Delta$ is linearly ordered by the orientation $\gamma$ and the orientation of the surface, i.e. if $\delta, \delta' \in \Delta$ have an oriented intersection $q=(t,t') \in \delta \overrightarrow{\cap} \delta'$ on the boundary and $p=(s,t) \in \gamma \overrightarrow{\cap} \delta$, $p'=(s,t') \in \gamma \overrightarrow{\cap} \delta'$, then $p < p'$.
\end{enumerate}
The set $\Delta \overrightarrow{\cap} \gamma$ is defined in the analogous way and is referred to as the \textbf{dual sequence of oriented intersections}.
\end{definition}\smallskip

\noindent We say that the (linear or cyclic) sequence of arcs $(\delta_i)_{i \in I}$ of $\Delta$ crossed by a $\Delta$-admissible curve $\gamma$ is \textbf{reduced} if the existence of $i \in I$ with $\delta_i=\delta_{i+1}$, implies that  $\delta_i$ ends at  a puncture $q$ and $\delta_i$ is the only arc of $\Delta$ which ends at $q$. \medskip

\noindent The following lemma is reformulation in our language of the fact that the homotopy class of a curve is determined by its sequence of intersections with a triangulation.

\begin{lem}\label{LemmaIntersectionsDetermineCurves}Let $\Delta$ be an extended triangulation of $\mathcal{S}$ which separates punctures. The following is true. 

	\begin{enumerate}
	\setlength\itemsep{1ex}
		\item The $\simeq_{\ast}$-class of a $\Delta$-admissible curve $\gamma$ is uniquely determined by its (dual) sequence of oriented intersections and such a sequence is reduced.
	  	\item If $(\delta_{i})_{i \in I}$ is a reduced linear or cyclic sequence of arcs of $\Delta$ such that for each $i \in I$, $\delta_{i}$ and it successor lie on the boundary of a triangle of $\Delta$, then there exists a unique $\simeq_{\ast}$-class of a $\Delta$-admissible curve $\gamma$ such that $(\delta_i)_{i \in I}$ is the sequence of oriented intersections.  \smallskip
	\end{enumerate}
	
\end{lem}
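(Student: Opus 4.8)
The plan is to prove both assertions by passing to a universal cover and using that a $\Delta$-admissible curve is determined, up to homotopy, by the combinatorial data of how it threads through the triangles of $\Delta$. The point of working with the $\simeq_\ast$-class rather than the homotopy class is precisely to collapse the two homotopy classes (the boundary loop and the boundary segment) that sit in one $\simeq_\ast$-class over a once-marked boundary component, since such a boundary loop and the adjacent boundary segment have the \emph{same} sequence of oriented intersections with $\Delta$ (namely, crossing only the infinite arcs winding around that component, in the same cyclic/linear order).

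First I would treat (1). Let $\gamma$ be $\Delta$-admissible and fix a representative in minimal position with $\overline{\Delta}$. Since $\Delta$ separates punctures, Lemma \ref{LemmaTriangulationSeparatesPuncturesLoopAroundPuncture} tells us that either $\gamma$ crosses a finite arc of $\Delta$, or $\gamma$ is homotopic to a loop around a puncture. In the first case, lift $\gamma$ to the universal cover $\widetilde{\mathcal S}$; the lifts of the arcs of $\overline{\Delta}$ cut $\widetilde{\mathcal S}$ into triangles (the lifts of triangles of $\Delta$), and a lift $\widetilde\gamma$ passes through a sequence of such triangles, entering and leaving through lifted edges. The sequence of oriented intersections records exactly which lifted edge is crossed at each step together with the side from which it is crossed (this is what the orientation-of-the-surface condition in Definition \ref{DefinitionOrderingSets}(2) encodes), and since a curve in minimal position crosses each lifted edge at most once and forms no bigon with $\overline{\Delta}$, this sequence determines the union of triangles traversed and hence $\widetilde\gamma$ up to endpoint-preserving homotopy; projecting back recovers $\gamma$ up to homotopy, a fortiori up to $\simeq_\ast$. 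In the remaining case $\gamma$ is a loop around a puncture $p$ and crosses only the infinite arcs ending at $p$; here the $\simeq_\ast$-class is still determined because the cyclic sequence of those arcs (with multiplicity one each, by minimal position) pins down which puncture we are circling and in which direction. Reducedness of the sequence is immediate from minimal position: if $\gamma$ crossed $\delta$ and then $\delta$ again, the subarc between the two crossings together with a subarc of $\delta$ would bound a bigon unless that bigon is ``absorbed'' into a monogon at a puncture, which forces $\delta$ to be the unique arc of $\Delta$ ending at that puncture.

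For (2) I would reverse the construction. Given a reduced sequence $(\delta_i)_{i\in I}$ such that consecutive arcs $\delta_i,\delta_{i+1}$ bound a common triangle, build $\gamma$ by choosing, in each such triangle, an arc segment joining a point of $\delta_i$ to a point of $\delta_{i+1}$ (and, if $\delta_i$ is the unique arc ending at a puncture $q$ and $\delta_{i+1}=\delta_i$, a segment that runs out to $q$ and back, i.e.\ winds once around $q$); concatenating these segments yields a $\Delta$-admissible curve whose sequence of oriented intersections is $(\delta_i)$ by construction. For uniqueness, if $\gamma'$ is another such curve then it has the same sequence, so by part (1) it is $\simeq_\ast$-equivalent to $\gamma$. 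One must also check the endpoint/boundary conditions at the ends of a linear sequence (so that the resulting object is an arc of the correct type, not something contractible), which again uses that $\Delta$ is extended and separates punctures.

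The main obstacle I anticipate is the bookkeeping in the degenerate and self-folded cases: self-folded triangles of $\Delta$, triangles with sides on the boundary, and the monogon situation at a puncture all need the reducedness hypothesis to be stated and used exactly right, and one has to verify that passing to $\simeq_\ast$-classes (rather than homotopy classes) is what makes the correspondence in (2) well-defined and bijective — in particular that the boundary loop / boundary segment ambiguity is the \emph{only} ambiguity, which is where Definition \ref{DefinitionEquivalenceRelationDegenerationsOfLoops} and the structure of triangles at a once-marked boundary component get invoked. The underlying topological fact (a curve is determined by its intersection pattern with a triangulation) is classical, so the work is entirely in adapting it cleanly to the present setup with infinite arcs and the $\simeq_\ast$ relation.
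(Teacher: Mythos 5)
The paper does not actually prove this lemma: it is introduced as ``a reformulation in our language'' of the classical fact that a curve in minimal position is determined up to homotopy by its intersection pattern with a triangulation, and no proof environment follows the statement. Your universal-cover argument is the standard proof of that classical fact, and your sketch is sound in outline; in particular you correctly identify that the only reason to pass to $\simeq_{\ast}$-classes is the boundary-loop/boundary-segment coincidence at once-marked components (which Definition \ref{DefinitionOrderingSets}(1) handles by cyclically ordering the intersections in that case), and that the reducedness clause exists precisely to absorb the monogon at a puncture with a unique incident arc. The one step you should spell out if writing this in full is the reconstruction of the \emph{lift}: when a lifted triangle has two edges projecting to the same arc $\delta_{i+1}$ (the self-folded/unique-arc-at-a-puncture situation), knowing only the downstairs label of the next crossing does not immediately single out the exit edge, and one must use the orientation data of the intersections (or the reducedness of the sequence) to resolve it — this is exactly the ``bookkeeping'' you flag, and it is where the argument genuinely needs the hypotheses rather than being pure routine.
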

\begin{rem}
	The previous lemma holds true for non-extended triangulations whenever $\mathcal{S}$ is not a disc with at most $4$ marked points and no punctures. In case $\mathcal{S}$ is a disc with at most $3$ marked points, the empty set is a triangulation. In the remaining case, a triangulation consists of a single arc and and there exist boundary segments with the same sequence of oriented intersections.
	\end{rem}\smallskip

\subsubsection{Triangulations of surface-like categories}\label{SectionTriangulationsFukayaLikeCategories}
\ \smallskip

\noindent We propose the following definition of a triangulation of a surface-like category. Throughout this section, we fix a surface-like category $(\T, \mathcal{S}, \eta, \gamma, \basis)$.

\begin{definition}\label{DefinitionTriangulationCategory}
	A \textbf{triangulation} of $\T$ is a set $\mathcal{X}\subset \T$ of essential objects with the following properties.
	\begin{itemize}
	\setlength\itemsep{1ex}
		\item[1)] If $X, X' \in \mathcal{X}$ are distinct, then $X$ and $X'$ are not isomorphic up to shift and 
		\[\Hom^*_{\Int}(X, X')=0.\]
		\item[2)] $\mathcal{X}$ is maximal among all sets of essential objects satisfying 1).
	\end{itemize}
	
	A triangulation $\mathcal{X} \subset \T$ \textbf{separates punctures} if further
	
	\begin{itemize}
	\setlength\itemsep{1ex}
		\item[3 a)] $\mathcal{X} \cap \Perf(\T)$ is a maximal subset of essential objects in $\Perf(\T)$ satisfying condition 1) above  and,
		\item[3 b)]for all $X \in \mathcal{X}$, $X$ is semi-perfect (see Definition \ref{DefinitionSemiPerfect}). 

	\end{itemize}
	
\noindent One obtains an \textbf{extended triangulation} of $\T$ by adding a representative of every $[1]$-orbit of isomorphism classes of segment objects to a triangulation of $\T$. 

 An  indecomposable object $X \in \T$ is said to be \textbf{$\mathcal{X}$-admissible} if $X$ is not a segment object and $X$ is not isomorphic to a shift of an object in $\mathcal{X}$.
\end{definition}

\noindent The first observation is that if $\mathcal{X} \subset \T$ is an (extended) triangulation of $\T$ and $T: \T \rightarrow \T'$ is a triangle equivalence between surface-like categories, then $T(\mathcal{X})$ is a triangulation and $\mathcal{X}$ separates punctures if and only if $T(\mathcal{X})$ separates punctures. 
\begin{lem}\label{AlgebraicAndGeometricTriangulationAgree}
	A collection of indecomposable objects $\mathcal{X} \subseteq \T$ is a triangulation if and only if the associated collection $\gamma(\mathcal{X})$ of homotopy classes is a triangulation of $\mathcal{S}$. Moreover, 
	\begin{enumerate}
	\setlength\itemsep{1ex}
		\item $\mathcal{X}$ separates punctures if and only if $\gamma(\mathcal{X})$ separates punctures, and
		\item an indecomposable object $X \in \T$ is $\mathcal{X}$-admissible if and only if $\gamma(X)$ is $\gamma(\mathcal{X})$-admissible. 
	\end{enumerate} 
\end{lem}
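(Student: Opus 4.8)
The plan is to establish the equivalence by translating the algebraic characterisations (essential object, interior Hom-space, semi-perfect, segment object) into their geometric counterparts, all of which have already been set up in the preceding sections, and then to invoke Proposition \ref{PropositionInteriorMorphismsVectorSpace} and Lemma \ref{LemmaCharacterizationEssentialObjects} as the main bridges. First I would show that $\mathcal{X}$ being a triangulation implies $\gamma(\mathcal{X})$ is a triangulation of $\mathcal{S}$. By Lemma \ref{LemmaCharacterizationEssentialObjects}, the objects of $\mathcal{X}$ are precisely represented by essential arcs, so $\gamma(\mathcal{X})$ is a set of homotopy classes of essential arcs. Condition 1) of Definition \ref{DefinitionTriangulationCategory} says $\Hom^*_{\Int}(X,X')=0$ for distinct $X,X'\in\mathcal{X}$; by Proposition \ref{PropositionInteriorMorphismsVectorSpace} (part (2), noting essential objects are not $\tau$-invariant by Lemma \ref{LemmaEssentialObjectsNotTauInvariant}) this vanishing is equivalent to the absence of interior intersections between representatives of $\gamma(X)$ and $\gamma(X')$, i.e. the arcs can be chosen with disjoint interiors — so $\gamma(\mathcal{X})$ is a simplex of $A_{\ast}(\mathcal{S})$. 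Maximality of $\mathcal{X}$ (condition 2)) then corresponds, via Lemma \ref{LemmaCharacterizationEssentialObjects} and the same Proposition, to maximality of the simplex $\gamma(\mathcal{X})$, i.e. $\gamma(\mathcal{X})$ is a triangulation. The converse direction runs the same equivalences backwards: given a triangulation of $\mathcal{S}$, lift each arc to an essential object via $X_{(-)}$, use that interior intersection numbers are minimal in minimal position, and conclude the lifted set satisfies 1) and 2).

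Next I would treat the ``separates punctures'' clause (1). Recall from Section \ref{SectionTriangulations} that $\gamma(\mathcal{X})$ separates punctures means its subset $\Delta'$ of finite arcs triangulates $(S,\marked\cap\partial S)$ and every arc of $\gamma(\mathcal{X})$ is finite or semi-finite. On the algebraic side, by the Lemma in Section \ref{SectionBoundarySegmentObjects} the finite arc objects are exactly those in $\Perf(\T)$ that are arc objects, and by the Lemma characterising semi-perfect objects, an object is semi-perfect iff $\gamma(X)$ is semi-finite. So condition 3a) — that $\mathcal{X}\cap\Perf(\T)$ is a maximal set of essential objects in $\Perf(\T)$ satisfying 1) — translates, by the first part of the lemma applied inside $\Perf(\T)$, to: the finite arcs of $\gamma(\mathcal{X})$ form a triangulation of $(S,\marked\cap\partial S)$. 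Condition 3b) — every $X\in\mathcal{X}$ is semi-perfect — translates to: every arc of $\gamma(\mathcal{X})$ is semi-finite (hence, together with the finite ones already accounted for, no arc is infinite). Combining, $\mathcal{X}$ separates punctures iff $\gamma(\mathcal{X})$ does. I should double-check the edge case where $\Delta'$ is empty or $A_{\ast}(\mathcal{S})$ is exceptional, using the remark following Lemma \ref{LemmaSeparatingPunctureTriangulationExist}, but the argument is otherwise formal.

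Finally, for (2) I would unwind both notions of $\mathcal{X}$-admissibility. Algebraically, $X$ is $\mathcal{X}$-admissible iff $X$ is not a segment object and not a shift of an object of $\mathcal{X}$. Geometrically, $\gamma(X)$ is $\gamma(\mathcal{X})$-admissible iff $\{\gamma_X\}\cup\gamma(\mathcal{X})$ is in minimal position and $\gamma_X$ is not homotopic to an arc of $\gamma(\mathcal{X})$. Now: ``$X$ not isomorphic to a shift of an object in $\mathcal{X}$'' is literally ``$\gamma(X)$ not a homotopy class of an arc in $\gamma(\mathcal{X})$'' since $\gamma(-)$ is compatible with shifts (Property 1 of Definition \ref{DefinitionFukayaLikeQuintuples}) and is a bijection. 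And ``$X$ not a segment object'', by Proposition \ref{PropositionCharacterizationBoundaryObjects}, means $\gamma(X)$ is neither a boundary segment nor a boundary loop with $1$-dimensional local system; since boundary segments are the extra arcs added in passing to the extended triangulation, excluding them matches the requirement that $\gamma_X$ is $\gamma(\mathcal{X})$-admissible rather than merely in the extended triangulation. The fact that any curve can be put in minimal position with a fixed triangulation (Section \ref{SectionPreliminaries}) handles the ``minimal position'' condition automatically.

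The main obstacle I anticipate is not any single hard step but rather the careful bookkeeping of the $\tau$-invariant and exceptional cases: Proposition \ref{PropositionInteriorMorphismsVectorSpace} only describes $\Hom_{\Int}$ by interior intersections when neither object is $\tau$-invariant, and one must check that essential objects are never $\tau$-invariant (Lemma \ref{LemmaEssentialObjectsNotTauInvariant}) before applying it — so the translation of condition 1) genuinely uses that lemma. Likewise the correspondence between ``semi-perfect'' and ``semi-finite'' and between ``segment object'' and ``boundary segment or boundary loop'' must be cited precisely, and one must make sure the disc-with-few-marked-points degenerate cases (where the arc complex is empty or $1$-dimensional) do not break the maximality translations. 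I would organise the write-up as: (i) the non-extended triangulation equivalence via Proposition \ref{PropositionInteriorMorphismsVectorSpace}; (ii) the separates-punctures refinement; (iii) the admissibility equivalence; flagging the degenerate surfaces once at the start.
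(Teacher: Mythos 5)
Your treatment of the main equivalence and of clause (2) follows the paper's own route: the paper likewise reduces ``$\mathcal{X}$ is a triangulation iff $\gamma(\mathcal{X})$ is'' to Proposition \ref{PropositionInteriorMorphismsVectorSpace} combined with the characterisation of essential objects by essential arcs, and the admissibility clause is the same formal unwinding of the two definitions.

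The gap is in the ``separates punctures'' clause. Condition 3a) translates, via the first part of the lemma, to: the subset $\Delta'$ of finite arcs of $\Delta:=\gamma(\mathcal{X})$ is a \emph{maximal collection of pairwise non-homotopic finite essential arcs on $(S,\marked)$ with pairwise disjoint interiors}. You then identify this with the condition in the geometric definition, namely that $\Delta'$ \emph{contains a triangulation of $(S,\marked\cap\partial S)$}, ``by the first part of the lemma applied inside $\Perf(\T)$''. That identification is not formal and cannot be obtained by restricting the lemma to $\Perf(\T)$: perfect objects correspond to finite arcs on the \emph{punctured} surface $(S,\marked)$, whose homotopy relation (homotopies must avoid punctures), notion of essential arc, and arc complex are all different from those of $(S,\marked\cap\partial S)$. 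Concretely, on a once-punctured polygon an arc joining two adjacent boundary marked points and winding around the puncture is an essential finite arc of $(S,\marked)$ but is homotopic to a boundary segment of $(S,\marked\cap\partial S)$, so maximality with respect to one ambient set of arcs does not transfer to the other for free. This is precisely the point to which the paper devotes the second half of its proof: it argues that, since $\Delta$ is a maximal triangulation all of whose non-finite arcs are semi-finite, every polygon cut out by $\Delta'$ contains at most one puncture, and deduces from this the equivalence of the two maximality statements. Some such geometric argument must be supplied in place of the appeal to ``the lemma inside $\Perf(\T)$''; the rest of your outline can stand as written.
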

\begin{proof}Set $\Delta:=\gamma(\mathcal{X})$. Lemma \ref{PropositionInteriorMorphismsVectorSpace} implies that $\mathcal{X}$ is triangulation if and only if $\Delta$ is a triangulation. An object $X \in \mathcal{X}$ is (semi-)perfect if and only its representing arcs are (semi-)finite. We conclude that $\mathcal{X}$ separates punctures if and only if $\Delta$  only contains finite or semi-finite and contains a maximal collection of pairwise non-homotopic finite arcs with disjoint interior.

Let $\Delta'$ is the subcollection of homotopy classes of finite arcs in $\Delta$. If $\Delta$ separates punctures, then every polygon of $\Delta'$ contains at most one puncture as every arc of $\Delta \setminus \Delta'$ start or ends on the boundary and $\Delta$ is maximal. Therefore $\Delta'$ is a maximal collection of pairwise non-homotpic finite arcs with pairwise disjoint interior. On the other hand, if $\Delta'$ is such a maximal collection of finite arcs, then the closure of every polygon of $\Delta'$ contains at most one puncture and hence every arc of $\Delta \setminus \Delta'$ has an end point on the boundary. 
\end{proof}
\begin{cor}
	Every surface-like category admits a triangulation which separates punctures.
\end{cor}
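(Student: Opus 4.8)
The plan is to obtain the statement as a direct consequence of Lemma~\ref{LemmaSeparatingPunctureTriangulationExist} and Lemma~\ref{AlgebraicAndGeometricTriangulationAgree}. Fix a surface-like quintuple $(\T, \mathcal{S}, \eta, \gamma, \basis)$. By Lemma~\ref{LemmaSeparatingPunctureTriangulationExist}, the marked surface $\mathcal{S}$ admits a triangulation $\Delta$ which separates punctures; thus $\Delta$ is a maximal collection of homotopy classes of essential arcs with pairwise disjoint interiors, each of them finite or semi-finite, and the subset of finite arcs in $\Delta$ contains a triangulation of $(S, \marked \cap\, \partial S)$.

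Next I would transport $\Delta$ to the categorical side. Since every arc carries a unique indecomposable local system (of dimension $1$) and the gradings on an arc form a torsor over $\mathbb{Z}$, for each $\delta \in \Delta$ a choice of grading together with this local system determines, via the bijection $\gamma(-)$ of Property~1) of Definition~\ref{DefinitionFukayaLikeQuintuples}, an indecomposable object $X_\delta \in \T$ with $\gamma(X_\delta) = \delta$ as homotopy classes of unoriented arcs. Because $\delta$ is essential, Lemma~\ref{LemmaCharacterizationEssentialObjects} shows that $X_\delta$ is essential. Putting $\mathcal{X} := \{X_\delta \mid \delta \in \Delta\}$, we have $\gamma(\mathcal{X}) = \Delta$.

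Finally, Lemma~\ref{AlgebraicAndGeometricTriangulationAgree} applies directly: as $\gamma(\mathcal{X}) = \Delta$ is a triangulation of $\mathcal{S}$ which separates punctures, the collection $\mathcal{X}$ is a triangulation of $\T$ which separates punctures, as required.

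I do not expect any genuine obstacle here, since the substantive content is already contained in Lemma~\ref{LemmaSeparatingPunctureTriangulationExist} (existence on surfaces) and Lemma~\ref{AlgebraicAndGeometricTriangulationAgree} (the dictionary between the geometric and categorical notions). The only points requiring mild care are that each $X_\delta$ is pinned down only up to shift and up to the choice of grading — which is harmless, since being essential, the vanishing of $\Hom^*_{\Int}$, and being (semi-)perfect are all invariant under shift — and that one should invoke the bijectivity of $\gamma(-)$, and not merely its existence, so that distinct arcs of $\Delta$ yield objects of $\mathcal{X}$ that are pairwise non-isomorphic up to shift.
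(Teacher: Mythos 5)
Your proof is correct and is exactly the argument the paper intends: the corollary is stated without proof precisely because it follows by transporting the surface triangulation of Lemma~\ref{LemmaSeparatingPunctureTriangulationExist} through the bijection $\gamma(-)$ and invoking Lemma~\ref{AlgebraicAndGeometricTriangulationAgree}. Your remarks on shift-invariance and injectivity of $\gamma(-)$ are sensible but do not change the route.
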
 \medskip

\subsection{Characteristic sequences}\label{SectionOnCharacteristicSequences} 
\ \smallskip

\noindent The next concept we introduce is the \textit{characteristic sequence} of an object which is the algebraic counterpart to sequences of intersections of a curve with a triangulation as in Lemma \ref{LemmaIntersectionsDetermineCurves}. It consists of `` dual''  sequences of morphisms and allow us to recover the $\simeq_{\ast}$-equivalence class of any object in $\T$.\medskip

\noindent As before, and unless stated otherwise, we assume that every indecomposable object appearing in any of the subsequent sections is either an arc object or a loop object associated to a $1$-dimensional local system.

\subsubsection{The idea behind characteristic sequences}
\label{SectionIdeaChracteristicSequences}
\ \smallskip

\noindent Suppose $\Delta$ is a triangulation of $S$ and assume that $\gamma$ is a gradable loop or arc on $\mathcal{S}$ which is $\Delta$-admissible. Assume further  that $p_1$ and $p_2$ are consecutive interior intersections of $\gamma$ with edges $\delta_1, \delta_2 \in \Delta$. There exists a unique intersection $q$ of $\delta_1$ and $\delta_2$, such that $p_1, p_2$ and $q$ lift to an embedded triangle in the universal cover of $S$. Assuming that  $q \in \delta_1 \overrightarrow{\cap} \delta_2$ and regarding $p_i$ as an element in $\gamma \overrightarrow{\cap} \delta_i$, we have $\basis(q) \circ \basis(p_1)= \lambda \cdot \basis(p_2)$ for some $\lambda \neq 0$ as implied by Definition \ref{DefinitionFukayaLikeQuintuples}. It suggest that our desired sequences of morphisms should be such that every two consecutive morphisms in this sequence are related via composition with a morphism between objects of the corresponding triangulation of $\T$. 

As it turns out, a single sequence obtained in this way is not sufficient to achieve what we want. However, we show that the sequence and its dual obtained by regarding each $p_i$ as an element in $\delta_i \overrightarrow{\cap} \gamma$, encode enough information.
Note that in case of the dual sequence, $\mu \cdot \basis(p_1)=\basis(p_2) \circ \basis(q)$ for some $\mu \neq 0$.

Since it is possible that $\delta_1$ and $\delta_2$ intersect more than once at the boundary, it becomes necessary to distinguish morphisms between essential objects which are multiples of basis elements from proper linear combinations of such. This idea motivates the notions of \textit{pure morphisms} and \textit{arrow morphisms} defined below.

\subsubsection{Pure morphisms and arrow morphisms}
\ \smallskip

We say that the pair $(\mathcal{S}, \omega_{\eta})$ is of \textbf{tubular type} if $\mathcal{S}$ is a cylinder without punctures and $\omega_{\eta}(\gamma)=0$ for the unique homotopy class of loops. Note that if $\mathcal{S}$ is a cylinder, then by Proposition \ref{PropositionFractionallyCalabiYauArcs}, $(S, \omega_{\eta})$ is of tubular type if and only if $\T$ has $\tau$-invariant indecomposable objects.
\begin{definition}\label{DefinitionArrowMorphisms}Assume that $(\mathcal{S}, \omega_{\eta})$ is not of tubular type and let $X, Y \in \T$ be perfect or semi-perfect essential objects or segment objects and assume that $\Hom^*_{\Int}(X,Y)=0$.
We call a non-zero morphism $f: X \rightarrow Y[m]$ \textbf{pure} if any of the following conditions is satisfied:
	\begin{itemize}
	\setlength\itemsep{1ex}
		\item[1)]  $X$ and $Y$ are perfect and there exist segment objects $X'$, $Y'$ and non-zero morphisms $g: X' \rightarrow X$ and $h: Y[m] \rightarrow Y'$, such that $f \circ g=0=h \circ f$.
		\item[2)] $X$ or $Y$ is not perfect and if $f$ does not factor through $\Perf(\T)$, then there exists a segment object $U$ and a morphism $g: U \rightarrow X$ such that $\Hom(g, Y) \neq 0$ and $f\circ g=0$.
	\end{itemize}
If $\mathcal{X}$ is a triangulation which separates punctures and $X,Y \in \mathcal{X}$, then $f$ is an \textbf{arrow morphism of $\mathcal{X}$} if $f$ is pure and it is not a composition of pure morphisms $X\rightarrow Z[n]$ and $Z[n] \rightarrow Y[m]$, where $Z \in \mathcal{X} \setminus \{X,Y\}$. 

	\end{definition}

It follows that being pure (resp. an arrow morphism) is preserved under triangle equivalences. The assumptions on the objects $X$ and $Y$ in Definition \ref{DefinitionArrowMorphisms} imply that they are representable by essential arcs or boundary segments with disjoint interiors.

\noindent Next we show, that all pure morphisms are multiples of morphisms associated with intersections. We distinguish between perfect and non-perfect objects.
\begin{lem}\label{LemmaCharacterizationPureMorphismsPerfectObjects}Assume that $X, Y$ and $0 \neq f:X \rightarrow Y$ satisfy the prerequisites of Definition \ref{DefinitionArrowMorphisms} and assume that $X$ and $Y$ are perfect. If $\gamma_X \in \gamma(X)$ and $\gamma_Y \in \gamma(Y)$ are in minimal position, then $f$ is pure if and only if it is multiple of $\basis(p)$ for some $p \in \gamma_X \overrightarrow{\cap} \gamma_Y$.
\end{lem}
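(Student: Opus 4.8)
The statement concerns perfect objects $X, Y$ in a surface-like category, together with a nonzero morphism $f : X \to Y$ satisfying the hypotheses that make ``pure'' meaningful (namely $\Hom^*_{\Int}(X,Y) = 0$, both objects essential and perfect, and $(\mathcal{S}, \omega_\eta)$ not of tubular type). We want to show: $f$ is pure if and only if $f = \lambda \cdot \basis(p)$ for a single intersection $p \in \gamma_X \overrightarrow{\cap} \gamma_Y$. The natural framework is to write a standard decomposition $f = \sum_{i=1}^m \lambda_i \basis(p_i)$ with $p_i \in \gamma_X \overrightarrow{\cap} \gamma_Y$ pairwise distinct (possible because $\basis$ injects $\gamma_X \overrightarrow{\cap} \gamma_Y$ into a basis of $\Hom^*(X,Y)$, and the vanishing of interior morphisms plus Proposition \ref{PropositionInteriorMorphismsVectorSpace} forces all the $p_i$ to lie on $\partial \mathcal{S}$). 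So the content is: $f$ is pure iff $m = 1$.

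\textbf{Key steps.} First I would establish the ``easy'' direction: if $f = \lambda \basis(p)$ with $p$ a boundary intersection, I need to produce segment objects $X', Y'$ and nonzero $g : X' \to X$, $h : Y[m] \to Y'$ with $f \circ g = 0 = h \circ f$. Geometrically $p$ is an endpoint shared (up to the boundary-intersection convention) by $\gamma_X$ and $\gamma_Y$; take $X'$ (resp. $Y'$) to be a segment object represented by the boundary segment adjacent to that endpoint on the appropriate side, chosen so that the composition vanishes by the immersed-triangle/fork analysis of Definition \ref{DefinitionFukayaLikeQuintuples} 4) — the point is that the relevant intersection in $X' \overrightarrow{\cap} X$ or $Y[m] \overrightarrow{\cap} Y'$ does not form a fork with $p$, so the composite is zero. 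I would spell out the two sides (the side where composing with a segment object ``before'' $X$ kills $f$, and the side ``after'' $Y$), using Lemma \ref{LemmaBoundaryPointsAreInvertible} to control the boundary maps. Second, the converse: suppose $f = \sum_{i=1}^m \lambda_i \basis(p_i)$ with $m \geq 2$ and all $p_i$ on the boundary, and suppose for contradiction $f$ is pure. Pick $g : X' \to X$ and $h : Y[m] \to Y'$ witnessing purity. Choose an index $i$ such that the endpoint of $\gamma_X$ corresponding to the segment object $X'$ is the one at which $p_i$ sits; then by the composition rule $\basis(p_i) \circ g$ is a nonzero multiple of a basis element (a fork is formed), while for $j \neq i$ the term $\basis(p_j) \circ g$ is either zero or supported at a different, distinct intersection, so no cancellation can occur — hence $f \circ g \neq 0$, a contradiction. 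If no $p_i$ sits at the $X'$-endpoint then $f \circ g = 0$ automatically but then the same analysis on the $Y'$-side must fail for at least one $p_i$ unless $m \le 1$; one has to check the two endpoints of $\gamma_X$ (resp. $\gamma_Y$) separately and see that purity forces all the $\basis(p_i)$ to be ``concentrated'' at one endpoint on each side, which combined with $\Hom^*_{\Int} = 0$ (so everything is a boundary intersection) forces $m = 1$.

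\textbf{Main obstacle.} The delicate point is the bookkeeping around boundary intersections: since $\gamma_X$ and $\gamma_Y$ can share several boundary intersections (at their two endpoints, and the convention in $\overrightarrow{\cap}$ means a geometric endpoint can produce intersections in one direction but not the other), I must carefully track which of the $\basis(p_i)$ survive pre-composition with $g$ and which survive post-composition with $h$, and rule out the scenario where a proper linear combination happens to satisfy both vanishing conditions by conspiracy. The cleanest way is probably to argue endpoint-by-endpoint: each $p_i$ is ``attached'' to one endpoint $e$ of $\gamma_X$; choosing $X'$ to be the segment on a given side at a given endpoint, the composites $\basis(p_i) \circ \basis(\text{endpoint of } X')$ are either all zero (wrong endpoint/side) or pick out exactly the $p_i$ at that endpoint, landing in pairwise distinct basis elements of $\Hom^*(X',Y)$, so no cancellation is possible. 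Running this over both endpoints and both of $X, Y$ pins down $m=1$. I expect the tubular-type exclusion and the ``essential $\Rightarrow$ simple representative'' remarks to be used silently to guarantee $\gamma_X, \gamma_Y$ are simple arcs or boundary segments with disjoint interiors, so that every shared intersection is genuinely a boundary endpoint and the fork/triangle dichotomy degenerates in the expected way.
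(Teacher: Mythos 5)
Your overall architecture matches the paper's: reduce to a standard decomposition $f=\sum_i\lambda_i\basis(p_i)$ into boundary intersections (all $p_i$ lie on $\partial\mathcal{S}$ because $\Hom^*_{\Int}(X,Y)=0$), and show purity is equivalent to $m=1$ by testing against segment objects and tracking which terms survive composition. However, your construction in the ``easy'' direction fails as stated. You propose taking $X'$ and $Y'$ to be boundary segments adjacent to \emph{the endpoint at which $p$ sits}, ``on the appropriate side''. At that marked point there is no appropriate side: the segment lying before $\gamma_X$ in the boundary order is the only one admitting a nonzero morphism $g:X'\to X$, and then $\gamma_{X'}<\gamma_X<\gamma_Y$ is exactly a fork, so $\basis(p)\circ g$ is a nonzero multiple of a basis element; the segment on the other side only gives morphisms $X\to X'$, i.e.\ the wrong direction. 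The same dichotomy kills the $h$-side. The witnesses must instead be taken at the \emph{other} endpoint(s): in the paper one takes the boundary segments adjacent to the second intersection $q'$ (or to the free endpoints when there is only one intersection), and the vanishing $\basis(p)\circ g=0=h\circ\basis(p)$ follows because those segments and the relevant lift of $\gamma_Y$ lie on opposite sides of the lift $\widetilde{\gamma}_X$, so no intersection triangle or fork can form. This is a genuine error, not just vagueness, since the lemma's ``if'' direction is false with your choice of witnesses.

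In the converse direction your fork/no-cancellation argument is sound for witnesses represented by boundary segments (distinct surviving terms land in distinct basis elements, so no conspiracy), but you overlook that a segment object may also be a \emph{boundary loop} with one-dimensional local system (Proposition \ref{PropositionCharacterizationBoundaryObjects}), and a purity witness could be of that form. This is precisely where the hypothesis that $(\mathcal{S},\omega_\eta)$ is not of tubular type is needed: a boundary loop near one endpoint forms an intersection triangle with $\gamma_X$ and $\gamma_Y$ there, and one must rule out that it simultaneously forms a triangle at the other intersection, which would only happen if $\mathcal{S}$ were a cylinder triangulated by $\gamma_X,\gamma_Y$. You instead assign the tubular-type exclusion the irrelevant role of guaranteeing simplicity of representatives (which already follows from essentiality and $\Hom^*_{\Int}(X,Y)=0$), so the case where that hypothesis actually does work is missing from your plan.
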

\begin{proof}
	There is nothing to show, if $|\gamma_X \overrightarrow{\cap} \gamma_Y|=1$. Assume therefore that $q,q' \in \gamma_X \overrightarrow{\cap} \gamma_Y$ are distinct intersections. In a universal cover of $\mathcal{S}$, there exists a lift $\widetilde{\gamma}_X$ of $\gamma_X$  and lifts of $\gamma_Y$ arranged as in Figure \ref{FigureCharacterizationPureMorphisms}.
	\begin{figure}[H]
		\begin{displaymath}
		\begin{tikzpicture}[scale=2]
		\draw (-1, 0.2)--(-1, 1);
		\draw (-1, -0.6)--(-1, -1);
		\draw (1,-1)--(1,-0.2);
		\draw (1,1)--(1,0.6);
		\filldraw (-1,-0.6) circle (1pt);
		
		\filldraw (-1,0.2) circle (1pt);
		
		
		\filldraw (1,-0.2) circle (1pt);
		
		\filldraw (1,0.6) circle (1pt);
		
		\draw  (-1, -0.6)--(1, -0.2);
		\draw (1, -0.2)--(-1,0.2);
		\draw (-1, 0.2)--(1,0.6);
		
		
		\draw (0, 0+0.12) node{$\widetilde{\gamma}_X$};
		
		\draw (1+0.12, -0.2) node{$q$};
		
		\draw (-1-0.12, 0.2) node{$q'$};
		\end{tikzpicture}
		\end{displaymath}
		\caption{Two lifts of $\gamma_Y$ intersect the lift $\widetilde{\gamma}_X$.} \label{FigureCharacterizationPureMorphisms}
	\end{figure}
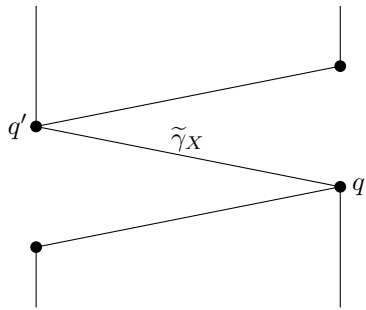
	\noindent We like to stress that the boundaries containing $q$ and $q'$ might coincide.  However in that case, the end points of $\widetilde{\gamma}_X$ are not consecutive marked points since otherwise $\gamma_X \simeq \tau \gamma_Y$ or $\gamma_X \simeq \gamma_Y$ and $|\gamma_X \overrightarrow{\cap} \gamma_Y| \leq 1$.  Let $\delta$ denote the boundary arc connecting $q'$ with its predecessor in $\marked$. Then $\delta$ and $\widetilde{\gamma}_X$ have a unique intersection $p \in \widetilde{\gamma}_Y \overrightarrow{\cap} \delta$. As $\delta$ and $\widetilde{\gamma}_Y$ lie on different sides of $\widetilde{\gamma}_X$ it follows from Definition \ref{DefinitionFukayaLikeQuintuples} 4) that  $\basis(p) \circ \basis(q)=0$. Similarly, if $p'$ denotes the unique intersection in $\delta' \overrightarrow{\cap} \widetilde{\gamma}_X$, where $\delta'$ is the boundary arc connecting $q'$ and its successor in $\marked$, then $\basis(q) \circ \basis(p')=0$. This proves that $\basis(q)$ is pure. A similar proof shows that $\basis(q')$ is pure.

	To prove the other implication, note that $p$ and $q'$ give rise to a fork and hence $\basis(p) \circ \basis(q')$ is a multiple of the morphisms associated to the unique intersection in $\widetilde{\gamma}_X \overrightarrow{\cap} \delta$. Similarly, if $\epsilon$ denotes the boundary arc which connects $q$ and its predecessor and $u \in \widetilde{\gamma}_Y \overrightarrow{\cap} \epsilon$ is the unique intersection, then $\basis(u) \circ \basis(q)$ is a multiple of $\basis(u')$, where $u' \in \widetilde{\gamma}_X \overrightarrow{\cap} \epsilon$ denotes the unique intersection.\\
	Now suppose $g:Y \rightarrow Y'$ is a non-zero morphism to a segment object $Y'$ and $g \circ f=0$. If $Y'$ is represented by $\delta$ or $\epsilon$, then by the above, $f$ is a multiple of $\basis(q)$ or a multiple of $\basis(q')$. Otherwise, $Y'$ is a loop object and w.l.o.g. we may assume that $\gamma_{Y'}$ can be homotoped on the boundary component of $q$. If the intersection of $\tilde{\gamma}_{Y'}$ with $\tilde{\gamma}_X$ and $\tilde{\gamma}_Y$ formed an intersection triangle with $q'$, then $(\mathcal{S}, \omega_{\eta})$ would be of tubular type with $\gamma_X$ and $\gamma_Y$ forming a triangulation. Thus, $f$ is a multiple of $\basis(q')$.
\end{proof}
	
	\begin{lem}\label{LemmaCharacterizationPureMorphismsNonPerfectObjects}
	Assume that $X, Y$ and $0 \neq f:X \rightarrow Y$ satisfy the prerequisites of Definition \ref{DefinitionArrowMorphisms} and assume that $X$ or $Y$ is semi-perfect. If $\gamma_X \in \gamma(X)$ and $\gamma_Y \in \gamma(Y)$ are in minimal position, then $f$ is pure if and only if it is multiple of $\basis(p)$ for some $p \in \gamma_X \overrightarrow{\cap} \gamma_Y$.
	\end{lem}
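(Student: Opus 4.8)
The plan is to mirror the structure of the proof of Lemma~\ref{LemmaCharacterizationPureMorphismsPerfectObjects}, adapting it to the situation where at least one of $X$, $Y$ is semi-perfect, i.e.\ representable by a semi-finite arc with one endpoint at a puncture. First I would dispose of the trivial case: if $|\gamma_X \overrightarrow{\cap} \gamma_Y| \le 1$ there is nothing to prove, since then every morphism $X \to Y$ is already a scalar multiple of $\basis(p)$ by Property~3) III) of Definition~\ref{DefinitionFukayaLikeQuintuples} (the space $\Hom^*(X,Y)$ is spanned by basis elements from non-puncture intersections together with the puncture families, and the hypothesis $\Hom^*_{\Int}(X,Y)=0$ forces the relevant intersections to be boundary intersections). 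So I would assume there are at least two distinct intersections $q, q' \in \gamma_X \overrightarrow{\cap} \gamma_Y$, and — using that $X$ or $Y$ is semi-perfect, hence the interiors of $\gamma_X, \gamma_Y$ avoid a small neighborhood of the boundary except near their finite endpoints — argue as before that all such intersections lie on $\partial S$ near the finite endpoints, so the local picture in the universal cover is again that of Figure~\ref{FigureCharacterizationPureMorphisms} (two lifts of $\gamma_Y$ crossing a single lift $\widetilde\gamma_X$ at consecutive marked boundary points).

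Next I would prove the ``only if'' direction (pure $\Rightarrow$ multiple of $\basis(p)$): given a standard decomposition $f = \sum_i \lambda_i \basis(p_i)$ with all $p_i$ on the boundary, I want to show at most one $\lambda_i$ is non-zero. Suppose two are, say at $q$ and $q'$ as in Figure~\ref{FigureCharacterizationPureMorphisms}. Choosing the boundary arc $\delta$ connecting $q'$ to its predecessor in $\marked$, and letting $U$ be the corresponding segment object with morphism $g : U \to X$ given by the unique intersection $\widetilde\gamma_X \overrightarrow{\cap} \delta$, one computes via Property~4) of Definition~\ref{DefinitionFukayaLikeQuintuples} that $\basis$ of the relevant intersection composed with $\basis(q')$ is a non-zero multiple of a basis element (they form a fork) while the composite with $\basis(q)$ vanishes (wrong side of $\widetilde\gamma_X$). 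Hence $f \circ g \neq 0$ for some such $g$, and one checks $\Hom(g,Y) \neq 0$, contradicting purity (condition~2) of Definition~\ref{DefinitionArrowMorphisms}), unless $f$ factors through $\Perf(\T)$ — but a morphism between a semi-perfect object and anything that factors through a perfect object is, by Lemma~\ref{LemmaMorphismFactorsThroughPerfIffPuncture}, a sum of morphisms at non-puncture intersections, and the same fork argument applies. This forces a single surviving term.

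For the ``if'' direction (a multiple of $\basis(p)$ is pure) I would, exactly as in the perfect case, produce a witnessing segment object: take $\delta$ or $\epsilon$ to be the boundary arc adjacent to the endpoint of $\gamma_Y$ near $p$, with representative segment object $U$, and verify that the composite $\basis(p) \circ (\text{its morphism from }U)$ vanishes while the dual composition is non-zero, so condition~2) of Definition~\ref{DefinitionArrowMorphisms} holds; if $f$ does factor through $\Perf(\T)$ the same check goes through verbatim. The one genuinely new case to handle is when the natural candidate segment object $U$ would need to be a \emph{boundary loop} rather than a boundary segment — here I would invoke the hypothesis that $(\mathcal{S}, \omega_\eta)$ is not of tubular type (as in the last paragraph of the proof of Lemma~\ref{LemmaCharacterizationPureMorphismsPerfectObjects}) to rule out the configuration in which $\widetilde\gamma_{Y'}$ forms an intersection triangle with $q'$, so that $f$ is still forced to be a multiple of the appropriate $\basis(q')$.

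The main obstacle I anticipate is bookkeeping around the puncture: since $\gamma_X$ or $\gamma_Y$ is semi-finite, a morphism associated to a puncture intersection is supported in infinitely many degrees (Remark~\ref{RemarkIntersectionsGentle} and the hypothesis $X, Y \notin \Perf(\T)$ is relevant here only in the ambient statement), and I must be careful that in the hypothesis $\Hom^*_{\Int}(X,Y)=0$ of Definition~\ref{DefinitionArrowMorphisms} together with Proposition~\ref{PropositionInteriorMorphismsVectorSpace} genuinely forces all intersections between $\gamma_X$ and $\gamma_Y$ to lie on the boundary — equivalently that no interior intersection (in particular no puncture, which is an ``interior'' intersection in the puncture-free model of Section~\ref{SectionUnmarkedBoundaryComponents}) can occur. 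Once that reduction is in place the argument is entirely parallel to the perfect case, so the substance is really just checking that the semi-finite endpoint does not disturb the fork/triangle analysis in the universal cover.
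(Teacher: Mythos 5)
There is a genuine gap, and it sits exactly where you flagged your own uncertainty: the reduction ``no puncture intersection can occur'' is false, and without it your argument does not address the case the lemma is actually about. The hypothesis $\Hom^*_{\Int}(X,Y)=0$ only rules out transversal crossings in the interiors of the representing arcs; it does not prevent $\gamma_X$ and $\gamma_Y$ from sharing a puncture as an \emph{endpoint}. (Compare triangulations that separate punctures: several semi-finite arcs of such a triangulation end at the same puncture, they pairwise satisfy $\Hom^*_{\Int}=0$, and yet the puncture contributes the whole family of morphisms $\basis(q)(j)$ between the corresponding objects.) So when, say, $\gamma_X$ is semi-finite, the relevant configuration is not ``two lifts of $\gamma_Y$ crossing $\widetilde\gamma_X$ at consecutive boundary marked points'' as in Figure \ref{FigureCharacterizationPureMorphisms}; it is \emph{one} shared boundary marked point $p$ plus \emph{one} shared puncture $q$. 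Your fork argument with two boundary intersections therefore never engages the case $f=\lambda\,\basis(q)+\mu\,\basis(p)$ with $\lambda,\mu\neq 0$, which is the only nontrivial case of the lemma. This is also why condition 2) of Definition \ref{DefinitionArrowMorphisms} is phrased conditionally on $f$ not factoring through $\Perf(\T)$: by Lemma \ref{LemmaMorphismFactorsThroughPerfIffPuncture} the component $\basis(q)$ at the puncture is precisely the part that does \emph{not} factor through $\Perf(\T)$, so the vacuous branch of the definition cannot be dismissed as you suggest.

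The correct route (which is the paper's) is shorter than what you outline. Assume w.l.o.g.\ that $\gamma_X$ is semi-finite; then disjointness of interiors forces $\gamma_X\overrightarrow{\cap}\gamma_Y$ to consist of at most one boundary point $p$ and at most one puncture $q$. By Lemma \ref{LemmaMorphismFactorsThroughPerfIffPuncture}, $f$ factors through $\Perf(\T)$ if and only if $f$ is a multiple of $\basis(p)$, in which case purity holds vacuously and the claim is immediate. Otherwise write $f=\lambda\,\basis(q)+\mu\,\basis(p)$ with $\lambda\neq 0$, take $U$ the segment object of the boundary segment from $p$ to its successor and $g:U\to X$ the associated morphism; then $\basis(p)\circ g\neq 0$ (fork at $p$, giving $\Hom(g,Y)\neq 0$) while $\basis(q)\circ g=0$ (a composition of non-puncture basis morphisms is never a puncture morphism, by Property 4) of Definition \ref{DefinitionFukayaLikeQuintuples}), so a purity witness exists if and only if $\mu=0$, i.e.\ if and only if $f$ is a multiple of $\basis(q)$. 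Your appeal to the tubular-type hypothesis is not needed here; the essential input you are missing is the dichotomy of Lemma \ref{LemmaMorphismFactorsThroughPerfIffPuncture} between the boundary and puncture components of $f$.
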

	\begin{proof}
		Let $\gamma_X$ and $\gamma_Y$ be representing arcs of $X$ and $Y$ in minimal position. By symmetry of the following arguments we may assume that $\gamma_X$ is an semi-finite arc. In particular, $\gamma_X$ and $\gamma_Y$ share at most one boundary point $p$ and at most one puncture $q$. By Lemma
		\ref{LemmaMorphismFactorsThroughPerfIffPuncture}, $f$ factors through $\Perf(\T)$, if and only if it is a multiple of $\basis(p)$.

Next, assume that $f$ does not factor through $\Perf(\T)$.
As in the proof of Lemma \ref{LemmaCharacterizationPureMorphismsPerfectObjects}, we see that there exists a segment object $U$ represented by a boundary segment, which connects $p$ and its successor, and a morphism  $g: U \rightarrow X$ (unique up to a scalar), such that $h \circ g\neq 0$ for a morphism $h:X \rightarrow \tau U[1]$. Write $f=\lambda \cdot \basis(q) + \mu \cdot \basis(p)$. Consequently,  $g \circ f=0$ if and only if $\mu=0$ if and only if $f$ is a multiple of $\basis(q)$. This finishes the proof. 
	\end{proof}

\noindent The following is a direct consequence of Lemma \ref{LemmaCharacterizationPureMorphismsPerfectObjects} and Lemma \ref{LemmaCharacterizationPureMorphismsNonPerfectObjects}.
\begin{cor}\label{CorollaryCharacterizationArrowMorphisms}Assume that $(\mathcal{S}, \omega_{\eta})$ is not of tubular type. Let $\mathcal{X}$ be a triangulation which separates punctures and $X,Y \in \mathcal{X}$. The arrow morphisms  $X \rightarrow Y$ are precisely the multiples of morphisms $\basis(p)$, where $p$ is the corner of a triangle in the corresponding triangulation of $S$. In particular, up to scalar, there exists at most one arrow morphism $X \rightarrow Y$.
\end{cor}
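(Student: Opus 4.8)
The plan is to combine the two preceding characterization lemmas (Lemma \ref{LemmaCharacterizationPureMorphismsPerfectObjects} and Lemma \ref{LemmaCharacterizationPureMorphismsNonPerfectObjects}) with the geometric description of compositions from Property 4) of Definition \ref{DefinitionFukayaLikeQuintuples}, which governs when a composition of basis morphisms $\basis(q_2)\circ\basis(q_1)$ is again (a multiple of) a basis morphism. First I would fix representatives $\gamma_X,\gamma_Y$ of $X,Y\in\mathcal{X}$ in minimal position together with the triangulation $\Delta=\gamma(\mathcal{X})$ of $S$, which is legitimate by Lemma \ref{AlgebraicAndGeometricTriangulationAgree}; since $\mathcal{X}$ separates punctures, $\Delta$ consists of finite and semi-finite arcs. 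By the two lemmas just cited, every pure morphism $X\to Y[m]$ is a scalar multiple of $\basis(p)$ for a uniquely determined oriented intersection $p\in\gamma_X\overrightarrow{\cap}\gamma_Y$, and this intersection is either a shared boundary marked point or a shared puncture (an interior intersection would produce a nonzero interior morphism, contradicting $\Hom^*_{\Int}(X,Y)=0$ together with Proposition \ref{PropositionInteriorMorphismsVectorSpace}).

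The heart of the argument is then a dichotomy on the intersection $p$. If $p$ is a corner of a triangle of $\Delta$ — i.e.\ the arcs $\gamma_X$ and $\gamma_Y$ of $\Delta$ bound a common triangle at $p$ — then there is no third arc $\delta\in\Delta$ whose representative $\gamma_Z$ sits ``between'' $\gamma_X$ and $\gamma_Y$ near $p$, so no lift configuration giving a factorization through an intermediate object $Z\in\mathcal{X}$ can occur; here one invokes Property 4) I) in the contrapositive: a nonzero composition $\basis(q_2)\circ\basis(q_1)$ through a third arc object requires a triangle/fork/double-bigon of lifts, and the triangulation being maximal with disjoint interiors rules these out unless $Z$ coincides with $X$ or $Y$. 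Conversely, if $p$ is \emph{not} a corner of a triangle of $\Delta$, then in a universal cover one finds a lift $\widetilde{\gamma}_X$ and a lift of a third arc $\gamma_Z$ of $\Delta$ (the one cutting off the region at $p$) such that $\widetilde{\gamma}_X,\widetilde{\gamma}_Z$ and a lift of $\gamma_Y$ form a fork (or triangle) at the lift of $p$; Property 4) I) then forces $\basis(p)$ to appear in the standard decomposition of $\basis(p_2)\circ\basis(p_1)$ for appropriate pure morphisms $\basis(p_1)\colon X\to Z[n]$ and $\basis(p_2)\colon Z[n]\to Y[m]$, so $\basis(p)$ is a composition of pure morphisms through $Z\in\mathcal{X}\setminus\{X,Y\}$ and hence is not an arrow morphism. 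This establishes that the arrow morphisms $X\to Y$ are exactly the multiples of $\basis(p)$ with $p$ a corner of a triangle. The final ``at most one'' assertion follows because two distinct arcs of a triangulation can share at most one marked point as a common corner of a single triangle (the surface not being of tubular type excludes the degenerate cylinder case where a loop could intervene), so there is at most one such $p$, hence at most one arrow morphism up to scalar.

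The main obstacle I anticipate is the careful bookkeeping in the non-corner case: one must produce the intermediate arc $\gamma_Z$ and the precise lift configuration, and check that the grading/degree-zero hypotheses of Property 4) are met after suitable shifts, so that $\basis(p_2)\circ\basis(p_1)$ is genuinely a nonzero multiple of $\basis(p)$ rather than vanishing or mixing with other terms. One must also handle the boundary-versus-puncture cases separately (as in the proofs of the two preceding lemmas) and, in the puncture case, account for the families $\basis(q)(j)$ of Property 2) II); but since $\mathcal{X}$ separates punctures, only one arc of $\Delta$ ends at any given puncture, which keeps the combinatorics of the ``reduced sequence'' under control and confines the intermediate-object subtlety to the boundary case. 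Everything else is a direct translation via Lemma \ref{AlgebraicAndGeometricTriangulationAgree} between the algebraic notion of arrow morphism and the geometric notion of a shared triangle corner.
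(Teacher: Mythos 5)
Your argument is correct and follows exactly the route the paper intends: the paper states this corollary without proof, as an immediate consequence of Lemma \ref{LemmaCharacterizationPureMorphismsPerfectObjects} and Lemma \ref{LemmaCharacterizationPureMorphismsNonPerfectObjects} (pure morphisms are multiples of $\basis(p)$ for endpoint intersections $p$ of the two triangulation arcs) combined with Property 4) of Definition \ref{DefinitionFukayaLikeQuintuples}, which makes factorizability through a third object of $\mathcal{X}$ equivalent to the existence of an intervening arc at $p$, i.e.\ to $p$ not being a corner of a triangle. Your fleshed-out dichotomy, including the observation that factorizations through shifts of $X$ or $Y$ themselves (or through segment objects, which lie outside $\mathcal{X}$) do not disqualify a pure morphism from being an arrow morphism, is precisely the intended reading.
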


\subsubsection{Definition of characteristic sequences}\label{SectionDefinitionCharacteristicSequences}
\ \smallskip

\noindent We are finally prepared to state the definition of characteristic sequences. In order to unify notation, we extend the definition of $\tau$ to non-perfect objects. If $X \not \in \Perf(\T)$ is indecomposable but not semi-perfect, we set $\tau X \coloneqq X$. Otherwise, we require that there exists a distinguished triangle
\[\begin{tikzcd} \tau X \arrow{r} & U \arrow{r}{f} & X \arrow{r} & \tau X[1] \end{tikzcd},\]
where $U$ is a segment object and $f$ is any non-zero morphism. This determines $X$ uniquely up isomorphism. In particular, we have $\gamma(\tau X)=\tau \gamma(X)$.

\begin{definition}\label{DefinitionCharacteristicSequence}Let $\mathcal{X}$ be a triangulation of $\T$ which separates punctures. Let $X \in \T$ be $\mathcal{X}$-admissible and assume that $X$ is $\tau$-invarant (resp.\@ not $\tau$-invariant).\\
	A cyclic (resp.\@ linear) sequence of pairs $(\phi_0, \overline{\phi}_0), \dots, (\phi_m, \overline{\phi}_m)$  of non-zero morphisms
	\[(\phi_j, \overline{\phi}_j) \in \Hom(X, Y_j[n_j]) \times \Hom(\tau^{-1}Y_j[n_j-1],X),\] 
	where $Y_0, \dots, Y_m \in \mathcal{X}$, is called a \textbf{characteristic sequence of $X$} (with respect to $\mathcal{X}$) if it satisfies all of the following conditions.
	\begin{enumerate}
	\setlength\itemsep{1ex}
		\item \begin{itemize}
		\setlength\itemsep{0.75ex}
			\item[a)] The morphisms $\phi_j$ form basis of $\bigoplus_{Y \in \mathcal{X}}{\Hom^*(X, Y)}$.
			\item[b)] The morphisms 
			$\overline{\phi}_j[-n_j+1]$ form a basis of $\bigoplus_{Y \in \mathcal{X}}{\Hom^*(\tau^{-1}Y, X)}$.
		\end{itemize}  
		\item  For every $j \in [0,m]$ (resp.\@ $j \in [0,m)$), there exists a pair $\{\sigma_j^1, \sigma_j^2\}=\{j,{j+1}\}$ with the following properties:
		\begin{itemize}
		\setlength\itemsep{0.75ex}
			\item [a)] There exists  an arrow morphism 
			\[\alpha_j: Y_{\sigma_j^1}[n_{\sigma_j^1}] \rightarrow Y_{\sigma_j^2}[n_{\sigma_j^2}],\]
			such that $\alpha_j \circ \phi_{\sigma_j^1}$ is a non-zero multiple of $\phi_{\sigma_j^2}$.
			\item[b)] There exists an arrow morphism  
			\[\overline{\alpha}_j: \tau^{-1}Y_{\sigma_j^1}[n_{\sigma_j^1}-1] \rightarrow \tau^{-1}Y_{\sigma_j^2}[n_{\sigma_j^2}-1],\]
			such that $\overline{\phi}_j \circ \overline{\alpha}_j$ is a non-zero multiple of $\overline{\phi}_{\sigma_j^2}$.
		\end{itemize}
		\item If $Y_j \in \Perf(\T)$, then $\phi_j \circ \overline{\phi}_j$ is a connecting morphism in an Auslander-Reiten triangle.
	\end{enumerate}
	We refer to the cyclic (resp.\@ linear) sequence $Y_0, \dots, Y_m$ as a \textbf{characteristic sequence of objects} of  $X$.
\end{definition}

\noindent We shall consider equivalence classes of characteristic sequences up to rotation and inversion.  Part 1) and 2) of Definition \ref{DefinitionCharacteristicSequence} follow the ideas described in Section \ref{SectionIdeaChracteristicSequences}. The third part incorporates the idea that $\phi_j$ and $\overline{\phi}_j$ should be ``dual'' with respect to the Serre pairing as discussed in Section \ref{SectionSerrePairing}.\medskip

\noindent We have the following.
\begin{lem}\label{LemmaCharacteristicSequencePreservedUnderEquivalences}Let $\mathcal{X}$ a triangulation of $\T$ which separates punctures and let $X \in \T$ be $\mathcal{X}$-admissible. If $(\phi_0, \overline{\phi}_0), \dots, (\phi_m, \overline{\phi}_m)$ is a characteristic sequence of $X$ with respect to $\mathcal{X}$ and $T: \T \rightarrow \T'$ is a triangle equivalence, then 
	\[\left(T\left(\phi_0\right), T\left(\overline{\phi}_0\right)\right), \dots,  \left(T\left(\phi_m\right), T\left(\overline{\phi}_m\right)\right),\]
	
	 \noindent is a characteristic sequence of $T(X)$ with respect to $T(\mathcal{X})$.
\end{lem}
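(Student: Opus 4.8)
The plan is to verify each of the three defining conditions of Definition \ref{DefinitionCharacteristicSequence} for the transported data, using only that $T$ is a triangle equivalence together with the structural facts already established about triangulations, arrow morphisms, and $\tau$. First I would record the setup: since $T$ is a triangle equivalence between surface-like categories, it sends essential objects to essential objects, preserves the property of being a segment object, preserves $\Perf$ (it restricts to an equivalence of perfect subcategories), and commutes with $\tau$ up to natural isomorphism; hence $T(\mathcal{X})$ is again a triangulation of $\T'$ which separates punctures (as noted in the text just before Lemma \ref{AlgebraicAndGeometricTriangulationAgree}), and $T(X)$ is $T(\mathcal{X})$-admissible because $X$ is not a segment object and not a shift of an object of $\mathcal{X}$. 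We also have $T(\tau^{-1}Y_j) \cong \tau^{-1}T(Y_j)$, so the Hom-spaces in which the transported morphisms live are exactly $\Hom(T(X), T(Y_j)[n_j]) \times \Hom(\tau^{-1}T(Y_j)[n_j-1], T(X))$, matching the format required of a characteristic sequence of $T(X)$.

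Next I would check condition (1). The functor $T$ induces, for each pair of objects, a $k$-linear isomorphism on Hom-spaces compatible with shifts and with direct sums, so it carries a basis of $\bigoplus_{Y \in \mathcal{X}} \Hom^*(X,Y)$ to a basis of $\bigoplus_{Y \in \mathcal{X}} T(\Hom^*(X,Y)) = \bigoplus_{Y' \in T(\mathcal{X})} \Hom^*(T(X), Y')$, which gives (1a); (1b) is identical after inserting the natural isomorphism $T\tau^{-1} \cong \tau^{-1}T$. For condition (2), the key input is that being a pure morphism, hence being an arrow morphism of a punctures-separating triangulation, is preserved under triangle equivalences — this is recorded right after Definition \ref{DefinitionArrowMorphisms} and rests on the fact that $T$ preserves segment objects, factorizations through $\Perf(\T)$, and nonvanishing of compositions. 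Thus for each $j$ the arrow morphisms $\alpha_j$ and $\overline{\alpha}_j$ transport to arrow morphisms $T(\alpha_j)$ and $T(\overline{\alpha}_j)$ in $\T'$, and since $T$ is a functor it preserves the relations $\alpha_j \circ \phi_{\sigma^1_j} = \lambda \phi_{\sigma^2_j}$ and $\overline{\phi}_j \circ \overline{\alpha}_j = \mu \overline{\phi}_{\sigma^2_j}$ (with the same nonzero scalars), so condition (2) holds with the same index pairing $\{\sigma^1_j,\sigma^2_j\}$. For condition (3): $Y_j \in \Perf(\T)$ iff $T(Y_j) \in \Perf(\T')$, and $T$ sends an Auslander–Reiten triangle to an Auslander–Reiten triangle and a connecting morphism to the connecting morphism of the image triangle, so $T(\phi_j) \circ T(\overline{\phi}_j) = T(\phi_j \circ \overline{\phi}_j)$ is again a connecting morphism of an Auslander–Reiten triangle.

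I expect the main subtlety to lie not in any single verification but in making precise the bookkeeping around the natural isomorphism $T\tau^{-1} \cong \tau^{-1}T$ and around the normalization of arrow morphisms "up to scalar" — one must be sure that the chosen natural isomorphism identifies $\Hom(\tau^{-1}Y_j[n_j-1], X)$ with $\Hom(\tau^{-1}T(Y_j)[n_j-1], T(X))$ in a way compatible with the arrow morphisms $\overline{\alpha}_j$ and with condition (3), so that the transported sequence is genuinely a characteristic sequence and not merely one up to an extra twist. Since characteristic sequences are only considered up to rotation and inversion, and the scalars appearing in Definition \ref{DefinitionCharacteristicSequence} are required only to be nonzero, this compatibility is automatic once one fixes the Serre functor / Auslander–Reiten structure transported along $T$; I would spell this out briefly but it involves no real computation. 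The remaining points are all immediate from functoriality.
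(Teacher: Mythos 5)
Your proof is correct and is exactly the argument the paper has in mind: the paper states this lemma without proof, treating it as immediate from the facts that a triangle equivalence preserves triangulations separating punctures, admissibility, pure/arrow morphisms, the (extended) Auslander--Reiten translation, and Auslander--Reiten triangles, which is precisely the verification you carry out. The only point worth making explicit is that the compatibility $T(\tau^{-1}Y_j)\cong \tau^{-1}T(Y_j)$ for the \emph{non-perfect} (semi-perfect) objects of $\mathcal{X}$ uses the paper's extended definition of $\tau$ via a triangle over a segment object, which is again preserved by $T$; your remarks on bookkeeping cover this.
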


\subsubsection{Existence and reconstruction of objects from characteristic sequences}
\ \smallskip

\noindent The following lemma shows that characteristic sequences exist.
\begin{lem}Let $\T$ be a surface-like category, $\mathcal{X}$ be a triangulation of $\T$ which separates punctures and let $X \in T$ be $\mathcal{X}$-admissible. Then, $X$ has a characteristic sequence with respect to $\mathcal{X}$.
\end{lem}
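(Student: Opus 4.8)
The statement is an existence result: every $\mathcal{X}$-admissible object $X$ in a surface-like category admits a characteristic sequence with respect to a fixed triangulation $\mathcal{X}$ that separates punctures. The natural strategy is to produce the sequence geometrically and then transfer it to the algebraic side via the dictionary already established in this section. First I would fix representatives in minimal position: a curve $\gamma_X \in \gamma(X)$ and the collection $\Delta = \gamma(\mathcal{X})$ of arcs of the triangulation, which by Lemma \ref{AlgebraicAndGeometricTriangulationAgree} is an honest triangulation of $\mathcal{S}$ separating punctures. Since $X$ is $\mathcal{X}$-admissible, $\gamma(X)$ is $\Delta$-admissible by the same lemma, so by Lemma \ref{LemmaTriangulationSeparatesPuncturesLoopAroundPuncture} either $\gamma_X$ meets a finite arc of $\Delta$ or it is a loop around a puncture; in the first case $\gamma_X$ has a well-defined (cyclic or linear) \emph{sequence of oriented intersections} $p_0, \dots, p_m$ with $\Delta$ in the sense of Definition \ref{DefinitionOrderingSets}, and this sequence is reduced by Lemma \ref{LemmaIntersectionsDetermineCurves}. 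The loop-around-a-puncture case has to be handled separately, but there the intersections with the infinite arcs ending at that puncture, together with the self-intersection supplied by Property 4)\,III) of Definition \ref{DefinitionFukayaLikeQuintuples}, give the analogous data.

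\textbf{Constructing the sequence.} For each intersection $p_j$, lying on an arc $\delta_j \in \Delta$ representing some $Y_j \in \mathcal{X}$, I would set $\phi_j := \basis(p_j)$ viewed as a morphism $X \to Y_j[n_j]$ (regarding $p_j \in \gamma_X \overrightarrow{\cap} \delta_j$) and $\overline{\phi}_j := \basis(\overline{p}_j)$-type morphism coming from the dual intersection $p_j \in \delta_j \overrightarrow{\cap} \gamma_X$, suitably reinterpreted through the Serre pairing of Section \ref{SectionSerrePairing} so that it lands in $\Hom(\tau^{-1}Y_j[n_j-1], X)$; the extended $\tau$ on non-perfect objects from Section \ref{SectionDefinitionCharacteristicSequences} is what makes this well-posed when $Y_j$ is semi-perfect. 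Condition 1) of Definition \ref{DefinitionCharacteristicSequence} then follows from Property 2)\,III) of Definition \ref{DefinitionFukayaLikeQuintuples} applied to the pairs $(X, Y)$ for $Y \in \mathcal{X}$ — the morphisms associated to interior intersections form a basis of $\Hom^*$ because $\Hom^*_{\Int}(Y, Y')=0$ within the triangulation kills the Auslander–Reiten correction term, combined with Lemma \ref{LemmaMorphismFactorsThroughPerfIffPuncture} to identify the relevant intersections. Condition 2) is exactly the geometric fact that consecutive intersections $p_j, p_{j+1}$ of $\gamma_X$ with $\Delta$ cut out (with the unique intersection $q$ of $\delta_j$ and $\delta_{j+1}$ at a common corner) an embedded triangle in the universal cover, so that $\basis(q) \circ \basis(p_{\sigma^1_j})$ is a non-zero multiple of $\basis(p_{\sigma^2_j})$ by Property 4)\,I) of Definition \ref{DefinitionFukayaLikeQuintuples} — and $\basis(q)$ is an arrow morphism of $\mathcal{X}$ by Corollary \ref{CorollaryCharacterizationArrowMorphisms} (here one must treat the tubular-type case separately, but that is the cylinder, where $\gamma_X$ is forced to be a boundary-parallel loop and the sequence is short and explicit). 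Condition 3) is the Serre-duality statement: when $Y_j$ is perfect, $\phi_j \circ \overline{\phi}_j$ computes the Serre pairing of $\phi_j$ with itself against the distinguished intersection in $\gamma_{Y_j} \overrightarrow{\cap} \tau\gamma_{Y_j}$, which is non-zero precisely for the diagonal by the last displayed non-vanishing in Section \ref{SectionSerrePairing}, hence equals (a multiple of) the connecting morphism of an Auslander–Reiten triangle by Proposition \ref{PropositionExistenceARTrianglesForArcs}.

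\textbf{The main obstacle.} The genuinely delicate point is condition 2) together with the \emph{duality} between the $\phi$-sequence and the $\overline{\phi}$-sequence — i.e.\ checking that \emph{the same} index pairing $\{\sigma^1_j, \sigma^2_j\}$ and a compatible pair of arrow morphisms $\alpha_j, \overline{\alpha}_j$ witness both condition 2a) and 2b) simultaneously. Geometrically both statements reduce to the single triangle spanned by $p_j$, $p_{j+1}$ and the corner $q$ of $\Delta$, but on the $\overline{\phi}$-side the composition runs in the opposite direction ($\overline{\phi}_j \circ \overline{\alpha}_j$), so one must verify that the orientation conventions for oriented intersections (Definition \ref{DefinitionOrderingSets}) and the sign in $\deg(\overline{p}) = 1 - \deg(p)$ line up so that the degree-zero arrow morphism $\overline{\alpha}_j$ really exists where needed. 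I expect this to require a careful case analysis according to whether the corner $q$ is interior or on the boundary and whether $\delta_j = \delta_{j+1}$ (the reduced-but-equal case), invoking the propagation description of $\basis$ from Section \ref{SectionMorphismsOfIntersection} in the gentle case as a sanity check; but once the orientations are pinned down, no further computation is needed — the existence of a characteristic sequence is then just the translation of ``$\gamma_X$ has a reduced sequence of oriented intersections with $\Delta$'' under the bijection $\gamma$ and the morphism-intersection dictionary $\basis$.
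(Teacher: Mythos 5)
Your proposal is correct and follows essentially the same route as the paper: pass to a representative curve in minimal position with the triangulation $\Delta=\gamma(\mathcal{X})$, take its (cyclic or linear) sequence of oriented intersections $p_0,\dots,p_m$, set $\phi_j=\basis(p_j)$ and $\overline{\phi}_j=\basis(\mathbb{S}^{-1}(p_j))$, and verify condition (1) via Property 2)\,III) of Definition \ref{DefinitionFukayaLikeQuintuples}, condition (2) via the intersection triangle formed by $p_j$, $p_{j+1}$ and the corner $q_j$ of $\Delta$ together with Property 4) (taking $\alpha_j=\basis(q_j)$ and $\overline{\alpha}_j=\mathbb{S}^{-1}(q_j)$, so the ``duality'' you flag as the main obstacle resolves immediately rather than needing a case analysis), and condition (3) via Proposition \ref{PropositionExistenceARTrianglesForArcs}. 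The only point you gloss over that the paper treats explicitly is the $\tau$-invariant \emph{arc} object case, where the cyclic order is read off a loop in $\gamma_{\ast}(X)$ and then transported to the actual arc representative by the bijection on intersections induced by a homotopy.
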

\begin{proof}Let $\Delta$ be a triangulation corresponding to $\mathcal{X}$. Since $X$ is $\Delta$-admissible, $\gamma$ intersects at least one of its arcs  in the interior or is homotopic to a boundary segment.\\
	Let $\gamma \in \gamma_{\ast}(X)$ such that $\{\gamma\} \cup \Delta$ is in minimal position. We assume that $\gamma$ is a loop if $X$ is $\tau$-invariant.   
	Let $p_0, \dots, p_m$ be the cyclic (resp.\@ linear) sequence of oriented intersections from $\gamma$ to arcs in $\Delta$. For each $j \in [0,m]$, let  $\delta_j \in \Delta$, such that $p_i \in \gamma \overrightarrow{\cap} \delta_{i}$ and denote $Y_j \in \mathcal{X}$, such that $\delta_j \in \gamma(Y_j)$.\\
	If $X$ is a loop object or not $\tau$-invariant, set $\phi_j\coloneqq \basis(p_j) \in \Hom^*(X, Y_j)$ and $\overline{\phi}_j\coloneqq \basis(\mathbb{S}^{-1}(p_j)) \in \Hom^*(\tau^{-1}Y_j[n_j-1], X)$.  If $X$ is a $\tau$-invariant arc object, then any homotopy $H$ from $\gamma$ to a $\Delta$-admissible representative $\gamma'$ of $X$ induces bijections 
{$\hat{H}:\gamma \overrightarrow{\cap} \delta_j \rightarrow \gamma' \overrightarrow{\cap} \delta_j$} and the pair $\left(\phi_j, \overline{\phi}_j\right)$ the pair associated with $\left(\hat{H}(p_i), \mathbb{S}\hat{H}\mathbb{S}^{-1}(p_i)\right).$

	It follows from Proposition \ref{PropositionExistenceARTrianglesForArcs} that  $\phi_j \circ \overline{\phi}_j$  is a connecting morphism in an Auslander-Reiten triangle, whenever $\delta_j$ is a finite arc.
	Property (1) of Definition \ref{DefinitionCharacteristicSequence} follows from Definition \ref{DefinitionFukayaLikeQuintuples} 2) III). \\
	Since $p_{j+1}$ and $p_j$ lie on the boundary of the same triangle of $\Delta$, there exist $\sigma_j^1, \sigma_j^2$ with $\{\sigma_j^1, \sigma_j^2\}=\{j,j+1\}$ and a unique intersection $q_j \in \delta_{\sigma_j^1}  \overrightarrow{\cap} \delta_{\sigma_j^2}$, such that $p_j, p_{j+1}$ and $q_j$ form an intersection triangle. Then, for $\alpha_j\coloneqq \basis(q_j):Y_{\sigma_j(1)}[n_{\sigma_j(1)}] \rightarrow Y_{\sigma_j(2)}[n_{\sigma_j(2)}]$, the composition $\alpha_j \circ \phi_{\sigma_j(1)}$ is a multiple of $\basis(p_{\sigma_j(2)})$. For $\overline{\alpha}_j \coloneqq \mathbb{S}^{-1}(q_j)$,  Property 2b) in Definition \ref{DefinitionCharacteristicSequence} follows in a similar way.
\end{proof}

\noindent Next, we show that two objects have equivalent characteristic sequences of objects if and only if their associated $\simeq_{\ast}$-classes coincide.\medskip

\noindent \textbf{Assumptions \& notation. \,} For the remainder of this section we fix a triangulation $\mathcal{X}$ of $\T$, which separates punctures, and denote by $\Delta$ a corresponding triangulation. We fix an $\mathcal{X}$-admissible object $X \in \T$ and let $\gamma \in \gamma_{\ast}(X)$ be a $\Delta$-admissible curve which we assume to be a loop if $\gamma_{\ast}(X)$ contains such a curve. We write $(p_i)_{i}$ for its linear or cyclic sequence of oriented  intersections with $\Delta$. Finally, let $(\phi_j, \overline{\phi}_j)_{j}$ be a characteristic sequence of $X$, let $(Y_j)$ denote the corresponding sequence of objects in $\mathcal{X}$ and let $\delta_j \in \Delta$ denote the representative of $Y_j$.

 Depending on whether $X$ is $\tau$-invariant or not, we identify the parametrizing set of indices $j$ with either an interval of integers or elements in a cyclic group in the natural way. For each  $j$, let $\sigma_j^1, \sigma_j^2$, $\alpha_j$ and $\overline{\alpha}_j$ be as in Definition \ref{DefinitionCharacteristicSequence}.\\
Finally for each $j$,  let 
\[\begin{array}{ccc}\displaystyle \phi_{j}=\sum_{l \in I_j}{b_l} & \text{and} & \displaystyle  \overline{\phi}_{j}=\sum_{\overline{l} \in \overline{I}_j}{\overline{b}_{\overline{l}}} \end{array}\]
denote standard decompositions with respect to $\gamma$, $\delta_j$ and $\tau^{-1}\delta_j$. We say that $\overline{l} \in \overline{I}_j$ and $l \in I_j$ are \textbf{dual} if the intersections corresponding to $b_l$ and $\overline{b}_{\overline{l}}$ are related by the bijection $\mathbb{S}: \gamma \overrightarrow{\cap} \Delta \rightarrow \tau^{-1}\Delta \overrightarrow{\cap} \gamma$.\\

\noindent The following proposition is an algebraic analogue of Lemma \ref{LemmaIntersectionsDetermineCurves}.
\begin{prp}\label{PropositionCharacteristicSequenceDefineObjectTauInvariant}Let $X, Y \in \T$ be $\mathcal{X}$-admissible. Then, all characteristic sequences of objects in $\mathcal{X}$ of $X$ coincide up to equivalence. Moreover, $\gamma_{\ast}(X)=\gamma_{\ast}(Y)$ if and only if their characteristic sequences of objects are equivalent. 

\end{prp}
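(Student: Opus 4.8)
\textbf{Proof plan for Proposition \ref{PropositionCharacteristicSequenceDefineObjectTauInvariant}.}

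The strategy is to show that a characteristic sequence of objects of $X$ is, up to equivalence (rotation and inversion), exactly the sequence of triangulation-arcs crossed by a $\Delta$-admissible representative $\gamma$ of $\gamma_{\ast}(X)$, at which point the assertion follows from Lemma \ref{LemmaIntersectionsDetermineCurves}. First I would fix an $\mathcal{X}$-admissible object $X$ and a $\Delta$-admissible curve $\gamma \in \gamma_{\ast}(X)$ as in the running assumptions, with its linear or cyclic sequence $(p_i)_i$ of oriented intersections with $\Delta$ and the associated sequence $(\delta_i)_i$ of arcs. The existence lemma already produced one characteristic sequence out of this geometric data, with $Y_j$ the object represented by $\delta_j$; so the content of the proposition is a \emph{uniqueness} statement: any abstractly given characteristic sequence $(\phi_j,\overline{\phi}_j)_j$ of $X$ must agree, up to equivalence, with a geometric one. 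The plan is to recover the $p_i$ from the $\phi_j$.

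The key steps, in order: (1) Using Proposition \ref{PropositionInteriorMorphismsVectorSpace} and the fact that $X$ and each $Y_j \in \mathcal{X}$ have $\Hom^*_{\Int}(X,Y_j)=0$ (as $\mathcal{X}$ is a triangulation), observe that every morphism $X \to Y_j[n]$ is a sum of morphisms associated with \emph{boundary} intersections of $\gamma_X$ with $\delta_j$; hence a standard decomposition $\phi_j = \sum_{l \in I_j} b_l$ as set up in the running notation indexes a subset of $\gamma_X \overrightarrow{\cap} \delta_j$. Condition 1a) of Definition \ref{DefinitionCharacteristicSequence} forces the totality of these $b_l$ over all $j$ to be a basis of $\bigoplus_{Y \in \mathcal{X}} \Hom^*(X,Y)$, which by Property 2)III) of Definition \ref{DefinitionFukayaLikeQuintuples} means exactly that $\{b_l\}$ is in bijection with $\gamma_X \overrightarrow{\cap} \Delta$ — i.e. the disjoint union of the $I_j$ recovers the \emph{set} of intersections $(p_i)_i$ (no more, no fewer, counted once each); (2) use condition 2a) together with the geometric description of composition (Property 4)I), Definition \ref{DefinitionFukayaLikeQuintuples}, via intersection triangles/forks/double-bigons) and Corollary \ref{CorollaryCharacterizationArrowMorphisms} to show that $\alpha_j \circ \phi_{\sigma_j^1}$ being a nonzero multiple of $\phi_{\sigma_j^2}$ can only happen when the underlying intersections of (a summand of) $\phi_{\sigma_j^1}$, the arrow morphism $\alpha_j$, and (a summand of) $\phi_{\sigma_j^2}$ form an intersection triangle — equivalently, the arcs $\delta_{\sigma_j^1}$ and $\delta_{\sigma_j^2}$ bound a common triangle of $\Delta$ and the intersections are consecutive along $\gamma$; this pins down the \emph{order} on $(p_i)_i$ up to the ambiguity inherent in the $\{\sigma_j^1,\sigma_j^2\}=\{j,j+1\}$ choice; (3) invoke the dual data $(\overline{\phi}_j,\overline{\alpha}_j)$ and condition 3) (connecting morphisms of Auslander--Reiten triangles, via the Serre pairing of Section \ref{SectionSerrePairing} and Proposition \ref{PropositionExistenceARTrianglesForArcs}) to resolve exactly that ambiguity: duality of $l \in I_j$ and $\overline{l} \in \overline{I}_j$ under $\mathbb{S}$ forces the two sequences $(\phi_j)$ and $(\overline{\phi}_j)$ to traverse the \emph{same} cyclic/linear ordering of intersections, which — precisely as in the linear-vs-cyclic distinction of Definition \ref{DefinitionOrderingSets} — is what upgrades ``same set of crossed arcs'' to ``same ordered (reduced) sequence of crossed arcs''. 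Having extracted a reduced sequence of arcs of $\Delta$ intrinsically from the characteristic sequence, part (2) of Lemma \ref{LemmaIntersectionsDetermineCurves} yields a unique $\simeq_{\ast}$-class; running this for both $X$ and $Y$ gives the stated equivalence, and conversely equality of $\simeq_{\ast}$-classes yields equivalent geometric (hence all) characteristic sequences.

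The main obstacle I anticipate is Step (2)–(3): controlling what the composition conditions 2a), 2b), 3) force when an arc $\delta_j$ meets $\gamma$ in \emph{several} boundary intersections, so that $\phi_j$ is a genuine linear combination rather than a single $\basis(p)$. Here the subtlety is that an arrow morphism $\alpha_j$ is only a multiple of a single $\basis(q)$ (Corollary \ref{CorollaryCharacterizationArrowMorphisms}), while $\phi_j$ may have many summands; one must argue that the equation ``$\alpha_j \circ \phi_{\sigma_j^1}$ is a nonzero multiple of $\phi_{\sigma_j^2}$'' holds on the nose only when the decompositions are ``aligned'' — i.e. $\alpha_j$ composes each summand of $\phi_{\sigma_j^1}$ either to zero or to the matching summand of $\phi_{\sigma_j^2}$, with no cancellation — which is exactly the delicacy that the notions of pure morphism and arrow morphism were introduced to handle, and which mirrors the bigon/fork bookkeeping in Lemma \ref{LemmaCharacterizationPureMorphismsPerfectObjects}. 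The exceptional tubular-type case (where $\Delta$ is a single pair of arcs on the cylinder and arrow morphisms behave differently) and the degenerate surfaces excluded in the Remark after Lemma \ref{LemmaIntersectionsDetermineCurves} will need to be checked or excluded separately, but these are small; the heart of the proof is the alignment argument just described.
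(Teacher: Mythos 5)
Your plan follows the paper's proof essentially verbatim: the paper likewise reconstructs the ordered sequence of arcs of $\Delta$ crossed by $\gamma$ by propagating a dual pair $(l,\overline{l}) \in I_j \times \overline{I}_j$ along the characteristic sequence — using $\overline{\phi}_j = \overline{\phi}_{j+1}\circ\overline{\alpha}_j$ to locate the next summand on the dual side, the characterization of arrow morphisms (Corollary \ref{CorollaryCharacterizationArrowMorphisms}) to match it with the corresponding summand of $\phi_{j+1}$, and condition (3) at a perfect $Y_j$ (or Lemma \ref{LemmaTriangulationSeparatesPuncturesLoopAroundPuncture} when no $Y_j$ is perfect, i.e.\ the loop-around-a-puncture case) to seed the induction — before concluding with Lemma \ref{LemmaIntersectionsDetermineCurves}. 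The ``alignment'' difficulty you flag as the main obstacle is precisely what this dual-pair propagation resolves, so your outline matches the paper's argument in both strategy and the key technical step.
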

\begin{proof}It is sufficient to show that we can reconstruct the sequence of arcs in $\Delta$ crossed by $\gamma$ from a characteristic sequence.
	
The set $N=\{0,\dots, n\}$ is equipped with a linear or cyclic ordering. By an interval $[x,y]$ of two of its elements $x,y$ we mean the set of elements which lie between $x$ and $y$, including $x$ and $y$.
The set $N$ is divided into non-empty intervals $N_0, \dots, N_m$ which are maximal with respect to the property that  for all $0 \leq i \leq m$, either $(\sigma_j^1, \sigma_j^2)=(j,j+1)$ for all $j \in N_i$, or  $(\sigma_j^1, \sigma_j^2)=(j+1,j)$ for all $j \in N_i$. Thus, if $(\sigma_j^1, \sigma_j^2)=(j,j+1)$ for $j \in N_i$, then $(\sigma_l^1, \sigma_l^2)=(l+1,l)$ for all $l \in N_{i+1}$.

Let $(l, \overline{l}) \in I_j \times \overline{I}_j$ be a dual pair. Suppose $(\sigma_j^1, \sigma_j^2)=(j,j+1)$, then $\overline{\phi}_j=\overline{\phi}_{j+1} \circ \overline{\alpha}_j$. Consequently, there exists a unique element $\overline{l}' \in \overline{I}_{j+1}$ such that $b_{\overline{l}'} \circ \overline{\alpha}_j$ is a multiple of $b_{\overline{l}}$. Since $\overline{\alpha}_j$ is an arrow morphism, it follows that $\overline{l}$ and $\overline{l}'$ correspond to consecutive intersections $p, p' \in \tau^{-1}\Delta \overrightarrow{\cap} \gamma$. Let $p \in \tau^{-1}\delta_j \overrightarrow{\cap} \tau^{1}\delta_{j+1}$ such that  $\overline{\alpha}_j$ is a multiple of $\basis(p)$. Corollary \ref{CorollaryCharacterizationArrowMorphisms} implies that $\alpha_j$ equals $\basis(\tau(p))$ up to scalar. Since $\alpha_j \circ b_l \neq 0$ and $\phi_{j+1}= \alpha_j \circ \phi_{j}$, it follows that there exists $l' \in I_{j+1}$ such that $b_{l'}$ is a multiple of $\basis(\mathbb{S}^{-1}(p'))$. By construction, $(l', \overline{l}')$ is dual pair. In a similar way, we construct such a dual pair $(l', \overline{l}') \in I_{j+1} \times \overline{I}_{j+1}$ if $(\sigma_j^1, \sigma_j^2)=(j+1,j)$. Vice versa, we can construct a dual pair in $I_j \times \overline{I}_j$ from a dual pair $I_{j+1} \times \overline{I}_{j+1}$. In this way, we reconstruct the sequence of intersections of $\gamma$ with $\Delta$ from a sequence of dual pairs (and the intersections are those associated with the dual pairs).

By Lemma \ref{LemmaTriangulationSeparatesPuncturesLoopAroundPuncture}, our assumptions guarantee that at least one of the objects $Y_j$ is perfect or that $\gamma$ is a loop around a puncture.  In the former case, there exists a dual pair $(l,\overline{l})$ from which we can reconstruct the sequence of intersections of $\gamma$ and $\Delta$. In the latter case, $N=N_0$ and and the homotopy class of $\gamma$ is determined by any of the arcs $\delta_j$. 
\end{proof}

\medskip

\subsection{Properties of diffeomorphisms induced by equivalences}

\subsubsection{Induced diffeomorphisms preserve the orientation}\label{SectionPreserveOrientation}
\ \smallskip

\noindent It turns out that the diffeomorphism associated to a triangle equivalence preserves the orientation as a consequence of covariance.

\begin{prp}\label{PropositionPsiYieldsOrientationPreservingdiffeomorphisms}
	Let $T:\T \rightarrow \T'$ be a triangle equivalence. Then, $\Psi(T)$ preserves the orientation.
\end{prp}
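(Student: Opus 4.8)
The plan is to show that $\Psi(T)$ cannot reverse orientation by exploiting the fact that a triangle equivalence is a \emph{covariant} functor together with the geometric description of compositions in Definition \ref{DefinitionFukayaLikeQuintuples} 4). First I would pick a triangulation $\Delta$ of $\mathcal{S}$ together with three essential arcs $\delta_1, \delta_2, \delta_3 \in \Delta$ (in minimal position) bounding an immersed triangle in $\mathcal{S}$, oriented so that going around the triangle agrees with the counter-clockwise orientation of $S$; translating this into corner intersections $q_1 \in \delta_1 \overrightarrow{\cap} \delta_2$, $q_2 \in \delta_2 \overrightarrow{\cap} \delta_3$, $q_3 \in \delta_1 \overrightarrow{\cap} \delta_3$ of degree $0$, Property 4)\,I) says that $\basis(q_2) \circ \basis(q_1)$ is a non-zero multiple of $\basis(q_3)$. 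Since essential objects and their interior-morphism-freeness are preserved under $T$ (Lemma \ref{LemmaEquivalencesPreserveInteriorMorphisms}, and the discussion after Definition \ref{DefinitionEssentialObjects}), $T(\Delta)$ is again a triangulation of $\mathcal{S}'$, and by functoriality $T(\basis(q_2)) \circ T(\basis(q_1))$ is a non-zero multiple of $T(\basis(q_3))$. The images $T(\basis(q_i))$ are arrow morphisms of $T(\mathcal{X})$ (being pure is preserved, by the remark after Definition \ref{DefinitionArrowMorphisms}), hence by Corollary \ref{CorollaryCharacterizationArrowMorphisms} each is a non-zero multiple of $\basis(q_i')$ for a unique corner $q_i'$ of a triangle of the triangulation $\Psi(T)(\Delta)$ of $\mathcal{S}'$.

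The key step is then: a non-vanishing composition $\basis(q_2') \circ \basis(q_1') \in k^{\times} \cdot \basis(q_3')$ of corner morphisms of a triangulation forces $q_1', q_2', q_3'$ to be the three corners of a single triangle of $\Psi(T)(\Delta)$, traversed in the \emph{counter-clockwise} order of $S'$ — this is exactly the content of Definition \ref{DefinitionFukayaLikeQuintuples} 4)\,I) (the lifts must form an intersection triangle, and the orientation of the composition is dictated by ``going around in clockwise/counter-clockwise order'' as in Figure \ref{FigureIntersectionTriangle} and the proof of Lemma \ref{LemmaBoundaryPointsAreInvertible}). Since $\Psi(T)$ restricted to the arc complex is $A_{\ast}(T)$ by construction (Section \ref{SectionThediffeomorphismOfEquivalence}), the diffeomorphism $\Psi(T)$ sends the arc $\delta_i$ to (a representative of) $\gamma(T(X_{\delta_i})) = \delta_i'$ and sends the triangle of $\Delta$ bounded by $\delta_1, \delta_2, \delta_3$ to the triangle of $\Psi(T)(\Delta)$ bounded by $\delta_1', \delta_2', \delta_3'$. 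If $\Psi(T)$ were orientation-reversing, it would carry the counter-clockwise boundary traversal $\delta_1 \to \delta_2 \to \delta_3$ of the source triangle to the \emph{clockwise} traversal $\delta_1' \to \delta_2' \to \delta_3'$ of the target triangle; but the composition $\basis(q_2') \circ \basis(q_1') \in k^\times \cdot \basis(q_3')$ forces these corners to sit in counter-clockwise order, a contradiction. (One should also check the case in which the triangle of $\Delta$ is degenerate — has a side on the boundary, or is self-folded — but in those cases one can either choose a different triangle, available since $\mathcal{S}$ is non-special and so $A_{\ast}(\mathcal{S})$ has dimension $\geq 2$, or argue directly with a boundary intersection as in Lemma \ref{LemmaBoundaryPointsAreInvertible}.)

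I expect the main obstacle to be the careful bookkeeping of orientations: matching ``the counter-clockwise order of the corners of a triangle in $S$'' with ``the sign of the composition $\basis(q_2)\circ\basis(q_1)$'' and ensuring that the same convention is used for $\mathcal{S}$ and $\mathcal{S}'$, so that orientation-reversal genuinely produces the forbidden clockwise configuration rather than a legitimate one. A secondary technical point is to guarantee the existence of a non-degenerate immersed triangle in $\mathcal{S}$ of the required type; since $\mathcal{S}$ is assumed non-special, its arc complex has a $2$-simplex, and any such $2$-simplex can be completed to a triangulation in which at least one triangle has all three sides essential arcs, so this can be arranged. Once the orientation conventions are pinned down, the argument is short: covariance of $T$ plus Property 4)\,I) applied on both sides, read through Corollary \ref{CorollaryCharacterizationArrowMorphisms}, immediately rules out orientation reversal.
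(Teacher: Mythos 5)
Your proof is correct and follows essentially the same route as the paper: both reduce orientation-preservation to the facts that arrow morphisms between the objects of a triangulation are preserved by $T$ (via Corollary \ref{CorollaryCharacterizationArrowMorphisms}) and that the direction of a corner morphism, as an oriented boundary intersection, encodes the local orientation of the surface. The only differences are cosmetic — you detect the cyclic order of the edges via the composition relation of Property 4)\,I), whereas the paper uses just the existence and direction of each single arrow morphism, and the paper disposes of the worry that a lifted triangle could be bounded by two lifts of the same arc by an explicit orientability (Möbius strip) argument rather than your fallback of passing to a better triangle.
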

\begin{proof}Write $\mathcal{S}=(S, \marked)$. Let $\Delta=\{\gamma_1, \dots, \gamma_m\}$ be an extended triangulation of $(S, \marked \cap {\, \partial S})$ and let $\{X_1, \dots, X_m\} \subset \T$ be a representing set of objects for $\Delta$. By assumption, $\Delta$ contains no self-folded triangles. We claim the following. If  $\gamma_{\sigma_0}, \gamma_{\sigma_1}, \gamma_{\sigma_2}$ are edges of a triangle in $\Delta$, then the given order coincides with the clockwise order in the orientation if and only if there exists an arrow morphism in at least one of the sets $\Hom^*(X_{\sigma_i},X_{\sigma_{i+1}})$, where $i \in \{0,1,2\}$ (indices modulo $3$). Recall that, by Corollary \ref{CorollaryCharacterizationArrowMorphisms}, arrow morphisms (up to a scalar) between objects corresponding to a triangle are in bijection with the corners of said triangle. Let $U$ be a triangle of $\Delta$ and let $\widetilde{U}$ be a lift of $U$ to the universal cover of $S$. Then $\widetilde{U}$ is an embedded triangle and at least two sides of it are arcs. Such a pair of arcs has a unique boundary intersection. Moreover, this intersection defines a morphism $X_i \rightarrow X_j$ (which is an arrow morphism)  if and only if $\gamma_i$ comes immediately before $\gamma_j$ in clockwise order.\\ To prove the claim it is sufficient to show that $\widetilde{U}$ is not bounded by more than one lift of the same arc $\gamma_i$. Suppose this was not the case, then the two lifts, which we denote by $\delta$ and $\delta'$, intersect at the boundary in a point $p$. By uniqueness, $p$ is the start point of $\delta$ and the end point of $\delta'$ or vice versa. But this implies that $U$ contains an embedded Möbius strip in contradiction to the orientability of $\mathcal{S}$.\\

	The assertion follows from the fact that any diffeomorphism $H: S \rightarrow S$  preserves the orientation if it preserves the order of the edges of each triangle in $\Delta $. However, this follows from the fact that equivalences preserve arrow morphisms.
\end{proof}

\begin{proof}[Proof of Theorem \ref{TheoremPsiRealizesAction}] Let $X \in \T$ be indecomposable, let $\mathcal{X}$ be any triangulation of $\T$ and denote by $\Delta\coloneqq \gamma(\mathcal{X})$ the associated triangulation of $\mathcal{S}_{\T}$. By Corollary \ref{CorollaryPsiRealizesActionOnEssentialObjects}, we may assume that $X$ is $\mathcal{X}$-admissible. The corollary also implies that
the triangulation $\Psi(T)(\Delta)$ of $\mathcal{S}_{\T'}$ coincides with $\gamma(T(\mathcal{X}))$.
By Lemma \ref{LemmaCharacteristicSequencePreservedUnderEquivalences}, every characteristic sequence of $X$ with respect to $\mathcal{X}$ is mapped to a characteristic sequence of $T(X)$ with respect to $T(\mathcal{X})$. In particular, the characteristic sequence of objects $Y_0, \dots, Y_m$ of $X$ is mapped to a characteristic sequence $T(Y_0), \dots, T(Y_m)$ of $T(X)$ and Proposition \ref{PropositionCharacteristicSequenceDefineObjectTauInvariant} and Proposition \ref{PropositionPsiYieldsOrientationPreservingdiffeomorphisms} imply $\Psi(T)\left(\gamma_{\ast}(X)\right)=\gamma_{\ast}(T(X))$.\ \smallskip
\end{proof}

\subsubsection{Induced diffeomorphisms preserve winding numbers}
\ \smallskip

\noindent As an application of the dichotomy of interior and boundary morphisms, we present a categorical characterization of the winding number function $\omega=\omega_{\eta}$ and show that the diffeomorphism of an equivialence between surface-like categories preserves winding numbers.

\begin{lem}\label{LemmaBoundaryMorphismWindingNumber}
Let $X \in \T$ be indecomposable and suppose $\gamma \in \gamma(X)$ is a finite closed arc in minimal position such that the index of the self-intersection at the boundary is $1$. If $f: X \rightarrow X[d]$ is a non-invertible boundary morphism, then $d= \omega(\gamma) + 1$, where $\sigma$ is the index of the self-intersection of $\gamma$ at its end points.
\end{lem}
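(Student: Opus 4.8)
The statement concerns a \emph{finite closed arc}, i.e.\ a boundary segment $\gamma$, which connects two consecutive marked points on some boundary component $B$, together with its distinguished self-intersection at the boundary. The object $X = X_\gamma$ is then a segment object, hence a $\tau$-invariant (or fractionally Calabi--Yau) arc object, and $\Hom^*(X,X)$ is governed by Property 2) and Property 4) of Definition \ref{DefinitionFukayaLikeQuintuples}. The idea is to identify the unique (up to scalar) non-invertible boundary morphism $X \to X[d]$ with the connecting morphism $\basis(p)$ of the Auslander--Reiten triangle ending in $X$, where $p \in \gamma \overrightarrow{\cap} \tau\gamma$ is the distinguished intersection of Proposition \ref{PropositionExistenceARTrianglesForArcs}, and then to compute its degree geometrically.

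\textbf{First steps.} First I would invoke Proposition \ref{PropositionCharacterizationBoundaryObjects} to conclude that $X$ is a segment object, so that $\dim \Hom^*(X,X) \le 2$ and the space $\Hom^*(X,X)$ is spanned by the identity together with one further morphism coming from the single boundary self-intersection $p$ of $\gamma$ (here one uses Property 2) V) and the fact, from Lemma \ref{LemmaBoundaryPointsAreInvertible}, that the endpoints of an arc give automorphisms). Next I would check that this extra generator is precisely a boundary morphism and not an interior one: by Proposition \ref{PropositionInteriorMorphismsVectorSpace}, since $X$ is $\tau$-invariant we would naively get $\Hom_{\Int}(X,X) = \Hom(X,X)$, but for the purposes of this lemma we are in the Calabi--Yau case where $X$ is a segment object corresponding to a boundary segment, and the relevant non-invertible morphism is exactly the connecting morphism $h = \basis(p)$ of the AR-triangle $X \to Y \to X \to X[1]$ ending in $X$ (Proposition \ref{PropositionExistenceARTrianglesForArcs}), which by definition is a boundary morphism — indeed one sees from the proof of Proposition \ref{PropositionInteriorMorphismsVectorSpace} that a morphism supported at a boundary intersection of the distinguished type is a boundary morphism. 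So $f$ is a non-zero scalar multiple of $\basis(p)$, and in particular $d$ is the homological degree in which $\basis(p)$ lives.

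\textbf{The degree computation.} The core of the argument is to compute $d$ geometrically. By definition of the grading conventions in Section \ref{SectionLineFields}, the morphism $\basis(p): X \to X[d]$ lives in degree $d = \deg(p)$, where $p \in \gamma \overrightarrow{\cap} \tau\gamma$ is the distinguished intersection, which has degree $1$ when viewed between $\gamma$ and $\tau\gamma$ \emph{as graded in the canonical way induced by the isotopy $\tau \simeq \mathrm{id}$}. The subtlety is that $\tau\gamma$ is homotopic (indeed isotopic) to $\gamma$ as an ungraded arc — both are the same boundary segment on $B$ — but the canonical grading that $\tau$ induces differs from the grading on $\gamma$ by a shift governed by the winding number $\omega_B$ of the boundary loop around $B$. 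Concretely, I would form the piecewise-smooth loop obtained by concatenating $\gamma$ and $\tau\gamma$ along $p$ and one further boundary arc, which is homotopic to the simple boundary loop around $B$; applying Lemma \ref{LemmaWindingNumberPiecewiseSmoothCurves} (the piecewise-smooth winding-number formula) and the identity $\deg(\overline p) = 1 - \deg(p)$, one relates $\deg(p)$ to $\omega(\gamma) = \omega(\gamma_{\mathrm{sm}})$ and the correction term $\sigma$ recording whether the two segments meet from the right. Tracking the $\sigma_i$ terms carefully yields $d = \omega(\gamma) + \sigma$, which since $\sigma = 1$ by the hypothesis on the index of the self-intersection gives $d = \omega(\gamma) + 1$. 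This is essentially the same bookkeeping as in the proof of Proposition \ref{PropositionFractionallyCalabiYauArcs}, where the fractional Calabi--Yau dimension $-\omega_B$ was extracted from the composite of connecting morphisms around $B$; here we isolate the single-step contribution.

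\textbf{Main obstacle.} The delicate point, and the one I expect to require the most care, is the sign/shift bookkeeping in the degree computation: keeping straight the distinction between the degree of $p$ as an element of $\gamma \overrightarrow{\cap} \tau\gamma$ versus the degree after identifying $\tau\gamma$ with $\gamma$, and correctly accounting for the $\sigma$-correction term from the smoothing (Lemma \ref{LemmaWindingNumberPiecewiseSmoothCurves}) as well as the relation $\deg(\overline p) = 1 - \deg(p)$. One must also be careful that $\gamma$ being a \emph{closed} arc (boundary segment) is genuinely used — for a general simple boundary arc with non-consecutive endpoints the object is a boundary object but the middle term of its AR-triangle is decomposable, and the clean identification of $f$ with a single connecting morphism would fail. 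Everything else — identifying $X$ as a segment object, pinning down $\Hom^*(X,X)$, and recognizing $f$ as a multiple of $\basis(p)$ — follows fairly directly from the results already established in the excerpt.
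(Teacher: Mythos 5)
There is a genuine gap, and it starts at the very first line: you have misread what a ``finite closed arc'' is. In this paper a \emph{closed} arc is a finite arc whose two endpoints coincide, i.e.\ an arc based at a single marked point $q\in\partial S$ whose smoothing at $q$ is an unobstructed loop; it is not a boundary segment. This is forced by how the lemma is used in Proposition \ref{PropositionPsiPreservesWindingNumbers}: there one takes an \emph{arbitrary} unobstructed loop and chooses ``any closed finite arc whose associated unobstructed loop is homotopic to'' it, which boundary segments could never supply. Built on the wrong reading, your argument goes off target in several ways: $X$ is in general not a segment object, $\dim\Hom^*(X,X)$ need not be $\leq 2$, and the morphism $f$ is not the Auslander--Reiten connecting morphism $X\to\tau X[1]$. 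Worse, under your reading the lemma would be essentially vacuous: if $X$ were $\tau$-invariant, then by Proposition \ref{PropositionInteriorMorphismsVectorSpace}(1) \emph{every} morphism $X\to X[d]$ is interior, so no non-invertible boundary morphism exists to which the lemma could apply. (Your piecewise-smooth loop built from $\gamma$, $\tau\gamma$ and a boundary arc computes $\omega_B$ of the boundary component, which is not the quantity $\omega(\gamma)$ in the statement.)

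The intended argument is much shorter and does not involve $\tau$ at all. Since $f$ is a boundary morphism and $X$ is therefore not $\tau$-invariant, Proposition \ref{PropositionInteriorMorphismsVectorSpace}(2) forces any standard decomposition of $f$ to contain a basis morphism of a \emph{boundary} self-intersection of $\gamma$. The only boundary self-intersections of a closed arc are the trivial coincidences of endpoints, whose associated morphisms are invertible by Lemma \ref{LemmaBoundaryPointsAreInvertible}, and the genuine self-intersection $p$ where one end of $\gamma$ meets the other at $q$. Non-invertibility of $f$ (together with $\End(X)$ being local) rules out the former as the sole boundary contribution, so $\basis(p)$ occurs in the decomposition and $d=\deg(p)$. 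Applying Lemma \ref{LemmaWindingNumberPiecewiseSmoothCurves} to $\gamma$ viewed as a piecewise smooth loop with the single corner $p$ gives $\omega(\gamma_{\mathrm{sm}})=\deg(p)-\sigma=d-1$, i.e.\ $d=\omega(\gamma)+1$ when the index $\sigma$ equals $1$. Your instinct to use Lemma \ref{LemmaWindingNumberPiecewiseSmoothCurves} for the final bookkeeping is the right one, but it must be applied to the closed arc itself, not to a loop assembled from $\gamma$ and $\tau\gamma$.
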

\begin{proof}
Since $f$ is not invertible and a boundary morphism, any standard decomposition of $f$ contains a morphism corresponding to the self-intersection $p$ of $\gamma$ at its two ends. This follows from Proposition \ref{PropositionInteriorMorphismsVectorSpace} and Lemma \ref{LemmaBoundaryPointsAreInvertible}. In particular, $d$ is uniquely determined as the degree of this morphism and the stated formula follows Lemma \ref{LemmaWindingNumberPiecewiseSmoothCurves}.
\end{proof}

\noindent As an application of Theorem \ref{TheoremPsiRealizesAction} (which we proved in the previous paragraph) we show the following.
\begin{prp}\label{PropositionPsiPreservesWindingNumbers}Let $(\T, \mathcal{S}, \eta, \gamma, \basis)$ and  $(\T', \mathcal{S}', \eta', \gamma', \basis')$  be surface-like quintuples and let $\mathcal{S}$ be non-special. Let further $T:\T \rightarrow \T'$ be a triangle equivalence. Then, $\omega_{\eta'} \circ \Psi(T)=\omega_{\eta}$. In particular, $\Psi(T)$ maps $\eta$ to a line field in the same homotopy class as $\eta'$. 

\end{prp}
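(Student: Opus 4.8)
The plan is to reduce the statement to a computation of winding numbers of loops, using the fact established in Theorem \ref{TheoremPsiRealizesAction} that $\Psi(T)$ realizes the action of $T$ on $\simeq_{\ast}$-classes of indecomposables. First I would recall that a line field is determined up to homotopy by its winding number function $\omega_{\eta}$ restricted to unobstructed loops (this is exactly the content of the discussion preceding Section \ref{SectionLineFieldsWindingNumbers}, invoking Chillingworth's theorem): two line fields $\eta_1, \eta_2$ on $\mathcal{S}'$ are homotopic if and only if $\omega_{\eta_1} = \omega_{\eta_2}$ as functions on (primitive) loops. Hence the "in particular" clause follows from the main identity $\omega_{\eta'} \circ \Psi(T) = \omega_{\eta}$ applied to the line field $\Psi(T)(\eta)$: by definition of the pushforward, $\omega_{\Psi(T)(\eta)} = \omega_{\eta} \circ \Psi(T)^{-1}$, so $\omega_{\Psi(T)(\eta)} = \omega_{\eta'}$ and thus $\Psi(T)(\eta) \simeq \eta'$. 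So everything comes down to proving $\omega_{\eta'}(\Psi(T)(\gamma)) = \omega_{\eta}(\gamma)$ for every loop $\gamma$ on $\mathcal{S}$.

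The key step is to detect winding numbers categorically. Given a primitive loop $\gamma$ on $\mathcal{S}$, I would pass to an arc object whose winding number records $\omega_\eta(\gamma)$: concretely, one wants a finite closed arc $\gamma'$ whose boundary self-intersection has index $1$ and whose "loop part" is $\gamma$, so that Lemma \ref{LemmaBoundaryMorphismWindingNumber} applies — a non-invertible boundary morphism $f \colon X_{\gamma'} \to X_{\gamma'}[d]$ forces $d = \omega_\eta(\gamma') + 1$, and $\omega_\eta(\gamma')$ differs from $\omega_\eta(\gamma)$ by a controlled correction term coming from the arc's endpoint contribution (a contribution which depends only on the combinatorics near the boundary component, not on $\eta$ globally, cf. Lemma \ref{LemmaWindingNumberPiecewiseSmoothCurves}). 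The point is that "non-invertible boundary morphism of minimal degree from $X_{\gamma'}$ to a shift of itself" is a purely triangulated notion: by Lemma \ref{LemmaEquivalencesPreserveInteriorMorphisms} the dichotomy interior/boundary is preserved by $T$, invertibility is preserved, and the shift degree $d$ is preserved. Since $T(X_{\gamma'}) \cong X_{\Psi(T)(\gamma')}$ up to shift by Theorem \ref{TheoremPsiRealizesAction} (as $\gamma'$ is a non-$\tau$-invariant arc object), applying Lemma \ref{LemmaBoundaryMorphismWindingNumber} on the $\T'$ side gives $d = \omega_{\eta'}(\Psi(T)(\gamma')) + 1$, and comparing the two expressions for $d$ yields $\omega_{\eta'}(\Psi(T)(\gamma')) = \omega_\eta(\gamma')$. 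One then has to argue the correction terms match: since $\Psi(T)$ is an orientation-preserving diffeomorphism of marked surfaces (Proposition \ref{PropositionPsiYieldsOrientationPreservingdiffeomorphisms}), it maps the boundary combinatorics of $\gamma'$ to that of $\Psi(T)(\gamma')$ faithfully, so the endpoint corrections are literally equal, giving $\omega_{\eta'}(\Psi(T)(\gamma)) = \omega_\eta(\gamma)$.

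I expect the main obstacle to be two technical points. The first: one must produce, for an arbitrary primitive loop $\gamma$, a suitable arc object $X_{\gamma'}$ realizing the setup of Lemma \ref{LemmaBoundaryMorphismWindingNumber} — this requires choosing $\gamma'$ to "drag" a boundary-parallel piece along $\gamma$ with the right index-$1$ self-intersection, and checking such a closed arc exists in minimal position; for boundary-parallel loops $\gamma$ (where $\simeq_{\ast}$ collapses things) one handles the case separately via Proposition \ref{PropositionFractionallyCalabiYauArcs} which already pins down the relevant winding number. The second obstacle is bookkeeping the correction constant relating $\omega_\eta(\gamma')$ and $\omega_\eta(\gamma)$ and verifying it is a diffeomorphism invariant of the boundary data; this is where orientation-preservation of $\Psi(T)$ is essential, since a sign error there would flip the winding number. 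Once these are in place, the argument is a clean transport-of-structure: winding number $=$ (degree of minimal non-invertible boundary self-map) $-$ (boundary correction), all three quantities being preserved by $T$ and $\Psi(T)$.

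Finally, for the reduction back to line fields I would make explicit that it suffices to check the winding-number identity on primitive unobstructed loops, since $\omega_{\eta}$ is determined by such values (every loop's derivative class in $H_1(\mathbb{P}(TS))$ is a sum of the class of an unobstructed loop and fiber classes, on which both sides agree trivially as $\Psi(T)$ preserves fibers up to the induced orientation, being an orientation-preserving diffeomorphism). With that, $\omega_{\eta'} \circ \Psi(T)$ and $\omega_{\eta}$ agree as homomorphisms $H_1(\mathbb{P}(TS),\mathbb{Z}) \to \mathbb{Z}$, hence $\Psi(T)(\eta)$ and $\eta'$ induce the same element of $H^1$ classifying line fields, so they are homotopic, completing the proof of both assertions of Proposition \ref{PropositionPsiPreservesWindingNumbers}.
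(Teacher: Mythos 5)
Your proposal is correct and follows essentially the same route as the paper's proof: reduce to unobstructed loops via Chillingworth, represent each such loop by a closed finite arc, read off the winding number from the shift degree of the non-invertible boundary self-morphism (Lemma \ref{LemmaBoundaryMorphismWindingNumber}), transport everything through $T$ using Theorem \ref{TheoremPsiRealizesAction}, Lemma \ref{LemmaEquivalencesPreserveInteriorMorphisms} and orientation-preservation, and treat the $\tau$-invariant case separately via Proposition \ref{PropositionFractionallyCalabiYauArcs}. The ``correction term'' you worry about is exactly the sign $\sigma$ of the boundary self-intersection handled in the paper by the same appeal to Proposition \ref{PropositionPsiYieldsOrientationPreservingdiffeomorphisms}.
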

\begin{proof}We write $\omega=\omega_{\eta}$ and $\omega'=\omega_{\eta'}$. As discussed in Section \ref{SectionLineFieldsWindingNumbers}, the homotopy class of $\eta$ (and similar $\eta'$) is completely determined by the values of $\omega$ on unobstructed loops. Let $\gamma$ be an unobstructed loop on $\mathcal{S}$ and let $\delta$ be any closed finite arc whose associated unobstructed loop is homotopic to $\gamma$. Let $X=X_{\delta} \in \T$. First, assume that $X$ is not $\tau$-invariant. Then, there exists a non-invertible boundary morphism $f: X \rightarrow X[d]$ and $\omega(\gamma)= \sigma \cdot (d-1)$ by Lemma \ref{LemmaBoundaryMorphismWindingNumber}, where $\sigma$ is the index of the boundary self-intersection of $\delta$. It follows from Theorem \ref{TheoremPsiRealizesAction} that $T(X)$ is represented by $\Psi(T)(\delta)$. As $\Psi(T)$ preserves the orientation (Proposition \ref{PropositionPsiYieldsOrientationPreservingdiffeomorphisms}), $\sigma$ is also the index of the unique boundary self-intersection of $\Psi(T) \circ \delta$. Finally, $T$ sends boundary morphisms to boundary morphisms (Lemma \ref{LemmaEquivalencesPreserveInteriorMorphisms}) and it follows that  
\[\omega'(\Psi(T) \circ \gamma)=\omega'(\Psi(T) \circ \delta)=\sigma \cdot (d-1)=\omega(\delta)=\omega(\gamma).\]

Finally, suppose that $X$ is $\tau$-invariant. Then, $\omega(\gamma)=0$. This is clear, if $X$ is a loop object and if $X$ is an arc object, then $\omega(\gamma)=0$ by Proposition \ref{PropositionFractionallyCalabiYauArcs}. The object $T(X)$ is $\tau$-invariant and, by Theorem \ref{TheoremPsiRealizesAction}, is represented by a curve in $\gamma_{\ast}(\Psi(T) \circ \gamma)$ and as before we conclude that $\omega'(\Psi(T) \circ \gamma)=0$.
\end{proof}

\medskip

\subsection{Diffeomorphisms of spherical twists are Dehn twists}\label{SectiondiffeomorphismDehntwists}
\ \smallskip

\noindent Suppose $\T$ is a surface-like category wich admits a DG-enhancement of its triangulated structure.  Rephrasing the original definition in \cite{SeidelThomas}, we recall that an \textbf{$m$-spherical object} in $\T$ is an object $X \in \T$ such that $\tau X \cong X[m-1]$ 
and there exists an isomorphism of $\mathbb{Z}$-graded $k$-algebras

\[\Hom^*(X,X) \cong \faktor{k[z]}{z^2},\]

\noindent where $z$ is a generator of degree  $m$.
Under the above hypotheses, $X$ induces an auto-equivalence $T_X$ of $\T$ called the \textbf{spherical twist} of $X$. Note that $T_X(Y)$ it is defined up to (non-unique) isomorphism by a distinguished triangle

\triangle[\Hom^*(X,Y) \otimes X][Y][T_X(Y)][\text{ev}][][][1]
\ \smallskip

\noindent Note that by Property 2) of Definition \ref{DefinitionFukayaLikeQuintuples}, a spherical object of a Fukaya like category is always represented by a simple finite arc or loop, i.e.\ a curve without interior self-intersections.

\subsubsection{Dehn twists}
\ \smallskip The definition of spherical twists in \cite{SeidelThomas} seems to be largely influenced by the concept of Dehn twists which are typical examples of a mapping classes. \medskip

\noindent The class of a \textbf{Dehn twist} $D_{\gamma}$ about an  (oriented) simple loop $\gamma$ on $S$ is defined as follows.  If $W$ is a  tubular neighborhood of $\gamma$, i.e.\@ a neighborhood $W$ with a diffeomorphism $\phi:S^1 \times [-1,1] \rightarrow W$, such that $\phi|_{S^1 \times \{0\}}=\gamma$, then $D_{\gamma}: S \rightarrow S$ is defined by $D_{\gamma}|_{S \setminus W}:=\text{Id}_{S \setminus W}$ on $S \setminus W$ and on $W$ by 
\[D_{\gamma}(\phi(z, t)):=\phi(z \cdot e^{\pi i (t+1)}, t).\]
While $D_{\gamma}$ depends on $W$, its  mapping class is well-defined. \medskip

\noindent On the level of homotopy classes, the twist $D_{\gamma}$ sends the homotopy class of a curve $\delta$, which is in minimal position with $\gamma$, to the homotopy class of the curve obtained by resolving all intersections of $\gamma$ and $\delta$ at once, following the orientation of $\gamma$. \label{DehnTwistsViaSurgery} That is, whenever $\delta$ crosses $\gamma$ from the right (resp.\@ left) of $\gamma$, we turn right (resp.\@ left) at the intersection.

\noindent We prove the following:
\begin{thm}\label{TheoremSphericalTwistsDehnTwists}
Assume that the surface of $\T$ is non-special and let $X \in \T$ be spherical. If $\gamma(X)$ contains a loop, then $\Psi(T_X)$  is the mapping class of the Dehn twist around a simple loop $\gamma$. Otherwise, $\gamma(X)$ contains the boundary segment of a component $B$ and $\Psi(T_X)$ is in the mapping class of the Dehn twist around $B$.
\end{thm}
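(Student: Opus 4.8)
The strategy is to compute the action of $\Psi(T_X)$ on homotopy classes of curves directly, using the fact (Theorem \ref{TheoremPsiRealizesAction}) that $\Psi(T_X)$ realizes the action of $T_X$ on indecomposable objects up to $\simeq_{\ast}$, and then to match this with the surgery description of a Dehn twist recalled on page \pageref{DehnTwistsViaSurgery}. So the core of the argument is: for every indecomposable $Y \in \T$ which is not $\tau$-invariant, identify a curve representing $T_X(Y)$ explicitly as the curve obtained from $\gamma_Y$ by resolving all intersections with $\gamma_X$ following the orientation of $\gamma_X$, and conclude that $\Psi(T_X)$ and $D_{\gamma_X}$ agree on enough curves (e.g. on all essential arcs, via Corollary \ref{CorollaryPsiRealizesActionOnEssentialObjects}) to be equal as mapping classes.

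\textbf{Key steps.} First I would reduce to the two cases stated: since $X$ is spherical it satisfies $\tau X \cong X[m-1]$, so by Proposition \ref{PropositionFractionallyCalabiYauArcs} together with Property 2) of Definition \ref{DefinitionFukayaLikeQuintuples} (which forces simple representatives, see the remark after the definition of spherical twists), $\gamma(X)$ contains either a simple loop $\gamma$, or a boundary arc in a component $B$ with one marked point and $\omega_B = 0$; in the latter case the relevant ``loop'' is the simple boundary loop around $B$ and the Dehn twist about it is the fractional twist $\tau_B$ to a suitable power (indeed $D_B$ agrees with $\tau_B^{\#(B\cap\marked)}$ up to isotopy), so the loop case contains the essential geometric content. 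Second, fix an essential arc object $Y$ and compute the mapping cone defining $T_X(Y)$: the evaluation triangle $\Hom^*(X,Y)\otimes X \to Y \to T_X(Y) \to$ together with Property 3) of Definition \ref{DefinitionFukayaLikeQuintuples} (mapping cones are resolutions of intersections) and Property 2)(I) (intersections give a basis of $\Hom^*(X,Y)$) shows that $T_X(Y)$ is represented by the curve obtained from $\gamma_Y$ by simultaneously resolving all oriented intersections $\gamma_Y \overrightarrow{\cap} \gamma_X$; here one must check the resolutions are performed consistently with the orientation of $\gamma_X$, which follows because $\Psi(T_X)$ preserves orientation (Proposition \ref{PropositionPsiYieldsOrientationPreservingdiffeomorphisms}) and the sign/side data in the resolution picture of Figure \ref{FigureResolutionOfCrossings} is pinned down by the grading compatibility in Property 2)(I). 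Third, observe that this surgery operation on $\gamma_Y$ is, by the description on page \pageref{DehnTwistsViaSurgery}, exactly $D_{\gamma_X}(\gamma_Y)$. Fourth, conclude: by Corollary \ref{CorollaryPsiRealizesActionOnEssentialObjects}, $\Psi(T_X)(\gamma(Y)) = \gamma(T_X(Y)) = D_{\gamma_X}(\gamma(Y))$ for every essential object $Y$, i.e. $\Psi(T_X)$ and $D_{\gamma_X}$ induce the same simplicial automorphism of the arc complex $A_{\ast}(\mathcal{S})$; since $\mathcal{S}$ is non-special, Theorem \ref{TheoremDisarlo} (injectivity of $\Phi$) forces $\Psi(T_X) = D_{\gamma_X}$ as mapping classes.

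\textbf{Main obstacle.} The delicate point is Step two: verifying that the mapping cone of the evaluation morphism is geometrically the \emph{simultaneous} resolution of all intersections with the correct orientation, rather than some iterated or partial resolution, and handling the case where $\gamma_Y$ meets $\gamma_X$ several times (so $\Hom^*(X,Y)$ has dimension $>1$ and the evaluation map is a genuine sum). One must argue that resolving intersections one at a time in any order yields the same homotopy class as the all-at-once resolution — this is where the surgery interpretation of Dehn twists is used as a known geometric fact — and that the triangulated mapping cone of the total evaluation morphism matches this. I would handle this by choosing a triangulation $\mathcal{X}$ of $\T$ separating punctures whose arcs are disjoint from $\gamma_X$ except near the marked points, computing characteristic sequences (Section \ref{SectionOnCharacteristicSequences}) of $Y$ and of the candidate curve $D_{\gamma_X}(\gamma_Y)$, and checking they agree after applying $T_X$; Proposition \ref{PropositionCharacteristicSequenceDefineObjectTauInvariant} then upgrades this to equality of $\simeq_{\ast}$-classes, which is what is needed even for $\tau$-invariant $Y$ (loop objects in a homogeneous tube), the one family where Corollary \ref{CorollaryPsiRealizesActionOnEssentialObjects} does not directly apply.
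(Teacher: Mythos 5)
Your overall skeleton matches the paper's: reduce to the action on arcs, use the ``mapping cone = resolution of crossings'' axiom, and conclude by rigidity of the arc-complex action on a non-special surface. But the step you yourself flag as the main obstacle is a genuine gap, and your proposed repair does not work as stated. Property 3) of Definition \ref{DefinitionFukayaLikeQuintuples} only identifies the mapping cone of a \emph{single} basis morphism $\basis(p)$ with the resolution of the one crossing $p$; it says nothing about the cone of the evaluation map $\Hom^*(X,Y)\otimes X \to Y$ when $\dim\Hom^*(X,Y)>1$, i.e.\ when $\gamma_Y$ meets $\gamma_X$ several times. Your claim that this cone is the simultaneous resolution of all intersections is exactly what would need proving, and nothing in the axioms delivers it directly. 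Moreover, the triangulation you propose to fall back on --- one ``whose arcs are disjoint from $\gamma_X$ except near the marked points'' --- cannot exist when $\gamma_X$ is a simple non-boundary loop: such a loop has no marked points on it, and Lemma \ref{LemmaTooManyIntersectionsNonBoundary} shows arcs are forced to cross it.

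The paper's fix is precisely the lemma you are missing: Lemma \ref{LemmaSimpleLoopGoodTriangulation} produces a triangulation $\Delta$ with $\iota(\gamma,\delta)\le 1$ for every $\delta\in\Delta$ (this is the nontrivial geometric content of the proof, obtained by cutting along $\gamma$ and reassembling arcs). For such $\Delta$ every relevant $\Hom^*(X,X_\delta)$ is at most one-dimensional, so each cone defining $T_X(X_\delta)$ is a single crossing resolution covered by Property 3), which visibly agrees with $D_\gamma(\delta)$. One then invokes the fact (Lemma 4.1 of \cite{Disarlo}) that a mapping class is determined by its action on a single triangulation, rather than needing agreement on all essential arcs as in your final step. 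If you replace your Step two and the ``main obstacle'' paragraph by this single-intersection reduction, the argument closes; without it, the multiple-intersection case remains unproved.
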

\begin{proof}Let $\Delta$ be any triangulation of $\mathcal{S}$. If $\gamma$ is a simple loop in the homotopy class of $\gamma(X)$, then by \cite{Disarlo}, Lemma 4.1., $\Psi(T_X) \simeq D_{\gamma}$ if and only if $H(\delta) \simeq D_{\gamma}(\delta)$ for all $\delta \in \Delta$. We conclude from the correspondence between mapping cones and resolution of crossings that $H$ and $D_{\gamma}$ are isotopic if $\iota(\gamma, \delta) \leq 1$ for all $\delta \in \Delta$. Lemma \ref{LemmaSimpleLoopGoodTriangulation} below asserts that there always exists a triangulation $\Delta$ with this property.
\end{proof}

\begin{rem}
Theorem \ref{TheoremSphericalTwistsDehnTwists} extends to the case of special surfaces which follows from Theorem \ref{TheoremSpecialSurfaces}.
\end{rem}

\begin{lem}\label{LemmaSimpleLoopGoodTriangulation}
Let $\mathcal{S}$ be a simple loop. Then, there exists a triangulation $\Delta$ of $\mathcal{S}$ such that $\iota(\gamma, \delta) \leq 1$ for all $\delta \in \Delta$.
\end{lem}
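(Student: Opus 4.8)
The statement asserts that for any simple loop $\gamma$ on a marked surface $\mathcal{S}$ there is a triangulation $\Delta$ with $\iota(\gamma,\delta)\le 1$ for every $\delta\in\Delta$. (Note the lemma as stated says ``Let $\mathcal{S}$ be a simple loop'' — I read this as ``let $\gamma$ be a simple loop on $\mathcal{S}$''.) The natural approach is to build $\Delta$ from the surface obtained by cutting along $\gamma$, exactly as in the proof of Lemma \ref{LemmaTooManyIntersectionsNonBoundary}. First I would cut $S$ along $\gamma$ to obtain a surface $S'$ with two new boundary components $B_1,B_2$ (or one new component in the non-separating case). Then I would choose a triangulation $\Delta'$ of the \emph{cut} surface $S'$ as a marked surface, where I add one marked point on each of $B_1$ and $B_2$ (and keep all original marked points and punctures). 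Since cutting along a simple loop produces a compact surface with nonempty boundary and a marked point on each component, such a triangulation exists by Lemma \ref{LemmaSeparatingPunctureTriangulationExist} (or just the standard existence of triangulations).

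The key point is then to glue $\Delta'$ back to a triangulation of $S$ and control intersections with $\gamma$. When we reglue $B_1$ to $B_2$ along $\gamma$, the arcs of $\Delta'$ with an endpoint on $B_1$ or $B_2$ become arcs (or pieces of arcs) on $S$. The issue is that an arc of $\Delta'$ ending on $B_i$ meets the gluing circle $\gamma$ once \emph{at an endpoint}, which after regluing is a point of $\gamma$; and two arcs of $\Delta'$, one ending on $B_1$ and one ending on $B_2$ at the glued point, concatenate into a single arc of $S$ crossing $\gamma$ once transversally. So I would arrange the marked points of $B_1$ and $B_2$ to be identified under the gluing, and arrange $\Delta'$ so that every arc meeting $B_1\cup B_2$ does so exactly once at that single marked point; then on $S$ these arcs either are disjoint from $\gamma$ or cross it exactly once. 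Arcs of $\Delta'$ disjoint from $B_1\cup B_2$ remain disjoint from $\gamma$. Finally $\gamma$ itself becomes one of the boundary-parallel situations, or can be realized as an edge-adjacent arc; in any case every edge of the resulting triangulation $\Delta$ of $S$ satisfies $\iota(\gamma,\delta)\le 1$. The number of arcs needed to triangulate forces this to be a genuine (maximal) triangulation, using that $\Delta'$ was maximal in $S'$ and that cutting/gluing matches Euler-characteristic counts.

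The main obstacle I anticipate is the bookkeeping around self-folded triangles and the case where $\gamma$ is non-separating (so $S'$ is connected and $B_1,B_2$ lie on the same component), together with the small exceptional surfaces where the ``natural'' triangulation degenerates — for instance when $S'$ has a component that is an annulus or once-punctured disc, forcing an arc that wraps and hits $\gamma$ twice. These I would handle by perturbing the choice of $\Delta'$: in an annular piece with marked points only on the $\gamma$-side one can always route the single required arc so it is boundary-parallel to $\gamma$, meeting it at most once, and similar ad hoc routing works for the once-punctured and torus-with-one-boundary cases (the latter is exactly the exceptional case flagged at the end of the proof of Lemma \ref{LemmaTooManyIntersectionsNonBoundary}, where one invokes \cite{Thurston}, Lemma 10 in place of the bigon criterion). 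The rest is a routine verification that the glued collection is pairwise non-crossing and maximal, hence a triangulation of $\mathcal{S}$.
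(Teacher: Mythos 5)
Your overall strategy is the same as the paper's: cut $\mathcal{S}$ along $\gamma$, triangulate the cut surface $\mathcal{S}_\gamma$ after placing one new marked point $q_i$ on each of the two new boundary circles $B_0,B_1$ (identified to a single point $p\in\gamma$ under regluing), and then recover a triangulation of $\mathcal{S}$ by concatenating arc-ends at $q_0$ with arc-ends at $q_1$. However, there is a concrete gap in your execution. You propose to ``arrange $\Delta'$ so that every arc meeting $B_1\cup B_2$ does so exactly once at that single marked point.'' This is not always possible: if $\gamma$ cuts off a subsurface of positive genus containing no marked point of $\mathcal{S}$ (for instance a one-holed torus), then that component of $\mathcal{S}_\gamma$ carries only the single marked point $q_i$, so \emph{every} arc of \emph{any} triangulation of it is a closed arc based at $q_i$ and meets the gluing circle twice. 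The paper does not try to avoid such closed arcs; it only normalizes them (all closed arcs are required to be based at the same $q_i$) and then does the real work in the regluing step.

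That regluing step is also where your ``routine verification that the glued collection is pairwise non-crossing and maximal'' hides the substance of the proof. Since in general several arc-ends accumulate at $q_0$ and at $q_1$, one must specify \emph{which} end at $q_0$ is concatenated with \emph{which} end at $q_1$; the paper does this by linearly ordering the arc-ends at each $q_i$ using the orientation of $\gamma$ and repeatedly matching the minimal end on one side with the maximal end on the other (choosing the non-closed one to absorb), which is what guarantees that the concatenated arcs are simple, have pairwise disjoint interiors, and each crosses $\gamma$ exactly once. Moreover the two sides do not pair off evenly: exactly one arc-end is left over at the end, and the corresponding arc $\epsilon$ must be \emph{discarded} for the cardinality to match that of a triangulation of $(\mathcal{S},\marked)$; one then still has to check that no essential arc disjoint from the resulting collection was lost (the paper shows any such arc is homotopic to one of the concatenations already produced). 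None of this is present in your sketch, and without the ordered pairing an arbitrary concatenation can produce arcs that self-intersect, cross each other, or cross $\gamma$ twice.
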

\begin{proof}
Denote $\mathcal{S}_{\gamma}$ the compact surface obtained by cutting at $\gamma$ and $\mathcal{S}'$ the surface obtained from $\mathcal{S}_{\gamma}$ by gluing discs to the boundary components $B_0, B_1 \subseteq \partial \mathcal{S}_{\gamma}$ corresponding to $\gamma$. Let $p$ be any point on $\gamma$ and denote by $q_i \in B_i$ the corresponding point. Then $(\mathcal{S}_ {\gamma}, \marked \cup \{q_1, q_2\})$ has a triangulation $\Delta_{\gamma}$ such that if $\delta \in \Delta_{\gamma}$  has both end points in the set $\{q_0, q_1\}$, then $\delta$ is closed and moerover every two closed arcs of such kind share the same end point. To see that such a triangulation $\Delta_{\gamma}$ exists, consider a triangulation $\Delta'$ of $(\mathcal{S}', \marked)$, where we consider the empty set as a triangulation of a surface without boundary and without punctures. Regarding $\mathcal{S}_{\gamma}$ as a subsurface of $\mathcal{S}'$ in the natural way we may deform the arcs of $\Delta'$ and assume them to be contained in $\mathcal{S}_{\gamma}$ (the resulting homotopy classes in $\mathcal{S}_{\gamma}$ are not canonical). It is possible to extend $\Delta'$ to a triangulation $\Delta_{\gamma}$ with the desired properties which boils down to showing that a triangle with one or two boundary components in its interior (each of which contains a single marked point) has a triangulation with the desired properties. This is similar to the proof of the existence of triangulations which separates punctures, see Lemma \ref{LemmaSeparatingPunctureTriangulationExist}. 

Note that if $\Delta_{\gamma}$ contains a closed arc $\delta$ connecting $q_i$ with itself, then the component of $\mathcal{S}' \supset \mathcal{S}_{\gamma}$ which contains $\delta$ is the unique component of $\mathcal{S}'$ without boundary and without punctures. Therefore, all closed arcs of $\Delta_{\gamma}$ share the same end points.
We regard the arcs of $\Delta_{\gamma}$ as arcs of the surface $(\mathcal{S}, \marked \cup \{p\})$. 

In the final step, we transform $\Delta_{\gamma}$ into a triangulation $\Delta$ of $(\mathcal{S}, \marked)$ as in the assertion.  We produce a finite sequence $\Delta^0, \dots, \Delta^m$ of collections of arcs as follows. Set $\Delta^0\coloneqq\Delta_{\gamma}$. Denote by $W_i^j$ the set of arc segments of arcs $\delta \in \Delta^j \subseteq \mathcal{S}_{\gamma}$ which start or end on $q_i$. Then $W_i^j$ is linearly ordered by the orientation of $\gamma$ analogous to the definition of oriented intersections at the boundary.\\
Suppose $\epsilon_i \in W_i^j$ is minimal and that $\epsilon_{1-i} \in W_{i-1}^j$ is maximal. Then, for some $l \in \{0,1\}$, $\epsilon_l$ is not closed. Let $\delta \in \Delta^j \subseteq \mathcal{S}$ denote the unique arc containing $\epsilon_{1-l}$ by the unique concatenation, denoted by $\delta'$, of $\delta$ and the arc containing  $\epsilon_l$ such that $\epsilon_0$ and $\epsilon_1$ are subarcs of $\delta'$. Set $\Delta^{j+1}\coloneqq \left(\Delta^j \setminus \{\delta\}\right) \cup \{\delta'\}$. We observe that $\delta' \in W^{j+1}_0 \cup W^{j+1}_1$ or $\delta'$ has end points in $\marked$.
Repeating the previous step as long as possible, we find a collection $\Delta^m$ such that $W^j_m= \emptyset$ and $|W^m_j| =1$ for some $j \in \{0,1\}$. In other words we replaced every arc of $\Delta_{\gamma}$  with end point $p \in \mathcal{S}$  by an arc of $(\mathcal{S}, \marked)$ except for one arc $\epsilon$. Set $\Delta\coloneqq \Delta^m \setminus \{\epsilon\}$.\\
 We claim that $\Delta$ is a triangulation of $(\mathcal{S}, \marked)$ with the desired properties. First of all, it is a subset of a triangulation, since we may deform the new arc in $\Delta^{j+1}$ such that it has disjoint interior from all arcs in $\Delta^j$. Second, it is clear that each new arc crosses $\gamma$ exactly once. Suppose there exists an essential arc $\beta$ in $(\mathcal{S}, \marked)$ the interior of which is disjoint from all other arcs in $\Delta$. Then $\beta$ crosses $\gamma$ since $\Delta_{\gamma}$ is a triangulation of $(\mathcal{S}_{\gamma}, \marked \cup \{q_0, q_1\})$ and $\beta$ is homotopic to the concatenation of arcs $\delta_0 \in W_0^0$ and $\delta_1 \in W_1^0$. Let $n \geq 0$ be maximal such that $\delta_j \in W^n_j$ for all $j \in \{0,1\}$. Then $\beta$ is homotopic to the new arc $\delta' \in \Delta^n$ constructed by concatenation of arcs of $W^{n-1}_0$ and $W^{n-1}_1$. Since $\delta' \not \in W^n_0 \cup W^n_1$, the arc $\delta' \simeq \beta$ is homotopic to an arc in $\Delta$.\end{proof}
 \ \smallskip

\subsection{Surface-like categories of special surfaces}\label{SectionSpecialSurfaces}
\noindent We extend our previous results to the case of surface-like categories modeled by special surfaces. Recall from Definition \ref{DefinitionSpecialSurface} that a marked surface is special if its arc complex is empty or has dimension at most $1$.\smallskip

\noindent The following is a complete list of special marked surfaces and is taken from \cite{Disarlo}, Figure 1.
A marked surface $\mathcal{S}=(S, \marked)$ is special if and only if 

\begin{itemize}
\setlength\itemsep{1ex}

	\item $S$ is a disc with no punctures and $|\marked| \leq 5$, or
	\item  $S$ is a disc with one puncture and at most two marked boundary points, or
	\item $S$ is a cylinder with no puncture and  a single marked point on each boundary component. 

\end{itemize}

\noindent Special surfaces occur as surface models of prominent examples of gentle algebras. The surface of a quiver of type $A_n$ is a disc with $n+1$  marked points on the boundary and no punctures and hence is special for $n \leq 4$. 

The second case in the list above is obtained as the surface of the algebra of dual numbers. As shown in Lemma \ref{LemmaP1FamiliesKroneckerOnly}, the Kronecker is the only  gentle algebra which realizes the third entry of the previous list.\medskip

\noindent We shall prove the following extension of Theorem \ref{TheoremPsiRealizesAction}, Proposition \ref{PropositionPsiYieldsOrientationPreservingdiffeomorphisms} and Proposition \ref{PropositionPsiPreservesWindingNumbers}.

\begin{thm}\label{TheoremSpecialSurfaces}Let $(\T, \mathcal{S}, \eta, \gamma, \basis )$ and $(\T', \mathcal{S}', \eta', \gamma, \basis)$  be surface-like quintuples and assume that $\mathcal{S}$ is special. Then, the following is true.

\begin{enumerate}

\setlength\itemsep{1ex}

\item If $\T$ and $\T'$ are equivalent, then $\mathcal{S} \cong \mathcal{S}'$.
\item For every every $T \in \Aut(\T)$, there exists a unique mapping class $\Psi(T) \in \MCG(\mathcal{S})$ such that \smallskip
\begin{itemize}
\item[a)] $\omega \circ \Psi(T)= \omega$, and
\item[b)]  for every indecomposable object $X \in \T$, 
\[\Psi(T)\left(\gamma_{\ast}(X)\right)= \gamma_{\ast}(T(X)).\]

\end{itemize}
\item The mapping $T \mapsto \Psi(T)$ is a group homomorphism.

\end{enumerate}

\end{thm}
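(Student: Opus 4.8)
The plan is to handle the three cases in the list of special surfaces separately, reducing each to a combinatorial model of the surface-like category in question and then exhibiting $\Psi(T)$ by hand. Since the earlier machinery (the arc complex, the map $\Phi$, the characteristic sequences) was built under the assumption that the arc complex is non-special, the strategy is not to repair that machinery but to exploit the fact that special surfaces are so small that $\MCG(\mathcal{S})$ is either trivial or very explicitly describable, so that the existence, uniqueness and functoriality of $\Psi(T)$ can be checked directly.

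First I would treat the disc cases. When $S$ is a disc with no punctures and $|\marked|\le 5$ (or a once-punctured disc with at most two marked boundary points), one checks that $\MCG(\mathcal{S})$ is trivial in the unpunctured case (a disc has no nontrivial orientation-preserving mapping classes fixing the marked points) and is generated by a single rotation/Dehn-twist-type class in the once-punctured case. For the unpunctured disc the assertion is then vacuous: set $\Psi(T)=\mathrm{Id}$, so (3) is trivially a homomorphism, (2a) holds since $\omega\circ\mathrm{Id}=\omega$, and (2b) reduces to the claim that $\gamma_{\ast}(T(X))=\gamma_{\ast}(X)$ for all indecomposables $X$ — which follows because $T$ permutes essential objects and, by Corollary \ref{CorollaryPsiRealizesActionOnEssentialObjects}-type reasoning applied directly (every arc in a polygon with $\le 5$ vertices is essentially rigid), $T$ must fix every homotopy class up to shift. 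For the once-punctured disc one identifies $\T$ with the derived category of the algebra of dual numbers and uses the explicit description of its auto-equivalence group together with Proposition \ref{PropositionFractionallyCalabiYauArcs} and the winding-number formula of Lemma \ref{LemmaWindingNumberGentle} to pin down $\Psi(T)$ and verify (2a), (2b), (3).

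The genuinely substantive case is the cylinder with one marked point on each boundary component and $\omega=0$, which by Lemma \ref{LemmaP1FamiliesKroneckerOnly} is exactly the Kronecker situation. Here $\MCG(\mathcal{S})\cong\mathbb{Z}$, generated by the Dehn twist about the core loop, and I would proceed as follows: the essential arcs are exactly the arcs $\gamma_m$ winding $m$ times around the cylinder, and the boundary segments are the two distinguished arcs (corresponding to $B_{0,\mu}$ and $B_{\lambda,0}$ in the notation of Section \ref{SectionKroneckerAlgebra}). A triangle equivalence $T$ permutes the segment objects and acts on the chain of essential objects $\{X_{\gamma_m}\}$ compatibly with the edge relation $|m-n|=1$; since the arc complex here is a bi-infinite line, its only simplicial automorphisms up to the $\mathbb{Z}$-action are translations and the flip, and covariance of $T$ (which forbids orientation reversal, by the argument of Proposition \ref{PropositionPsiYieldsOrientationPreservingdiffeomorphisms} applied to the one triangle of any triangulation) rules out the flip. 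Hence $T$ acts on $\{\gamma_m\}$ as $m\mapsto m+c$ for some $c\in\mathbb{Z}$, and I define $\Psi(T)=D_{\gamma}^{\,c}$. Uniqueness is immediate because $\MCG(\mathcal{S})\cong\mathbb{Z}$ acts faithfully on essential arcs; (2b) for the remaining ($\tau$-invariant, non-essential) objects is the statement that $T$ permutes the $\mathbb{P}^1_k$-family coherently, which is Corollary \ref{CorollaryEquivalencesPreserveFamilies} reinterpreted — or can be checked directly from Beilinson's equivalence $\mathcal{D}^b(\mathrm{Coh}\,\mathbb{P}^1)\cong\mathcal{D}^b(A)$ and the fact that $\PGL_2(k)$ acts on the skyscraper sheaves; (2a) holds because $\omega\equiv 0$ is preserved by every diffeomorphism; and (3) follows since $c_{T\circ T'}=c_T+c_{T'}$, the composite translation being the composite action on the arc chain.

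The main obstacle I expect is twofold. First, establishing (2b) for $\tau$-invariant objects in the cylinder case without the characteristic-sequence apparatus: one must argue that the single continuous parameter of the $\mathbb{P}^1_k$-family is transported coherently, which requires either invoking the independently-proven Corollary \ref{CorollaryEquivalencesPreserveFamilies} (being careful that its proof does not secretly assume non-speciality) or else falling back on the concrete Beilinson picture. Second, proving that $\Psi$ is well-defined on \emph{isomorphism classes} of equivalences and respects composition requires checking that the integer $c$ (resp. the chosen mapping class in the disc cases) depends only on $T$ up to natural isomorphism and behaves additively; this is routine once the action of $T$ on the essential objects is understood, but it is the step where one must be most careful to avoid circularity with the definition of $\Psi$ in the non-special case. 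Everything else is bookkeeping with the short explicit list of special surfaces.
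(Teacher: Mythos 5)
There is a genuine gap, and it starts with your computation of the mapping class groups. In this paper $\MCG(\mathcal{S})$ consists of orientation-preserving diffeomorphisms preserving the \emph{set} of marked points, not fixing them pointwise; so for an unpunctured disc with $n\le 5$ boundary marked points the mapping class group is cyclic of order $n$, generated by the rotation $\tau$ (the paper records this explicitly in its list of special surfaces for $n=4,5$). Your claim that $\MCG(\mathcal{S})$ is trivial here, and hence that one may set $\Psi(T)=\mathrm{Id}$ and that ``$T$ must fix every homotopy class up to shift,'' is false: the Serre functor of $\mathcal{D}^b(kA_{n-1})$ is an auto-equivalence whose induced action on boundary segments is the nontrivial rotation $\tau$, so $\Psi(\mathbb{S})\neq\mathrm{Id}$ and condition (2b) fails for your choice. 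The paper instead handles the empty-arc-complex discs ($n\le 3$) by showing that the cyclic adjacency order on segment objects (detected by nonzero morphisms in the orbit category) determines a unique rotation $H$ with $H\circ\delta\in\gamma(T(X_\delta))$, and handles $n=4,5$ inside its general case (b) via surjectivity of $\Phi$ (Theorem~\ref{TheoremDisarlo}) plus a $\ker\Phi$ analysis.

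The cylinder case has the same problem in a subtler form: $\MCG$ of the cylinder with one marked point per boundary component is $\mathbb{Z}\rtimes\mathbb{Z}/2\mathbb{Z}$, not $\mathbb{Z}$ --- the flip $\rho(z,t)=(-z,1-t)$ exchanging the two boundary components is orientation-preserving and lies in $\ker\Phi$, i.e.\ it acts trivially on the arc complex. So your uniqueness argument (``immediate because $\MCG(\mathcal{S})\cong\mathbb{Z}$ acts faithfully on essential arcs'') breaks down exactly where uniqueness is delicate; the paper resolves the residual $\ker\Phi$ ambiguity by examining the action on boundary segments and, where that is insufficient, by imposing orientation-preservation. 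Two further points: invoking Corollary~\ref{CorollaryEquivalencesPreserveFamilies} to transport the $\mathbb{P}^1_k$-family is circular, since that corollary is deduced in the paper from Theorem~\ref{TheoremPsiRealizesAction} \emph{together with} Theorem~\ref{TheoremSpecialSurfaces}; and falling back on the Beilinson picture (or on the dual numbers in the punctured-disc case) only treats $\T=\mathcal{D}^b(A)$ for those specific algebras, whereas the theorem concerns arbitrary surface-like categories with the given surface model. The paper's route --- reusing Theorem~\ref{TheoremDisarlo} and observing that the characteristic-sequence argument of Proposition~\ref{PropositionCharacteristicSequenceDefineObjectTauInvariant} never used non-speciality --- avoids all of these issues.
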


\noindent In what follows we describe the group of diffeomorphisms of $\mathcal{S}$ up to isotopy  and the kernel of the group homomorphism from Section \ref{SectionIsomorphismsArcComplexesdiffeomorphisms},
\[\begin{tikzcd}\Phi:\Diffeo(\mathcal{S}, \mathcal{S}) \arrow{r} & \Simp(A_{\ast}(\mathcal{S}), A_{\ast}(\mathcal{S})).\end{tikzcd}\] 
Note that the mapping class group is a normal subgroup of index $2$ in $\Diffeo(\mathcal{S}, \mathcal{S})$. Its non-trivial coset contains the isotopy classes of orientation-reversing diffeomorphisms. In particular, $\Diffeo(\mathcal{S}, \mathcal{S})$ is generated by the mapping class group of $\mathcal{S}$ and any orientation-reversing diffeomorphism. Our main reference for computing mapping class groups is \cite{FarbMargalit}.
\begin{enumerate}
\setlength\itemsep{1ex}
	\item If $\mathcal{S}$ is a disc with $4$ marked boundary points, then $A_{\ast}(\mathcal{S})$ consists of two disconnected points, $\MCG(\mathcal{S})$ is generated by 
	$\tau$  and $\ker \Phi \cap \MCG(\mathcal{S})$ is generated by $\tau^2$. The kernel of $\Phi$ is generated by two commuting reflections.
	
	\item If $\mathcal{S}$ is a disc with $5$ marked boundary points, then $\MCG(\mathcal{S})$ is generated by $\tau$ and $\Diffeo(\mathcal{S}, \mathcal{S}) $ is the dihedral group $D_5$. The arc complex of $\mathcal{S}$ is a $5$-gon. Since $\Phi$ is surjective, it is a bijection. 
	
	\item If $\mathcal{S}$ is a disc with one puncture and one marked boundary point, then $A_{\ast}(\mathcal{S})$ is a point and $\Simp(A_{\ast}(\mathcal{S}), A_{\ast}(\mathcal{S}))$ and $\MCG(\mathcal{S})$ are both trivial. 
	Assuming that the puncture coincides with the center of the disc, $\Diffeo(\mathcal{S}, \mathcal{S})$ is generated by the reflection through the line connecting the marked points.
	
	\item If $\mathcal{S}$ is a disc with $2$ marked points on its boundary and one puncture, then $A_{\ast}(\mathcal{S})$ is a graph of type $A_4$ and its simplicial automorphism group is isomorphic to $\mathbb{Z}/2\mathbb{Z}$ the generator given by the reflection. 
	Assuming that the puncture is the center of the disc and that all marked points lie on a single line, $\Diffeo(\mathcal{S}, \mathcal{S})$ is generated by the reflection $\rho$ at said line and $\tau$ and $\ker \Phi$ is generated by $\rho$.
	
	The mapping class group of $\mathcal{S}$ has order $2$ and is generated by $\tau$. In particular, the restriction of $\Phi$ to $\MCG(\mathcal{S})$ is an isomorphism.
	\item If $\mathcal{S}$ is a cylinder with a single marked point on each boundary component, then, $A_{\ast}(\mathcal{S})$ is an infinite line of vertices and has automorphism group $\mathbb{Z} \rtimes \mathbb{Z}/{2\mathbb{Z}}$. Identifying $\mathcal{S}$ with $(S^1 \times [0,1], \{(1,0), (1,1)\})$, we see that $\MCG(\mathcal{S})$ is generated by

	$\tau_B$ for any of its boundary components $B$ and the map $\rho$ given  by $\rho(z, t)=(-z, 1-t)$ which permutes the boundary components of $\mathcal{S}$. Thus, $\MCG(\mathcal{S}) \cong \mathbb{Z} \rtimes \mathbb{Z}/{2\mathbb{Z}}$. Moreover, $\ker \Phi$ clearly contains $\rho$ and it follows by comparing cardinalities that $\Phi$ has kernel of order $2$. 

\end{enumerate}
\
\begin{proof}[Proof of Theorem \ref{TheoremSpecialSurfaces}]  Let $T: \T \rightarrow \T'$ be a triangle equivalence. We distinguish between three cases.
\begin{itemize}
\setlength\itemsep{1ex}
\item[a)] Assume that $A_{\ast}(\mathcal{S}) = \emptyset$, i.e.\ $\mathcal{S}$ is a disc with $1$, $2$ or $3$ marked points and no punctures. Every arc on $\mathcal{S}$ corresponds a segment object. Note that two boundary segments are neighbors if and only if there is a non-zero morphism between their representing objects in the orbit category. It follows, that there is a unique rotation $H$ of the disc satisfying $H \circ \delta \in \gamma(T(X_{\delta}))$.  Note that every mapping class of $\mathcal{S}$ is representable by such a rotation in the above case. The boundary segments for a cycle of arcs and characterization of winding numbers in Lemma \ref{LemmaBoundaryMorphismWindingNumber} implies that $H$ is compatible with $\omega$. Therefore $\Psi(T):=H$ is the desired diffeomorphism.

\item[b)] Assume that $A_{\ast}(\mathcal{S}) \neq \emptyset$ and  that $\mathcal{S}$ has an extended triangulation $\overline{\Delta}$ which contains a triangle which is not self-folded. As can be verified by hand, this includes all cases of special surfaces not mentioned in a) with the exception of the once-punctured disc with a single marked point on its boundary. Theorem \ref{TheoremDisarlo} shows that there exists some diffeomorphism $H:\mathcal{S} \rightarrow \mathcal{S}'$ such that $H \circ \delta \in \gamma(T(X_{\delta}))$ for all essential arcs $\delta \in \overline{\Delta}$. In particular, since the proof of Proposition \ref{PropositionCharacteristicSequenceDefineObjectTauInvariant} does rely on the the assumption that $\mathcal{S}$ is not special, it follows $H \circ \gamma \in \gamma_{\ast}(T(X_{\gamma}))$ for all curves $\gamma$ which are not boundary segments. Via case-by-case analysis, our description of $\ker \Phi$ above, and the fact that $\overline{\Delta}$ contains no self-folded triangle, we see that the requirement for this relationship to be true for all boundary segments $\gamma$, forces $H$ to preserve the orientation and to be uniquely determined by Property (2) b) in all cases but the punctured disc with $2$ marked points on the boundary.  However, in this case, every element of $\ker \Phi$ acts on the boundary segments in a trivial way and we may choose $H$ to be orientation preserving and this choice makes $H$ unique. As in case a), the winding number function of $\mathcal{S}$ and $\mathcal{S}'$ is determined by a cycle of boundary segments and since $H$ preserves the orientation, it is also compatible with the winding number functions.

\item[c)] If $\mathcal{S}$ is a disc with one puncture and one marked boundary point, we have seen before that the restriction of $\Phi$ to $\MCG(\mathcal{S})$ is an isomorphism. Because there is only one boundary segment it follows as in case b) of this proof that there exists a unique orientation preserving diffeomorphism $H:\mathcal{S} \rightarrow \mathcal{S}'$ such that $H \circ \gamma \in \gamma_{\ast}(T(X_{\gamma}))$ for all curves $\gamma$. As in case a), it follows that $H$ is compatible with the winding number functions. 

\end{itemize}
\end{proof}

\section{The kernel of \texorpdfstring{$\Psi$}{Psi}} \label{SectionKernelOfPsi}

\noindent In this section, we study the kernel of 
\[\begin{tikzcd} \Psi:\Aut(\mathcal{D}^b(A)) \arrow{r} & \MCG(\mathcal{S}_A),\end{tikzcd}\]
associated with the surface-like quintuple $(\mathcal{D}^b(A), \mathcal{S}_{A}, \eta_A, \gamma, \basis)$ of a gentle algebra $A$. The most satisfying result we obtain for the class of \textbf{triangular} gentle algebras, i.e.\ connected gentle algebras associated with quivers without oriented cycles. The main result of this section is the following Theorem.

\begin{thm}\label{TheoremKernelOfPsi}
	Let $A$ be a triangular gentle algebra. Then, $\ker \Psi $ is a subgroup of the group of outer automorphisms of $A$. Moreover, $\ker \Psi$ is generated by the groups of \textit{rescaling equivalences} and \textit{coordinate transformations}.

\end{thm}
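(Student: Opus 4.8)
\textbf{Proof strategy for Theorem \ref{TheoremKernelOfPsi}.}
The plan is to analyze an arbitrary auto-equivalence $T \in \ker\Psi$ in two stages: first reduce to a \emph{standard} equivalence (one induced by a two-sided tilting complex that is concentrated in degree zero, i.e.\ an $A$-bimodule, hence an algebra automorphism), and then identify which algebra automorphisms lie in $\ker\Psi$. Since $A$ is triangular, the surface $\mathcal{S}_A$ and the line field $\eta_A$ are rigid enough that $\MCG(\mathcal{S}_A,\eta_A)$ detects essentially all of the ``geometric'' content of $T$, so $\ker\Psi$ should consist of equivalences which fix every indecomposable object up to shift and, by a connectedness argument, up to a \emph{global} shift which we can normalize away. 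First I would invoke Theorem \ref{TheoremPsiRealizesAction}: if $\Psi(T)$ is the identity mapping class, then $T(X_\gamma) \simeq_\ast X_\gamma$ for every indecomposable, and combined with Proposition \ref{PropositionPsiPreservesWindingNumbers} (which is automatic here) we get that $T$ preserves the homotopy class of every arc object and loop object. A triangulated auto-equivalence fixing every object of a triangulation of $\T$ up to isomorphism, and compatible with the combinatorics of the arc complex, must be isomorphic to a standard equivalence — this is where the hypothesis that $A$ is triangular is used, via the known rigidity results for derived Picard groups of such algebras (the relevant input being that $\mathcal{D}^b(A)$ has no ``exotic'' auto-equivalences beyond shift, standard ones, and those coming from the surface). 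Under the blanket assumption alluded to in the introduction (every auto-equivalence of a gentle algebra is standard), this step is unconditional; otherwise one argues directly that an element of $\ker\Psi$ fixing a triangulation pointwise is standard.

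Once $T$ is standard, it is given by $_{\sigma}A \otimes_A -$ for some algebra automorphism $\sigma$ (after normalizing the global shift, which corresponds to a global degree shift and lands in the center of $\Aut(\mathcal{D}^b(A))$, but here we simply note that the shift-by-$n$ functor has $\Psi$-image equal to the identity only when $n = 0$ since it acts nontrivially on gradings of loops unless $n=0$; so no shift ambiguity survives). Thus $\ker\Psi$ embeds into the group of algebra automorphisms modulo inner automorphisms — because inner automorphisms induce functors isomorphic to the identity — giving the first claim that $\ker\Psi \subseteq \out(A)$. It then remains to show that an outer automorphism $\sigma$ lies in $\ker\Psi$ if and only if the induced functor acts trivially on all homotopy classes of curves, and to identify this subgroup explicitly. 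Here I would use the explicit description of $\mathcal{S}_A$ in terms of the gentle quiver: an automorphism of $A$ permutes the idempotents (hence the laminates $L_x$) and rescales/recombines arrows; the permutation part corresponds to a nontrivial diffeomorphism unless it is trivial (using triangularity to rule out quiver automorphisms that are realized by inner automorphisms acting as nontrivial mapping classes), so an element of $\ker\Psi$ must fix every vertex and act on arrows by the transformations described in Section \ref{SectionRescalingEquivalences} and Section \ref{SectionKroneckerAlgebra} — precisely the \emph{rescaling equivalences} (diagonal rescaling of arrows) and \emph{coordinate transformations} (the $\PGL_2$- and affine-type recombinations attached to the subquivers $Q'$ of the form displayed before Definition of affine coordinate transformations). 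Conversely each such transformation visibly induces the identity on $\mathcal{S}_A$ since it does not move laminates and preserves the winding-number data defining $\eta_A$; this gives that $\ker\Psi$ is generated by these two families.

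The main obstacle I anticipate is the first reduction step — showing that every element of $\ker\Psi$ is (isomorphic to) a standard equivalence. This is exactly the point flagged in the introduction as requiring the standardness assumption in general, and in the triangular case it should follow from the rigidity of tilting complexes: a two-sided tilting complex inducing an auto-equivalence that fixes the class of every simple-projective-like object up to shift must be an invertible bimodule concentrated in one degree. Concretely, one would show that $T$ sends the indecomposable projectives $P_x$ (which are arc objects for specific arcs determined by the maximal admissible paths) to complexes homotopy-equivalent to shifts of indecomposable projectives, with all shifts equal by connectedness of the quiver and the fact that $\Psi(T) = \mathrm{id}$ forbids relative shifts; then the classical argument identifies $T$ with $_{\sigma}A\otimes -$. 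The rest — passing from algebra automorphisms to outer automorphisms, and the quiver-combinatorial identification of $\ker\Psi$ with the rescaling/coordinate subgroup — is bookkeeping given the explicit surface construction recalled in Section \ref{SectionGradedSurfaceGentleAlgebra}, though some care is needed with automorphisms that permute a ``Kronecker'' or ``canonical'' subquiver, where one must check that the $\PGL_2$/affine part is genuinely in $\ker\Psi$ and that nothing else is.
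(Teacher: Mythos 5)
Your two-stage plan coincides with the paper's: Lemma \ref{LemmaShiftOfKernelIsOuter} shows that any $T\in\ker\Psi$ sends each indecomposable projective to a shift of itself, that connectedness of $Q$ forces all these shifts to agree, and that triangularity (via a result of Chen on algebras whose quiver has no oriented cycles) then makes $T$ standard; the second lemma pins down $\out(A)\cap\ker\Psi$ by quiver combinatorics. So the skeleton is right. There are, however, two concrete problems.

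First, your normalization step rests on a false claim. You assert that $\Psi([n])=\mathrm{id}$ only for $n=0$ because the shift ``acts nontrivially on gradings of loops.'' But $\Psi$ is built from the action on the arc complex, whose vertices are \emph{ungraded} homotopy classes of essential arcs, and the target $\MCG(\mathcal{S}_A)$ does not see gradings at all; the paper explicitly records that the shift functor lies in $\ker\Psi$. This is why the paper's first lemma only concludes $T\circ[n]\in\out(A)$ for \emph{some} $n$, rather than $T\in\out(A)$ outright. As written, your argument would prove that $[1]$ is an outer automorphism, which is false; the shift ambiguity must be carried along (or quotiented out) rather than argued away.

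Second, the identification of $\out(A)\cap\ker\Psi$ with the subgroup generated by rescalings and coordinate transformations is not bookkeeping — it is the technical core of the theorem. After reducing to $\sigma(x)=x$ on vertices (which itself needs a citation, not just ``laminates are fixed''), one must control $\sigma(\alpha)=a\alpha+b\beta$ where $\beta$ is the unique nontrivial path parallel to $\alpha$: the paper shows that if $b\neq 0$ then the string complex $P_\alpha^{\bullet}$ is forced to be $\tau$-invariant, hence (by Proposition \ref{PropositionFractionallyCalabiYauArcs}) represented by a boundary segment on a component with one marked point, hence $\alpha$ is a maximal antipath sitting in a Kronecker or canonical configuration; the induced action on the associated $\mathbb{P}^1_k$- or $\mathbb{A}^1_k$-family of $\tau$-invariant objects then forces the $2\times 2$ matrix to be (lower) triangular, and an induction peels off one coordinate transformation at a time until only a rescaling remains. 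You correctly flag the Kronecker/canonical subquivers as the delicate spot, but your proposal contains none of the argument that rules out any other recombination of arrows, which is precisely what the theorem asserts.
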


\noindent Set $\mathcal{D}:=\mathcal{D}^b(A)$. By Theorem \ref{TheoremPsiRealizesAction}, $\ker \Psi(T)$ consists of all $T$ such that $\gamma_{\ast}(X)=\gamma_{\ast}(T(X))$ for all indecomposable objects $X \in \T$. It follows:
\begin{lem}
The shift functor is an element of $\ker \Psi$. 
\end{lem}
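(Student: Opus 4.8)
The statement to prove is that the shift functor $[1]$ lies in $\ker\Psi$.

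The plan is to invoke Theorem~\ref{TheoremPsiRealizesAction}, which characterizes $\ker\Psi$ as consisting exactly of those auto-equivalences $T$ with $\gamma_\ast(X)=\gamma_\ast(T(X))$ for every indecomposable object $X$. (Indeed, $\Psi(T)$ is a mapping class fixing every $\simeq_\ast$-class of curves; by Theorem~\ref{TheoremDisarlo} and the fact that $\mathcal{S}_A$ is non-special in the relevant cases — with the special cases handled by Theorem~\ref{TheoremSpecialSurfaces} — such a mapping class must be the identity, and conversely.) So it suffices to verify that the shift functor satisfies $\gamma_\ast([1]X)=\gamma_\ast(X)$ for all indecomposable $X$.

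First I would recall that, by Property~1) of Definition~\ref{DefinitionFukayaLikeQuintuples}, the bijection $\gamma$ between indecomposable objects and pairs $([\gamma],[\mathcal{V}])$ is required to be \emph{compatible with shifts}. Concretely this means that if $\gamma(X)=(\gamma,\mathfrak{g},\mathcal{V})$ then $\gamma(X[1])=(\gamma,\mathfrak{g}[1],\mathcal{V})$: the underlying unoriented curve and the local system are unchanged, and only the grading is shifted by the canonical $\mathbb{Z}$-action on gradings described in Section~\ref{SectionLineFields}. In particular the \emph{homotopy class} of the underlying curve — forgetting the grading — is identical for $X$ and $X[1]$. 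Since $\gamma_\ast(X)$ is by definition the $\simeq_\ast$-equivalence class of this homotopy class of curves (Definition~\ref{DefinitionEquivalenceRelationDegenerationsOfLoops} and the notation introduced thereafter), and $\simeq_\ast$ is coarser than homotopy, we conclude $\gamma_\ast(X[1])=\gamma_\ast(X)$ for every indecomposable $X$.

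Combining the two observations gives the result: the shift functor $[1]$ fixes $\gamma_\ast(X)$ for all indecomposable $X$, hence by Theorem~\ref{TheoremPsiRealizesAction} (and Theorem~\ref{TheoremSpecialSurfaces} in the special cases) it lies in $\ker\Psi$. There is essentially no obstacle here — the content is entirely bookkeeping about what ``compatible with shifts'' means and how $\gamma_\ast$ is defined — so the proof is a one- or two-line deduction once the definitions are unwound; the only point requiring a moment's care is to note that shifting the grading does not change the underlying unoriented curve, which is exactly the shift-compatibility clause built into Definition~\ref{DefinitionFukayaLikeQuintuples}.
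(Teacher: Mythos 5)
Your proposal is correct and matches the paper's (implicit) argument: the paper derives this lemma directly from the characterization of $\ker\Psi$ via Theorem \ref{TheoremPsiRealizesAction} together with the shift-compatibility clause of Definition \ref{DefinitionFukayaLikeQuintuples}, exactly as you do. The only difference is that you spell out the bookkeeping (shifting only changes the grading, not the underlying curve, hence not the $\simeq_{\ast}$-class) which the paper leaves to the reader.
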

\medskip

\subsection{Rescaling equivalences}\label{SectionRescalingEquivalences}

\noindent We present further  typical examples of auto-equivalences in the kernel of $\Psi$. The map which attaches to any algebra automorphism $\sigma: A \rightarrow A$ its corresponding equivalence
${_{\sigma}A} \otimes_{\mathbb{L}} - :\mathcal{D} \rightarrow \mathcal{D}$, defines a group homomorphism
  \[\begin{tikzcd}\mathcal{O}:\Aut_k(A) \arrow{r} & \Aut\left(\mathcal{D}\right).\end{tikzcd}\]
  
  \noindent  Two such equivalences associated to automorphisms $\sigma, \sigma'$ are naturally isomorphic as functors if and only if $_{\sigma}A \cong {_{\sigma'}A}$ as $A$-$A$-bimodules and  that $\ker \mathcal{O}$ coincides with the set $\Inn A$ of inner automorphisms of $A$, c.f.\ \cite{RoggenkampTaylor}, Chapter VII. Thus, $\mathcal{O}$ descends to an embedding of the group $\out(A)$ of outer automorphisms of $A$ into $\Aut(\mathcal{D})$.\medskip

 \noindent   As an important special case, suppose $f: A \rightarrow A$ is an algebra isomorphism that fixes every vertex of $Q$ and multiplies every arrow by a scalar, i.e.\ $f(\alpha)=\lambda_{\alpha} \cdot \alpha$ for some $\lambda_{\alpha} \in k^{\times}$ for all $\alpha \in Q_1$.
The set of such automorphisms forms a subgroup of $\Aut_k(A)$. We call the derived equivalences associated to elements of this subgroup \textbf{rescaling equivalences} and denote their generated subgroup of $\Aut(\mathcal{D})$ by $\mathcal{R}$. \medskip

\noindent There is a surjective homomorphism 

\begin{displaymath}\begin{tikzcd}
(k^{\times})^{Q_1} \arrow[ twoheadrightarrow]{r}{\mathcal{O}} &  \mathcal{R},  \end{tikzcd}
\end{displaymath}

\noindent mapping  $(\lambda_{\alpha})_{\alpha \in Q_1}$ to the rescaling equivalence associated with the automorphism of $A$ which multiplies every $\alpha$ with $\lambda_{\alpha}$ and leaves every vertex fixed. Rescaling automorphisms are special cases of \textit{linear changes of variables}, c.f.\ \cite{AsensioSaorin}.  \medskip

\noindent Next, we describe the structure of $\mathcal{R}$ explicitely.

\begin{lem}\label{LemmaExactSequenceRescalingEquivalences}
There exists a short exact sequence of groups
\begin{displaymath}\label{SesRescalingEquivalences}\begin{tikzcd}
0 \arrow{r} & \faktor{(k^{\times})^{Q_0}}{k^{\times}} \arrow{r}{\phi} & (k^{\times})^{Q_1} \arrow{r}{\mathcal{O}} & \mathcal{R} \arrow{r} & 0,
\end{tikzcd} \end{displaymath}
where the quotient on the left is taken with respect to the diagonal embedding and the map $\phi$  is defined by 
\[\phi\left(\overline{(\lambda_x)_{x \in Q_0}}\right):=(\lambda_{s(\alpha)}^{-1} \cdot \lambda_{t(\alpha)})_{\alpha \in Q_1}.\]

\noindent The map on the right hand side maps an element of $(k^{\times})^{|Q_1|}$ to its associated element in $\mathcal{R}$.
 \end{lem}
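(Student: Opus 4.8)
The plan is to identify each map in the sequence concretely and then check exactness at the three spots. First I would verify that $\phi$ is a well-defined group homomorphism: the assignment $(\lambda_x)_{x\in Q_0}\mapsto (\lambda_{s(\alpha)}^{-1}\lambda_{t(\alpha)})_{\alpha\in Q_1}$ is clearly multiplicative in the $\lambda_x$, and the diagonal subgroup $k^\times\hookrightarrow (k^\times)^{Q_0}$ maps to the trivial tuple since $\lambda^{-1}\lambda=1$ for every arrow; conversely if $(\lambda_{s(\alpha)}^{-1}\lambda_{t(\alpha)})_\alpha$ is the constant tuple $1$, then $\lambda$ is constant on each connected component of the underlying graph of $Q$, hence (as $A$ is connected, which I may assume since the statement is about a single gentle algebra and the general case reduces componentwise) constant, i.e. lies in the diagonal. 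This gives injectivity of $\phi$, i.e. exactness at $\faktor{(k^\times)^{Q_0}}{k^\times}$.

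Next I would treat exactness at $\mathcal{R}$, i.e. surjectivity of $\mathcal{O}\colon (k^\times)^{Q_1}\to\mathcal{R}$. This is essentially the definition: $\mathcal{R}$ is by construction the subgroup of $\Aut(\mathcal{D})$ generated by the derived equivalences $_{\sigma}A\otimes^{\mathbb{L}}-$ attached to scaling automorphisms $\sigma$, and the scaling automorphisms form a group isomorphic to $(k^\times)^{Q_1}$ on which $\mathcal{O}$ restricts to the stated map; so its image is already all of $\mathcal{R}$. The only mild point is that the ``subgroup generated by'' collapses to the image because the scaling automorphisms themselves form a group and $\mathcal{O}$ is a homomorphism.

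The main work is exactness in the middle, $\ker\mathcal{O}=\operatorname{im}\phi$, where here $\mathcal{O}$ denotes the homomorphism $(k^\times)^{Q_1}\to\mathcal{R}$ (the restriction of the bimodule-twist homomorphism $\Aut_k(A)\to\Aut(\mathcal{D})$ to the scaling subgroup). By the discussion preceding the lemma, the kernel of $\Aut_k(A)\to\Aut(\mathcal{D})$ is $\Inn(A)$, so I must show that a scaling automorphism $\sigma=\sigma_{(\lambda_\alpha)}$ is inner if and only if $(\lambda_\alpha)_\alpha=\phi\bigl(\overline{(\mu_x)_x}\bigr)$ for some tuple $(\mu_x)$, i.e. $\lambda_\alpha=\mu_{s(\alpha)}^{-1}\mu_{t(\alpha)}$. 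For the ``if'' direction, given such $(\mu_x)$ one checks directly that conjugation by the unit $u=\sum_{x\in Q_0}\mu_x e_x\in A^\times$ (where $e_x$ is the idempotent at $x$) sends $\alpha=e_{s(\alpha)}\alpha e_{t(\alpha)}$ to $\mu_{s(\alpha)}^{-1}\alpha\mu_{t(\alpha)}=\lambda_\alpha\alpha$ and fixes each $e_x$, so $\sigma$ is inner. For the ``only if'' direction, suppose $\sigma$ is inner, say $\sigma(a)=u^{-1}au$; since $\sigma$ fixes all vertices, $u$ commutes with every idempotent $e_x$, hence $u=\sum_x \mu_x e_x + (\text{higher-degree terms in the radical})$, and writing the conjugation action on each arrow $\alpha$ one extracts, from the degree-one component, the relation $\lambda_\alpha=\mu_{s(\alpha)}^{-1}\mu_{t(\alpha)}$ with all $\mu_x\in k^\times$. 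The point where I expect to have to be slightly careful is this last step: peeling off the radical part of $u$ and justifying that only the semisimple part $\sum_x\mu_x e_x$ governs the scaling of arrows --- this uses that $A$ is generated in degrees $0$ and $1$ and that conjugation by a unipotent element of $1+\operatorname{rad}A$ alters an arrow only by adding longer paths, which cannot cancel the degree-one term. Once this is in place, exactness in the middle follows, and the lemma is proved.
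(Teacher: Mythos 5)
Your proof is correct and follows essentially the same route as the paper's: surjectivity of $\mathcal{O}$ onto $\mathcal{R}$ by definition, and $\Img\phi\subseteq\ker\mathcal{O}$ via conjugation by the unit $\sum_{x}\mu_x e_x$, using that $\ker(\Aut_k(A)\to\Aut(\mathcal{D}))=\Inn A$. The only divergence is the reverse inclusion $\ker\mathcal{O}\subseteq\Img\phi$, where the paper simply cites Theorem 15 of Asensio--Saor\'{\i}n (an inner rescaling automorphism is an acyclic character) while you argue directly by splitting the conjugating unit as $\sum_x\mu_xe_x$ plus a radical term and comparing modulo $\rad^2A$; this is valid since the arrows are linearly independent modulo $\rad^2A$ by admissibility of $I$, and it makes the middle exactness self-contained.
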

 \begin{proof}
We discussed above that the map on the right hand side is surjective.  The image of $\phi$ are the so-called \textit{acyclic characters} of $Q$, c.f.\ \cite{AsensioSaorin}, Proposition 10. Note that $\phi((\lambda_x)_{x \in Q_0})$ coincides with the automorphism of $A$ induced by conjugation with the element $\sum_{x \in Q_0}{\lambda_x x}$ and hence $\Img \phi \subseteq \Inn A$ is in the kernel of the map on the right hand side.  
If two tuples $(\lambda_{\alpha})_{\alpha \in Q_1}, (\mu_{\alpha})_{\alpha \in Q_1}$ induce isomorphic rescaling equivalences, the the automorphism associated with the tuple $(\lambda_{\alpha} \cdot \mu_{\alpha}^{-1})_{\alpha \in Q_1}$ is inner and, by the second short exact sequence of Theorem 15 in \cite{AsensioSaorin}, it is an acyclic character.
 \end{proof}
\noindent  The following shows that $\mathcal{R}$ is determined by $\mathcal{S}_A$ up to isomorphism.

 \begin{prp}
 There exist isomorphisms $\mathcal{R} \cong (k^{\times})^{1+ |Q_1| - |Q_0| } \cong (k^{\times})^{1-\chi(\mathcal{S}_A)}$.
\end{prp}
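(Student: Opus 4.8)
The strategy is to read off both isomorphisms from the short exact sequence in Lemma \ref{LemmaExactSequenceRescalingEquivalences} together with a standard computation of the Euler characteristic of $\mathcal{S}_A$ in terms of the quiver. First I would invoke Lemma \ref{LemmaExactSequenceRescalingEquivalences}: since $(k^{\times})^{Q_0}$ and $(k^{\times})^{Q_1}$ are divisible abelian groups and all maps in the sequence are homomorphisms of such groups, the sequence splits (equivalently, one can argue dimension-count over the divisible group $k^{\times}$, or simply note that a short exact sequence of vector spaces over $\mathbb{Q}$ / of divisible abelian groups splits). Hence
\[
\mathcal{R} \;\cong\; (k^{\times})^{Q_1} \big/ \Img \phi \;\cong\; (k^{\times})^{|Q_1|}\big/(k^{\times})^{|Q_0|-1},
\]
where the quotient on the left of the sequence is $(k^{\times})^{Q_0}/k^{\times} \cong (k^{\times})^{|Q_0|-1}$ because the diagonal embedding $k^{\times}\hookrightarrow (k^{\times})^{Q_0}$ is a split injection (again by divisibility, or since $Q_0\neq\emptyset$ one coordinate projection splits it). Since $\phi$ is injective, $\Img\phi\cong (k^{\times})^{|Q_0|-1}$ is a divisible subgroup and the quotient is again a power of $k^{\times}$, giving $\mathcal{R}\cong (k^{\times})^{|Q_1|-(|Q_0|-1)} = (k^{\times})^{1+|Q_1|-|Q_0|}$. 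Here I should be slightly careful that $Q$ is connected, as stated in the hypothesis for triangular gentle algebras, so that $|Q_0|\geq 1$ and the diagonal really is a rank-one subgroup; this is where connectivity of $\mathcal{S}_A$ enters.

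For the second isomorphism it remains to identify $1+|Q_1|-|Q_0|$ with $1-\chi(\mathcal{S}_A)$, i.e.\ to prove $\chi(\mathcal{S}_A) = |Q_0| - |Q_1|$. The cleanest route is via the laminates: by the defining properties of $\mathcal{S}_A$ recalled in Section \ref{SectionGradedSurfaceGentleAlgebra}, the set $\{L_x \mid x\in Q_0\}$ of laminates is a collection of $|Q_0|$ disjoint embedded arcs whose complement is a disjoint union of discs $\{\Delta_p \mid p \in \marked\}$. Cutting $\mathcal{S}_A$ along all laminates and then re-gluing realizes $\mathcal{S}_A$ as a CW-type decomposition: the $|\marked|$ discs $\Delta_p$ are the $2$-cells, and the laminates together with the boundary arcs supply the lower cells. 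One then counts: each laminate contributes to the gluing in a controlled way, and the marked points and boundary segments account for the remaining cells. A slicker bookkeeping is to dualize and use the admissible-path description in property (3) of Section \ref{SectionGradedSurfaceGentleAlgebra}: the discs $\Delta_p$ with $\partial\Delta_p$ containing more than one laminate correspond bijectively to maximal admissible paths, and a count of the total number of laminate-endpoints on boundaries, organized by these discs, yields $\sum_{p}(\text{number of laminates on }\partial\Delta_p) = 2|Q_0|$ since every laminate has two endpoints. Combining this with $|\marked|$ = number of discs and an Euler-characteristic computation $\chi(\mathcal{S}_A) = \#(2\text{-cells}) - \#(1\text{-cells}) + \#(0\text{-cells})$ gives $\chi(\mathcal{S}_A)=|Q_0|-|Q_1|$ after the routine cancellation.

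The main obstacle I anticipate is precisely this last Euler-characteristic bookkeeping: one must choose a genuine CW (or ribbon-graph) structure on $\mathcal{S}_A$ compatible with the laminate decomposition and carefully count cells, keeping track of self-folded situations, boundary segments that are sides of discs, and the unmarked boundary components that appear when $\gldim A = \infty$ (these are replaced by punctures in the convention of Theorem \ref{TheoremGentleAlgebrasFukayaLike}, and one should check the count is insensitive to this). A convenient way to sidestep delicate case analysis is to observe that $\mathcal{S}_A$ deformation retracts onto the ``Brauer-graph-like'' spine dual to the laminate decomposition — a graph with one vertex per disc $\Delta_p$ and one edge per laminate $L_x$ — so that $\chi(\mathcal{S}_A) = \chi(\text{spine}) = |\marked| - |Q_0|$; the assertion $|\marked| - |Q_0| = |Q_0|-|Q_1|$, equivalently $|\marked| = 2|Q_0| - |Q_1|$, then follows from the combinatorial identity counting arrows and maximal admissible paths in a gentle quiver (each arrow lies in a unique maximal admissible path, these paths biject with a subset of the discs, and a short computation relates their total length to $|Q_1|$ and $|Q_0|$). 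Once $\chi(\mathcal{S}_A)=|Q_0|-|Q_1|$ is in hand, substituting into the first isomorphism completes the proof.
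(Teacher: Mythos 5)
Your overall plan is sound and both isomorphisms are true, but the justification of the first one has a genuine gap. From the splitting of the short exact sequence of Lemma \ref{LemmaExactSequenceRescalingEquivalences} (which is fine: the kernel is divisible, hence injective) you only obtain $(k^{\times})^{Q_1} \cong \Img \phi \oplus \mathcal{R}$ with $\Img\phi$ abstractly isomorphic to $(k^{\times})^{|Q_0|-1}$, and you then cancel to get $\mathcal{R}\cong (k^{\times})^{1+|Q_1|-|Q_0|}$. Cancellation of divisible abelian groups fails when the multiplicities in the structure theorem are infinite, which they are here: for $k=\mathbb{C}$ one has $\mathbb{C}^{\times}\cong(\mathbb{Q}/\mathbb{Z})\oplus V$ with $V$ a $\mathbb{Q}$-vector space of uncountable dimension, and the divisible subgroup $H=(\mathbb{Q}/\mathbb{Z})\oplus V\oplus 0\oplus V$ of $(\mathbb{C}^{\times})^{2}=(\mathbb{Q}/\mathbb{Z})\oplus V\oplus(\mathbb{Q}/\mathbb{Z})\oplus V$ is abstractly isomorphic to $\mathbb{C}^{\times}$ (since $V\oplus V\cong V$) yet $(\mathbb{C}^{\times})^{2}/H\cong\mathbb{Q}/\mathbb{Z}$, which is not a power of $\mathbb{C}^{\times}$. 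So neither ``the quotient is again a power of $k^{\times}$'' nor the exponent count follows from divisibility alone; you must use the specific form of $\phi$. The fix is short and is in effect what the paper does: $\phi=\Hom(\partial,k^{\times})$ where $\partial\colon\mathbb{Z}^{Q_1}\to\mathbb{Z}^{Q_0}$ is the incidence map of the underlying graph $|Q|$, and since $k^{\times}$ is injective the functor $\Hom(-,k^{\times})$ is exact, so $\mathcal{R}=\mathrm{coker}\,\phi\cong\Hom(\ker\partial,k^{\times})\cong\Hom(H_1(|Q|,\mathbb{Z}),k^{\times})\cong(k^{\times})^{1-\chi(|Q|)}$, the cycle space of a connected graph being free of rank $|Q_1|-|Q_0|+1$. (Equivalently: choose a spanning tree $T$ and verify directly that $(k^{\times})^{Q_1}=\Img\phi\times(k^{\times})^{Q_1\setminus T}$.) The paper phrases exactly this as identifying the sequence with the reduced simplicial cochain complex of $|Q|$ with coefficients in $k^{\times}$, so that $\mathcal{R}\cong\tilde{H}^{1}(|Q|,k^{\times})$.

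For the second isomorphism your route genuinely differs from the paper's: the paper cites the homotopy equivalence $\mathcal{S}_A\simeq|Q|$ from \cite{OpperPlamondonSchroll}, Proposition 1.22, whence $\chi(\mathcal{S}_A)=\chi(|Q|)=|Q_0|-|Q_1|$, while you recompute $\chi(\mathcal{S}_A)$ from the laminate decomposition via the dual spine, reducing to the identity $|\marked|=2|Q_0|-|Q_1|$. That identity does hold (each laminate is a side of exactly two discs, the arrows are partitioned into maximal admissible paths, and a disc carrying a maximal path with $n$ arrows has $n+1$ laminates on its boundary), so your argument can be completed; but it essentially reproves the cited homotopy equivalence and forces you to treat the unmarked boundary components of the infinite-global-dimension case separately, whereas invoking the known result disposes of the second isomorphism in one line.
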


\begin{proof}The quiver $Q$ carries the structure of a simplicial complex with vertices given by elements of $Q_0$ and edges parametrized by the set of arrows $Q_1$.
The exact sequence (see Lemma \ref{LemmaExactSequenceRescalingEquivalences})
\begin{displaymath}\begin{tikzcd}
0 \arrow{r} & k^{\times} \arrow{r}{\Delta} & (k^{\times})^{Q_0} \arrow{r}{\phi} & (k^{\times})^{Q_1} \arrow{r}{} & 0 \arrow{r} & \cdots,
\end{tikzcd} \end{displaymath}
 where $\Delta$ denotes the diagonal embedding,
is isomorphic to the co-chain complex of the reduced simplicial cohomology of $Q$ with coefficients in $k^{\times}$. The isomorphism identifies the basis element $e_{\beta}=(\delta_{\alpha \beta})_{\alpha \in Q_1} \in (k^{\times})^{Q_1}$ with the element $e_{\beta}^{\ast}$ of the natural dual basis of $C^1(Q, k^{\times})=\Hom( \mathbb{Z}^{Q_1}, k^{\times})\cong (k^{\times})^{Q_1}$ and the basis element $e_{x}=(\delta_{x y})_{y \in Q_0}$ with the dual  basis element of $C^0(Q, k^{\times})=\Hom(\mathbb{Z}^{Q_0}, k^{\times})\cong (k^{\times})^{Q_0}$. Finally, the evaluation homomorphism  $\Hom(\mathbb{Z}, k^{\times}) \rightarrow k^{\times}$ provides the last isomorphism. It follows that $\mathcal{R} \cong \tilde{H}^1_{\text{Simp}}(Q, k^{\times})=\tilde{H}^1( |Q|, k^{\times})$, where $|Q|$ denotes the topological space associated with $Q$, i.e.\ its underlying graph. Thus, $\mathcal{R}$ is isomorphic to $(k^{\times})^c$ for some $c \geq 0$. As shown in the proof of \cite{OpperPlamondonSchroll}, Proposition 1.22, $\mathcal{S}_A$ and $|Q|$ are homotopy equivalent and therefore $c=1-\chi(|Q|)=1-\chi(\mathcal{S}_A)$. 

\end{proof}

\begin{lem}\label{LemmaRescalingIsInKernel}$\mathcal{R}$ is a subset of $\ker \Psi$.
\end{lem}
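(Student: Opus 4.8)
\textbf{Proof strategy for Lemma \ref{LemmaRescalingIsInKernel}.}

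The plan is to verify directly that a rescaling equivalence fixes the homotopy class of every indecomposable object, so that Theorem \ref{TheoremPsiRealizesAction} (applied to $\T = \mathcal{D}^b(A)$) forces it into $\ker\Psi$. Recall from Section \ref{SectionRescalingEquivalences} that a rescaling equivalence is of the form ${}_\sigma A \otimes_{\mathbb{L}} -$ where $\sigma$ is an algebra automorphism fixing all vertices of $Q$ and sending each arrow $\alpha$ to $\lambda_\alpha \cdot \alpha$ for some $\lambda_\alpha \in k^\times$. Since $\sigma$ fixes every vertex, it fixes every indecomposable projective module $P_x = Ax$ up to isomorphism; more precisely, $({}_\sigma A) \otimes_A P_x \cong P_x$ canonically, and under this identification a map $P_x \to P_y$ induced by an admissible path $u = \alpha_n \cdots \alpha_1$ is sent to the map induced by $\lambda_{\alpha_n} \cdots \lambda_{\alpha_1} \cdot u$, i.e.\ the same map rescaled by a non-zero scalar.

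The key step is then to use the explicit description of the bijection $X_{(-)}$ from Section \ref{SectionBijectionObjectsCurves}. Given a graded curve $(\gamma, \mathfrak{g})$ with local system $\mathcal{V}$, the complex $X_{(\gamma, \mathcal{V})}$ has terms $\bigoplus P_{x_i} \otimes_k \mathcal{V}(q_i)$ and differential $\partial = \sum_i (\partial_i \otimes_k \mathcal{V}(\gamma_i))$ where each $\partial_i$ is induced by an admissible subpath. Applying ${}_\sigma A \otimes_{\mathbb{L}} -$ therefore yields a complex with the same projective terms and a differential in which each component $\partial_i$ is multiplied by a non-zero scalar $c_i \in k^\times$ depending only on the laminates crossed by the segment $\gamma_i$. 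Such a rescaling of the differential is realized by an isomorphism of complexes: one inductively rescales the summands $P_{x_i} \otimes_k \mathcal{V}(q_i)$ (equivalently, absorbs the scalars into the local system $\mathcal{V}$, replacing it by an isomorphic local system $\mathcal{V}'$ obtained by post-composing the transition maps with multiplication by the $c_i$). Hence ${}_\sigma A \otimes_{\mathbb{L}} X_{(\gamma, \mathcal{V})} \cong X_{(\gamma, \mathcal{V}')}$ with $\mathcal{V}' \cong \mathcal{V}$, and since the grading data is untouched, this gives $\gamma(T(X)) = \gamma(X)$, in particular $\gamma_\ast(T(X)) = \gamma_\ast(X)$, for every indecomposable $X$. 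By Theorem \ref{TheoremPsiRealizesAction}, $\Psi(T) = \mathrm{id}$ in $\MCG(\mathcal{S}_A, \eta_A)$, so $T \in \ker\Psi$; as $\mathcal{R}$ is generated by such $T$, we conclude $\mathcal{R} \subseteq \ker\Psi$.

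The main obstacle is the bookkeeping in the middle step: one must check that the scalars introduced by $\sigma$ on the various components of the differential can be simultaneously absorbed into a single rescaling of the terms (equivalently, into an isomorphic local system), without creating a cocycle obstruction. For arc objects this is automatic because the "graph" of segments is a path, so the rescaling constants propagate freely; for band (loop) objects one must observe that the eigenvalue of the local system gets multiplied by the total product $\prod_\alpha \lambda_\alpha^{m_\alpha}$ of the scalars around the loop, which changes the isomorphism class of the local system only within the family of indecomposable local systems of the same dimension and thus does not change $\gamma_\ast$ (recall that $\gamma_\ast$ forgets the continuous parameter of a homogeneous tube). So even in the loop case the homotopy class, and hence the $\simeq_\ast$-class, is preserved.
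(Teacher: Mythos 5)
Your proof is correct and follows essentially the same route as the paper's: the paper likewise observes that a rescaling equivalence preserves projectives and only rescales the components of the differentials, so string complexes keep their isomorphism class while band complexes move only within their $\tau$-invariant family, whence $\gamma_\ast$ is preserved for every indecomposable and Theorem \ref{TheoremPsiRealizesAction} gives $T\in\ker\Psi$. Your version merely spells out the scalar-absorption bookkeeping (and corrects, in the final paragraph, the momentary overstatement $\mathcal{V}'\cong\mathcal{V}$ in the loop case) that the paper leaves implicit.
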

\begin{proof}
	Let $f \in \Aut(A)$ be a rescaling automorphism and $F=\mathcal{O}(f) \in \Aut(\mathcal{D})$ the corresponding equivalence. 
	Then $F$ maps projective modules to projective modules and since $f$ sends arrows to multiples of themselves, $F$ sends string complexes to string complexes with rescaled components of its differential. This does not change the isomorphism class of a string complex. For the same reasons, $\mathcal{F}$ may change the isomorphism classes of band complexes, but only to a band complex in the same family of band complexes (a family of $\tau$-invariant objects). In particular, the representing homotopy class of every indecomposable object in $\mathcal{D}$ remains unchanged under the action of $F$ showing that $\mathcal{F} \in \ker \Psi$.
\end{proof}
\noindent In a similar way one proves:
\begin{lem}
Let $T \in \Aut( \mathcal{D}^b(B))$ be an affine or projective coordinate transformation as defined in Section \ref{SectionFractionalCalabiYauTauInvariantFamilies}. Then, $T \in \ker \Psi$. 
\end{lem}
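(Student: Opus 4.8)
The plan is to mimic the proof of Lemma \ref{LemmaRescalingIsInKernel}, since affine and projective coordinate transformations are, by definition, derived functors of the form $_{\sigma}A \otimes_{\mathbb{L}} -$ for an algebra automorphism $\sigma$ which fixes every vertex of the quiver and sends every arrow to a linear combination of (at most two) parallel arrows. In particular such a $\sigma$ preserves the two-sided ideal generated by the arrows and respects the path grading, so the induced functor $F = \mathcal{O}(\sigma)$ preserves projective modules and sends the standard basis admissible paths $u_i$ appearing in the differentials of string and band complexes (Section \ref{SectionBijectionObjectsCurves}) to linear combinations of parallel admissible paths. The point is that the \emph{homotopy class of the underlying curve} of an indecomposable object is read off from the combinatorics of which indecomposable projectives occur and which admissible paths connect them, data that $F$ leaves untouched, even though the precise scalars in the differential (and hence the isomorphism class of a band complex within its homogeneous tube) may change.

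Concretely, I would first recall from Section \ref{SectionKroneckerAlgebra} the explicit shape of the automorphism $\sigma = \sigma_{\Lambda}$: it fixes all vertices, multiplies each $\beta_i$ by $\lambda_i \in k^{\times}$, and sends the maximal antipath $\alpha$ to $\lambda_{\alpha}\alpha + \lambda_{\beta}\beta$ where $\beta = \beta_m\cdots\beta_1$. Then I would argue that applying $F$ to the complex $X_{(\gamma,\mathcal{V})}$ of Section \ref{SectionBijectionObjectsCurves} yields a complex with the same projective summands in each degree and with each differential component $\partial_i$ (induced by an admissible subpath $u_i$) replaced by $F(\partial_i)$. For an admissible path $u_i$ that does not involve the arrow $\alpha$ (which is an antipath, hence never part of an admissible path), $F(\partial_i)$ is just a non-zero scalar multiple of $\partial_i$; so in fact $F$ rescales the differential of every string complex, leaving its isomorphism class unchanged, exactly as in the proof of Lemma \ref{LemmaRescalingIsInKernel}. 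For band complexes the relevant differential contains a component given by $\lambda\alpha + \mu\beta$, and $F$ sends this to $\lambda(\lambda_{\alpha}\alpha + \lambda_{\beta}\beta) + \mu'\beta$ for a suitable rescaled $\mu'$, i.e.\ to $\lambda\lambda_{\alpha}\alpha + (\lambda\lambda_{\beta} + \mu')\beta$. By the parametrization recalled in Section \ref{SectionKroneckerAlgebra} (the complexes of $(\lambda,\mu)$ and $(\lambda',\mu')$ being isomorphic iff $\lambda^{-1}\mu = \lambda'^{-1}\mu'$), this is another member of the same $\mathbb{A}^1_k$- or $\mathbb{P}^1_k$-family of $\tau$-invariant objects. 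Thus $F$ maps each indecomposable to an object in the same $\simeq_{\ast}$-class (using that a whole family of $\tau$-invariant objects shares one $\simeq_{\ast}$-class, Definition \ref{DefinitionFamilyTauInvariantObjects}).

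Having established that $\gamma_{\ast}(F(X)) = \gamma_{\ast}(X)$ for every indecomposable $X \in \mathcal{D}$, the conclusion $F \in \ker\Psi$ is immediate from Theorem \ref{TheoremPsiRealizesAction} together with the remark at the start of Section \ref{SectionKernelOfPsi} that $\ker\Psi$ consists precisely of the equivalences $T$ with $\gamma_{\ast}(X) = \gamma_{\ast}(T(X))$ for all indecomposable $X$. I would phrase this uniformly for the affine case and (as a degenerate special case, with $m$ appropriately interpreted and the Kronecker surface being the cylinder) the projective case.

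The main obstacle I anticipate is bookkeeping rather than conceptual: one must check carefully that $\sigma$ really does preserve the structure that the surface model reads off, i.e.\ that it sends admissible paths to linear combinations of \emph{admissible} paths with the same source and target passing through the same sequence of vertices — this is where the hypothesis that $\alpha$ is a maximal antipath and $\beta$ a maximal admissible path is used, so that the "error term" $\lambda_{\beta}\beta$ introduced in $\sigma(\alpha)$ is itself an admissible path realizing the same pair of laminates on $\partial\Delta_i$ and hence does not alter the combinatorial data of any string or band complex. Once this is pinned down, the argument is a routine adaptation of Lemma \ref{LemmaRescalingIsInKernel}.
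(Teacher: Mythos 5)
Your overall strategy is exactly the one the paper intends (its own ``proof'' is the single phrase that one argues as in Lemma \ref{LemmaRescalingIsInKernel}), and your endgame --- preservation of $\simeq_{\ast}$-classes followed by Theorem \ref{TheoremPsiRealizesAction} --- is correct. However, there is a genuine gap in the middle step. The parenthetical claim that $\alpha$ ``is an antipath, hence never part of an admissible path'' is false: $\alpha$ is itself an admissible path of length one, and it can occur as a differential component $u_i$ (in the sense of Section \ref{SectionBijectionObjectsCurves}) of string and band complexes \emph{other than} the members of the family $(P^{\bullet}_{u\alpha+v\beta})$; for instance, if $\beta=\beta_1\beta_2$, the complex $P_x\rightarrow P_y\oplus P_w$ with components $\alpha$ and $\beta_1$ is such a string complex. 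For these objects $F$ does not merely rescale the differential: the component $\alpha$ becomes $\lambda_{\alpha}\alpha+\lambda_{\beta}\beta$, so neither your ``string complexes are rescaled'' case nor your ``band complexes move within their family'' case applies, and your closing remark that the error term ``realizes the same pair of laminates'' is not true as stated, since $\beta$ crosses the intermediate laminates $L_{t(\beta_1)},\dots$ and lives in a different disc than $\alpha$.

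The gap is fillable, and the missing argument is where the maximality hypotheses actually enter. If $\alpha$ sits inside a component $u_i$ of length at least two, the error term vanishes outright, since $\sigma(u\alpha v)=\lambda\, u\alpha v$ because $u\beta=0=\beta v$ by maximality of the admissible path $\beta$. If a component equals $\alpha$ itself and has a neighbouring homotopy letter, that letter cannot compose with $\alpha$ to zero (this would force a length-two relation involving $\alpha$, contradicting maximality of the antipath), so it points out of $s(\alpha)$ (resp.\ into $t(\alpha)$) just as $\alpha$ does; by gentleness and maximality of $\beta$ it is then forced to be an initial (resp.\ terminal) subpath of $\beta$, through which $\beta$ factors, and a unitriangular change of basis of the relevant term absorbs the summand $\lambda_{\beta}\beta$. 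Hence every such complex is isomorphic to its image. The only indecomposables in which the letter $\alpha$ has no neighbour are the two-term complexes $P^{\bullet}_{\lambda\alpha+\mu\beta}$, and these are precisely the ones your family argument (Definition \ref{DefinitionFamilyTauInvariantObjects}) does handle. With this insertion --- and with the Kronecker case treated by the $\PGL_2(k)$-action as you indicate --- your proof is complete.
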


\noindent Note that in case of canonical algebras, rescaling equivalences are special cases of coordinate transformations.\medskip

\subsection{The proof of Theorem \ref{TheoremKernelOfPsi}}
The proof of Theorem \ref{TheoremKernelOfPsi} is split across the following two Lemmas. \medskip

\begin{lem}\label{LemmaShiftOfKernelIsOuter}Let $A$ be a triangular gentle algebra. For every  $T \in \ker \Psi$, there exists an integer $n \in \mathbb{Z}$, such that $T\circ [n] \in \out(A)$.
\end{lem}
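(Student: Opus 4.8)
The statement asserts that, after shifting an element $T \in \ker\Psi$ by a suitable power of $[1]$, we obtain a functor which is isomorphic to a \emph{standard} auto-equivalence, i.e.\ one of the form $_{\sigma}A \otimes^{\mathbb{L}} -$ for some algebra automorphism $\sigma$; once such a functor is standard, it automatically lies in $\out(A)$ by the discussion in Section \ref{SectionRescalingEquivalences}. The first step is therefore to reduce to standardness. For a triangular (hence, in particular, finite global dimension) gentle algebra, $\mathcal{D}^b(A)$ is generated by an exceptional collection given by the indecomposable projectives, so I plan to invoke the Rouquier--Zimmermann / Keller type rigidity of exceptional sequences: an auto-equivalence is standard as soon as it maps a full exceptional collection to another one related to it by mutations and shifts in a controlled way. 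More concretely, I will use that $\ker\Psi(T)$ fixes the $\simeq_{\ast}$-class of every indecomposable object (Theorem \ref{TheoremPsiRealizesAction}), so $T$ permutes the homotopy classes of all arc objects trivially; in particular it fixes the homotopy classes representing the indecomposable projective $A$-modules. Hence $T(P_x)$ is, for each vertex $x$, isomorphic to a shift $P_{x}[n_x]$ of an indecomposable projective (the object represented by the same arc, up to the shift ambiguity in Property~1 of Definition \ref{DefinitionFukayaLikeQuintuples}).

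The second step is to show that all the integers $n_x$ are equal to a single $n$. Here is where triangularity enters decisively: since the $\Ext$-quiver of $A$ has no oriented cycles, there is a partial order on vertices, and for every arrow $\alpha: x \to y$ there is a nonzero morphism $P_y \to P_x$ in degree $0$ (the right multiplication map). Applying $T$ and using that $T$ is a triangle functor, we get a nonzero morphism $P_y[n_y] \to P_x[n_x]$; since $\Hom^*(P_y, P_x)$ for a gentle algebra is concentrated in finitely many degrees and, crucially, the degree-$0$ part is the one carrying the arrow-induced map, comparing with the fact that $T$ must also respect the interior/boundary dichotomy and the arrow morphisms (Corollary \ref{CorollaryCharacterizationArrowMorphisms}, Lemma \ref{LemmaEquivalencesPreserveInteriorMorphisms}) forces $n_x = n_y$. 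Connectedness of $Q$ (part of being triangular) then propagates this equality to a global constant $n$. Replacing $T$ by $T \circ [-n]$, we may and do assume $T(P_x) \cong P_x$ for all $x$, compatibly with the canonical generators of the morphism spaces.

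The third step is to promote this pointwise data to a natural isomorphism $T \cong {_{\sigma}A}\otimes^{\mathbb{L}}-$. Since the $P_x$ form a full strong(ly generating) exceptional-type collection and $T$ fixes each of them, $T$ restricts to an auto-equivalence of the additive category $\mathrm{add}\{P_x\}$ together with its enrichment, i.e.\ an automorphism of the category of projectives, which is exactly the data of an algebra automorphism $\sigma$ of $A = \End(\bigoplus_x P_x)^{\mathrm{op}}$ fixing vertices. By a standard argument (the projectives generate $\mathcal{D}^b(A)$, $\Perf(A)=\mathcal{D}^b(A)$ since $\gldim A < \infty$, and the DG-enhancement is unique for such algebras, cf.\ Lunts--Orlov / the tilting formalism), an auto-equivalence fixing the tilting object $\bigoplus P_x$ and acting on $\End$ by $\sigma$ is isomorphic to $_{\sigma}A\otimes^{\mathbb{L}}-$. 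Then $\sigma$ defines an element of $\out(A)$, and by construction $\mathcal{O}(\sigma) \cong T \circ [-n]$, which is precisely the assertion with $n$ in place of the claimed integer (up to sign).

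\textbf{Main obstacle.} The delicate point is the passage from ``$T$ fixes each homotopy class, hence fixes each $P_x$ up to shift and isomorphism'' to ``$T$ acts on the category of projectives by an algebra automorphism'' --- one must rule out that $T$ scrambles the morphism spaces in a way not realized by any $\sigma$, and in the infinite-global-dimension or non-triangular case this genuinely fails (the kernel is then larger than $\out(A)$, accounting for why the theorem is stated only for triangular $A$). Controlling this requires knowing that $T$, being the identity on $\simeq_{\ast}$-classes, also preserves arrow morphisms up to scalar (via Corollary \ref{CorollaryCharacterizationArrowMorphisms} and Lemma \ref{LemmaEquivalencesPreserveInteriorMorphisms}), so that the induced automorphism of $\End(\bigoplus P_x)$ is a \emph{linear change of variables} fixing vertices; identifying such an automorphism with a genuine algebra automorphism $\sigma$, rather than merely a linear endofunctor, is where the combinatorics of gentle algebras (relations generated in degree $2$, the special biserial structure) must be used to check multiplicativity. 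I expect this compatibility verification to be the technical heart of the argument; everything else is bookkeeping with shifts and the already-established geometric dictionary.
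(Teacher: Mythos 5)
Your overall route is the same as the paper's: use $T\in\ker\Psi$ to see that each indecomposable projective $P_x$ is sent to a shift $P_x[n_x]$ of itself, equalize the shifts using connectedness of $Q$, and then conclude that $T\circ[-n]$ is standard, hence of the form ${_{\sigma}A}\otimes^{\mathbb{L}}-$ and so in $\out(A)$. Steps one and two are correct, though your step two is more elaborate than necessary: since $P$ and $P'$ are projective \emph{modules}, $\Hom^{*}(P,P')$ is concentrated in degree $0$, so $\Hom(P,P'[\,m_{P'}-m_P])\neq 0$ immediately forces $m_P=m_{P'}$; no appeal to arrow morphisms or the interior/boundary dichotomy is needed.

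The weak point is your third step. The paper disposes of it by citing Chen (Proposition 2.4): for an algebra whose quiver has no oriented cycles, an auto-equivalence of $\mathcal{D}^b(A)$ fixing every indecomposable projective is standard. Your sketch of this step both mislocates the difficulty and does not close it. Multiplicativity of the induced map on $\End(\bigoplus_x P_x)$ is automatic from functoriality of $T$ — no gentle-algebra combinatorics is needed to see that $T$ induces an algebra automorphism $\sigma$ fixing the idempotents. The genuine obstacle is the opposite one: knowing how $T$ acts on $\mathrm{add}(A)$ does not a priori determine $T$ as a \emph{triangle functor} on all of $\mathcal{D}^b(A)$, because an abstract triangle equivalence need not lift to the DG level; uniqueness of enhancements does not hand you a DG lift of a given exact functor, and "rigidity of exceptional collections" is not applicable as stated since the projectives of a gentle algebra do not form an exceptional collection in general. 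This is precisely the point where triangularity is consumed and where the cited result of Chen is doing real work — note the paper's remark immediately after the lemma that for non-triangular gentle algebras it is unknown whether all auto-equivalences are standard. So your proof is correct in outline but needs the final step replaced by (or reduced to) that citation rather than the argument you sketch.
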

\begin{proof}
	Let $T  \in \ker \Psi$. Since every indecomposable projective module is essential, it follows that $T$ preserves their isomorphism classes up to shift. For $P \in \T$ an indecomposable projective $A$-module, let $m_P \in \mathbb{Z}$ be such that $T(P) \cong P[m_P]$. If $P$ and $P'$ indecomposable projective $A$-modules and $\Hom(P,P')\neq 0$, then by virtue of the isomorphism 
	\[\Hom(T(P),T(P'))\cong \Hom(P, P'[m_{P'}-m_P]),\]
	
	\noindent  it follows $m_P=m_{P'}$. Since $Q$ is connected, it follows by induction that $m_P=m_{P'}$ for all indecomposable projective $A$-modules $P$ and $P'$ and we may assume that $T(P)\cong P$ for all projective $A$-modules $P$. Since $Q$ has no oriented cycles, it follows from Proposition 2.4 in \cite{Chen}  that $T$ is standard and naturally isomorphic to a derived tensor product $_{\sigma}A \otimes^{\mathbb{L}} -$ for some $\sigma \in \Aut_k(A)$.
\end{proof}

\begin{rem}
The arguments of the previous Lemma work for arbitrary connected gentle algebras and standard functors $T \in \ker \Psi$. We do not know whether all auto-equivalences of gentle algebras are standard.
\end{rem}

\noindent Together with Lemma \ref{LemmaShiftOfKernelIsOuter} the following lemma completes the proof of Theorem \ref{TheoremKernelOfPsi}.
\begin{lem}Let $A=kQ/I$ be a gentle algebra. Then, $\out(A) \cap \ker \Psi$ is the group which is generated by all coordinate transformations and rescaling equivalences. 
\end{lem}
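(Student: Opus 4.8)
The plan is to characterize, for a standard auto-equivalence $T = {_\sigma}A \otimes^{\mathbb L} -$ with $\sigma \in \out(A)$, exactly when $T$ lies in $\ker\Psi$, and to show this forces $\sigma$ to be built from coordinate transformations and rescaling automorphisms. First I would record the easy inclusion: by the two lemmas in Section \ref{SectionRescalingEquivalences}, every rescaling equivalence and every coordinate transformation lies in $\ker\Psi$, and since $\ker\Psi$ is a subgroup, so is the subgroup they generate; thus it suffices to prove the reverse inclusion. So let $\sigma \in \Aut_k(A)$ represent an element of $\out(A) \cap \ker\Psi$; after composing with an inner automorphism (which does not change the class in $\out(A)$ nor the element of $\ker\Psi$) I may normalize $\sigma$ so that it fixes every vertex of $Q$, using that $T$ preserves the iso-classes of the indecomposable projectives $P_x$ (they are essential, hence $\Psi(T)$ fixes $\gamma(P_x)$, and since $A$ is triangular we can even arrange $T(P_x) \cong P_x$ on the nose, not merely up to shift — this is what Lemma \ref{LemmaShiftOfKernelIsOuter} gives, together with connectedness of $Q$). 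Once $\sigma$ fixes all vertices, $\sigma$ is determined by a collection of linear maps on the arrow spaces: for each vertex pair it permutes/mixes the (at most two) arrows and scales them, subject to $\sigma(I) = I$.

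The heart of the argument is then a local analysis at each arrow. Since $\sigma$ fixes vertices, write $\sigma(\alpha) = \sum_\beta c_{\alpha\beta}\,\beta$ summed over arrows $\beta$ with $s(\beta) = s(\alpha)$, $t(\beta) = t(\alpha)$; gentleness bounds the number of parallel arrows, and the relation-preservation condition $\sigma(I) \subseteq I$ severely constrains the matrices $(c_{\alpha\beta})$. The key point to extract is: if $\sigma$ genuinely mixes two parallel arrows $\alpha, \alpha'$ (i.e.\ is not simultaneously diagonalizable into a pure rescaling on the basis of arrows), then one of the two arrows, say $\alpha'$, must be a maximal antipath and the other, $\alpha$, the start of a maximal admissible path (so the local picture is precisely the subquiver $Q'$ from Section \ref{SectionKroneckerAlgebra}), for otherwise $\sigma$ would fail to preserve $I$. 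This is exactly the combinatorial shape that defines affine/projective coordinate transformations. I would verify this by a case check on the gentle conditions (4) in the definition of a gentle pair: an arrow $\alpha'$ that is parallel to $\alpha$ and is \emph{not} an antipath has some $\delta$ with $\delta\alpha' \in I$ or $\alpha'\delta \in I$, and then mixing $\alpha'$ into $\alpha$ would force $\delta\alpha \in I$ or produce a contradiction with the at-most-one conditions; running through the possibilities pins down the $Q'$-shape.

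Having isolated this, I would then show the converse of the $\ker\Psi$-membership: a $\sigma$ of this mixing type is in $\ker\Psi$ \emph{only if} it actually is the coordinate transformation $\sigma_\Lambda$ attached to $Q'$, up to a further rescaling. The mechanism is the geometric one from Theorem \ref{TheoremPsiRealizesAction}: the band complexes $P^\bullet_{\lambda\alpha + \mu\beta}$ form a family of $\tau$-invariant objects (an $\mathbb A^1_k$- or $\mathbb P^1_k$-family), and by Corollary \ref{CorollaryEquivalencesPreserveFamilies} any equivalence permutes such families preserving their type and, being in $\ker\Psi$, fixes the underlying $\simeq_\ast$-class; hence $T$ must act on the family $\{P^\bullet_{\lambda\alpha+\mu\beta}\}$ by a reparametrization of the parameter, i.e.\ a Möbius (affine, in the $\mathbb A^1$ case) transformation — and the computation of $T$ on these explicit two-term complexes shows that transformation is exactly the one induced by $\sigma_\Lambda$ for the corresponding $\Lambda$. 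Conversely, any $\sigma$ that is diagonal on arrows is by definition a rescaling automorphism, which we already know lies in $\ker\Psi$. Assembling: an arbitrary $\sigma \in \out(A)\cap\ker\Psi$ decomposes, arrow-block by arrow-block, into a product of (a rescaling part) and (coordinate-transformation parts at each maximal-antipath/admissible-path configuration), which is precisely the claimed generating set.

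The main obstacle I expect is the local arrow-block analysis tying together three things simultaneously: the constraint $\sigma(I) = I$, the gentleness axioms, and the requirement that the induced reparametrization of the $\tau$-invariant family be the identity on $\simeq_\ast$-classes. In particular one must rule out ``exotic'' vertex-fixing automorphisms that mix arrows in a way compatible with $I$ but not of coordinate-transformation shape, and must be careful about arrows lying in several overlapping maximal paths and about the bookkeeping when $k$ is not algebraically closed — though the paper's standing assumption that $k$ is algebraically closed should make the Möbius/affine identification clean. A secondary subtlety is that the argument as stated handles standard functors; the statement is for $\out(A)\cap\ker\Psi$, so every element is automatically a derived tensor with a bimodule ${_\sigma}A$ and hence standard, so no genuine gap arises there, but it is worth flagging explicitly in the write-up.
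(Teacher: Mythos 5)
Your overall architecture matches the paper's: normalize $\sigma$ to fix the vertices via an inner automorphism, analyse its effect on arrows, and use preservation of $\tau$-invariant families to force the coordinate-transformation shape. But the central step has a genuine gap. You set up the local analysis as $\sigma(\alpha)=\sum_{\beta}c_{\alpha\beta}\,\beta$ summed over parallel \emph{arrows} and assert that $\sigma$ is ``determined by linear maps on the arrow spaces.'' A vertex-fixing automorphism of a gentle algebra can, and in the relevant cases does, send an arrow $\alpha$ to $a\alpha+b\beta$ where $\beta$ is a parallel \emph{path of length at least $2$}; this is precisely the affine coordinate transformation and is the main case to be handled (two genuinely parallel arrows occur only in the Kronecker situation). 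Your combinatorial case check on the gentle axioms, phrased entirely for pairs of parallel arrows, therefore never reaches the generic configuration, and the claim that $\sigma(I)\subseteq I$ alone forces the $Q'$-shape is exactly the ``exotic automorphisms'' obstacle you flag but do not resolve -- indeed it is not clear that $I$-preservation suffices without the categorical hypothesis. The paper argues differently: if $b\neq 0$ then $T(P^{\bullet}_{\alpha})\cong P^{\bullet}_{\sigma(\alpha)}$ is a band complex, and $T\in\ker\Psi$ forces it to share its $\simeq_{\ast}$-class with the string complex $P^{\bullet}_{\alpha}$, so $P^{\bullet}_{\alpha}$ must be $\tau$-invariant; Proposition \ref{PropositionFractionallyCalabiYauArcs} then places its arc on a boundary component with a single marked point, Corollary \ref{CorollaryIndecomposableMiddleTermARTriangle} makes the Auslander--Reiten middle term indecomposable, and the cited result of Bobi\'nski yields that $\alpha$ is a maximal antipath. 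The $Q'$-shape is thus a consequence of $T\in\ker\Psi$, not of relation-preservation.

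Two further points. First, the triangularity of the matrix $M$ (that $\sigma(\beta)$ cannot involve $\alpha$) is obtained in the paper by composing with arrows $\delta$ adjacent to $\alpha$ and using $\sigma(\delta\beta)=0$ together with $\delta\alpha\neq 0$; your M\"obius-reparametrization of the family does not obviously supply this, since it only controls the induced action on the parameter space. Second, the concluding ``arrow-block by arrow-block'' decomposition is not a direct product: the affine coordinate transformation attached to one maximal antipath also rescales the arrows $\beta_1,\dots,\beta_l$ of the companion admissible path, so one must peel off one coordinate transformation at a time (compose with $\rho^{-1}$ and iterate finitely often) as the paper does. Minor remark: triangularity of $A$ is neither assumed nor needed in this lemma -- it enters only in the companion statement that every element of $\ker\Psi$ is standard up to shift -- so your appeal to it when normalizing the projectives is superfluous.
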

\begin{proof}The inclusion ``$\supseteq $'' was shown in Lemma \ref{LemmaRescalingIsInKernel}. 

Let  $\sigma$ be an automorphism of $A$ such that $T \coloneqq\mathcal{O}(\sigma) \in \ker \Psi$.
	Since $T$ preserves the isomorphism class of every indecomposable projective $A$-module it follows from Lemma 20,\cite{AsensioSaorin}, that we may assume $\sigma(x)=x$ for all $x \in Q_0$ after composition with an inner automorphism. Note that since $A$ is gentle, it is isomorphic to the graded algebra associated with the filtration by powers of its radical.
	
For $\epsilon$ a linear combination of parallel paths in $Q$ starting in a vertex $x$ and ending in a vertex $y$, we denote by $P_{\epsilon}^{\bullet}$ the complex
			\begin{displaymath}
	\begin{tikzcd}
	\cdots \arrow{r} & 0 \arrow{r} & P_{y} \arrow{rr}{\epsilon} && P_{x} \arrow{r}  & 0 \arrow{r} & \cdots,
	\end{tikzcd}
	\end{displaymath}
	concentrated in degrees $0$ and $1$ (although the precise degrees are not important).
	A complex $P_{\epsilon}^{\bullet}$ such that $\epsilon \neq 0$ and $\Img \epsilon \subseteq \rad P_x$ as above is a string complex  if $\epsilon$ is a non-zero multiple of a single path and a band complex, otherwise.

	Suppose that for some $\alpha \in Q_1$, $\sigma(\alpha)$  and $\alpha$ are linearly independent.
	 Since $A$ is gentle, it follows that there exists at most one non-trivial path $\alpha \neq \beta \not \in I$ in $Q$ which is parallel to $\alpha$. As $\sigma(x)=x$ for all $x \in Q_0$, it follows $\sigma(\alpha)=a \cdot \alpha + b \cdot \beta$ for some $a, b \in k$ and $b \neq 0$. Note that, if $a=0$, then $T(P_{\alpha}^{\bullet})\cong P_{\beta}^{\bullet}$ is not isomorphic to a shift of $P_{\alpha}^{\bullet}$. Therefore, if $a=0$, it follows from $T \in \ker \Psi$ that the degenerated string complexes $P_{\alpha}^{\bullet}$ and $P_{\beta}^{\bullet}$ are $\tau$-invariant and must belong to arcs of the same $\simeq_{\ast}$-class, which implies that $Q$ is the Kronecker quiver by Lemma \ref{LemmaP1FamiliesKroneckerOnly}. 
	
	 If $a \neq 0$, then $b=0$ and then $\sigma(\alpha)$ is a multiple of $\alpha$, or $T(P_{\alpha}^{\bullet})$ is a band complex isomorphic to $P_{\sigma(\alpha)}^{\bullet}$

	implying that $P_{\alpha}^{\bullet}$ must be $\tau$-invariant as well. Thus, if $P_{\alpha}^{\bullet}$ is not $\tau$-invariant, then $b=0$ and $\sigma(\alpha)=a\cdot \alpha$.
	
	Next, suppose the arc object $P_{\alpha}^{\bullet}$ is $\tau$-invariant. Since $\End(P_{\alpha}^{\bullet}) \cong k$, Serre duality implies $\Hom^*(P_{\alpha}^{\bullet}, P_{\alpha}^{\bullet}) \cong k \oplus k[-1]$ and it follows from Proposition \ref{PropositionFractionallyCalabiYauArcs} that $P_{\alpha}^{\bullet}$ is represented by the  boundary segment on a component $B$ with a single marked point. This can also be seen easily from the construction of the bijection $\gamma$ described in Section \ref{SectionBijectionObjectsCurves}. It follows that the middle term in an Auslander-Reiten triangle starting in $P_{\alpha}^{\bullet}$ is indecomposable. By Corollary 6.3, \cite{Bobinski}, $\alpha$ is a maximal antipath, i.e.\ if $\delta \in Q_1$, then $\delta \alpha \not \in I $ and $\alpha \delta \not \in I$ whenever any such expression is defined.

The simple boundary loop around $B$ represents the family of complexes  $(\P_{u \alpha + v \beta})_{[u:v] \in \mathbb{P}^1_k}$. Since $T$ preserves this family of objects, there exists an invertible matrix $M \in \GL_2(k)$ such that
\begin{equation}\label{Equation1}\begin{pmatrix}\sigma(\alpha) \\ \sigma(\beta)\end{pmatrix}= M\begin{pmatrix}\alpha \\ \beta\end{pmatrix}.\end{equation}

	If $Q$ is the Kronecker quiver, it means that $T$ is isomorphic to a coordinate transformation.
	If not, then, $P_{\alpha}^{\bullet}$ is contained in an $\mathbb{A}_k^1$-family of $\tau$-invariant objects (Lemma \ref{LemmaP1FamiliesKroneckerOnly}) proving that $P_{\beta}^{\bullet}$ is not $\tau$-invariant. By our considerations above we know that $\beta$ is a path of length at least $2$ or $\sigma(\beta)$ is a multiple of $\beta$.
	
We assume that $\beta=\beta_l \cdots \beta_1$ for some $l \geq 2$ and arrows $\beta_i \in Q_1$. Suppose there exists an arrow $\delta \in Q_1$ such that $t(\delta)=s(\alpha)$. Then, the composition of $\delta$ and $\alpha$ is non-zero since $\alpha$ is a maximal antipath and $A$ is gentle. Since $\delta$ is not a maximal antipath, our previous arguments imply that $\P_{\delta}$ is not $\tau$-invariant and  as above it follows $\sigma(\delta)= d \delta$ for some $d \in k^{\times}$. Since, $\delta \alpha \neq 0$ and $0=\sigma(\delta \beta)= d \delta \sigma(\beta)$,  we conclude that $M$ lower triangular. Analogous arguments apply for arrows $\delta$ with $s(\delta)=t(\alpha)$. The action in (\ref{Equation1}) extends to an affine coordinate transformation $\rho$ of $A$ such that

\[\rho^{-1} \circ \sigma(x)= \begin{cases} x, & \text{if }x \in \{\alpha, \beta_1, \dots, \beta_l\}; \\ \sigma(x), & \text{otherwise.} \end{cases}\]

\noindent It follows that we arrive at a rescaling equivalence after a finite number of steps,

\end{proof}

\medskip

\section{The image of \texorpdfstring{$\Psi$}{Psi}}\label{SectionImageOfPsi}

\noindent We investigate the image of 
\[\begin{tikzcd} \Psi:\Aut(\mathcal{D}^b(A)) \arrow{r} & \MCG(\mathcal{S}_A),\end{tikzcd}\]
associated with a gentle algebra $A$.
We know from Proposition \ref{PropositionPsiPreservesWindingNumbers} that $\Img \Psi$ is contained in the stabilizer $\MCG(\mathcal{S}_A, \eta_A)$ of $\eta$ under the natural action of $\MCG(\mathcal{S}_A)$. The action of a diffeomorphism $H$ on $\eta$ is defined by pullback, i.e.\ for all $x \in \mathcal{S}_A$, $(H.\eta)(x) \coloneqq \mathbb{P}(TH)^{-1} \circ \eta (H(x))$.\medskip

\noindent We prove the following.

\begin{thm}\label{TheoremImageOfPsi}
The image of $\Psi$ is equal to $\MCG(\mathcal{S}_A, \eta_A)$.
\end{thm}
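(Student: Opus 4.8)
The plan is to show that every mapping class $H$ which stabilizes the homotopy class of $\eta_A$ lies in the image of $\Psi$. By Proposition \ref{PropositionPsiPreservesWindingNumbers} we already know $\Img\Psi \subseteq \MCG(\mathcal{S}_A,\eta_A)$, so only the reverse inclusion requires work. The strategy is to \emph{realize} such an $H$ by a derived auto-equivalence, using the main comparison result of \cite{LekiliPolishchukGentle} (respectively the geometric characterization of tilting complexes announced for Section \ref{SectionHomeorphismsInduceEquivalences}): a homotopy class of line field on a given marked surface determines a gentle algebra up to derived equivalence. The key point is that $H$ does \emph{not} change the surface, the marked points, or (up to homotopy) the line field, so pushing a triangulation of $\mathcal{S}_A$ forward along $H$ produces geometric data defining a tilting (or silting) complex in $\mathcal{D}^b(A)$ whose endomorphism algebra is again $A$.

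\textbf{Key steps, in order.}
First I would fix a triangulation $\Delta$ of $\mathcal{S}_A$ that separates punctures (Lemma \ref{LemmaSeparatingPunctureTriangulationExist}) and a corresponding triangulation $\mathcal{X}\subset\mathcal{D}^b(A)$ (Lemma \ref{AlgebraicAndGeometricTriangulationAgree}); this $\mathcal{X}$ should be chosen so that its objects form a tilting complex (e.g.\ take $\Delta$ so that $\mathcal{X}$ consists of indecomposable summands of a tilting complex, using the geometric characterization of tilting complexes from Section \ref{SectionHomeorphismsInduceEquivalences}). Second, apply $H$ to get $H(\Delta)$, again a triangulation with the same combinatorics; grade its arcs compatibly with $\eta_A$, which is possible precisely because $H$ fixes $[\eta_A]$ — each arc $H(\delta)$ inherits a grading from $\delta$ via the homotopy $H_*\eta_A\simeq\eta_A$, and these gradings are consistent on each triangle. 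Third, the graded arcs of $H(\Delta)$ correspond under the bijection of Section \ref{SectionBijectionObjectsCurves} to a collection of objects $\mathcal{X}_H\subset\mathcal{D}^b(A)$ which, because $H$ is an orientation-preserving diffeomorphism fixing marked points and line field, has the same Hom-structure (in particular the same arrow morphisms, by Corollary \ref{CorollaryCharacterizationArrowMorphisms} and the orientation argument of Proposition \ref{PropositionPsiYieldsOrientationPreservingdiffeomorphisms}) as $\mathcal{X}$; hence $\mathcal{X}_H$ is a tilting complex with endomorphism algebra isomorphic to $A$. Fourth, the resulting derived equivalence $T_H:\mathcal{D}^b(A)\xrightarrow{\sim}\mathcal{D}^b(\End(\mathcal{X}_H))\cong\mathcal{D}^b(A)$ sends the objects of $\mathcal{X}$ to the objects of $\mathcal{X}_H$, i.e.\ it realizes the action of $H$ on the arcs of $\Delta$. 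Fifth, by Corollary \ref{CorollaryPsiRealizesActionOnEssentialObjects} the diffeomorphism $\Psi(T_H)$ agrees with $H$ on all essential arcs of $\Delta$; since $\Delta$ is a triangulation and a mapping class is determined by its action on the vertices of (a maximal simplex in) the arc complex — via Theorem \ref{TheoremDisarlo} and, for special surfaces, the case analysis of Section \ref{SectionSpecialSurfaces} — we conclude $\Psi(T_H)=H$ in $\MCG(\mathcal{S}_A)$. This proves $H\in\Img\Psi$ and hence $\Img\Psi = \MCG(\mathcal{S}_A,\eta_A)$.

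\textbf{Main obstacle.}
The delicate part is the third and fourth steps: producing from $H(\Delta)$ an \emph{honest} tilting complex whose endomorphism ring is $A$ on the nose, rather than merely a silting object or a gentle algebra derived-equivalent to $A$. This is exactly where stabilization of the line field is essential — a general mapping class would produce a gentle algebra with a \emph{different} line-field orbit and hence (by Theorem \ref{IntroCorollaryDerivedInvariant}) a non-derived-equivalent algebra, so the endomorphism algebra would not be $A$. One must check that fixing $[\eta_A]$ is enough to guarantee $\End(\mathcal{X}_H)\cong A$; this should follow from the explicit reconstruction of a gentle algebra from its graded surface in \cite{LekiliPolishchukGentle} (or \cite{OpperPlamondonSchroll}), since $H$ carries the graded surface of $A$ to an isomorphic graded surface. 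A secondary technical point is handling the non-special versus special surface dichotomy uniformly when deducing $\Psi(T_H)=H$ from agreement on $\Delta$; for special surfaces one invokes Theorem \ref{TheoremSpecialSurfaces} instead of Theorem \ref{TheoremDisarlo}, and one must also note that in the exceptional cases ($A_{\le 4}$, dual numbers, Kronecker) the statement can be verified directly from the explicit descriptions of $\MCG$ and $\Aut(\mathcal{D}^b(A))$ given there.
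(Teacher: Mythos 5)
Your overall strategy coincides with the paper's: push a distinguished collection of arcs forward along $H$, use the preservation of winding numbers to shift the corresponding objects into a tilting complex, compute its endomorphism ring geometrically (Lemma \ref{LemmaEndomorphismTiltingObject}) to see it is again $A$, invoke Rickard to get an auto-equivalence $F_H$, and then check $\Psi(F_H)=H$ by agreement on enough arcs. The closing step and the containment $\Img\Psi\subseteq\MCG(\mathcal{S}_A,\eta_A)$ are also handled as in the paper.

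However, there is a genuine gap in your first and third steps: you cannot in general choose a triangulation $\Delta$ of $\mathcal{S}_A$ whose associated objects form a tilting complex. A basic tilting complex over $A$ has exactly $|Q_0|$ indecomposable summands, whereas the number of arcs in a triangulation is dictated by the topology of $\mathcal{S}_A$ (for a disc with $m$ boundary marked points it is $m-3$, while the corresponding $A_{m-1}$-quiver algebra has $m-1$ vertices; moreover some projectives are represented by boundary segments, which are not even vertices of the arc complex). So the set $\mathcal{X}$ you start from does not exist as described, and consequently $\End(\mathcal{X}_H)$ would not be Morita equivalent to $A$ and Rickard's theorem would not apply. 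The paper avoids this by taking instead the collection $\{\gamma_1,\dots,\gamma_n\}$ of arcs representing the indecomposable projective $A$-modules — a dissection satisfying the hypotheses of Lemma \ref{LemmaGeometricTiltingComplex} but typically not a triangulation. With that replacement your argument goes through, except that the final identification $\Psi(F_H)=H$ can then no longer be deduced from Theorem \ref{TheoremDisarlo} applied to a maximal simplex; the paper instead observes that the complement of the $\gamma_i$ is a union of discs each containing at most one puncture, so that agreement of $\Psi(F_H)$ and $H$ on these arcs (which follows from Theorem \ref{TheoremPsiRealizesAction} rather than Corollary \ref{CorollaryPsiRealizesActionOnEssentialObjects}, since boundary segments are not essential) already pins down the mapping class after extending to a triangulation containing the $\gamma_i$. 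Your worry in the ``main obstacle'' paragraph about getting $\End\cong A$ on the nose is resolved exactly by Lemma \ref{LemmaEndomorphismTiltingObject}: the endomorphism ring is read off from the intersection pattern, which $H$ preserves, and the degree-zero concentration is guaranteed by $\omega_A\circ H=\omega_A$ via condition (2) of Lemma \ref{LemmaGeometricTiltingComplex}.
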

\begin{proof}Since the homotopy class of $\eta$ is completely determined by its winding number function $\omega=\omega_{\eta}$, $\MCG(\mathcal{S}_A, \eta_A)$ is the set of all mapping classes $H$ such  that $\omega \circ H=\omega$. In particular, Theorem \ref{IntroTheoremEquivalencesFukayaLikeCategories} and Proposition \ref{PropositionPsiPreservesWindingNumbers} show that $\Img \Psi$ is a subset of the the stabilizer of $\eta$.

Let  $\gamma_1, \dots, \gamma_n$ be finite simple arcs which represent all isomorphism classes  of indecomposable projective $A$-modules.  For the opposite inclusion, let $H \in \MCG(\mathcal{S})_{\omega}$, set $\delta_i \coloneqq H(\gamma_i)$ and let $T_i \in \mathcal{D}^b(A)$ be a representative of $\delta_i$. As in Lemma \ref{LemmaEndomorphismTiltingObject}, we may assume - after applying suitable powers of the shift  functor to each $T_i$ - that $T\coloneqq \bigoplus_{i=1}^{n}{T_i}$ is a tilting object and  $B\coloneqq\Hom(T,T)^{\text{op}} \cong A$. Moreover, the isomorphism can be chosen in such a way that the identity of $T_i$ is mapped to the idempotent of $A$ corresponding to $\gamma_i$. 

Due to a result of Rickard \cite{RickardMoritaTheory}, there exists an associated equivalence 
$F_H: \mathcal{D}^b(A) \rightarrow \mathcal{D}^b(A)$
such that $F_H(T_i) \cong P_i$ for all $i \in [1,n]$.

We claim that $\Psi(F_H)=H$. Indeed, $\Psi(F_H)(\gamma_i) \simeq \delta_i = H(\gamma_i)$ by Theorem \ref{TheoremPsiRealizesAction}. Note that the complement of all $\gamma_i$ is a disjoint union of discs, each of which contains at most one puncture. Since the mapping class of a disc with at most one puncture which fixes every marked point on its boundary is trivial, it follows that $\Psi(F_H)$ and $H$ agree on a triangulation of $\mathcal{S}_A$ which contains the arcs $\gamma_1, \dots, \gamma_n$. Thus, $\Psi(F_H)=H$.
\end{proof}

\section{Diffeomorphisms induce derived equivalences}\label{SectionHomeorphismsInduceEquivalences}

\noindent In this section we prove that gentle algebras with equivalent surface models are derived equivalent. It is a generalization of Corollary 3.2.4, \cite{LekiliPolishchukGentle}, to the case of (ungraded) gentle algebras of arbitrary global dimension and builds on similar ideas. The main result reads as follows.
\begin{thm}\label{TheoremEquivalentSurfacesImplyDerivedEquivalence}
Let $A$ and $B$ be gentle algebras and let $H: \mathcal{S}_A \rightarrow \mathcal{S}_B$ be a diffeomorphism of marked surfaces such that $\omega_A=\omega_B \circ H$. Then $A$ and $B$ are derived equivalent.
\end{thm}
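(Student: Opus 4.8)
\textbf{Proof proposal for Theorem \ref{TheoremEquivalentSurfacesImplyDerivedEquivalence}.}

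The plan is to construct an explicit tilting complex in $\mathcal{D}^b(A)$ whose endomorphism algebra is $B$, and then invoke Rickard's Morita theory for derived categories. First I would fix finite simple arcs $\gamma_1, \dots, \gamma_n$ on $\mathcal{S}_B$ representing the indecomposable projective $B$-modules $P_1^B, \dots, P_n^B$; since $B$ is gentle, the complement of these arcs in $S_B$ is a disjoint union of discs, each containing at most one puncture. Applying the diffeomorphism $H^{-1}: \mathcal{S}_B \rightarrow \mathcal{S}_A$ we obtain arcs $\delta_i \coloneqq H^{-1}(\gamma_i)$ on $\mathcal{S}_A$ which are again finite, simple, and pairwise non-crossing (in minimal position), and whose complement is again a disjoint union of discs each with at most one puncture. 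Let $T_i \in \mathcal{D}^b(A)$ be an object representing $\delta_i$ (with some grading), which exists by Theorem \ref{TheoremGentleAlgebrasFukayaLike} and Theorem \ref{TheoremBijectionObjectsCurves}. Since the $\delta_i$ are finite arcs, each $T_i$ is a perfect object, i.e.\ $T_i \in \Perf(A) = \mathcal{K}^b(A\text{-}\proj)$.

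The heart of the argument is to verify that $T \coloneqq \bigoplus_{i=1}^n T_i[m_i]$, for a suitable choice of shifts $m_i \in \mathbb{Z}$, is a tilting object, i.e.\ that $\Hom^*(T,T)$ is concentrated in degree $0$ and $T$ generates $\Perf(A)$ as a thick subcategory. Vanishing of negative and positive self-extensions: because the $\delta_i$ have pairwise disjoint interiors and no interior self-intersections, the only oriented intersections among the $\delta_i$ occur at shared marked points on the boundary. Using the injection $\basis$ of Property 2) of Definition \ref{DefinitionFukayaLikeQuintuples} together with the geometric description of $\Hom^*$ in terms of intersections (Property 2) III)), the graded dimension of $\bigoplus_{i,j}\Hom^*(T_i, T_j)$ is governed entirely by these boundary intersections, and one computes as in \cite{LekiliPolishchukGentle} that the degrees of the corresponding morphisms can all be aligned into a single cohomological degree after shifting each $T_i$ appropriately --- this is exactly where the hypothesis $\omega_A = \omega_B \circ H$ enters, since it guarantees the gradings on $\mathcal{S}_A$ induced via $H$ from the canonical gradings of boundary segments on $\mathcal{S}_B$ are consistent (the winding numbers of the boundary components agree, so the relevant degree shifts close up). Generation: since $\{\delta_1, \dots, \delta_n\}$ can be completed to a triangulation of $\mathcal{S}_A$ whose remaining arcs cut out discs with at most one puncture, and the indecomposable projectives of $A$ are themselves represented by a collection of finite arcs with the same complementary property, a standard argument --- as in the proof of Theorem \ref{TheoremImageOfPsi} and Lemma \ref{LemmaEndomorphismTiltingObject} --- shows the thick subcategory generated by $T$ contains every indecomposable projective $A$-module, hence equals $\Perf(A)$.

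Having established that $T$ is tilting, I would identify $\End_{\mathcal{D}^b(A)}(T)^{\mathrm{op}} \cong B$. The multiplicative structure of $\End^*(T)$ is computed from compositions of the basis morphisms $\basis(p)$ via Property 4) of Definition \ref{DefinitionFukayaLikeQuintuples} (immersed triangles in the universal cover); since $H$ carries the triangulation-with-complementary-discs picture of $\mathcal{S}_A$ to that of $\mathcal{S}_B$, the quiver-with-relations read off from the incidences of the $\delta_i$ around marked points on $\mathcal{S}_A$ is isomorphic to the gentle presentation $(Q_B, I_B)$ of $B$ read off analogously from the $\gamma_i$ on $\mathcal{S}_B$. (At worst one must further rescale by a rescaling equivalence, cf.\ Section \ref{SectionRescalingEquivalences}, to match structure constants; but the underlying quiver and ideal already coincide, so this only affects the isomorphism, not its existence.) Finally, by Rickard's theorem \cite{RickardMoritaTheory}, the existence of a tilting complex in $\mathcal{D}^b(A)$ with endomorphism algebra $B$ yields a triangle equivalence $\mathcal{D}^b(B) \xrightarrow{\ \sim\ } \mathcal{D}^b(A)$, so $A$ and $B$ are derived equivalent.

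The main obstacle I anticipate is the bookkeeping of gradings: one must show not merely that each $\Hom^k(T_i, T_j)$ can be shifted to degree $0$ individually, but that a single coherent choice of shifts $(m_i)$ works simultaneously for all pairs. This requires tracking the degree-$1$ relations of Lemma \ref{LemmaWindingNumberPiecewiseSmoothCurves} and Equation (\ref{Equation2}) along paths in the "dual graph" of the triangulation, and it is precisely here that $\omega_A = \omega_B \circ H$ is indispensable --- without the matching of winding numbers the degree shifts would fail to be consistent around loops in that graph. The generation statement, by contrast, is essentially topological and should follow cleanly from the disc-decomposition property of the arc system.
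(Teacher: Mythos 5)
Your proposal is correct and follows essentially the same route as the paper: transport the arc system representing the indecomposable projectives through the diffeomorphism, use the hypothesis $\omega_A=\omega_B\circ H$ to align the gradings so that the resulting object has endomorphisms concentrated in degree zero, verify generation from the disc/concatenation picture, read off the endomorphism algebra from the boundary intersections, and conclude by Rickard. The paper packages these verifications as Lemma \ref{LemmaGeometricTiltingComplex} (geometric characterization of tilting arc systems, where the loop winding-number condition is exactly your ``shifts close up around loops in the dual graph'' issue) and Lemma \ref{LemmaEndomorphismTiltingObject}; the only cosmetic difference is that you push $B$'s projectives through $H^{-1}$ rather than $A$'s through $H$.
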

\medskip

\subsection{Tilting complexes in derived categories of gentle algebras} We recall the definition of a tilting object.
\begin{definition}\label{DefinitionTiltingObject} Let $A$ be a finite dimensional algebra. An object $X \in \Perf(A)$ is a \textbf{tilting object} if all of the following conditions are satisfied:
	\begin{itemize}
	\setlength\itemsep{1ex}
		\item[1)] $\Hom^*(X,X)$ is concentrated in degree zero. 
		\item[2)] Let $\mathcal{T}$ denote the smallest triangulated subcategory of $\mathcal{D}^b(A)$ which contains $X$ and is closed under taking direct summands. Then $A \in \mathcal{T}$. 
	\end{itemize}
\end{definition}
\noindent We only consider tilting objects $X$ which are \textit{basic}, i.e.\@ the multiplicity of every indecomposable object in $X$ is at most one.
Next, we define what if means for a set of arcs to generate a surface.
\begin{definition}
A system $\mathcal{P}=\{\gamma_1, \dots, \gamma_m\}$ of arcs on a marked surface $\mathcal{S}$ \textbf{geometrically generates} $\mathcal{S}$ if for every finite arc $\delta$ on $\mathcal{S}$, there exists a sequence of finite arcs $\delta_1, \dots, \delta_m \simeq \delta$, such that $\delta_1 \in \mathcal{P}$ and such that for all $i \in (1, m)$, $\delta_{i+1}$ is the concatenation of $\delta_i$ and an arc in $\mathcal{P}$.
\end{definition}

\begin{lem}\label{LemmaGeometricalGeneratorsGentle} Let $A$ be a gentle algebra and let $\mathcal{P}=\{\gamma_1, \dots, \gamma_m\}$ denote a complete set of representatives of all indecomposable projective $A$-modules on $\mathcal{S}_A$. Then, $\mathcal{P}$ geometrically generates $\mathcal{S}_A$. 
\end{lem}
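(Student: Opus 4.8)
The statement to prove is Lemma \ref{LemmaGeometricalGeneratorsGentle}: the system $\mathcal{P}$ of arcs representing the indecomposable projective $A$-modules geometrically generates $\mathcal{S}_A$. The plan is to work entirely on the combinatorial/geometric side, using the description of the surface $\mathcal{S}_A$ in terms of the laminates $\{L_x \mid x \in Q_0\}$ from Section \ref{SectionGradedSurfaceGentleAlgebra}. Recall from that section that the arc $\gamma_x$ representing the indecomposable projective $P_x = Ax$ is (up to homotopy) the laminate-dual object; concretely each $\gamma_x$ is a finite arc which together with the others cuts $\mathcal{S}_A$ into pieces indexed by marked points, and any finite arc $\delta$ in minimal position with the laminates is dissected by its intersections with laminates into segments, each living in one disc $\Delta_p$. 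The main idea is to induct on the number of segments of $\delta$, i.e.\ on the number of laminate-intersections of $\delta$.

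First I would set up the base case: a finite arc $\delta$ with at most one segment lies, up to homotopy, inside a single disc $\Delta_p$, hence crosses at most one laminate; such an arc is homotopic to one of the $\gamma_x$ (or to a boundary segment, which is the concatenation of two of the $\gamma_x$'s sharing a marked point, or directly in $\mathcal{P}$ after the conventions of the preceding lemmas). Next, for the inductive step, given a finite arc $\delta$ with $n \geq 2$ laminate-intersections $q_1, \dots, q_n$ in order along $\delta$, I would consider the initial segment $\delta'$ of $\delta$ running from its starting boundary point through $q_1$ up to just past $q_2$: this is a short arc crossing exactly one or two laminates and hence homotopic to an element of $\mathcal{P}$ (or a short concatenation thereof). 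Then $\delta$ is homotopic to the concatenation of $\delta'$ with the arc $\delta''$ obtained by deleting the initial portion, and $\delta''$ has strictly fewer laminate-intersections; applying the inductive hypothesis to $\delta''$ (reversed, so that the concatenation point is at its start) produces the desired sequence of concatenations, and prepending $\delta'$ finishes the argument. One must be a little careful that after concatenation and homotoping back to minimal position the intersection count genuinely drops — this is where the hypothesis that $\{\gamma\} \cup (\text{laminates})$ is in minimal position, combined with the bigon criterion (\cite{FarbMargalit}, Proposition 1.7), is used to rule out the creation of extra intersections.

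The main obstacle I expect is bookkeeping the endpoints and orientations correctly: "concatenation" of arcs in the definition of geometric generation requires the arcs to share a marked point, so I need to ensure at each step that the short piece $\delta'$ I peel off actually ends at a marked point which is an endpoint of the next arc in $\mathcal{P}$, rather than merely at a laminate crossing. This is handled by choosing $\delta'$ to terminate not just past $q_2$ but precisely at the marked point $p$ of the disc $\Delta$ whose boundary contains both $q_1$ and $q_2$ (such a $p$ exists because consecutive laminate crossings bound a common disc); then $\delta'$ is, up to homotopy, an arc with one endpoint a genuine marked boundary point (its original start) and the other the marked point $p$, and by the base-case analysis it is homotopic to a $\gamma_x$ or a concatenation of at most two of them. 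A secondary, minor, point is the treatment of semi-finite and infinite arcs — but the statement only concerns finite arcs $\delta$, and the intermediate arcs $\delta_1, \dots, \delta_m$ can all be kept finite because we only ever concatenate with the finite arcs $\gamma_x \in \mathcal{P}$, so this issue does not arise. Finally I would remark that the whole argument is purely about the surface $\mathcal{S}_A$ and its laminate decomposition, hence is insensitive to whether $A$ has finite or infinite global dimension, which is why the conclusion holds in the stated generality.
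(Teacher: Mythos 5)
Your argument is correct in substance, but it takes a genuinely different route from the paper. The paper's proof is algebraic and very short: it observes that every perfect string complex is built from indecomposable projectives by a sequence of mapping cones over ALP maps, that these maps correspond to boundary intersections, and then invokes the dictionary ``mapping cone $=$ resolution of an intersection $=$ concatenation'' to conclude. You instead argue entirely on the surface, by induction on the number of intersections of $\delta$ with the laminates: the key inputs are that the arc $\gamma_x$ of $P_x$ is the unique homotopy class crossing only $L_x$, exactly once, and that consecutive laminate crossings of $\delta$ bound a common disc $\Delta_p$ with a single marked point $p$, through which one can reroute $\delta$ to split off a copy of $\gamma_{x_1}$. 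The two arguments are mirror images of one another (peeling off one laminate crossing geometrically corresponds to peeling off one projective summand of the string complex algebraically); yours has the advantage of being self-contained on the geometric side and not requiring the ALP mapping-cone machinery, while the paper's is a one-liner given the dictionary already established in Sections \ref{SectionBijectionObjectsCurves}--\ref{SectionMorphismsOfIntersection}.

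Two small points to tighten. First, your base case is slightly misstated: an arc crossing no laminate lies in a single disc $\Delta_p$ and is contractible, hence excluded, and a boundary segment may cross several laminates (one for each laminate endpoint it passes), so boundary segments should simply be absorbed into the general induction rather than treated as concatenations of two arcs of $\mathcal{P}$; the genuine base case is ``exactly one laminate crossing'', which is $\gamma_x$ by the bijection of Theorem \ref{TheoremBijectionObjectsCurves}. Second, you should record explicitly that the truncated-and-rerouted arc $\delta''$ is still in minimal position with the laminates (any bigon for $\delta''$ would already be a bigon for $\delta$, since the rerouted initial portion stays inside one disc and crosses no laminate) and that the intermediate concatenations $\gamma_{x_1}\ast\cdots\ast\gamma_{x_i}$ are non-contractible; the cleanest way to see the latter is that such a concatenation represents the nonzero truncated string complex $P_{x_1}\rightarrow\cdots\rightarrow P_{x_i}$. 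Neither point is a real obstruction, but both belong in a complete write-up.
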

\begin{proof}
Every perfect string complex in $\mathcal{D}^b(A)$ is obtained by a sequence of mappings cones  with respect to ALP maps (see Section \ref{SectionMorphismsOfIntersection}) from or to indecomposable projective $A$-modules. Moreover, these maps correspond to boundary intersections and the assertion follows from the relationship between concatenations and mapping cones. 
\end{proof}

\noindent The following lemma provides a characterisation for a collection of arcs to represent a tilting object.
\begin{lem}\label{LemmaGeometricTiltingComplex}Let $\mathcal{P}=\{\gamma_1, \dots, \gamma_n\}$ be a set of pairwise non-homotopic curves on $\mathcal{S}_A$ in minimal position. Then, there exists a tilting object $X=\bigoplus_{i=1}^n{X_i}$  such that $\gamma_i \in \gamma(X_i)$ if and only if  the following conditions are satisfied:
	\begin{enumerate}
	\setlength\itemsep{1ex}
		\item For all $1 \leq i, j \leq n$, $\gamma_i$ and $\gamma_j$ are finite simple arcs with disjoint interior.
		\item The winding  number of every piecewise smooth loop formed by arcs in $\mathcal{P}$ is zero.
		\item $\mathcal{P}$ geometrically generates $\mathcal{S}_A$.
	\end{enumerate}
	
\end{lem}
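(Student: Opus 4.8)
\textbf{Proof strategy for Lemma \ref{LemmaGeometricTiltingComplex}.}

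The plan is to prove both implications by translating each of the three geometric conditions into its algebraic counterpart via the dictionary assembled in the previous sections, most notably Proposition \ref{PropositionPsiPreservesWindingNumbers} (or rather the winding-number interpretation of self-extensions given by Lemma \ref{LemmaBoundaryMorphismWindingNumber} and Lemma \ref{LemmaWindingNumberPiecewiseSmoothCurves}) and Lemma \ref{LemmaGeometricalGeneratorsGentle}. For the forward direction, assume $X=\bigoplus X_i$ is a tilting object with $\gamma_i \in \gamma(X_i)$. Since tilting objects are perfect and have $\Hom^*$ concentrated in degree zero, each $X_i$ has no self-extensions, so $\dim\Hom^*(X_i,X_i)$ is small; by Proposition \ref{PropositionFractionallyCalabiYauArcs} together with Lemma \ref{LemmaEssentialObjectsNotTauInvariant}-type arguments this forces $\gamma_i$ to be a finite simple arc (a $\tau$-invariant arc or loop would carry a boundary self-intersection contributing a non-zero degree class, hence a non-trivial graded self-extension, and a non-simple arc would give $\dim\Hom^*(X_i,X_i)\geq 3$). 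Vanishing of $\Hom^*(X_i,X_j)$ in nonzero degrees combined with the interpretation of interior intersections as morphisms (Property 2 of Definition \ref{DefinitionFukayaLikeQuintuples}) forces the $\gamma_i,\gamma_j$ to have disjoint interiors — any interior intersection would produce morphisms in two dual degrees summing to $1$, hence a nonzero morphism in a nonzero degree. This gives condition (1). For condition (2): a piecewise smooth loop $\ell$ built from the $\gamma_i$ and boundary intersections between them corresponds, via the composition rule (Property 4) and Lemma \ref{LemmaWindingNumberPiecewiseSmoothCurves}, to a composable chain of degree-zero morphisms among the $X_i$ whose composite, when the loop closes up, lands in $\Hom^*(X_{i_0},X_{i_0}[\omega(\ell)])$; since that graded Hom is concentrated in degree zero, $\omega(\ell)=0$. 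Condition (3) is immediate from Lemma \ref{LemmaGeometricalGeneratorsGentle} applied to the algebra $B=\End(X)^{\mathrm{op}}$: the arcs $\gamma_i$ are precisely the representatives of the indecomposable projectives of $B$ under the identification $\mathcal{D}^b(A)\cong\mathcal{D}^b(B)$ (the surface is unchanged), so $\mathcal{P}$ geometrically generates $\mathcal{S}_A=\mathcal{S}_B$.

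For the converse, suppose $\mathcal{P}=\{\gamma_1,\dots,\gamma_n\}$ satisfies (1)--(3), and choose objects $X_i$ representing $\gamma_i$, with gradings to be fixed below. Condition (1) ensures, via Proposition \ref{PropositionInteriorMorphismsVectorSpace} and Definition \ref{DefinitionFukayaLikeQuintuples}, that $\Hom^*_{\Int}(X_i,X_j)=0$, so all morphisms among the $X_i$ come from boundary intersections. Since $\gamma_i$ is a finite simple arc, the only self-intersection is at the endpoints and is invertible (Lemma \ref{LemmaBoundaryPointsAreInvertible}), so $\End^*(X_i)$ is spanned by the identity in degree $0$ together with possibly one class coming from the boundary self-intersection if the two endpoints happen to coincide; condition (2) applied to the (degenerate piecewise smooth) loop traversing $\gamma_i$ and back forces that class, if present, to sit in degree $0$ as well — but simplicity plus the relation $\deg(\overline p)=1-\deg(p)$ then rules it out, so $\End^*(X_i)\cong k$ concentrated in degree $0$. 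For $i\neq j$, a nonzero morphism $X_i\to X_j[d]$ corresponds to a boundary intersection $p$, and a standard decomposition argument shows its degree is computed by Lemma \ref{LemmaWindingNumberPiecewiseSmoothCurves} applied to the piecewise smooth arc-loop obtained from $\gamma_i$, $p$, $\gamma_j$ (and, to close it, a return path); condition (2) forces $d$ to be independent of the choice of such a morphism and, after adjusting the gradings $\mathfrak{g}_i$ on the arcs by integer shifts so that all arrow morphisms sit in degree $0$, we get $\Hom^*(X_i,X_j)$ concentrated in degree $0$ for all $i,j$. This gives condition 1) of Definition \ref{DefinitionTiltingObject} for $X=\bigoplus X_i$. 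For condition 2), i.e. that $X$ generates, I would use condition (3): geometric generation says every finite arc $\delta$ — in particular each indecomposable projective $P_x$ — is obtained from $\gamma_1$ by a sequence of concatenations with arcs of $\mathcal{P}$, and by the correspondence between concatenation of arcs at a boundary point and mapping cones of the associated boundary morphisms (the same principle used in the proof of Lemma \ref{LemmaGeometricalGeneratorsGentle}), each $X_\delta$ lies in the triangulated hull of $\{X_1,\dots,X_n\}$ closed under summands; applying this with $\delta$ running over the indecomposable projectives shows $A\in\mathcal{T}$.

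\textbf{Main obstacle.} The delicate point is the grading bookkeeping in the converse: condition (2) is a statement about winding numbers of \emph{piecewise smooth loops}, and to extract from it the vanishing of $\Hom^*(X_i,X_j)$ in nonzero degrees one must carefully relate the degree of the morphism $\basis(p)$ of a boundary intersection $p\in\gamma_i\overrightarrow\cap\gamma_j$ to the winding number of an auxiliary closed-up loop, using Lemma \ref{LemmaWindingNumberPiecewiseSmoothCurves} and the sign conventions for $\sigma_i$ there, and then check that a single global choice of integer shifts of the $\mathfrak{g}_i$ simultaneously puts \emph{all} arrow morphisms (there may be several, in a connected web) into degree $0$ — this is exactly where connectedness of the ``arrow quiver'' of $\mathcal{P}$ and the cocycle condition (2) interact, and it is the analogue of the fact that a $1$-cocycle with values in $\mathbb{Z}$ on a graph with vanishing periods is a coboundary. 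I expect the rest to be a fairly direct unwinding of Definition \ref{DefinitionFukayaLikeQuintuples} together with the already-established facts about interior morphisms and Lemma \ref{LemmaGeometricalGeneratorsGentle}.
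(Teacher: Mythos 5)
Your proposal follows essentially the same route as the paper: in the forward direction, degree-zero concentration of $\Hom^*(X,X)$ rules out $\tau$-invariance and interior (self-)intersections, Rickard's theorem plus Lemma \ref{LemmaGeometricalGeneratorsGentle} gives geometric generation; in the converse, you make a consistent choice of shifts on each connected component of the ``arrow quiver'' (the paper's sets $Z(m)$), using condition (2) exactly as the cocycle-with-vanishing-periods argument you describe, and then use concatenation $\leftrightarrow$ mapping cones for generation. One small correction: your claim that $\End^*(X_i)\cong k$ for a closed simple arc, argued via $\deg(\overline{p})=1-\deg(p)$, is wrong — duals are only defined for \emph{interior} intersections, a boundary self-intersection contributes a single oriented intersection, and a closed simple arc can legitimately carry a two-dimensional endomorphism algebra concentrated in degree $0$ (condition (2) forces the extra class into degree $0$, which is all the tilting condition requires, so your proof is unaffected).
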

\begin{proof}Suppose $X=\bigoplus_{i=1}^n{X_i}$ is a tilting object and $X_i$ is indecomposable with $\gamma_i \in \gamma(X_i)$. Since $X_i \in \Perf(A)$ and $\Hom^*(X,X)$ is concentrated in degree $0$, it is not $\tau$-invariant and hence $\gamma_i$ is a finite arc. Since every interior intersection of $\gamma_i$ and $\gamma_j$ gives rise to morphisms $f: X_i \rightarrow X_j[m]$ and $f': X_j \rightarrow X_i[1-m]$, the same reasoning shows that $\gamma_i$ and $\gamma_j$ have no interior intersections for all $i, j \in [1,n]$. In particular, $B:=\Hom(X,X)^{\text{op}}$ is gentle (c.f.\ Lemma \ref{LemmaEndomorphismTiltingObject} below or \cite{SchroerZimmermann}). Due to a result of Rickard \cite{RickardMoritaTheory}, there exists a derived equivalence $T: \mathcal{D}^b(B) \rightarrow \mathcal{D}^b(A)$ which sends the indecomposable projective $B$-modules to the objects $X_1, \dots, X_n$. Let $\mathcal{P}'$ be a  complete set of representatives of the indecomposable $B$-modules. Then, $\Psi(T)(\mathcal{P}')=\mathcal{P}$ up to homotopy. Applying  Lemma \ref{LemmaGeometricalGeneratorsGentle} to $\mathcal{P}'$, we conclude that $\mathcal{P}$ geometrically generates $\mathcal{S}_A$. Thus, $T$ satisfies conditions (1)-(3).

Next, suppose $T$ satisfies conditions (1)-(3). Let $\mathcal{U}:=\{U_1, \dots, U_n\} \subset \mathcal{D}^b(A)$ be a complete set of representatives of the arcs $\gamma_1, \dots, \gamma_n$ and let $i \in [1, n]$. Denote $Z(m) \subset \mathcal{U}$ the set consisting of all objects $U$ for which there exists a sequence  $U=U_{i_1}, \dots, U_{i_l}=U_m$ in $\mathcal{U}$ such that for all $j \in [1,l)$, $\Hom^*(U_{i_j}, U_{i_{j+1}})\neq 0$ or $\Hom^*(U_{i_{j+1}}, U_{i_{j}})\neq 0$.  Then either $Z(m)=Z(m')$ or $Z(m) \cap Z(m')=\emptyset$ for all $m, m' \in [1,n]$. Let $a_m \in \mathbb{Z}$. We specify $a_l$ for every $l \in [1,n]$ such that $U_l \in Z(m)$ as follows. Let  $U_m[a_m]=U_{i_0}[b_{i_0}], \dots, U_{i_s}[b_{i_s}]=U_l[b_{i_s}]$ and morphisms $f_0, \dots, f_{s}$, such that for each $j \in [0, s)$, $f_j$ is a morphism from $U_{i_j}[b_{i_j}]$ to $U_{i_{j+1}}[b_{i_{j+1}}]$ or vice versa which is associated to an intersection of $\gamma_{i_j}$ and $\gamma_{i_{j+1}}$.
	Set $a_l \coloneqq b_{i_s}$. Then, $a_l$ only depends on $a_m$. Namely, given another pair of sequences $U_{j_0}[c_{j_0}], \dots, U_{j_q}[c_{j_t}]$  and  $g_0, \dots, g_{t}$ as above, then, the sequences
	\[\begin{array}{cc}U_l[b_{i_s}]=U_{i_s}[b_{i_s}], \dots, U_{i_0}[b_{i_0}], U_{j_0}[c_{j_1}], \dots, U_{j_t}[c_{j_t}]=U_l[c_{j_t}]  &  f_s, \dots, f_{0} , g_0, \dots, g_{t}\end{array}\]		
	
\noindent	correspond to a piecewise smooth cycle of arcs in $T$ and the second condition together with Property 5) of surface-like categories implies $b_{i_s}=c_{j_t}$. Making choices of this kind for all sets $Z(m)$, the same argument shows that the $\Hom^*(X, X)$ of the object $X \coloneqq \bigoplus_{i=1}^n{U_i[a_i]}$ is concentrated in degree $0$.
	
The correspondence between mapping cones and resolutions of intersections implies by the second condition that the smallest triangulated subcategory of $\mathcal{D}^b(A)$, which is closed under direct summands and which contains $X$, also contains every perfect indecomposable object which is not $\tau$-invariant. In particular, it contains every indecomposable projective $A$-module. It shows that $X$ is a tilting object.
\end{proof}

\noindent Next, we show that the endomorphism ring of a tilting object can be recovered from a representing set of arcs.
\begin{lem}\label{LemmaEndomorphismTiltingObject}
	Let $X \in \Perf(A)$ be a tilting object and assume that $X=\bigoplus_{i=1}^n{X_i}$ ($X_i$ indecomposable) is represented by a set of arcs $\mathcal{P}=\{\gamma_1, \dots, \gamma_n\}$. Then, $\Hom(X,X)$ is isomorphic to the algebra $k\Gamma/R$, where $\Gamma$ is a quiver and $R$ is an ideal generated by quadratic zero relations, given as follows:
	\begin{itemize}
	\setlength\itemsep{1ex}
		\item $\Gamma$ has vertices $\{x_1, \dots, x_n\}$ and  the arrows from $x_i$ to $x_j$ are in one-to-one correspondence with the directed intersections  $p \in \gamma_i \overrightarrow{\cap} \gamma_j$, such that there is no other arc of $T$ ending between $\gamma_i$ and $\gamma_j$.
		\item $R$ is generated by all expressions $pq$, where $p$ and $q$ are composable arrows of $\Gamma$ and  $p \neq q$ as points in $S_A$. 
		
	\end{itemize}
\end{lem}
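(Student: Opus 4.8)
The plan is to read off the presentation directly from the geometry of the arcs $\gamma_i$, using the description of morphism spaces and their compositions provided by Properties 2) and 4) of Definition~\ref{DefinitionFukayaLikeQuintuples}. By Lemma~\ref{LemmaGeometricTiltingComplex}, the $\gamma_i$ are finite simple arcs with pairwise disjoint interiors, and after replacing each $X_i$ by a suitable shift we may assume that $\Hom^{\ast}(X,X)$ is concentrated in homological degree $0$; in particular $\End(X_i) \cong k$ for every $i$ (the only self-intersections of $\gamma_i$ are its endpoints, which give invertible morphisms by Lemma~\ref{LemmaBoundaryPointsAreInvertible}), and these account for the vertices $x_1, \dots, x_n$. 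Because the interiors are disjoint, every directed intersection $p \in \gamma_i \overrightarrow{\cap} \gamma_j$ occurs at a common marked endpoint $P$, at which the ends of the arcs of $\mathcal{P}$ are linearly ordered in the counter-clockwise direction. By Property 2) (in particular 2\,III)) these intersections index a basis of $\Hom(X_i, X_j)$, so the first step is simply to record that $\dim \Hom(X_i, X_j) = |\gamma_i \overrightarrow{\cap} \gamma_j|$ with basis $\{\basis(p)\}$.

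Next I would identify the arrows, i.e.\ a basis of $\rad/\rad^2$ of $\End(X)$. The claim is that $\basis(p)$, for $p \in \gamma_i \overrightarrow{\cap} \gamma_j$ located at a marked point $P$, lies in $\rad^2$ exactly when some other arc $\gamma_l \in \mathcal{P}$ ends at $P$ strictly between the ends of $\gamma_i$ and $\gamma_j$. If such a $\gamma_l$ exists, the three ends form a fork at $P$ in the sense of Figure~\ref{FigureIntersectionTriangle}, and Definition~\ref{DefinitionFukayaLikeQuintuples}\,4)\,I) yields intersections $p_1 \in \gamma_i \overrightarrow{\cap} \gamma_l$ and $p_2 \in \gamma_l \overrightarrow{\cap} \gamma_j$ at $P$ with $\basis(p_2) \circ \basis(p_1)$ a non-zero multiple of $\basis(p)$, whence $\basis(p) \in \rad^2$. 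Conversely, if no arc ends between, then every non-zero composite of two non-invertible basis morphisms arises from a triangle, fork or double-bigon, and any such configuration producing an intersection in $\gamma_i \overrightarrow{\cap} \gamma_j$ at $P$ would force an intermediate end at $P$; hence $\basis(p) \notin \rad^2$ and it is an arrow. This produces precisely the quiver $\Gamma$ of the statement.

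It then remains to compute $R = \ker(k\Gamma \to \End(X))$ and to show it is generated by the quadratic elements $pq$ with $p \neq q$ as points of $S_A$. I would treat a composable pair of arrows $p\colon x_i \to x_j$ and $q\colon x_j \to x_k$ in two cases. When $p$ and $q$ lie at the same marked point $P$, the ends of $\gamma_i, \gamma_j, \gamma_k$ form a fork at $P$, and Definition~\ref{DefinitionFukayaLikeQuintuples}\,4)\,I) shows that the composite $\basis(q) \circ \basis(p)$ is a non-zero multiple of $\basis(r)$, where $r \in \gamma_i \overrightarrow{\cap} \gamma_k$ is the intersection at $P$; inductively, a path of arrows all based at a single point $P$ is non-zero and equals, up to scalar, the basis morphism of the corresponding directed intersection at $P$, and distinct such paths yield linearly independent morphisms. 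When $p$ and $q$ sit at distinct marked points, I would pass to the universal cover and argue that no triangle, fork or double-bigon simultaneously carries compatible lifts of $p$ and $q$ on a single lift of $\gamma_j$ while joining lifts of $\gamma_i$ and $\gamma_k$; here the disjointness of interiors, together with the fact that endpoint morphisms are invertible (Lemma~\ref{LemmaBoundaryPointsAreInvertible}), makes the bookkeeping of lifts manageable, and the composite vanishes by Definition~\ref{DefinitionFukayaLikeQuintuples}\,4)\,I).

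Finally, assembling these computations, the monomial basis of $k\Gamma/R'$, where $R'$ denotes the ideal generated by the stated quadratic relations, consists of the trivial paths together with the non-zero paths, each of which is based at a single marked point; the assignment sending such a path to the basis morphism $\basis(r)$ of its associated directed intersection is a bijection onto the basis of $\End(X)$ recorded in the first step. Hence $k\Gamma/R' \cong \End(X) = \Hom(X,X)$, so $R = R'$, which gives the presentation and shows $R$ is generated by quadratic zero relations. I expect the main obstacle to be the vanishing statement in the distinct-point case: ruling out all three exceptional configurations in the universal cover for intersections located at two different marked points is the delicate geometric heart of the argument, everything else being a bookkeeping of bases and a dimension count.
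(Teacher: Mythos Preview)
Your proposal is correct and follows the same route as the paper, only in much greater detail. The paper's own proof is a three-line sketch: define an algebra map $\varphi\colon k\Gamma/R \to \Hom(X,X)$ by sending $x_i$ to $\operatorname{id}_{X_i}$ and each arrow $p$ to the morphism $\basis(p)$; observe that $\varphi$ is well-defined and surjective ``from the correspondences between intersections and morphisms and the geometric description of compositions''; then compare dimensions. Your steps --- basis via intersections, identification of arrows, the two composition cases, and the final basis bijection --- are precisely what lies behind that sentence and the dimension count.

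Your diagnosis of the delicate point is also accurate: the only real content is the vanishing of $\basis(q)\circ\basis(p)$ when $p$ and $q$ sit at distinct marked points. The clean way to close this, in the spirit of the paper's use of Definition~\ref{DefinitionFukayaLikeQuintuples}\,4)\,I), is to note that with pairwise disjoint interiors the only possible configuration in the universal cover is a triangle with all three corners at marked boundary points; but for such an embedded disc the oriented intersections at the three corners are cyclic (they give morphisms $X_i\to X_j$, $X_j\to X_k$, $X_k\to X_i$), so the third corner lies in $\gamma_k\overrightarrow{\cap}\gamma_i$ rather than $\gamma_i\overrightarrow{\cap}\gamma_k$ and contributes nothing to $\Hom(X_i,X_k)$. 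This is exactly the orientation argument used in the proof of Proposition~\ref{PropositionExistenceARTrianglesForArcs}.
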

\begin{proof}There exists an algebra homomorphism $\varphi: k\Gamma/_{R} \rightarrow B$, which sends each vertex $x_i$ of $\Gamma$ to the identity morphism of $X_i$ which we regard as an element in $B$ in the natural way. We require also that $\varphi$ sends an arrow $p \in \gamma_i \overrightarrow{\cap} \gamma_j$ to a morphism $X_i \rightarrow X_j$ associated to $p$. It follows from the correspondences between intersections and morphisms on one hand and the geometric description of  compositions that $\varphi$ is surjective.  By comparing dimensions (which we can express in terms of intersections), we see that $\varphi$ is an isomorphism. 
\end{proof}
\begin{proof}[Proof of Theorem \ref{TheoremEquivalentSurfacesImplyDerivedEquivalence}]
Denote $\mathcal{P}$ as set of arcs representing the isomorphism classes of indecomposable projective $A$-modules. Since $A$ is tilting, the set $\mathcal{P}$ satisfies the conditions of Lemma \ref{LemmaGeometricTiltingComplex}. The assumptions on $H$ ensure that $H(\mathcal{P})$ enjoys the same properties. Thus, the arcs of $H(\mathcal{P})$ represent the indecomposable direct summands of a tilting object $Y \in \mathcal{D}^b(B)$ and it follows from Lemma \ref{LemmaEndomorphismTiltingObject} that $\Hom(Y,Y)^{\text{op}} \cong A$  as a $k$-algebra. 
\end{proof}
\begin{rem}
	Note that Lemma \ref{LemmaEndomorphismTiltingObject} provides a geometric proof of the result in \cite{SchroerZimmermann} that the class of  gentle algebras is closed under derived equivalences. \ \smallskip
\end{rem}
\medskip

\bibliographystyle{plain}

\bibliography{Bibliography_Yana}{}

\end{document}